\newcommand{\ubar}[1]{\underaccent{\bar}{#1}}
\NewExpandableDocumentCommand{\gobblefirst}{m}
{
	\tl_tail:n { #1 }
}
\DeclareMathSymbol{\lsb@l}{\mathalpha}{letters}{`l}
\def\XXint#1#2#3{{\setbox0=\hbox{$#1{#2#3}{\int}$ }
		\vcenter{\hbox{$#2#3$ }}\kern-.6\wd0}}
\newtheorem{prop}{Proposition}
\newtheorem{thm}[prop]{Theorem}
\newtheorem{lem}[prop]{Lemma}
\newtheorem{coro}[prop]{Corollary}
\newtheorem{rema}[prop]{Remark}
\title[The Penrose inequality in extrinsic geometry]{ 
	The Penrose inequality in extrinsic geometry}
\author{Michael Eichmair}
\address{
	\textnormal{Michael Eichmair \newline  \indent
		University of Vienna \newline \indent
		Faculty of Mathematics  \newline \indent
		Oskar-Morgenstern-Platz 1 \newline \indent
		1090 Vienna, 	Austria  \newline\indent 
		 \href{https://orcid.org/0000-0001-7993-9536}{https://orcid.org/0000-0001-7993-9536} \newline\indent	
		 \href{mailto:michael.eichmair@univie.ac.atm}{michael.eichmair@univie.ac.at}}
}
\author{Thomas Koerber}
\address{\textnormal{Thomas Koerber  \newline \indent
		University of Vienna \newline \indent
		Faculty of Mathematics  \newline \indent
		Oskar-Morgenstern-Platz 1 \newline \indent 1090 Vienna,	Austria \newline\indent 
		 \href{https://orcid.org/0000-0003-1676-0824}{https://orcid.org/0000-0003-1676-0824} \newline \indent
		  \href{mailto:thomas.koerber@univie.ac.atm}{thomas.koerber@univie.ac.at}}
}
\begin{document}

		\date{\today}
	\onehalfspacing
\begin{abstract}
	The Riemannian Penrose inequality is a fundamental result in mathematical relativity. It has been a long-standing conjecture of G.~Huisken that an analogous result should hold in the context of extrinsic geometry. In this paper, we resolve this conjecture and show that the exterior mass $m$ of an asymptotically flat support surface $S\subset\mathbb{R}^3$ with nonnegative mean curvature and outermost free boundary minimal surface $D$ is bounded in terms of
	$$
	m\geq \sqrt{\frac{|D|}{\pi}}.
	$$ If equality holds, then the unbounded component of $S\setminus \partial D$  is a half-catenoid. 	In particular, this extrinsic Penrose inequality leads to a new characterization of the catenoid among all complete  embedded minimal surfaces with finite total curvature. 
To prove this result, we study minimal capillary surfaces supported on $S$ that minimize the  free energy and discover a quantity associated with these surfaces that is nondecreasing as the contact angle increases.
	\end{abstract}
	\maketitle 
 \section{Introduction}
 In mathematical models of isolated gravitational systems, asymptotically flat Riemannian 3-manifolds with nonnegative scalar curvature arise naturally as maximal initial data for the Einstein field equations. 
 The Riemannian Penrose inequality asserts that a fundamental quantity  $m_{ADM}$ associated with such initial data $(M,g)$ called the ADM-mass  (after R.~Arnowitt, S.~Deser, and C.~Misner \cite{ArnowittDeserMisner})  is bounded below by a quantity in terms of the area of the outermost minimal surface $N\subset M$, which serves as a proxy for the event horizon of the spacetime. More precisely, there holds
 $$
 m_{ADM}\geq \sqrt{\frac{|N|}{16\,\pi}}
 $$ 
with equality if and only if the unbounded component of $(M\setminus N,g)$ is isometric to the exterior of the so-called spatial Schwarzschild manifold. 
 The Riemannian Penrose inequality has been established by G.~Huisken and T.~Ilmanen \cite{HuiskenIlmanen} in the case where $N$ is connected using a weak notion of inverse mean curvature flow and, independently, by H.~Bray \cite{Bray2} in the  case where $N$ has one or multiple components using his conformal flow.  The theory of weak inverse mean curvature flow pioneered by G.~Huisken and T.~Ilmanen  has found many other  applications also outside the field of mathematical relativity; see, e.g., \cites{BrayNeves,GuanLi,BrendleHungWang,Xu,HuiskenKoerber}.
 
 In his dissertation under the guidance of G.~Huisken \cite{Volkmann}, A.~Volkmann has introduced the concept of an asymptotically flat support surface $S\subset \mathbb{R}^3$  and discovered several striking parallels between such  surfaces with nonnegative mean curvature and asymptotically flat Riemannian 3-manifolds with nonnegative scalar curvature. This includes the discovery of a scalar quantity associated with $S$ that stands in for the ADM-mass   for which a result analogous to the fundamental positive mass theorem \cite{SchoenYau,Witten} holds; see \cite[Theorem 2.8]{Volkmann}. A.~Volkmann has also proven a rigidity result for the existence of an unbounded stable free boundary minimal surface supported on $S$ that is similar in spirit to a rigidity result due to A.~Carlotto \cite{Carlotto} for the existence of unbounded stable minimal surfaces in asymptotically flat Riemannian 3-manifolds; see \cite[Theorem 2.13]{Volkmann}. 
 
 Also in view of these results, G.~Huisken \cite[p.~38]{Volkmann} has conjectured that a result analogous to  the Riemannian Penrose inequality should hold for asymptotically flat support surfaces. The goal of this paper is to confirm this conjecture. 
 
 To describe our contributions here, 	let $ S\subset \mathbb{R}^3$ be a connected  unbounded properly embedded surface such that $\mathbb{R}^3\setminus S$ has exactly two components. In particular, $S$ has no boundary.  We say that $S$ is an asymptotically flat support surface if its mean curvature is integrable and the complement of a compact subset of $S$ is contained in the graph of a function $\psi \in C^\infty(\mathbb{R}^2)$ such that, for some $\tau>1/2,$ 
 \begin{equation*} 
 	\begin{aligned} 
 		\sum_{i=1}^2|\partial_i\psi(y)|+\sum_{i,\,j=1}^2|y|\,|\partial^2_{ij}\psi(y)|=O(|y|^{-\tau}).
 	\end{aligned}
 \end{equation*} 
 Let $\nu(S)$ be the unit normal field of $S$ asymptotic to $-e_3$. The component of $\mathbb{R}^3\setminus S$ that $\nu(S)$ points out of is denoted by $M(S)$ and referred to as the region above $S$.  Let $H(S)$ be the mean curvature of $S$ computed as the tangential divergence of $\nu(S)$.

The exterior mass $m$ of $S$ is the quantity
 \begin{align} \label{exterior mass section 1}
 	m=\lim_{r\to\infty}\frac{1}{2\,\pi\,r}\,\int_{\left\{y\in\mathbb{R}^2:|y|=r\right\}}\sum_{i=1}^2\,y_i\,\partial_i\psi(y);
 \end{align}
 see \cite[p.~36]{Volkmann}. The integrability of $H(S)$ ensures that the limit on the right-hand side of \eqref{exterior mass section 1} exists; see \cite[Proposition 2.6]{Volkmann}.
 
  The following result of S.~Almaraz, E.~Barbosa, and L.~de Lima stands in for  the positive mass theorem for asymptotically flat Riemannian 3-manifolds; see \cite[Theorem 1.1]{AlmarazBarbosaDeLima} and \cite[Lemma 2.1]{Koerber}.
 \begin{thm} \label{extrinsic:pmt} Let $S\subset\mathbb{R}^3$ be an asymptotically flat support surface with nonnegative mean curvature. There holds $m\geq 0$  with equality if and only if $S$ is a flat plane. 
 \end{thm}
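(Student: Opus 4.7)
The plan is to establish a divergence identity on $S$ relating $m$ to the total mean curvature, and then to conclude nonnegativity by viewing the region above $S$ as an asymptotically flat three-manifold with boundary to which a positive mass theorem applies.

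First I would apply the Laplace--Beltrami operator $\Delta^S$ of $S$ to the height function $\phi = x_3|_S$. Using $\Delta^S X = -H(S)\,\nu(S)$ for the position vector $X$, one obtains the pointwise identity
$$
\Delta^S \phi = -H(S)\,\langle e_3, \nu(S)\rangle.
$$
Integrating over the truncated region $\Sigma_R = S \cap \{x_1^2+x_2^2<R^2\}$ and applying Stokes' theorem gives
$$
\int_{\partial\Sigma_R}\langle e_3,\eta\rangle\,d\sigma = -\int_{\Sigma_R}H(S)\,\langle e_3,\nu(S)\rangle\,dA,
$$
where $\eta$ denotes the outward unit conormal. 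Using the asymptotic flatness decay $|\partial\psi|+|y|\,|\partial^2\psi| = O(|y|^{-\tau})$ with $\tau > 1/2$, a direct calculation on the graphical portion shows that the boundary integral converges to $2\pi\,m$ as $R\to\infty$, yielding
$$
2\pi\,m = -\int_S H(S)\,\langle e_3, \nu(S)\rangle\,dA.
$$

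On the asymptotically graphical portion one has $\langle e_3,\nu(S)\rangle = -(1+|\nabla\psi|^2)^{-1/2} < 0$, so the integrand $-H(S)\langle e_3,\nu(S)\rangle$ is pointwise nonnegative there. The main obstacle is that on the compact non-graphical part of $S$ the sign of $\langle e_3,\nu(S)\rangle$ is not controlled: if $S$ contains an overhang, $\nu(S)$ may have a positive $e_3$-component on part of it, and the pointwise integrand may be negative. To bypass this difficulty I would pass to the region $M(S)$ above $S$ equipped with the flat Euclidean metric. This is an asymptotically flat three-manifold with boundary $S$, with identically vanishing scalar curvature, whose boundary has nonnegative mean curvature with respect to the outward normal $\nu(S)$ by hypothesis. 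The exterior mass $m$ coincides with the ADM-type mass invariant of $(M(S), g_{\mathrm{Eucl}})$ in the sense of Almaraz--Barbosa--de Lima; their positive mass theorem with boundary then yields $m \geq 0$. The nontrivial task is the identification of the two notions of mass, given that $S$ itself extends out to infinity rather than being a compact inner boundary, and this is the step I expect to be the main obstacle; it is carried out in detail in \cite[Lemma~2.1]{Koerber}.

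For the equality case, if $m=0$ then the rigidity part of the positive mass theorem with boundary forces $(M(S), g_{\mathrm{Eucl}})$ to be isometric to a flat half-space of $\mathbb{R}^3$, so that $S=\partial M(S)$ is a flat plane. In summary, the substantial content of the proof is not the integral identity derived above, but rather the reduction to the Almaraz--Barbosa--de Lima framework, which handles the non-graphical part of $S$ where the direct integrand $-H(S)\langle e_3,\nu(S)\rangle$ can fail to be nonnegative.
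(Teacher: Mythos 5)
Your argument is correct, but it reproduces the credited original proof rather than the paper's own. When stating the theorem, the paper attributes it to \cite[Theorem~1.1]{AlmarazBarbosaDeLima} together with \cite[Lemma~2.1]{Koerber}; viewing $(\bar M(S),g_{\mathrm{Eucl}})$ as an asymptotically flat three-manifold with noncompact mean-convex boundary and invoking the Almaraz--Barbosa--de~Lima positive mass theorem, with the mass identification from \cite{Koerber}, is precisely that route. The paper's Section~\ref{section:proof of main results 2} deliberately gives an independent alternative proof to illustrate its free-energy monotonicity machinery: if a compact free boundary minimal surface exists in $\bar M(S)$, then Lemma~\ref{existence of outermost free boundary} together with the Penrose inequality Theorem~\ref{extrinsic:rpi} already forces $m>0$; otherwise $S$ is diffeomorphic to a plane, the Gauss--Bonnet theorem gives $\int_S K(S)=0$, and if $S$ is not a flat plane there is a point with $K(S)>0$ near which a small planar cap $\Sigma_\varepsilon$ has $J_{t_0}(\Sigma_\varepsilon)<0$ for suitable $t_0>0$. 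The existence theory (Proposition~\ref{capillary existence}) then produces a nonempty stable minimal capillary minimizer $\Sigma_{t_0}$ with $m_f(\Sigma_{t_0})>0$, and the monotonicity and asymptotics of $m_f$ (Corollary~\ref{monotonicity corollary}, Proposition~\ref{asymptotics prop}) yield $m\geq m_f(\Sigma_{t_0})>0$. Your route is shorter but imports the substantive positive mass theorem with boundary as a black box; the paper's route is self-contained modulo the capillary technology it develops for Theorem~\ref{extrinsic:rpi}, and it makes the strict positivity quantitatively visible through the explicit lower bound $m_f(\Sigma_{t_0})$. Your preliminary divergence identity $2\pi m=-\int_S H(S)\,\langle e_3,\nu(S)\rangle\,dA$ is correct, and you rightly observe that it cannot close the argument on its own since the integrand can change sign on overhangs; the paper does not use this identity.
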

 \begin{rema}
 	A.~Volkmann has proven Theorem \ref{extrinsic:pmt} in the special case where $S$ is asymptotically catenoidal; see \cite[Definition 2.2]{Volkmann} and \cite[Theorem 2.8]{Volkmann}. 
 \end{rema}

		A nonempty compact minimal surface  $D\subset \bar M(S)$  that intersects $S$ orthogonally along $\partial D$ such that $\partial D\subset S$ bounds a bounded open subset of $S$ is called a free boundary minimal surface. Given a free boundary minimal surface $D\subset \bar M(S)$, let $S'\subset S$ be the closure of the unbounded component of $S\setminus \partial D$ and $M'(S)$ be the unbounded component of   $M(S)\setminus D$. We call $D$ outermost if every compact minimal surface  $\Sigma\subset \bar M'(S)$  that intersects $S$ orthogonally along $\partial \Sigma$ is contained in $D$. If an outermost free boundary minimal surface $D\subset \bar M(S)$ exists, we say that $S$ is an asymptotically flat support surface with  outermost minimal free boundary surface $D$, exterior surface $S'$, and exterior region $M'(S)$.
 
 As an example, we consider the half-catenoid $S'_m\subset \mathbb{R}^3$ of mass $m>0$ given by 
 \begin{align} \label{half-catenoid} 
S'_m= 	\{(y,x_3):y\in\mathbb{R}^2,\text{ }x_3\geq 0,\text{ and }|y|=m\,\cosh(m^{-1}\,x_3)\}.
 \end{align} 
Let $S_m\subset \mathbb{R}^3$ be an asymptotically flat support surface  such that $S'_m\subset S_m$ and $S_m\setminus S'_m\subset \{x\in \mathbb{R}^3:x_3<0\}$. Then $S_m$ is an asymptotically flat support surface with exterior mass $m$,  outermost  free boundary minimal surface $D_m=\{(y,0):y\in\mathbb{R}^2 \text{ and } |y|\leq m\}$, exterior surface $S'_m$, and exterior region $M'(S_m)=\{(y,x_3):y\in\mathbb{R}^2,\text{ } x_3>0, \text{ and } |y|<m\,\cosh(m^{-1}\,x_3)\}$; see Figure \ref{exterior_surface}. 

  	\begin{figure}\centering
	\includegraphics[width=\linewidth]{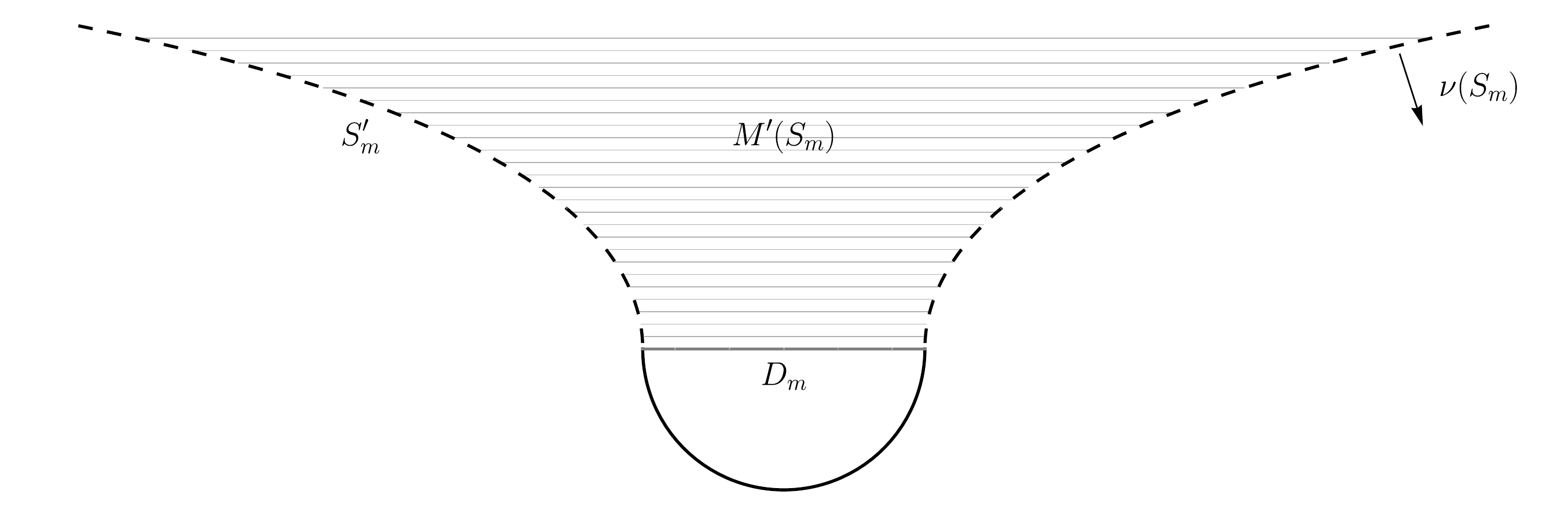}
	\caption{An illustration of the asymptotically flat support surface $S_m$. The outermost free boundary minimal surface $D_m$ is indicated by the solid gray line. The exterior surface $S_m'$ is indicated by the dashed black line. The exterior region $M'(S_m)$ is indicated by the hatched region.
	}
	\label{exterior_surface}
\end{figure}

In a similar way, every connected complete minimal surface embedded in $\mathbb{R}^3$ with finite total curvature that is not a plane gives rise to an asymptotically flat support surface with outermost free boundary minimal surface; see \cite[Proposition 1]{Schoen}. After D.~Costa's discovery of one such surface that is distinct from both the plane and the catenoid, such surfaces have been shown to exist in abundance; see, e.g., \cite{Traizet,Kapouleas}. Note that, in many cases, the outermost free boundary minimal surface of these examples has multiple components; see, e.g., \cite{HoffmanKarcher}.
 
  The main result of this paper is the following positive resolution of G.~Huisken's conjecture as stated in \cite[p.~38]{Volkmann}. 
 \begin{thm} \label{extrinsic:rpi} Let $S\subset \mathbb{R}^3$ be an asymptotically flat support surface  with exterior mass $m$, outermost free boundary minimal surface $D\subset \bar M(S)$, and exterior surface $S'\subset S$. Suppose that $H(S)\geq 0$ on $S'$. There holds 
 	\begin{align*} 
 	m\geq\sqrt{\frac{|D|}{\pi}}
 	\end{align*} 
 	with equality if and only if $S'$ is a translation of the half-catenoid \eqref{half-catenoid} of mass $m$.
 \end{thm}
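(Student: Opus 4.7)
The plan follows the basic logic of the Huisken--Ilmanen proof of the Riemannian Penrose inequality, with inverse mean curvature flow replaced by a one-parameter family of minimal capillary surfaces. I would foliate the exterior region $M'(S)$ by minimal capillary surfaces $\Sigma_\theta\subset\bar M'(S)$ meeting $S'$ at constant contact angle $\theta\in[\pi/2,\pi)$, with $\Sigma_{\pi/2}=D$, and reinterpret the Penrose inequality as a monotonicity statement for the Hawking-type quantity
$$
Q(\Sigma_\theta)=\sin\theta\,\sqrt{\frac{|\Sigma_\theta|}{\pi}}.
$$
The form of $Q$ is calibrated by the half-catenoid: the horizontal planar disks $\{x_3=mt\}\cap\bar M'(S_m)$ provide an explicit foliation of $M'(S_m)$ by minimal capillary surfaces meeting $S_m'$ at contact angles $\theta$ with $\cos\theta=-\tanh(t)$ and areas $\pi m^2/\sin^2\theta$, so that $Q\equiv m$ along the family. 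This also gives the expected structure of the rigidity case.

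Each $\Sigma_\theta$ is to be obtained as a minimizer of the free energy
$$
E_\theta(\Sigma)=|\Sigma|-\cos\theta\,|T_\Sigma|,
$$
where $T_\Sigma\subset S'$ is the bounded region enclosed by $\partial\Sigma$. The outermost property of $D$ places $\Sigma_\theta$ on the far side of $D$, and the hypothesis $H(S)\geq 0$ on $S'$ supplies the barrier information needed to prevent $\partial\Sigma_\theta$ from escaping along $S'$. Standard geometric measure theory should then yield a smooth minimizer satisfying the Euler--Lagrange conditions: minimality in $M'(S)$ together with contact angle $\theta$ along $\partial\Sigma_\theta$. Stability of $E_\theta$-minimizers combined with the implicit function theorem should then promote $\{\Sigma_\theta\}$ to a smooth foliation of $M'(S)\setminus D$. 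The central analytic step is the monotonicity $dQ/d\theta\geq 0$, which I would derive by combining (i) a first variation computation for $|\Sigma_\theta|$ under a change of $\theta$, which produces a boundary integral of the geodesic curvature of $\partial\Sigma_\theta$ in $\Sigma_\theta$, (ii) the Gauss--Bonnet theorem on $\Sigma_\theta$ together with $K=-\tfrac{1}{2}|A^\Sigma|^2\leq 0$ and the decomposition $\kappa_g^{\Sigma}=\kappa_g^{S}\cos\theta-A^S(\tau,\tau)\sin\theta$ of the geodesic curvature through the contact angle, and (iii) the stability inequality for $E_\theta$-minimizers, used to absorb the curvature contributions with the correct sign. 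The hypothesis $H(S)\geq 0$ enters through the $S$-side boundary terms produced in (ii).

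With $Q(\Sigma_{\pi/2})=\sqrt{|D|/\pi}$ in hand, the inequality will follow once $Q(\Sigma_\theta)\to m$ as $\theta\to\pi^-$. This requires an asymptotic analysis: as $\theta\to\pi$ the boundary $\partial\Sigma_\theta$ must retreat to the asymptotic end of $S'$, and $\Sigma_\theta$ should resemble a nearly horizontal disk at a large height, so that the free energy reproduces the mass integral \eqref{exterior mass section 1} up to controlled errors that vanish in the limit. For the equality case, $m=\sqrt{|D|/\pi}$ forces $Q$ to be constant along the family, which in turn forces equalities in both the Gauss--Bonnet step (so each $\Sigma_\theta$ is totally geodesic, hence a flat disk) and the stability step (so $S'$ is umbilic of the appropriate shape along each $\partial\Sigma_\theta$). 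Reconstructing $S'$ from this rigid foliation data would identify it with a translate of the half-catenoid of mass $m$. I expect the main obstacle to be the monotonicity, where the capillary boundary geometry, Gauss--Bonnet, and the stability inequality must be arranged into a single clean inequality, and where the specific form of $Q$ and the hypothesis $H(S)\geq 0$ have to conspire to produce the right sign.
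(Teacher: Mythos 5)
Your overall strategy coincides with the paper's: produce minimal capillary surfaces $\Sigma_\theta$ by minimizing a free energy with prescribed contact angle $\theta$, track the quantity $Q(\Sigma_\theta)=\sin\theta\,\sqrt{|\Sigma_\theta|/\pi}$ (the paper's ``free energy mass''), prove its monotonicity via variation formulas and Gauss--Bonnet, identify its limit as $\theta\to\pi^-$ with the exterior mass, and recover the half-catenoid via rigidity. There are, however, concrete gaps in the proposed execution. You posit a smooth foliation $\{\Sigma_\theta\}$ of $M'(S)\setminus D$ and plan to differentiate $Q$ along it, but this is not available: free-energy minimizers are in general non-unique, and as $\theta$ increases the enclosed area $|S(\Sigma_\theta)|$ can jump. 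The paper replaces this by passing to the free-energy profile $\rho(z)=\inf\{|\Sigma|:|S(\Sigma)|=z,\ \Omega(D)\subset\Omega(\Sigma)\}$, which is only Lipschitz, and proves monotonicity of $\rho(z)\,(1-\rho'(z)^2)$ in the sense of distributions via a Bray-style comparison-function argument (constructing, at each $z$, a smooth function touching $\rho$ from above with the right one-sided second-order inequality). Differentiating a hypothetical foliation cannot substitute for this step.

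Two further ingredients are missing. The chain (i)--(iii) by itself yields only a bound on the profile's second derivative proportional to $|\partial\Sigma_\theta|^{-2}\,(1-\rho'^2)\,2\pi\,\chi(\Sigma_\theta)$ after discarding the nonpositive $|h(\Sigma_\theta)|^2$ and $H(S)$ contributions; converting this into $[\rho(1-\rho'^2)]'\geq 0$, i.e. $2\rho\rho''\leq 1-\rho'^2$, requires the sharp isoperimetric inequality for minimal disks, $|\partial\Sigma|^2\geq 4\pi\,|\Sigma|$, to replace $|\partial\Sigma_\theta|^{-2}$ by $(4\pi\,|\Sigma_\theta|)^{-1}$. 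The same inequality is also used to show $Q\to m$. Moreover, $D$ may be disconnected, so $\Sigma_\theta$ may have several components and $\chi(\Sigma_\theta)>1$; the constant-speed variation then fails, and the paper instead uses a variation whose normal speed on the $i$-th component is the weight $\gamma_i\propto|\partial\Sigma^i_\theta|/\sum_j|\partial\Sigma^j_\theta|^2$, chosen precisely so that the per-component isoperimetric inequalities combine, exploiting the quadratic scaling of $|\Sigma|$ and $|S(\Sigma)|$. Finally, a sign remark: with $\cos\theta=-\tanh(t)$, the correct functional is $J_t(\Sigma)=|\Sigma|-\tanh(t)\,|S(\Sigma)|=|\Sigma|+\cos\theta\,|S(\Sigma)|$, whereas your $E_\theta(\Sigma)=|\Sigma|-\cos\theta\,|T_\Sigma|$ carries the opposite sign and would not drive minimizers outward as $\theta$ increases.
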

 \begin{rema}
 	Note that $D$ is not required to be connected in Theorem \ref{extrinsic:rpi}.
 \end{rema}
 \begin{rema}
 	The assumption that $D$ is outermost in Theorem \ref{extrinsic:rpi} cannot be dropped. Indeed, it is straightforward to construct the extension $S_1$ of the half-catenoid \eqref{half-catenoid} with mass $m=1$ so that $S_1$ is axially symmetric, has nonnegative mean curvature, and such that there is a free boundary minimal surface contained in $\bar M(S)\setminus \bar M'(S)$ with area greater than $\pi$.
 \end{rema}
 \begin{rema}
 	The assumption that $H(S)\geq 0$ on $S'$ in Theorem \ref{extrinsic:rpi} cannot be dropped. Indeed, an asymptotically flat support surface  obtained as a perturbation of the extension $S_1$ of the half-catenoid \eqref{half-catenoid} with mass $m=1$  in direction of $-e_3$ that is supported outside a sufficiently large compact set has the same outermost free boundary minimal surface as $S_1$ but exterior mass less than $m=1$. Note that the mean curvature of such a surface cannot possibly be nonnegative. 
 \end{rema}

 \begin{rema}
 	A.~Volkmann \cite[Proposition 2.9]{Volkmann} has confirmed Theorem \ref{extrinsic:rpi} in the special case where $S'$ is graphical, asymptotically catenoidal,  and the inward co-normal of $\partial S'\subset S'$ is $e_3$.
 \end{rema}
 \begin{rema}
 	In \cite{SchoenYau3}, using a conformal method, R.~Schoen and S.-T.~Yau have shown that the proof of the positive mass theorem can be reduced to the case where the asymptotically flat Riemannian 3-manifold is asymptotic to Schwarzschild initial data. By contrast, we are not aware of a method that reduces the proof of Theorem \ref{extrinsic:rpi} to the case where $S$ is asymptotically catenoidal.
 \end{rema}
 \begin{rema}
 	We will explain in Section \ref{section:proof of main results 2} how the method employed to prove Theorem \ref{extrinsic:rpi} can be used to give an alternative proof of Theorem \ref{extrinsic:pmt}.
 \end{rema}
In the global theory of minimal surfaces, the catenoid 
$$
\{(y,x_3):y\in\mathbb{R}^2,\text{ }x_3\in\mathbb{R},\text{ and }|y|=\cosh(x_3)\}
$$
admits many beautiful characterizations. Up to scaling and rigid motions,   the catenoid is the only complete nonplanar minimal surface of revolution \cite{Bonnet} and the unique complete embedded minimal surface with finite total curvature and two ends \cite{Schoen,LopezRos}. The catenoid has been characterized in terms of its total curvature \cite{Osserman} and in terms of its Morse index \cite{LopezRos1}. We will now explain how 
Theorem \ref{extrinsic:rpi} provides a new characterization of the catenoid in terms of its flux and  neck size. Recall from, e.g., \cite[Proposition 1]{Schoen} that every connected complete  minimal surface embedded in $\mathbb{R}^3$ with finite total curvature  has a finite number of ends. These ends can be ordered in terms of their height. Moreover, recall from, e.g., \cite[\S17]{Fang} that, as a consequence of the first variation formula, each of these ends admits a vector called the flux that is a conserved quantity of that end; see \eqref{Flux}. We refer to the largest value of the length of these flux vectors as the largest flux of the minimal surface. If the minimal surface is not a plane, we refer to $\sqrt{4\,\pi}$ times  the square root  of the maximum of the least amount of area needed to separate the region above the top end into two unbounded components and the least amount of area needed to separate the region below the bottom end into two unbounded components as the neck size of the minimal surface; see \eqref{neck size}. Moreover, we agree that the neck size of a plane is zero. Note that the largest flux of the catenoid equals its neck size; see also \cite[p.~481]{KorevaarKusnerSolomon}. As a consequence of Theorem \ref{extrinsic:rpi}, we obtain that the plane and the catenoid are the only connected complete minimal surfaces embedded in $\mathbb{R}^3$ with finite total curvature that satisfy this property. 
\begin{thm} \label{catenoid characterization}
	Up to scaling and rigid motions, the plane and the catenoid are the only connected complete minimal surfaces embedded in $\mathbb{R}^3$ with finite total curvature whose largest flux is no larger than their neck size. 
\end{thm}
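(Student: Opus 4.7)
\textbf{The plan} is to apply Theorem \ref{extrinsic:rpi} to asymptotically flat support surfaces associated with the top and bottom ends of the minimal surface, combined with the standard identification between the flux of a catenoidal end and its exterior mass. Let $\Sigma\subset\mathbb{R}^3$ be a connected complete minimal surface embedded in $\mathbb{R}^3$ with finite total curvature. If $\Sigma$ is a plane, both the largest flux $F$ and the neck size $N$ vanish, and the conclusion holds, so assume $\Sigma$ is not a plane. By classical structure theorems, $\Sigma$ has finitely many ends, each asymptotic to a catenoidal or planar end, and all ends are parallel; after a rotation I align them with $e_3$.

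For the top end $E_{\mathrm{top}}$ of $\Sigma$, I invoke the construction referenced at \cite[Proposition 1]{Schoen} to produce an asymptotically flat support surface $S_{\mathrm{top}}\subset\mathbb{R}^3$ whose exterior surface $S'_{\mathrm{top}}$ coincides, outside a compact set, with $E_{\mathrm{top}}$ and which admits an outermost free boundary minimal surface $D_{\mathrm{top}}$. Since $S'_{\mathrm{top}}$ is a portion of the minimal surface $\Sigma$, the hypothesis $H(S_{\mathrm{top}})\geq 0$ on $S'_{\mathrm{top}}$ of Theorem \ref{extrinsic:rpi} holds. A direct computation from \eqref{exterior mass section 1} using the asymptotic expansion $\psi(y)=m_{\mathrm{top}}\log|y|+O(1)$ of a catenoidal end gives exterior mass $m_{\mathrm{top}}$, while the flux satisfies $|\Phi_{\mathrm{top}}|=2\pi m_{\mathrm{top}}$. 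Theorem \ref{extrinsic:rpi} then yields $|D_{\mathrm{top}}|\leq \pi m_{\mathrm{top}}^2=|\Phi_{\mathrm{top}}|^2/(4\pi)$. An analogous argument for the bottom end $E_{\mathrm{bot}}$ (for instance by reflecting $\Sigma$ across a horizontal plane) yields $|D_{\mathrm{bot}}|\leq |\Phi_{\mathrm{bot}}|^2/(4\pi)$. The compact free boundary minimal surfaces $D_{\mathrm{top}}$ and $D_{\mathrm{bot}}$ separate the region above $E_{\mathrm{top}}$ (respectively, below $E_{\mathrm{bot}}$) into two unbounded components, and hence satisfy $|D_{\mathrm{top}}|\geq A_{\mathrm{top}}$ and $|D_{\mathrm{bot}}|\geq A_{\mathrm{bot}}$, where $A_{\mathrm{top}}$ and $A_{\mathrm{bot}}$ denote the minimal separating areas appearing in the definition of the neck size. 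Chaining these estimates produces
\begin{align*}
N^2=4\pi\max(A_{\mathrm{top}},A_{\mathrm{bot}})\leq \max(|\Phi_{\mathrm{top}}|^2,|\Phi_{\mathrm{bot}}|^2)\leq F^2.
\end{align*}

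Under the hypothesis $F\leq N$, this forces equality $F=N$, and in particular equality in Theorem \ref{extrinsic:rpi} for the end $E\in\{E_{\mathrm{top}},E_{\mathrm{bot}}\}$ achieving the maximum of $A_{\mathrm{top}},A_{\mathrm{bot}}$. The rigidity clause then implies that $S'_E$ is a half-catenoid. Since $S'_E$ agrees with $E$ outside a compact set and both are real-analytic minimal surfaces, unique continuation forces $\Sigma$ to coincide with the full catenoid of which that half-catenoid is a subset; combined with connectedness and completeness, this yields that $\Sigma$ is that catenoid. The principal technical obstacle is the inequality $|D_E|\geq A_E$, which requires a careful identification of the region above (respectively, below) the top (respectively, bottom) end of $\Sigma$ with an appropriate subset of $\bar M(S_E)$, together with the verification that $D_E$ separates this region into two unbounded components in the sense of the neck size definition. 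A secondary issue is ensuring that the support surface provided by \cite[Proposition 1]{Schoen} can be arranged so that its outermost free boundary minimal surface matches the one relevant for bounding $A_E$.
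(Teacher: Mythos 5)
Your approach is essentially the same as the paper's: build an asymptotically flat support surface from the end realizing the neck size, apply Theorem \ref{extrinsic:rpi} to bound the area of the outermost free boundary minimal surface by the square of the exterior mass, identify the exterior mass with the flux of that end via \eqref{exterior mass section 1}, and in the equality case invoke the rigidity clause of Theorem \ref{extrinsic:rpi} together with unique continuation. The substantive difference is that the paper resolves, in a single paragraph, what you call the \emph{principal technical obstacle}, and it does so by inverting the order of your construction: using geometric measure theory and the positive--mean-curvature barrier foliation of $\bar M^+$ near infinity, it first produces a compact free boundary minimal surface $\tilde D\subset\bar M^+$ realizing the infimum $\gamma^+(\tilde S)$, and only afterwards extends the component $\tilde S'$ of $\tilde S\setminus\partial\tilde D$ containing the top end to an asymptotically flat support surface $S$ with $\tilde D\setminus\partial\tilde D\subset M(S)$. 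Because $\tilde D$ is then a free boundary minimal surface supported on $S$, the outermost such surface $D$ exists by Lemma \ref{existence of outermost free boundary}, and $|D|\geq|\tilde D|$ because $\tilde D$ minimizes in a class to which $D$ belongs; this is precisely the inequality $|D_E|\geq A_E$ you need and cannot obtain by building $S_E$ first. Two smaller omissions: you should treat the case in which $\Sigma^+$ is null homologous relative to $\tilde S$, which by the strong maximum principle forces $\ell=1$ and $\tilde S$ to be a plane; and you argue at both ends, where it suffices to argue at the end realizing $\gamma(\tilde S)$ — harmless but redundant.
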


 \subsection*{Outline of related results} To prove the Riemannian Penrose inequality, G.~Huisken and T.~Ilmanen \cite{HuiskenIlmanen} have developed the theory of weak solutions for inverse mean curvature flow emerging from an outermost minimal surface. Building on an observation due to R.~Geroch \cite{Geroch}, they have shown that the so-called Hawking mass is nondecreasing along this flow provided that the scalar curvature of the asymptotically flat Riemannian 3-manifold is nonnegative and the evolving surface is connected; see \cite[\S5]{HuiskenIlmanen}.
 
   In an attempt to adapt this strategy to prove Theorem \ref{extrinsic:rpi},  T.~Marquardt \cite{Marquardt} has studied the inverse mean curvature flow of an evolving  surface that is required to intersect an asymptotically flat support surface $S\subset \mathbb{R}^3$ orthogonally.   Along this flow, T.~Marquardt has observed that a modified Hawking mass  is nondecreasing  provided that $S$ has nonnegative mean curvature  and that the evolving surface is connected. The second-named author \cite{Koerber} has taken this approach further and obtained the following partial result towards Theorem \ref{extrinsic:rpi}.
\begin{thm}[{\cite[Corollary 1.4]{Koerber}}]\label{extrinsic:imcf}Let $S\subset \mathbb{R}^3$ be an asymptotically flat support surface  with exterior mass $m$, connected outermost free boundary minimal surface $D\subset \bar M(S)$, and exterior surface $S'\subset S$. Suppose that $H(S)\geq 0$ on $S'$. There holds 
	\begin{align} \label{imcf estimate} 
	m\geq\sqrt{\frac{|D|}{2\,\pi}}.
	\end{align} 
\end{thm}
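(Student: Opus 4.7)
The plan is to use the weak inverse mean curvature flow with free boundary on $S$, as developed by T.~Marquardt in \cite{Marquardt} and pursued further by the second-named author in \cite{Koerber}. The idea is to construct a proper weak IMCF $\{\Sigma_t\}_{t\geq 0}$ in $\bar M'(S)$ starting from $\Sigma_0 = D$, whose level sets meet $S$ orthogonally along their boundaries, and to monitor an appropriately modified Hawking mass $m_H(\Sigma_t)$ that is nondecreasing along the flow, with initial value $\sqrt{|D|/(2\,\pi)}$ and asymptotic limit equal to the exterior mass $m$.

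First, I would set up the weak flow via Marquardt's free-boundary variant of the Huisken--Ilmanen elliptic regularization scheme, formulated through minimization of an appropriate free-boundary analog of the Huisken--Ilmanen functional. The outermost property of $D$ rules out a jump at the initial time, so $\Sigma_0 = D$. Since $D$ is connected and the level sets of the weak flow are outward-minimizing among competitors with free boundary on $S'$, $\Sigma_t$ remains connected for every $t\geq 0$; moreover, $\Sigma_t$ exhausts $\bar M'(S)$ as $t\to\infty$.

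Next, I would define a modified Hawking-type mass for surfaces $\Sigma\subset \bar M'(S)$ meeting $S$ orthogonally along $\partial \Sigma$, of the shape
$$
m_H(\Sigma) = \sqrt{\frac{|\Sigma|}{2\,\pi}}\left(1 - \frac{1}{8\,\pi}\int_\Sigma H^2 - \frac{1}{2\,\pi}\int_{\partial \Sigma}\kappa_g^\Sigma\right),
$$
where $\kappa_g^\Sigma$ denotes the geodesic curvature of $\partial\Sigma$ in $\Sigma$. Along the smooth flow, differentiating $m_H(\Sigma_t)$ in $t$ using $\partial_t x = H^{-1}\,\nu$ and combining the Geroch identity with the Gauss--Bonnet theorem $\int_\Sigma K + \int_{\partial \Sigma}\kappa_g^\Sigma = 2\,\pi\,\chi(\Sigma)$ yields $\frac{d}{dt}m_H(\Sigma_t)\geq 0$ provided $\chi(\Sigma_t)\leq 1$ (guaranteed by connectedness) and the boundary correction has the right sign. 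The orthogonality of $\Sigma_t$ with $S$ identifies $\kappa_g^\Sigma$ along $\partial \Sigma_t$ in terms of $H(S)$ and the second fundamental form of $S$, so that the hypothesis $H(S)\geq 0$ on $S'$ enforces the desired sign. This monotonicity is then transferred to the weak flow by Huisken--Ilmanen-type smooth approximation.

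At $t=0$, the computation reduces to a Gauss--Bonnet evaluation on the minimal surface $D$, giving $m_H(D) = \sqrt{|D|/(2\,\pi)}$. As $t\to\infty$, a comparison of $\Sigma_t$ with large coordinate half-spheres whose boundaries track the graphical asymptotic end of $S$, combined with the decay $|\nabla \psi| = O(|y|^{-\tau})$, yields $\lim_{t\to\infty}m_H(\Sigma_t) = m$, matching the definition \eqref{exterior mass section 1} of the exterior mass. Chaining these values through the monotonicity of Step 2 gives $m\geq \sqrt{|D|/(2\,\pi)}$. The main obstacle is the detailed analysis of the weak flow---its existence and partial regularity in the free-boundary setting, preservation of connectedness through possible jump times, and the justification of monotonicity across such singular times---together with verifying that the boundary contribution in $m_H$ carries the correct sign under $H(S)\geq 0$ throughout the evolution.
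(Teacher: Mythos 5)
Your proposal follows essentially the same route as the paper, which does not reprove this statement but attributes it to \cite{Koerber}*{Corollary 1.4}, i.e.\ to the free-boundary weak inverse mean curvature flow of Marquardt \cite{Marquardt} together with the monotonicity of a modified Hawking mass under $H(S)\geq 0$ and connectedness of the evolving surface. One caveat: the specific trial quantity you display cannot be the right one, since for a flat disk $D$ (e.g.\ the half-catenoid case) one has $\int_{\partial D}\kappa_g^D=2\pi$ and $H=0$, so your $m_H(D)$ would vanish rather than equal $\sqrt{|D|/(2\,\pi)}$; in the Marquardt--Koerber argument the geodesic-curvature integral is not subtracted in the mass itself, but enters the Geroch--Gauss--Bonnet computation of its derivative, where the orthogonality condition and $H(S)\geq 0$ give the favorable sign.
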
 
In view of Theorem \ref{extrinsic:rpi}, estimate \eqref{imcf estimate} is not optimal. This is because the modified Hawking mass considered in \cite{Marquardt} is strictly increasing along the inverse mean curvature flow considered in \cite{Marquardt,Koerber} even in the case where $S'$ is a half-catenoid \eqref{half-catenoid}. This suggests that inverse mean curvature flow is not an adequate technique  
to prove Theorem \ref{extrinsic:rpi}; see also \cite[p.~321]{Koerber}.

As an alternative approach towards Theorem \ref{extrinsic:rpi}, the authors \cite{EichmairKoerber2} have developed a gluing technique that can be applied to an asymptotically flat support surface. Combining \cite[Theorem 8]{EichmairKoerber2}, \cite[Lemma 2.1]{Koerber}, and the Riemannian Penrose inequality for asymptotically flat Riemannian 3-manifolds \cite{Bray2} shows that the assumption that $D$ be connected in Theorem \ref{extrinsic:imcf} can be dispensed with. Again, this approach is not suited to improve upon estimate  \eqref{imcf estimate}.

\subsection*{Outline of our approach} The preceding discussion suggests that an altogether new approach is needed to prove Theorem \ref{extrinsic:rpi}. Our strategy here is motivated by the fact that the exterior region $M'(S_1)$ of an asymptotically flat support surface $S_1$ that contains the half-catenoid  \eqref{half-catenoid} with mass $m=1$ is foliated by the family $\{\Sigma_t:t\geq 0\}$ of flat disks $\Sigma_t=\{x\in \bar M(S_1):x_3=t\}$ and that, for each $t\geq 0$, $\Sigma_t$ intersects $S_1$ at a constant angle of  $\arccos(-\tanh (t))$ and satisfies 
\begin{align} \label{catenoid constant} 
\frac{1}{\cosh (t)}\,\sqrt{\frac{|\Sigma_t|}{\pi}}=1.
\end{align} 

 Let $S$ be an asymptotically flat support surface with exterior mass $m$,  outermost free boundary minimal surface $D\subset \bar M(S)$, and exterior region $S'\subset S$. By Lemma \ref{topology}, we assume that $S$ is diffeomorphic to the plane.  We say that a compact surface $\Sigma\subset \bar M(S)$ is admissible if $\partial \Sigma\subset S$, the intersection of $\Sigma$ and  $S$ is transverse, and if $\Sigma$ has no closed components contained in $M(S)$.   Given an admissible surface $\Sigma\subset \bar M(S)$, let $S(\Sigma)$ denote the closure of the union of the bounded components of $S\setminus \partial \Sigma$ and $\Omega(\Sigma)$ be the  union of the bounded components of $M(S)\setminus \Sigma$; see Figure \ref{lateral surface}. 
   	\begin{figure}\centering
 	\includegraphics[width=\linewidth]{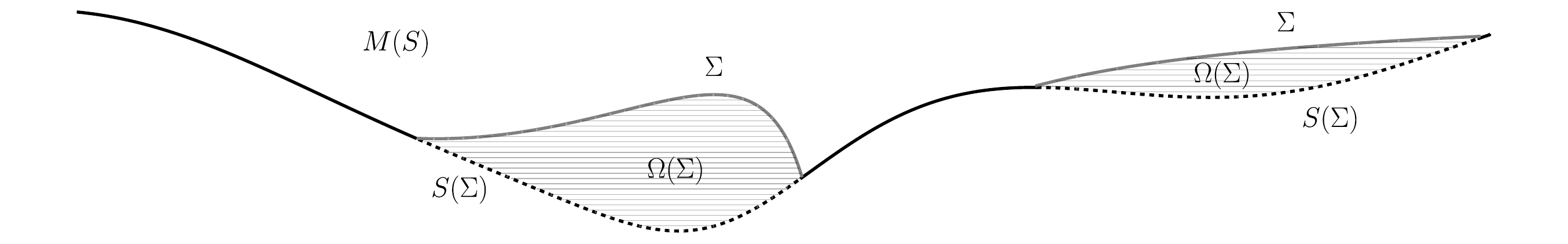}
 	\caption{An illustration of an admissible surface $\Sigma$ indicated by the solid gray line with two components supported on an asymptotically flat support surface $S$ indicated by the solid black line. The lateral surface $S(\Sigma)$ of $\Sigma$ is indicated by the dashed black line. The inside $\Omega(\Sigma)$ of $\Sigma$ is indicated by the hatched region. 
 	}
 	\label{lateral surface}
 \end{figure}
 
 We consider the free energy profile $\rho:[|S(D)|,\infty)\to[|D|,\infty)$ given by 
 \begin{align} \label{free energy profile intro} 
 	\rho(z)=\inf\{|\Sigma|:\Sigma\subset \bar M(S)\text{ is admissible such that $\Omega(D)\subset \Omega(\Sigma)$ and  $|S(\Sigma)|=z$}\}.
 \end{align} 
Assume first that $\rho$ is smooth. By the formulas for the first variation of area \eqref{area first} and first variation of enclosed area \eqref{enclosed area first},  if $z>|S(D)|$ and $\Sigma_z\subset \bar M(S)$ is an admissible surface  with $\Omega(D)\subset \Omega(\Sigma_z)$,  $|S(\Sigma_z)|=z,$ and $\rho(z)=|\Sigma_z|$, then $\Sigma_z$ is a  minimal capillary surface with capillary angle $\arccos(-\rho'(z))$, i.e.,  the mean curvature of $\Sigma_z$ vanishes and $\Sigma_z$ intersects $S$ at a constant angle of $\arccos(-\rho'(z))$. Using the second variation of area formula \eqref{area second} applied to  a constant normal speed variation and the Gauss-Bonnet theorem, we obtain 
	\begin{align} \label{rho estimate}
	\rho''(z)\leq |\partial \Sigma_z|^{-2}\,(1-\rho'(z)^2)\,\left(2\,\pi\,\chi(\Sigma_z)-\frac12\,\int_{\Sigma_z} |h(\Sigma_z)|^2-\sqrt{1-\rho'(z)^2}\,\int_{\partial \Sigma_z} H(S)\right);
\end{align}
see Lemma \ref{normal variation}. Here, $h(\Sigma_z)$ is the second fundamental form of $\Sigma_z$ and $\chi(\Sigma_z)$ its Euler characteristic. By the optimal isoperimetric inequality for minimal surfaces, we have $|\partial \Sigma_z|^2\geq 4\,\pi\,\rho(z)$; see \cite{Carleman,Almgren,Brendle}.  Using this and the assumption $H(S)\geq 0$,  we see that, in the case where $\Sigma_z$ is connected, \begin{align} \label{mono} \left[\rho(z)\,(1-\rho'(z)^2)\right]'\geq 0.\end{align} 
 Importantly, \eqref{mono} holds even in the case where $\Sigma_z$ has multiple components. To see this, we may instead apply the second variation of area formula \eqref{area second} to a suitable variation whose normal speed is only constant on the individual components of $\Sigma_{z}$; see Lemma \ref{comparison function}.  This observation hinges on the two important facts that both the area $|\Sigma|$ and the enclosed area $|S(\Sigma)|$ of an admissible surface $\Sigma\subset \bar M(S)$ scale quadratically and that the left-hand side of \eqref{mono} is a linear function of $\rho(z)$.   By Lemma \ref{minimization in homology class}, $\rho(|S(D)|)\,(1-\rho'(|S(D)|)^2)=|D|$.   In view of \eqref{catenoid constant}, we expect that 
$$\lim_{z\to\infty}\rho(z)\,(1-\rho'(z)^2)= \pi\,m^2.$$   This suggests that the free energy profile provides a link between $\pi\,m^2$ and $|D|$.

 We note that H.~Bray \cite{bray} has suggested a related approach based on an analysis of the isoperimetric profile to prove the Riemannian Penrose inequality and that C.~Li \cite{Li} has studied a variational problem  related to \eqref{free energy profile intro} to prove  a polyhedron comparison result conjectured by M.~Gromov.

To make the preceding approach rigorous, we first  show that, for every $t>0$, there exists a minimal capillary surface $\Sigma_t\subset \bar M(S)$ with $\Omega(D)\subset\Omega(\Sigma_t)$ that minimizes
the free energy 
\begin{align} \label{free energy intro} 
	J_t(\Sigma)=|\Sigma|-\tanh (t)\,|S(\Sigma)|
\end{align} 
among all admissible surfaces $\Sigma\subset \bar M(S)$ with $\Omega(D)\subset \Omega(\Sigma)$; see Proposition \ref{capillary existence}. Note that, necessarily, $\rho(|S(\Sigma_t)|)=|\Sigma_t|$ for every $t>0$. To this end, we construct barriers for \eqref{free energy intro} in the asymptotic region of $\bar M(S)$, see Proposition \ref{prop:sub}, as well as near $D$, see Proposition \ref{prop:super}, and apply a result of J.~Taylor \cite{Taylor}. The construction in Proposition \ref{prop:sub}  relies on several global results from minimal surface theory such as the half-space theorem of D.~Hoffman and W.~Meeks \cite{HoffmanMeeks}. While, given $t>0$, the surface $\Sigma_t$ may not be unique as a minimizer of \eqref{free energy intro}, we show that the enclosed area $|S(\Sigma_t)|$ is always a strictly increasing function of $t$ and that $\Sigma_t$ diverges as $t\to\infty$; see Lemma \ref{s property 1} and Lemma \ref{inner divergence}.

In view of \eqref{rho estimate}, we introduce the quantity 
\begin{align} \label{intro free energy mass} 
m_f(\Sigma_t)=\frac{1}{\cosh (t)}\,\sqrt{\frac{|\Sigma_t|}{\pi}}
\end{align}
associated with $\Sigma_t$, which we dub the free energy mass of $\Sigma_t$. 
 Building on the observation \eqref{mono} and using an argument based on  comparison functions as in \cite{bray}, we show that \eqref{intro free energy mass} is nondecreasing for $t>0$, even in the case where $\Sigma_t$ has multiple components; see Proposition \ref{montonicity prop}.    Since $D$ is outermost, we have $\Sigma_t\to D$ smoothly as $t\to0$ so that the monotonicity extends to the case where $t\geq 0$; see Corollary \ref{monotonicity corollary}.
  
We then study the asymptotic behavior of the free energy mass \eqref{intro free energy mass}. Using the optimal isoperimetric inequality for minimal surfaces  again and  integration by parts as well as  the coarse area estimate Lemma \ref{coarse area estimate}  and that $\Sigma_t$ diverges as $t\to\infty$ to control the error terms arising in the computation,  we show that $m_f(\Sigma_t)\to m$ as $t\to\infty$; see Proposition \ref{asymptotics prop}.  In conjunction with Corollary \ref{monotonicity corollary}, using that $$m_f(D)=\sqrt{\frac{|D|}{\pi}},$$ Theorem \ref{extrinsic:rpi} follows. 

Finally, we consider the case of equality. By the arguments leading to \eqref{mono}, we see that, for all $t>0$, $\Sigma_t$ is a union of flat disks and that $H(S)=0$ on $\partial \Sigma_t$. We observe that, in this case, the surfaces $\Sigma_t$, $t>0$,  are mutually disjoint. It then follows that every $\Sigma_t$ is connected and that $S'$ is contained in a minimal surface immersed in $\mathbb{R}^3$ with total curvature equal to $8\,\pi$. By a result of R.~Osserman \cite{Osserman}, such a minimal surface is necessarily a catenoid.

We remark that a related  strategy only works in H.~Bray's approach \cite{bray} to prove the Riemannian Penrose inequality when requiring the substantial additional assumption \cite[Condition 2]{bray}. This is because the Hawking mass is not additive and the volume enclosed by an isoperimetric surface scales cubically rather than quadratically.

\subsection*{Acknowledgments}
The authors would like to sincerely thank Simon Brendle both for pointing out to them the potential relevance of minimal capillary surfaces in this problem and for his valuable feedback on a draft of this paper that has led to significant simplifications of some of the proofs. Part of this research was carried out while the authors were visiting Columbia University. They gratefully acknowledge the  hospitality during their stays. 
This research was funded in part  by the Austrian Science Fund (FWF) [10.55776/M3184, 10.55776/Y963]. 

\section{Sub- and supersolutions}

In this section, we assume that $ S\subset \mathbb{R}^3$ is a properly embedded plane with integrable mean curvature. 
Moreover, we assume that the complement of a compact subset of $S$ is contained in the graph of a function $\psi \in C^\infty(\mathbb{R}^2)$ such that, for some $\tau>1/2$,
\begin{equation} 
	\begin{aligned} \label{asymptotically flat section 3}
		\sum_{i=1}^2|\partial_i\psi(y)|+\sum_{i,\,j=1}^2|y|\,|\partial^2_{ij}\psi(y)|=O(|y|^{-\tau}).
	\end{aligned}
\end{equation} 
Let $\nu(S)$ be the unit normal field of $S$ asymptotic to $-e_3$.  The component of $\mathbb{R}^3\setminus S$ that $\nu(S)$ points out of is denoted by $M(S)$ and referred to as the region above $S$.   We denote the second fundamental form of $S$ with respect to $\nu(S)$ by $h(S)$. Our convention is such that the trace of $h(S)$, i.e., the mean curvature $H(S)$, is the tangential divergence of $\nu(S)$.

The goal of this section is to construct sub- and supersolutions for the free energy \eqref{free energy intro}.

Recall from Appendix \ref{appendix:af surfaces} the definition of an admissible surface $\Sigma \subset \bar M(S)$ as well as that of its lateral surface $S(\Sigma)\subset S$ and  its inside $\Omega(\Sigma)\subset M(S)$. Moreover, recall the definitions of the normal $\nu(\Sigma)$ and the co-normal $\mu( \Sigma)$ as well as  the conventions for the geodesic curvature $k(\partial \Sigma)$, the second fundamental form $h(\Sigma),$ and  the mean curvature $H(\Sigma)$. Given $f\in C^\infty(\Sigma)$, recall the definitions of $\nabla^\Sigma f$ and $\Delta^\Sigma f$.

Recall from Appendix \ref{appendix:free energy} the definition of a minimal capillary surface $\Sigma\subset \bar M(S)$ with capillary angle $\theta\in (0,\pi)$ and the notion of stability for such a surface

We assume  that $S$ is not a flat plane. By \eqref{asymptotically flat section 3}, for every $\lambda>1$ sufficiently large, $S$ and $\{x\in\mathbb{R}^3:|x|=\lambda\}$ intersect transversely in a  Jordan curve $\Gamma_\lambda$ that is smoothly close to a horizontal circle.  By a result of A.~Korn \cite[\S1]{KornArthur} based on the implicit function theorem, there is a minimal graph  $\Sigma_\lambda\subset \mathbb{R}^3$ over a  strictly convex subset of $\mathbb{R}^2$   with $\partial \Sigma_\lambda=\Gamma_\lambda$. It follows that $\Sigma_\lambda$ has least area among all compact surfaces with boundary $\Gamma_\lambda$ and that $\Sigma_\lambda$ is the unique surface of least area with boundary $\Gamma_\lambda$; see also \cite{Rado}. Moreover, it follows from standard elliptic theory that  
\begin{align} \label{smoothly close} \lambda^{-1}\,\Sigma_\lambda \text{ is smoothly close to }\{(y,0):y\in\mathbb{R}^2\text{ and }|y|\leq1\};
\end{align}
see, e.g., \cite[Theorem 15.9]{GilbargTrudinger}. 
  If $H(S)\geq 0$,   the maximum principle and \eqref{asymptotically flat section 3} imply that $\Sigma_\lambda \subset  \bar M(S)$ provided that $\lambda>1$ is sufficiently large.   
\begin{lem} \label{subsolutions} 
Suppose that $H(S)\geq 0$.	There is $\lambda_0>1$ with the following properties.
	\begin{itemize}
		\item[$\circ$]  $\Sigma_\lambda$ and $S$ intersect transversely along $\Gamma_\lambda$ for every $\lambda\geq \lambda_0$.
		\item[$\circ$] There holds $\Omega(\Sigma_{\lambda_1})\subset \Omega(\Sigma_{\lambda_2})$ for all $\lambda_2>\lambda_1\geq \lambda_0$.
		\item[$\circ$] $\{\Sigma_\lambda:\lambda\geq \lambda_0\}$ is a smooth foliation of $(\bar M(S)\setminus \bar \Omega(\Sigma_{\lambda_0}))\cup \Sigma_{\lambda_0}$. 
	\end{itemize}

\end{lem}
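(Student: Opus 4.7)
The plan is to view $\Sigma_\lambda$ and the asymptotic graphical end of $S$ as graphs over the strictly convex projection $D_\lambda \subset \mathbb{R}^2$ of $\Sigma_\lambda$, and then combine the strong maximum principle with the Hopf boundary point lemma to pit them against one another. Write $\Sigma_\lambda = \{(y,u_\lambda(y)) : y \in D_\lambda\}$ and let $\mathcal{M}(f) = \operatorname{div}(\nabla f/\sqrt{1+|\nabla f|^2})$ be the minimal surface operator. Because $\nu(S) \sim -e_3$ on the asymptotic end, the sign conventions yield $H(S) = \mathcal{M}(\psi) \geq 0$ on the graphical portion of $S$, while $\Sigma_\lambda$ being minimal is $\mathcal{M}(u_\lambda) = 0$. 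The inclusion $\Sigma_\lambda \subset \bar M(S)$ translates to $u_\lambda \geq \psi$ on the graphical part of $D_\lambda$, with equality on $\partial D_\lambda$.

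For the transversality claim, set $w_\lambda = u_\lambda - \psi$. An interpolation argument in $\mathcal{M}$ supplies a uniformly elliptic linear operator $L_\lambda$ with $L_\lambda w_\lambda \leq 0$; uniform ellipticity comes from the smallness of $|\nabla u_\lambda|$ and $|\nabla \psi|$ near infinity, which follows from \eqref{smoothly close} and \eqref{asymptotically flat section 3}. Thus $w_\lambda$ is a nonnegative supersolution vanishing on $\partial D_\lambda$. Since $S$ is not a flat plane, Schoen's characterization of the plane among complete embedded minimal surfaces of finite total curvature with one end \cite{Schoen} implies $H(S) > 0$ at some point of $S$; for $\lambda$ sufficiently large, this point lies in the bounded region of $S$ enclosed by $\Gamma_\lambda$, which rules out the degenerate case $w_\lambda \equiv 0$. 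The strong maximum principle then forces $w_\lambda > 0$ in the interior of $D_\lambda$, and Hopf's boundary point lemma---applicable because the strict convexity of $D_\lambda$ provides the interior ball condition---gives $\partial_\nu w_\lambda < 0$ along $\partial D_\lambda$. Hence the normal vectors of $\Sigma_\lambda$ and $S$ are not parallel at any point of $\Gamma_\lambda$, which is transversality.

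For the monotonicity claim, the level sets $\{y \in \mathbb{R}^2 : |y|^2 + \psi(y)^2 = \lambda^2\}$ form a smooth nested family of convex Jordan curves near infinity, so $D_{\lambda_1} \subset D_{\lambda_2}$ for $\lambda_2 > \lambda_1 \geq \lambda_0$. On $\partial D_{\lambda_1} \subset \mathrm{int}(D_{\lambda_2})$ we have $u_{\lambda_1} = \psi < u_{\lambda_2}$ by the strict inequality from the previous step. The comparison principle for the minimal surface equation on $D_{\lambda_1}$ then yields $u_{\lambda_2} > u_{\lambda_1}$ on $D_{\lambda_1}$, whence $\Sigma_{\lambda_2}$ lies strictly above $\Sigma_{\lambda_1}$ and $\Omega(\Sigma_{\lambda_1}) \subset \Omega(\Sigma_{\lambda_2})$. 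Smooth dependence of $\Sigma_\lambda$ on $\lambda$ follows from the implicit function theorem applied to the minimal surface equation with the smoothly varying Dirichlet data $\Gamma_\lambda$, exactly as in the Korn construction already invoked in the excerpt. Combined with monotonicity, the leaves are mutually disjoint, so $\{\Sigma_\lambda\}_{\lambda \geq \lambda_0}$ forms a smooth foliation. Coverage of $(\bar M(S) \setminus \bar\Omega(\Sigma_{\lambda_0})) \cup \Sigma_{\lambda_0}$ follows because $\Omega(\Sigma_\lambda)$ exhausts $M(S)$ as $\lambda \to \infty$, since $\Gamma_\lambda$ escapes every compact subset of $\mathbb{R}^3$ and $\Sigma_\lambda$ caps off the asymptotic end of $M(S)$ at each stage.

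The main obstacle is the transversality step: converting the purely topological hypothesis that $S$ is not a flat plane into the strict positivity of $w_\lambda$ in the interior of $D_\lambda$ required to apply Hopf's lemma. This is achieved by invoking Schoen's rigidity theorem, combined with the area-minimality of $\Sigma_\lambda$, to exclude the degenerate case in which $\Sigma_\lambda$ could coincide with a piece of $S$.
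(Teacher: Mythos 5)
Your graph-based comparison framework is in the right spirit, and the monotonicity step via the comparison principle for minimal graphs is a clean variant of the paper's argument (which instead uses area-minimality of $\Sigma_\lambda$ and the maximum principle). However, there are two genuine gaps.

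The more serious one is the exhaustion claim at the end. You assert that $\Omega(\Sigma_\lambda)$ exhausts $M(S)$ because ``$\Gamma_\lambda$ escapes every compact set and $\Sigma_\lambda$ caps off the asymptotic end,'' but this does not follow: $\Sigma_\lambda$ could a priori stay uniformly close to $S$ as $\lambda \to \infty$, in which case a fixed $x \in M(S)$ far from $S$ is never enclosed. Nothing in the hypotheses rules this out without an additional input, and note that one cannot appeal to the exterior mass being positive since that is exactly what is ultimately being proved. The paper handles this point with a genuinely nontrivial step: if some $x \in M(S)$ is never in $\Omega(\Sigma_\lambda)$, a subsequence $\Sigma_{\lambda_k}$ converges locally smoothly to a complete area-minimizing boundary, which must be a flat plane, and then the Hoffman--Meeks half-space theorem (using $H(S)\geq 0$) forces $S$ itself to be a flat plane, a contradiction. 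This half-space ingredient is essential and is missing from your argument.

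The transversality step also has a gap. You only have $w_\lambda = u_\lambda - \psi$ defined on the graphical annulus of $D_\lambda$, not on all of $D_\lambda$, so the alternative the strong maximum principle offers is $w_\lambda > 0$ or $w_\lambda \equiv 0$ on that \emph{annulus}. To rule out $w_\lambda \equiv 0$ you invoke a point $p$ with $H(S)(p) > 0$ lying in $S(\Sigma_\lambda)$; but $p$ may well lie in the compact non-graphical core of $S$, where $w_\lambda$ is not defined, so this gives no immediate contradiction. One needs a continuation argument: coincidence on the annulus together with the strong maximum principle for the minimal surface operator propagates inward and forces $\Sigma_\lambda = S(\Sigma_\lambda)$, at which point $H(S) \equiv 0$ on $S(\Sigma_\lambda)$ and you can reach $p$. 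This is exactly the continuation step the paper makes explicit (``by the strong maximum principle, $\Sigma_\lambda = S(\Sigma_\lambda)$''), after which it concludes differently---observing that such a coincidence for a sequence $\lambda_k\to\infty$ would make $S$ area-minimizing, hence a flat plane---rather than hunting for a point with $H(S) > 0$. Your foliation step is essentially fine, though the paper additionally records a quantitative spectral gap from \eqref{smoothly close}, which makes the normal speed being nonvanishing more transparent.
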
 
\begin{proof}
If, for $\lambda>1$ sufficiently large,	$\Sigma_{\lambda}$ and $S$  intersect tangentially along $\Gamma_\lambda$ or  $\Sigma_\lambda$ touches $S$ at an interior point, then, by the strong maximum principle, $\Sigma_{\lambda}=S(\Sigma_{\lambda})$. If there was a sequence $\{\lambda_k\}_{k=1}^\infty$ with $\lambda_k\to\infty$ such that	either $\Sigma_{\lambda_k}$ and $S$  intersect tangentially along $\Gamma_{\lambda_k}$ or  $\Sigma_{\lambda_k}$ touches $S$ at an interior point, then  $S$ is  area-minimizing.  Since $S$ is not a flat plane by assumption, this is a contradiction.
	
	If $\lambda>1$ is sufficiently large, using that $\Sigma_\lambda$ is area-minimizing and the maximum principle, we see that $\Sigma_\lambda\setminus \Gamma_\lambda\subset \Omega(\Sigma_{\lambda'})$ for every $\lambda'>\lambda$. 
	Let $x\in M(S)$ and suppose, for a contradiction, that $x\notin \Omega(\Sigma_\lambda)$ for every $\lambda>1$. Using standard tools from minimal surface theory, we see that there is a sequence $\{\lambda_k\}_{k=1}^\infty$ with $\lambda_k\to\infty$ such that $\Sigma_{\lambda_k}$ converges locally smoothly to an unbounded area-minimizing boundary. Again, such a boundary must be a flat plane. Since $H(S)\geq 0$, the proof of the  half-space theorem \cite[Theorem 1]{HoffmanMeeks} applies to show that $S$ is also a flat plane, a contradiction.
	
	Next, note that for $\lambda>1$ sufficiently large, $\Sigma_{\lambda'}$ converges to an area-minimizing  surface $\Sigma'_\lambda$ with $\partial \Sigma'_\lambda=\Gamma_\lambda$ as $\lambda'\to \lambda$. By uniqueness, we have $\Sigma_\lambda=\Sigma'_{\lambda}$.  Moreover, it follows from \eqref{smoothly close} that there is $\kappa>0$ such that, for all $\lambda>1$ sufficiently large, 
	$$
	\int_{\Sigma_\lambda} |\nabla^{\Sigma_\lambda} f|^2-\int_{\Sigma_\lambda} |h(\Sigma_\lambda)|^2\,f^2\geq \frac{\kappa}{\lambda^2}\,\int_{\Sigma_\lambda} f^2
	$$
	for every $f\in C^\infty(\Sigma_\lambda)$ with $f=0$ on $\partial \Sigma_\lambda$. In fact, $\kappa=5$ will do. Arguing as in the proof of Lemma \ref{local foliation}, using that the intersection of $\Sigma_\lambda$ and $S$ is transverse,  we see that  $\{\Sigma_\lambda:\lambda\geq \lambda_0\}$ is a smooth foliation. The assertion follows.
\end{proof}
\begin{prop} \label{prop:sub}
Suppose that $H(S)\geq 0$. Let $t_0\in \mathbb{R}$.	There exists $v\in C^\infty(\bar M(S))$  such that
	\begin{equation} \label{subsolution properties} 
	\begin{aligned}
		&\circ \qquad v(x)\to\infty \text{ as }x\to\infty,\\
		&\circ \qquad Dv(x)\neq 0 \text{ in } \{x\in \bar M(S) : v(x)\geq  t_0\},\\
		&\circ \qquad \operatorname{div}(|Dv|^{-1}\,Dv)=0\text{ in }\{x\in M(S): v(x)\geq  t_0\},\text{ and} \\	
		&\circ \qquad |D v|^{-1}\,\langle Dv,\nu(S)\rangle <-\tanh (v)\text{ on }\{x\in S:v(x)\geq t_0\}. 	
	\end{aligned}
	\end{equation} 
\end{prop}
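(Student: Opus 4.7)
My plan is to define $v$ so that its level sets in the asymptotic region coincide with the area-minimizing disks $\Sigma_\lambda$ provided by Lemma \ref{subsolutions}. This makes the minimal-surface equation in \eqref{subsolution properties} automatic and reduces the problem to a quantitative angle estimate along the boundary curves $\Gamma_\lambda$. I expect the main obstacle to be extracting an explicit decay rate for $\langle \nu(\Sigma_\lambda), \nu(S)\rangle + 1$ in terms of $\lambda$, since this is what controls how slowly $v$ may be allowed to grow.

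Concretely, I will first fix $\lambda_0 > 1$ from Lemma \ref{subsolutions} so that $\{\Sigma_\lambda : \lambda \geq \lambda_0\}$ is a smooth foliation of $U := (\bar M(S) \setminus \bar \Omega(\Sigma_{\lambda_0})) \cup \Sigma_{\lambda_0}$, and let $\lambda \in C^\infty(U)$ denote the corresponding leaf parameter. The gradient direction $|D\lambda|^{-1} D\lambda$ then coincides with the outward unit normal $\nu(\Sigma_\lambda)$ and has vanishing divergence since each leaf is minimal. By \eqref{smoothly close}, $\Sigma_\lambda$ is contained in $\{|x| \leq C \lambda\}$ for $\lambda$ large, so $\lambda(x) \to \infty$ as $x \to \infty$. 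For any smooth strictly increasing $\phi : [\lambda_0, \infty) \to \mathbb{R}$, the function $v := \phi \circ \lambda$ will then automatically satisfy the first three conditions in \eqref{subsolution properties} on $U$.

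The crucial step is to establish constants $C > 0$ and $\sigma > 0$ such that
\begin{align*}
\langle \nu(\Sigma_\lambda), \nu(S) \rangle \leq -1 + C \lambda^{-\sigma} \quad \text{on } \Gamma_\lambda
\end{align*}
for all $\lambda$ sufficiently large. The asymptotic flatness \eqref{asymptotically flat section 3} gives $\nu(S) + e_3 = O(\lambda^{-\tau})$ on $\Gamma_\lambda$. For the minimal-graph side, I will rescale by $\lambda^{-1}$: the rescaled graph $\lambda^{-1} \Sigma_\lambda$ solves the minimal surface equation with boundary data inheriting decay of order $\lambda^{-\sigma'}$ from $\psi$, and standard elliptic regularity (as in the proof of \eqref{smoothly close}) then yields $\nu(\Sigma_\lambda) - e_3 = O(\lambda^{-\sigma'})$ on $\Gamma_\lambda$. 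Combining the two bounds gives the claim with $\sigma = \min(\tau, \sigma')$; any positive $\sigma$ will suffice for what follows.

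To finish, I enlarge $\lambda_0$ so that $2 C \lambda_0^{-\sigma} \leq 1 - \tanh(t_0 - 1)$ and fix a smooth strictly increasing $\phi : [\lambda_0, \infty) \to \mathbb{R}$ with $\phi(\lambda_0) = t_0 - 1$, $\phi(\lambda) \to \infty$, and $\tanh(\phi(\lambda)) \leq 1 - 2 C \lambda^{-\sigma}$ for all $\lambda \geq \lambda_0$; for instance, $\phi(\lambda) = t_0 - 1 + (\sigma/4) \log(\lambda / \lambda_0)$ works in the asymptotic region and can be truncated smoothly to enforce the growth bound throughout. Setting $v := \phi \circ \lambda$ on $U$ and extending $v$ smoothly to $\bar M(S)$ with $v \leq t_0 - 1$ on $\bar \Omega(\Sigma_{\lambda_0})$ ensures $\{v \geq t_0\} \subset U$. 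On $\{x \in S : v(x) \geq t_0\}$, the angle estimate and the choice of $\phi$ then give
\begin{align*}
\langle |Dv|^{-1} Dv, \nu(S) \rangle = \langle \nu(\Sigma_\lambda), \nu(S) \rangle \leq -1 + C \lambda^{-\sigma} < -\tanh(\phi(\lambda)) = -\tanh(v),
\end{align*}
verifying the last condition in \eqref{subsolution properties}.
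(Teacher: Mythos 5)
Your proof takes essentially the same approach as the paper's: both define $v$ by composing the leaf parameter of the foliation from Lemma~\ref{subsolutions} with a slowly increasing function $f$ (so that the properness, nonvanishing gradient, and minimal surface equation are automatic from the foliation) and then arrange $f$ to grow slowly enough that the boundary angle condition holds. The only difference is one of softness: you attempt a quantitative power-law bound $\langle \nu(\Sigma_\lambda),\nu(S)\rangle \leq -1 + C\lambda^{-\sigma}$ on $\Gamma_\lambda$ via a rescaled elliptic estimate (plausible, but you leave the rate unjustified), whereas the paper avoids the need for any explicit rate by introducing $\eta(\lambda)=\inf\{-|D\tilde v(x)|^{-1}\langle D\tilde v(x),\nu(S)(x)\rangle : x\in S,\ \tilde v(x)\geq\lambda\}$, observing that $\eta$ is locally Lipschitz, nondecreasing, strictly greater than $-1$ by transversality, and tends to $1$ by \eqref{asymptotically flat section 3} and \eqref{smoothly close}, and then choosing any strictly increasing $f$ with $\tanh(f(\lambda))<\eta(\lambda)$ and $f(\lambda_0+1)\leq t_0$.
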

\begin{proof}
	By  Lemma \ref{subsolutions}, the function $\tilde v: \bar M(S)\setminus \bar \Omega(\Sigma_{\lambda_0+1})\to\mathbb{R}$ determined by $\tilde v(x)=\lambda$ if $x\in \Sigma_\lambda$ is smooth.  We extend $\tilde v$ to a smooth function  on all of $\bar M(S)$ so that $\tilde v(x)< \lambda_0+1$ in $\bar\Omega(\Sigma_{\lambda_0+1})\setminus \Sigma_{\lambda_0+1}$. Since $\{\Sigma(\lambda):\lambda\geq \lambda_0+1\}$ is a smooth foliation, we have $\tilde v(x)\to\infty$ as $x\to\infty$ and  $D\tilde v(x)\neq 0$ in  $\{x\in \bar M(S):\tilde v(x)\geq \lambda_0+1\}$. Since $H(\Sigma_\lambda)=0$ for every $\lambda\geq \lambda_0+1$, there holds $\operatorname{div}(|D\tilde v|^{-1}\,D\tilde v)=0$ in  $\{x\in M(S): \tilde v(x)\geq  \lambda _0+1\}.$ 
	Let $\eta:[\lambda_0+1,\infty)\to \mathbb{R}$ be  given by 
	$$
	\eta(\lambda)=\inf\{-|D\tilde v(x)|^{-1}\,\langle D\tilde v(x),\nu(S)(x)\rangle:x\in S\text{ and } \tilde v(x)\geq  \lambda \}.
	$$
	Clearly, $\eta\in C^{0,1}_{loc}([\lambda_0+1,\infty))$ and $\eta$ is nondecreasing. Since the intersection of $\Sigma_{\lambda_0+1}$ and $S$ is transverse, we see that $\eta(\lambda)>-1 $ for every $\lambda\geq \lambda _0+1$. Moreover, using \eqref{asymptotically flat section 3} and  \eqref{smoothly close}, we have $\eta(\lambda)\to1$ as $\lambda\to\infty$. Let $f\in C^\infty([\lambda_0+1,\infty))$ be a strictly increasing function such that  $f(\lambda)\to\infty$ as $\lambda\to\infty,$  
	$
	\tanh (f(\lambda))< \eta(\lambda)
	$ 
	for every $\lambda\geq \lambda_0+1$, 
and  $f(\lambda_0+1)\leq  t_0$. 
	The function $v=f\circ \tilde v$ has all  the asserted properties. 
\end{proof}

\begin{prop} \label{prop:super}
	Let $t\in\mathbb{R}$. Suppose that $\Sigma_t\subset \bar M(S)$ is a   stable minimal capillary surface with capillary angle $\arccos(-\tanh (t))$.   There is an  admissible surface $\tilde \Sigma_t\subset \bar M(S)$ disjoint from $\Sigma_t$ with $ \Omega (\Sigma_t)\subset \Omega(\tilde \Sigma_t)$ and a function  $w\in C^\infty(\bar\Omega(\tilde \Sigma_t)\setminus (\tilde \Sigma_t\cup \bar \Omega(\Sigma)))\cap C^0(\bar \Omega(\tilde \Sigma_t)\setminus \tilde \Sigma_t)$ with the following properties.  
\begin{equation} \label{w properties}
	\begin{aligned}
		&\circ \qquad w=t\text{ on } \bar \Omega(\Sigma_{t}),\\
		&\circ \qquad w(x)\to\infty \text{ as }x\to \tilde \Sigma_t,\\
		&\circ \qquad Dw\neq 0 \text{ in } \bar\Omega(\tilde \Sigma_t)\setminus (\tilde \Sigma_t\cup \bar \Omega(\Sigma_t)),\\
		&\circ \qquad \operatorname{div}(|Dw|^{-1}\,Dw)= 0\text{ in }\Omega(\tilde \Sigma_t)\setminus \bar \Omega(\Sigma_t),\text{ and} \\	
		&\circ \qquad |Dw|^{-1}\, \langle Dw ,\nu(S)\rangle >-\tanh (w)\text{ on } S(\tilde \Sigma_t)\setminus (\partial \tilde \Sigma_t\cup S(\Sigma_t)).	
	\end{aligned}
	\end{equation} 
\end{prop}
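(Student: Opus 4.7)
The strategy is to construct $w$ by reparametrizing a local one-sided foliation of $\Sigma_t$ by stable minimal capillary surfaces of varying capillary angle. As a first step I would use the stability of $\Sigma_t$ together with the implicit function theorem---analogously to the foliation argument underlying Lemma \ref{local foliation}, but with a Robin-type capillary boundary condition imposed on the Jacobi operator---to produce $\epsilon>0$ and a smooth one-parameter family $\{\Sigma_t^s\}_{s\in[t,t+\epsilon]}$ of stable minimal capillary surfaces in $\bar M(S)$ with $\Sigma_t^t=\Sigma_t$, capillary angle of $\Sigma_t^s$ equal to $\arccos(-\tanh(s))$, and $\Omega(\Sigma_t)\subsetneq\Omega(\Sigma_t^{s_1})\subsetneq\Omega(\Sigma_t^{s_2})$ for $t<s_1<s_2\leq t+\epsilon$. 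The strict outward monotonicity in $s$ stems from the positivity of the initial normal velocity, which satisfies a Jacobi equation with an inhomogeneous Robin datum, via the strong maximum principle.

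Fixing $s_0\in(t,t+\epsilon)$, I then set $\tilde\Sigma_t:=\Sigma_t^{s_0}$ and define $u$ on $\bar\Omega(\tilde\Sigma_t)\setminus\tilde\Sigma_t$ by $u\equiv t$ on $\bar\Omega(\Sigma_t)$ and $u\equiv s$ on $\Sigma_t^s$ for $s\in(t,s_0]$. By the foliation, $u$ is continuous on $\bar\Omega(\tilde\Sigma_t)\setminus\tilde\Sigma_t$ and smooth with nonvanishing gradient on $\bar\Omega(\tilde\Sigma_t)\setminus(\tilde\Sigma_t\cup\bar\Omega(\Sigma_t))$, and $Du/|Du|$ agrees with the outward unit normal $\nu(\Sigma_t^s)$ on each leaf. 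Choose a smooth strictly increasing function $f:[t,s_0)\to[t,\infty)$ with $f(t)=t$, $f(s)>s$ for $s\in(t,s_0)$, and $f(s)\to\infty$ as $s\uparrow s_0$---for instance, $f(s)=s-\log\bigl(1-(s-t)/(s_0-t)\bigr)$---and set $w:=f\circ u$.

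The bullets of \eqref{w properties} then follow directly. The first two are immediate from $f(t)=t$ and the blow-up of $f$ at $s_0$. The third is $Dw=f'(u)\,Du\neq 0$, using $f'>0$ and $Du\neq 0$. The fourth is $\operatorname{div}(|Dw|^{-1}Dw)=\operatorname{div}(|Du|^{-1}Du)=H(\Sigma_t^s)=0$, because each leaf is minimal. For the last bullet, the capillary condition on $\Sigma_t^s$ yields $\langle\nu(\Sigma_t^s),\nu(S)\rangle=-\tanh(s)$ along $\partial\Sigma_t^s$, so on $S(\tilde\Sigma_t)\setminus(\partial\tilde\Sigma_t\cup S(\Sigma_t))$,
\[
|Dw|^{-1}\langle Dw,\nu(S)\rangle=-\tanh(s)>-\tanh(f(s))=-\tanh(w),
\]
using $f(s)>s$ and the strict monotonicity of $\tanh$. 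Admissibility of $\tilde\Sigma_t$, its disjointness from $\Sigma_t$, and $\Omega(\Sigma_t)\subset\Omega(\tilde\Sigma_t)$ are all built into the foliation.

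The main obstacle is the first step: producing the smooth one-parameter family of stable minimal capillary surfaces with prescribed varying capillary angle, given only the stability of $\Sigma_t$. This requires a solvability statement for the Jacobi operator of $\Sigma_t$ together with the linearized capillary boundary condition; in the strictly stable case it follows directly from the implicit function theorem, while in the marginal case one constructs instead a constant-angle foliation from a positive zero-eigenvalue Jacobi field and adjusts the argument in the obvious way (the inequality on the boundary becomes $-\tanh(t)>-\tanh(w)$, which is still implied by $f(s)>t$ for $s>t$). Once the foliation is in hand, the remaining verifications are routine.
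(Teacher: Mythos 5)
Your proposal follows essentially the same route as the paper: invoke stability to produce a local one-sided foliation of $\bar M(S)$ near $\Sigma_t$ by minimal capillary surfaces, and then precompose the foliation parameter with a strictly increasing function that blows up at the far leaf. This is exactly the mechanism behind the paper's proof, which appeals to Lemma~\ref{local foliation}. Two remarks on where your sketch falls short of a complete argument.

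First, you do not address the case where $\Sigma_t$ has several components. The paper handles this by applying the foliation lemma to each component separately and shrinking the parameter ranges so that the resulting outer leaves are mutually disjoint before gluing the construction together; the fix is easy but should be stated.

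Second, your treatment of the marginal case $\kappa_t=0$ is not correct as written. The existence of a positive Jacobi field does not by itself yield an actual one-parameter family of exact solutions, let alone a \emph{constant}-angle foliation: with the capillary angle prescribed, the linearized capillary operator is not surjective, and the implicit function theorem fails. The way out (used in Lemma~\ref{local foliation}) is to release the angle as an auxiliary unknown: one solves an extended Fredholm-alternative system in which the angle $\omega(s)$ adjusts as part of the solution, and one obtains a foliation whose angle function satisfies only $\omega(0)=t$ and $\omega'(0)=0$; in general $\omega$ is neither constant nor monotone. Consequently the reparametrizing function $\eta$ must be chosen to satisfy $\eta(s)>\omega(s)$, as in the paper, not merely $\eta(s)>s$ or $\eta(s)>t$; otherwise the required strict boundary inequality $|Dw|^{-1}\langle Dw,\nu(S)\rangle=-\tanh(\omega(s))>-\tanh(\eta(s))$ can fail. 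With these two adjustments your proof becomes a faithful rendering of the paper's argument.
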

\begin{proof}
	We first assume that $\Sigma_t$ is connected. Since $\Sigma_t$ is stable, Lemma \ref{local foliation} shows that there are $\delta>0$, $\omega\in C^\infty((-2\,\delta,2\,\delta))$, and  a smooth foliation $\{\Sigma(s):s\in(-2\,\delta,2\,\delta)\}$  of minimal capillary surfaces $\Sigma(s)$ with capillary angle $\arccos(-\tanh(\omega(s)))$ such that $\Sigma(0)=\Sigma_t$.
Let $\tilde \Sigma_t=\Sigma(\delta)$  and $\eta \in C^\infty([0,\delta))$ be such that 
\begin{itemize}
	\item[$\circ$] $\eta(s)\to\infty$ as $s\nearrow\delta$,
	\item[$\circ$] $\eta(s)>\omega(s)$ on $s\in(0,\delta)$, and
	\item[$\circ$] $\eta(0)=t$.
\end{itemize}
Then the function $w:\bar\Omega(\tilde \Sigma_t)\setminus \tilde \Sigma_t\to\mathbb{R}$ determined by $ w(x)=\eta(s)$ if $x\in \Sigma(s)$ for some $s\in(0,\delta)$ and $w(x)=0$ if $x\in \bar \Omega(\Sigma_t)$ has all of the asserted properties. This completes the proof in the case where $\Sigma_t$ is connected.

In the case where $\Sigma_t$ has multiple components $\Sigma^1_t,\ldots,\Sigma^\ell_t$,   we apply Lemma \ref{local foliation} to each  $\Sigma^i_t$ and obtain  $\delta^i>0$, $\omega^i\in C^\infty((-2\,\delta^i,2\,\delta^i))$, and  a smooth foliation $\{\Sigma^i(s):s\in(-2\,\delta^i,2\,\delta^i)\}$  by minimal capillary surfaces $\Sigma^i(s)$ with capillary angle $\arccos(-\tanh(\omega^i(s)))$ such that $\Sigma^i(0)=\Sigma^i_t$. Shrinking each $\delta^i>0$, we may assume that $\Sigma^1(\delta^1),\ldots, \Sigma^\ell(\delta^\ell)$ are mutually disjoint. We may now argue as in the case where $\Sigma_t$ is connected. The assertion follows.
\end{proof}

\section{Minimizing the free energy} 
In this section, we assume that $ S\subset \mathbb{R}^3$ is a properly embedded plane with integrable mean curvature. 
Moreover, we assume that the complement of a compact subset of $S$ is contained in the graph of a function $\psi \in C^\infty(\mathbb{R}^2)$ such that, for some $\tau>1/2$,
\begin{equation*} 
	\begin{aligned}
		\sum_{i=1}^2|\partial_i\psi(y)|+\sum_{i,\,j=1}^2|y|\,|\partial^2_{ij}\psi(y)|=O(|y|^{-\tau}).
	\end{aligned}
\end{equation*} 
Let $\nu(S)$ be the unit normal field of $S$ asymptotic to $-e_3$. The component of $\mathbb{R}^3\setminus S$ that $\nu(S)$ points out of is denoted by $M(S)$ and referred to as the region above $S$.  We denote the second fundamental form of $S$ with respect to $\nu(S)$ by $h(S)$. Our convention is such that the trace of $h(S)$, i.e., the mean curvature $H(S)$, is the tangential divergence of $\nu(S)$.

Recall from Appendix \ref{appendix:af surfaces} the definition of an admissible surface $\Sigma \subset \bar M(S)$ as well as that of its lateral surface $S(\Sigma)\subset S$ and its inside $\Omega(\Sigma)\subset M(S)$. 

Recall from Appendix \ref{appendix:free energy} the definition of a minimal capillary surface $\Sigma\subset \bar M(S)$ with capillary angle $\theta\in (0,\pi)$ and the notion of stability for such a surface. Moreover, given $t\in\mathbb{R}$,  recall  the definition \eqref{free energy smooth} of the free energy $J_t(\Sigma)$ with  angle $\arccos(-\tanh (t))$ of an admissible surface $\Sigma\subset \bar M(S)$.

Let $t_0\in\mathbb{R}$. Suppose that $\Sigma_{t_0}\subset \bar M(S)$ is a stable minimal capillary surface with capillary angle $\arccos(-\tanh (t_0))$. We assume that $H(S)\geq 0$ on $S\setminus S(\Sigma_{t_0})$.

The goal of this section is to prove that, for every $t\in (t_0,\infty),$ there is an admissible surface $\Sigma_t\subset \bar M(S)$ with $\Omega(\Sigma_{t_0})\subset \Omega(\Sigma_t)$ such that $J_t(\Sigma_t)\leq J_t(\Sigma)$ for every admissible surface $\Sigma\subset \bar M(S)$ with $\Omega(\Sigma_{t_0})\subset \Omega(\Sigma)$.

By Proposition \ref{prop:sub} and Lemma \ref{smoothing the corner}, there is a subsolution $v\in C^\infty(\bar M(S))$ satisfying \eqref{subsolution properties}.

By Proposition \ref{prop:super}, there is an admissible surface $\tilde\Sigma_{t_0}\subset \bar M(S)$ disjoint from $\Sigma_{t_0}$  with $ \Omega (\Sigma_{t_0})\subset \Omega(\tilde\Sigma_{t_0})$  and  a supersolution $w\in C^\infty(\bar\Omega(\tilde \Sigma_{t_0})\setminus (\tilde \Sigma_{t_0}\cup \bar \Omega(\Sigma_{t_0})))\cap C^0(\bar \Omega(\tilde \Sigma_{t_0})\setminus \tilde \Sigma_{t_0})$
satisfying \eqref{w properties} with $t=t_0$.  

Subtracting a constant from $v$, we may assume that  $v< w$ in $\bar \Omega(\tilde \Sigma_{t_0})\setminus \tilde \Sigma_{t_0}$. 

Given $t\in(t_0,\infty)$, let 
\begin{itemize} 
\item[$\circ$] $V_t=\{x\in \bar M(S):v(x)= t\}$ and 
\item[$\circ$] $W_t=\{x\in \bar M(S):w(x)= t\}.$
\end{itemize}  

Let $\Phi:\mathbb{R}\to\mathbb{R}$ be given by $\Phi(t)=\tanh(t)$. The relevant properties of $\Phi$ for the purpose of this paper are that
	\begin{align} 
		&\circ \qquad \Phi(0)=0, \label{Phi zero} \\
		&\circ \qquad \lim_{t\to-\infty} \Phi(t)=-1\text{ and} \lim_{t\to\infty} \Phi(t)=1\text{, and} \label{capillary angle pi}\\
			&\circ \qquad \Phi'(t)>0\text{ for every $t\in\mathbb{R}$}.\qquad\qquad\qquad\qquad\qquad\qquad\qquad \label{Phi convex}
	\end{align}

 Recall from Appendix \ref{BV}  the definition of a set $F\subset  M(S)$ of finite perimeter in $ M(S)$, of its reduced boundary $\partial^* F$ in $ M(S)$, and of its perimeter measure $||\partial F||$  in $M(S)$.  Moreover, recall that such a set  $F\subset  M(S)$ has a trace $\mathbbm{1}_F|_S\in L_{loc}^1(S)$.  
 
  Let $\mathcal{U}$ be the set of all bounded sets $F\subset M(S)$ of finite perimeter in $M(S)$ such that $\Omega(\Sigma_{t_0})\subset F$.  Given $t\in(t_0,\infty)$ and $F\in\mathcal{U}$,  let  
 \begin{align*} 
 	\tilde J_t(F)=||\partial F||(M(S))-\Phi(t)\,\int_{S}\mathbbm{1}_F|_{S}
 \end{align*} 
 be the   free energy $\tilde J_t(F)$ with angle $\arccos(-\Phi(t))$; cp.~\eqref{free energy smooth}. 
 Let 
 $$
 \Lambda_{t}=\inf \{\tilde J_t(F):F\in \mathcal{U}\}. 
 $$
  Note that $\Omega(V_t)\in \mathcal{U}$. In particular, $\Lambda_t<\infty$.
  
  \begin{lem}\label{freelowersemi} Let $K\subset \mathbb{R}^3$ be a compact set and $t\in(t_0,\infty)$.  Suppose that  $\{F_k\}_{k=1}^\infty$ is a sequence of sets $F_k\subset K\cap M(S)$ of finite perimeter in $M(S)$ such that $\mathbbm{1}_{F_k}\to \mathbbm{1}_F$ in $L^1(M(S))$ where $F\subset K\cap  M(S)$ is a set of finite perimeter in $M(S)$. There holds
  	$$
\tilde  	J_t(F)\leq \liminf_{k\to\infty} \tilde J_t(F_k).
  	$$
  \end{lem}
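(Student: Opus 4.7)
The strategy is to absorb the trace contribution into a Gauss--Green rewriting so that $\tilde J_t$ becomes the sum of a convex, positively $1$-homogeneous surface functional and a bulk term continuous under $L^1$ convergence; lower semicontinuity will then follow from Reshetnyak's theorem. I may assume $\liminf_{k\to\infty}\tilde J_t(F_k)<\infty$, and, passing to a subsequence, that this liminf is attained as a limit. Since $F_k\subset K$, the uniform bound $\int_S\mathbbm{1}_{F_k}|_S\,d\mathcal{H}^2\leq \mathcal{H}^2(S\cap K)$ together with the definition of $\tilde J_t(F_k)$ yields a uniform upper bound on the perimeters $||\partial F_k||(M(S))$, so that $\{\mathbbm{1}_{F_k}\}$ is uniformly bounded in $BV(M(S))$.

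Exploiting that $|\Phi(t)|<1$ by \eqref{capillary angle pi}, I would construct a vector field $X\in C^\infty_c(\mathbb{R}^3;\mathbb{R}^3)$ with $|X|\leq |\Phi(t)|$ on $\mathbb{R}^3$ and $X\cdot \nu(S)=-\Phi(t)$ on a neighborhood of $S\cap K$ in $S$; such an $X$ is obtained by multiplying a smooth unit extension of $\nu(S)$ to a tubular neighborhood of $S$ by $-\Phi(t)$ and a suitable cutoff. Applying the divergence theorem in $M(S)$ to any set $G$ of finite perimeter in $M(S)$ contained in $K$ and using that the trace of $\mathbbm{1}_G$ vanishes outside $S\cap K$, the boundary contribution equals $-\Phi(t)\int_S\mathbbm{1}_G|_S\,d\mathcal{H}^2$, and rearranging yields the key identity
$$\tilde J_t(G)=\int_{\partial^*G}(1-X\cdot\nu_G)\,d\mathcal{H}^2+\int_G\operatorname{div}(X)\,dx,$$
whose surface integrand is nonnegative since $|X|<1$.

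To pass to the limit, the bulk integral is continuous under $L^1$ convergence, so $\int_{F_k}\operatorname{div}(X)\,dx\to\int_F\operatorname{div}(X)\,dx$. The uniform perimeter bound combined with $L^1$ convergence of the indicators implies weak-$*$ convergence of the vector-valued Gauss--Green measures $\nu_{F_k}\,||\partial F_k||\rightharpoonup \nu_F\,||\partial F||$ on $M(S)$. The integrand $\hat g(x,p)=|p|-X(x)\cdot p$ is nonnegative, continuous in $x$, and convex and positively $1$-homogeneous in $p$, so Reshetnyak's lower semicontinuity theorem yields
$$\int_{\partial^*F}(1-X\cdot\nu_F)\,d\mathcal{H}^2\leq \liminf_{k\to\infty}\int_{\partial^*F_k}(1-X\cdot\nu_{F_k})\,d\mathcal{H}^2,$$
and adding the two contributions gives the lemma. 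The main technical point is the construction of $X$ with the prescribed normal trace on $S\cap K$ subject to the sharp pointwise bound $|X|\leq |\Phi(t)|<1$; it is precisely this strict inequality, guaranteed by \eqref{capillary angle pi} for $t\in\mathbb{R}$, that makes the rewriting nonnegative on the surface and hence amenable to Reshetnyak's theorem.
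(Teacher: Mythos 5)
Your argument is correct but takes a genuinely different route from the paper's. The paper works directly with the trace term: it introduces a smooth cutoff $\eta_\delta$ supported in a $\delta$-collar of $S$, bounds the difference of traces via Lemma~\ref{bv:trace1}, which converts it into perimeter terms weighted by $\eta_\delta$ plus an $O(\delta^{-1})\,\|\mathbbm{1}_{F_k}-\mathbbm{1}_F\|_{L^1}$ error, applies lower semicontinuity of the weighted perimeter with weight $1-\eta_\delta$ (Lemma~\ref{bv:lower}), and finally sends $\delta\searrow 0$. You instead rewrite $\tilde J_t$ as a single lower semicontinuous functional: building a compactly supported $X$ with $|X|\leq|\Phi(t)|<1$ and $X\cdot\nu(S)=-\Phi(t)$ near $S\cap K$, and applying Gauss--Green on the Lipschitz domain $M(S)$, you obtain
\[
\tilde J_t(G)=\int_{\partial^*G}\bigl(1-X\cdot\nu_G\bigr)\,d\mathcal{H}^2+\int_G\operatorname{div}X\,dx,
\]
where the surface integrand is nonnegative, convex, and positively $1$-homogeneous, so Reshetnyak's lower semicontinuity theorem applies once you observe that the uniform perimeter bound (which you extract correctly from the finiteness of the liminf and $F_k\subset K$) upgrades the distributional convergence $D\mathbbm{1}_{F_k}\to D\mathbbm{1}_F$ to weak-$*$ convergence of measures. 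Two small remarks: (i) the identity requires the Gauss--Green formula for bounded finite-perimeter subsets of a domain with Lipschitz boundary, with the trace term on $S$ appearing explicitly; this is standard but is a slightly heavier tool than the trace estimate the paper uses. (ii) Be careful with the sign convention: $D\mathbbm{1}_G$ has polar density equal to the inward normal $-\nu_G$, so if you feed $D\mathbbm{1}_G$ directly to Reshetnyak the integrand should be $|p|+X(x)\cdot p$; your phrasing in terms of $\nu_{F_k}\,\|\partial F_k\|$ avoids this, but one should state explicitly that this vector measure is $-D\mathbbm{1}_{F_k}$. Conceptually, your calibration rewriting is cleaner and makes the lower semicontinuity manifest, at the cost of invoking Reshetnyak's theorem, whereas the paper stays within the elementary BV toolkit it catalogs in Appendix~\ref{BV}.
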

  \begin{proof}
  Given $\delta>0$, we choose a function $\eta_\delta \in C^\infty(\bar M(S))$ with the following properties:
  \begin{align*}
  	&\circ \qquad 0\leq \eta_\delta \leq 1,\qquad\qquad \qquad \circ \qquad |D\eta_\delta|\leq 2\,\delta^{-1},
  	\\&\circ\qquad  \eta_\delta =1 \text{ on $S$, and}\qquad \quad\,\,\, \circ \qquad \operatorname{spt}\eta_\delta \subset \{x\in \bar M(S):\operatorname{dist}(x,S)\leq \delta\}.
  \end{align*}
  By \eqref{capillary angle pi} and \eqref{Phi convex}, we have $|\Phi(t)|<1$. It follows that  
  $$
\Phi(t)  \int_{S}\mathbbm{1}_{F_k}|_S-\Phi(t)  \int_{S}\mathbbm{1}_{F}|_S\geq -\int_{S}\left|\mathbbm{1}_{F_k}|_S-\mathbbm{1}_{F}|_S\right |.
  $$
  Clearly,
  $$
 \int_{S}\left|\mathbbm{1}_{F_k}|_S-\mathbbm{1}_{F}|_S\right |= \int_{S}\eta_\delta\,\left|\mathbbm{1}_{F_k}|_S-\mathbbm{1}_{F}|_S\right |.
  $$
  By Lemma \ref{bv:trace1},  
  $$
  \int_{S}\eta_\delta\,\left|\mathbbm{1}_{F_k}|_S-\mathbbm{1}_{F}|_S\right|\leq \int_{M(S)}\,\eta_\delta\,\mathrm{d}\,||\partial F_k||+\int_{M(S)}\eta_\delta\, \mathrm{d}\,||\partial F||+\frac{2}{\delta}\,\int_{M(S)}|\mathbbm{1}_{F_k}-\mathbbm{1}_{F}|.
  $$
  Combining these inequalities, we have
  \begin{align*} 
J_t(F_k)-J_t(F)&\geq \int_{M(S)} (1-\eta_\delta)\,\mathrm{d}\,||\partial F_k||-\int_{M(S)} (1-\eta_\delta)\,\mathrm{d}\,||\partial F||
  	\\&\qquad -2\,\int_{M(S)} \eta_\delta\,\mathrm{d}\,||\partial F||-\frac{2}{\delta}\,\,\int_{M(S)}|\mathbbm{1}_{F_k}-\mathbbm{1}_F|.
  \end{align*} 
  Using Lemma \ref{bv:lower}, we see that  
  $$
  \liminf_{k\to\infty}\int_{M(S)}(1-\eta_\delta)\,\mathrm{d}\,||\partial F_k||\geq \int_{M(S)} (1-\eta_\delta)\,\mathrm{d}\,||\partial F||.
  $$
  Consequently,
  $$
  \liminf_{k\to\infty}\tilde J_t(F_k)\geq \tilde J_t(F) -2\,\int_{M(S)}\eta_\delta\,\mathrm{d}\,||\partial F||.
  $$
  Letting $\delta\searrow 0$, the assertion follows from the dominated convergence theorem.
  \end{proof}
 Recall from Lemma \ref{intersection and union} that if $E,\,F\subset M(S)$ are sets of finite perimeter in $M(S)$, then so are $E\cap F$ and $E\cup F$. 
  \begin{lem} \label{replacement lemma}
  	Let $t\in(t_0,\infty)$. Suppose that $F\in \mathcal{U}$. There holds 
  	\begin{align} \label{replace 1}
  		&\circ\qquad	\tilde	J_t( \Omega(V_t)\cap F)\leq \tilde J_t(F)\text{ and}\qquad\qquad\qquad\qquad\qquad\qquad \qquad \\
  		\label{replace 2}
  		&\circ\qquad  		\tilde J_t( \Omega(W_t)\cup F)\leq \tilde J_t(F).
  	\end{align}
  \end{lem}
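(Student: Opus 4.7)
The plan is to exploit $v$ and $w$ as calibrations. Let $X=|Dv|^{-1}\,Dv$ in the region where $Dv\neq 0$ and $Y=|Dw|^{-1}\,Dw$ on $\bar \Omega(\tilde\Sigma_{t_0})\setminus (\tilde \Sigma_{t_0}\cup \bar \Omega(\Sigma_{t_0}))$. By \eqref{subsolution properties} and \eqref{w properties}, these are unit vector fields, divergence-free on the respective interior domains, whose trace inner products with $\nu(S)$ satisfy $\langle X,\nu(S)\rangle<-\Phi(v)$ on $S\cap\{v\geq t_0\}$ and $\langle Y,\nu(S)\rangle>-\Phi(w)$ on $S(\tilde \Sigma_{t_0})\setminus(\partial\tilde\Sigma_{t_0}\cup S(\Sigma_{t_0}))$. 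Since $\Phi$ is strictly increasing by \eqref{Phi convex}, these inequalities upgrade to $\langle X,\nu(S)\rangle<-\Phi(t)$ on $S\cap\{v\geq t\}$ and $\langle Y,\nu(S)\rangle>-\Phi(t)$ on $S\cap\{w\leq t\}$. Note also that $\Omega(\Sigma_{t_0})\subset\Omega(V_t)$: the normalization $v<w$ on $\bar\Omega(\tilde\Sigma_{t_0})\setminus\tilde\Sigma_{t_0}$, together with $w\equiv t_0$ on $\bar\Omega(\Sigma_{t_0})$, forces $v<t_0<t$ there.

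For \eqref{replace 1}, I would set $A=\Omega(V_t)\cap F$ and $E=F\setminus \Omega(V_t)$, a bounded set of finite perimeter which, up to an $\mathcal{H}^2$-null set, lies in $\{v\geq t\}$ where $X$ is smooth and divergence-free. Applying the divergence theorem in the BV framework to $X$ on $E$, using that $\nu(S)$ is the outward normal of $M(S)$ along $S$, and decomposing $\partial^* E\cap M(S)$ as $\partial^* F\cap(M(S)\setminus\bar\Omega(V_t))$ (with outward normal $\nu^F$) together with $V_t\cap F^{(1)}$ (with outward normal $-X$, since $\Omega(V_t)$ lies opposite $E$), I obtain the identity
$$\mathcal{H}^2(V_t\cap F^{(1)})=\int_{\partial^* F\cap(M(S)\setminus\bar\Omega(V_t))}\langle X,\nu^F\rangle\,d\mathcal{H}^2+\int_{S\cap F^{(1)}\cap\{v\geq t\}}\langle X,\nu(S)\rangle\,d\mathcal{H}^2.$$
Bounding $\langle X,\nu^F\rangle\leq |X|=1$ and $\langle X,\nu(S)\rangle<-\Phi(t)$ on the trace component, the right-hand side is at most $\mathcal{H}^2(\partial^* F\cap(M(S)\setminus\bar\Omega(V_t)))-\Phi(t)\,\int_{S}\mathbbm{1}_{F\setminus\Omega(V_t)}|_S$. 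Since $||\partial F||(M(S))-||\partial A||(M(S))$ equals the first term on the right minus the left-hand side, and $\int_S(\mathbbm{1}_F-\mathbbm{1}_A)|_S$ equals the last integral, this rearranges into $\tilde J_t(A)\leq \tilde J_t(F)$.

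For \eqref{replace 2}, I would argue symmetrically. Set $B=\Omega(W_t)\cup F$ and $G=\Omega(W_t)\setminus F$. Since $F\supset\Omega(\Sigma_{t_0})$, $G$ lies in $\Omega(\tilde\Sigma_{t_0})\setminus\bar\Omega(\Sigma_{t_0})$, where $Y$ is smooth and divergence-free. Decomposing $\partial^* G\cap M(S)$ as $W_t\cap F^{(0)}$ (outward normal $Y$) together with $\partial^* F\cap \Omega(W_t)$ (outward normal $-\nu^F$), the divergence theorem applied to $Y$ on $G$, combined with $|Y|=1$ and $\langle Y,\nu(S)\rangle>-\Phi(t)$ on $S\cap G^{(1)}\subset S\cap\{w\leq t\}$, yields
$$\mathcal{H}^2(W_t\cap F^{(0)})\leq \mathcal{H}^2(\partial^* F\cap\Omega(W_t))+\Phi(t)\,\int_{S}\mathbbm{1}_{G}|_S,$$
which rearranges into $\tilde J_t(B)\leq \tilde J_t(F)$.

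The main technical point I expect to require care is the BV bookkeeping: identifying the reduced-boundary decompositions of $E$ and $G$, keeping track of orientations where $V_t$ or $W_t$ arise as boundaries, and matching the trace functions $\mathbbm{1}_{F\setminus\Omega(V_t)}|_S$ and $\mathbbm{1}_G|_S$ against the correct subsets of $S$. Once these are in place, both assertions reduce to the same one-line calibration inequality, executed once with the subsolution $v$ and once with the supersolution $w$.
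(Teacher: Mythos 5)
Your proof is the same calibration argument as the paper's, but executed directly at the BV level, whereas the paper routes through smooth approximation. Concretely, the paper applies Lemma~\ref{bv:perimeterapprox} to approximate $F$ by smooth compact sets $F_k$ (arranged to meet $V_t$ and $S$ transversely), runs the divergence theorem for the honest smooth hypersurface $\partial F_k$ on the region $M(S)\cap(F_k\setminus E_k)$, obtains $\tilde J_t(M(S)\cap E_k)\leq \tilde J_t(M(S)\cap F_k)$, and then passes to the limit using Lemma~\ref{bv:trace2} together with the lower semicontinuity Lemma~\ref{freelowersemi}. You instead invoke the Gauss--Green theorem for the set of finite perimeter $E=F\setminus\Omega(V_t)$ itself, which forces you to justify the decomposition of $\partial^* E$ into $\partial^*F\cap(M(S)\setminus\bar\Omega(V_t))$, $V_t\cap F^{(1)}$, and the trace piece on $S$, and similarly for $\partial^*A$ with $A=\Omega(V_t)\cap F$. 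That decomposition, and the cancellation you need when you subtract $||\partial A||$ from $||\partial F||$, is only exact modulo the set $\partial^*F\cap V_t$, which may have positive $\mathcal{H}^2$-measure for a general $F\in\mathcal{U}$; you would need either a further almost-everywhere-in-$t$ argument to dismiss it, or an explicit accounting of the aligned- and anti-aligned-normal pieces on $V_t\cap\partial^*F$ (the anti-aligned part contributes with the same sign as $V_t\cap F^{(1)}$, the aligned part drops out of the difference). You flag exactly this as the place requiring care, so the proposal is not complete as stated, though the gap is a matter of BV bookkeeping rather than a conceptual error. The paper's approximation route sidesteps all of this at the cost of one extra semicontinuity pass; your route, once the bookkeeping is carried out (e.g.\ via the structure theory of $\partial^*(E\cap F)$ and the trace formula for $E$ as a set of finite perimeter in $\mathbb{R}^3$), gives the same conclusion without approximation. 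Everything else --- the identification of $X$, $Y$ as unit, divergence-free calibration fields, the upgrade of the angle condition from $-\Phi(v)$ to $-\Phi(t)$ on $\{v\geq t\}$ (and dually for $w$), the signs of the normals on $V_t$ and $W_t$, and the rearrangement into $\tilde J_t(A)\leq\tilde J_t(F)$ --- matches the paper.
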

  \begin{proof}
  	By Lemma \ref{bv:perimeterapprox}, there is a compact set $K\subset \mathbb{R}^3$ and a  sequence $\{F_k\}_{k=1}^\infty$ of  smooth compact sets $F_k\subset K$ such that $\mathbbm{1}_{M(S)\cap F_k}\to\mathbbm{1}_{F}$ in $L^1(M(S))$ and $||\partial (M(S)\cap F_k)||(M(S))\to ||\partial F||(M(S))$.  Using Lemma \ref{bv:trace2}, we see that 
  	\begin{align} \label{free approx} 
  		& 	\lim_{k\to\infty} \tilde J_t(M(S)\cap F_k)=\tilde J_t(F).  
  	\end{align}
  	We may assume that the intersection of $\partial F_k$ and $V_t$ and the intersection of $\partial F_k$ and $S$ are  transverse  for every $k$.
  	Let $E_k=\Omega(V_t)\cap F_k.$ Note that $E_k$ has Lipschitz boundary and that $E_k\subset K$.  Let $\nu(\partial F_k)$ be the normal of $\partial F_k$ pointing out of $F_k$ and $\nu (\partial E_k)$ be the normal of $\partial E_k$ pointing out of $E_k$. Note that  $\nu(\partial E_k)=|Dv|^{-1}\,Dv$   on $M(S)\cap (\partial E_k\setminus \partial F_k)$. Consequently,
  	\begin{align*} 
  &	|M(S)\cap \partial F_k|-|M(S)\cap \partial E_k|\\&\qquad \geq \int_{M(S)\cap(\partial F_k\setminus  \partial E_k)}|Dv|^{-1}\,\langle Dv,\nu(F_k)\rangle -\int_{M(S)\cap ( \partial E_k\setminus \partial F_k)}|Dv|^{-1}\,\langle Dv,\nu(\partial E_k)\rangle.
  	\end{align*} 
  	By the divergence theorem,  using \eqref{subsolution properties} and \eqref{Phi convex},
  	\begin{figure}\centering
  		\includegraphics[width=\linewidth]{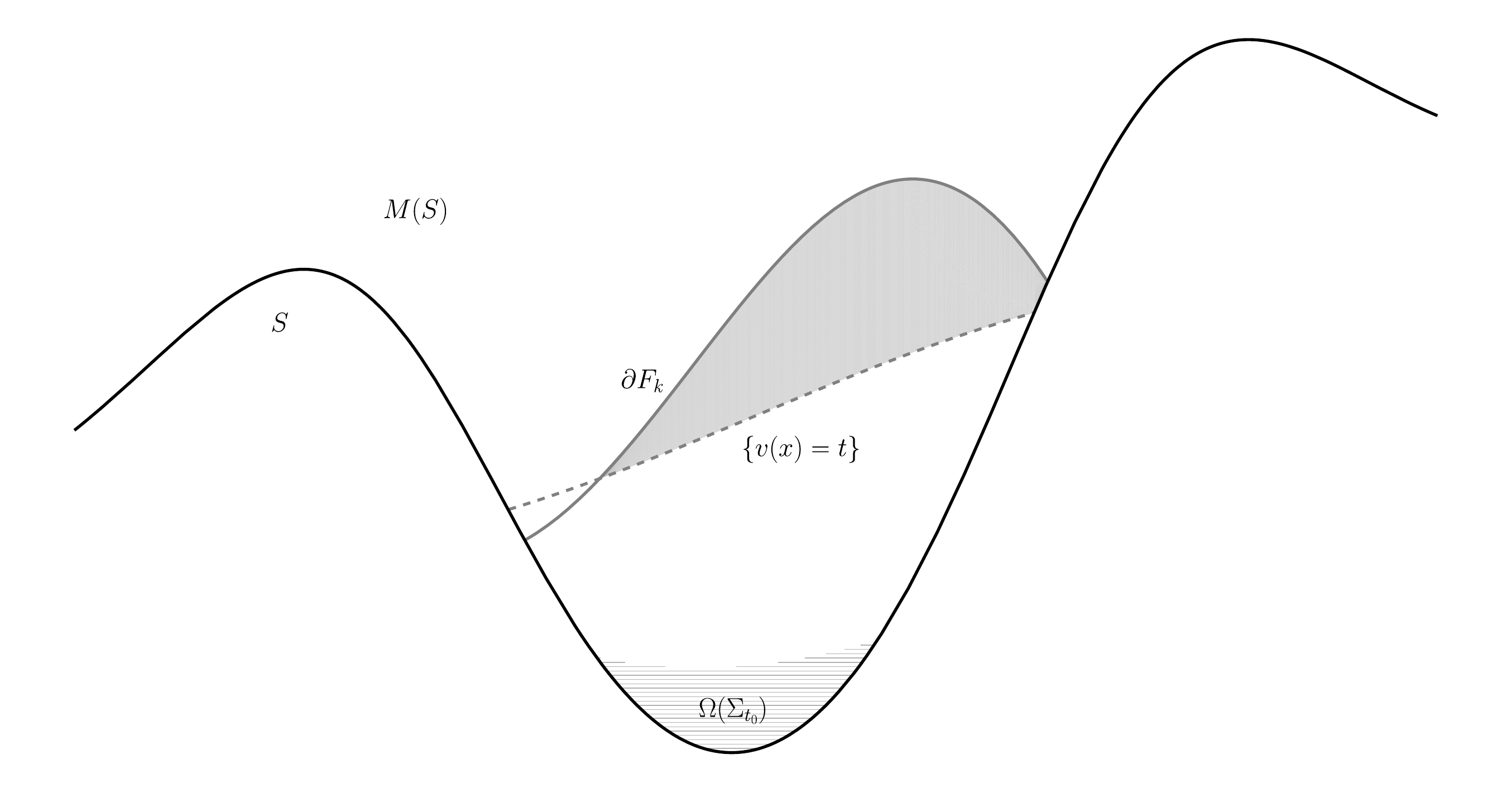}
  		\caption{An illustration of the calibration argument used in the proof of Lemma \ref{replacement lemma}. $S$ is indicated by the solid black line. $\Omega(\Sigma_{t_0})$ is indicated by the hatched region. $\partial F_k$ is indicated by the solid gray line and $\{x\in \bar M(S):v(x)=t\}$ is indicated by the dashed gray line. The divergence theorem is applied to the region indicated by the gray filling. 
  		}
  		\label{calibration}
  	\end{figure}
  	\begin{align*} 
  		&\int_{M(S)\cap(\partial F_k\setminus \partial E_k)}|Dv|^{-1}\,\langle Dv,\nu(\partial F_k)\rangle -\int_{M(S)\cap (\partial E_k\setminus \partial F_k)}|Dv|^{-1}\,\langle Dv,\nu(\partial E_k)\rangle
  		\\&\qquad = \int_{ M(S)\cap (F_k\setminus E_k)}\operatorname{div}(|D v|^{-1}\,D v)-\int_{S\cap(F_k\setminus E_k)}   |D v|^{-1}\,\langle D v,\nu(S)\rangle
  		\\&\qquad  \geq \int_{S\cap(F_k\setminus E_k)}  \Phi(v)
  		\\&\qquad \geq \Phi(t)\,|S\cap(F_k\setminus E_k)|;
  	\end{align*} 
  	see Figure \ref{calibration}.
It follows that
  	$$ \tilde J_t(M(S)\cap E_k)\leq \tilde J_t(M(S)\cap F_k).$$ Note that $\mathbbm{1}_{M(S)\cap E_k}\to \mathbbm{1}_{\Omega(V_t)\cap F}$ in $L^1(M(S)).$ Using  Lemma \ref{freelowersemi} and \eqref{free approx}, we obtain \eqref{replace 1}.
  	
   By the same reasoning as above, using \eqref{w properties} instead of \eqref{subsolution properties}, we obtain \eqref{replace 2}. The assertion follows. 
  \end{proof}
  \begin{prop} \label{capillary existence}
  	Let $t\in(t_0,\infty)$. There exists a  minimal capillary surface $\Sigma_t\subset \bar M(S)$ such that  $\Omega(\Sigma_{t_0})\subset \Omega(\Sigma_t)$ and 
  	\begin{align} \label{smooth inf} 
  \text{$J_t(\Sigma_t)\leq J_t(\Sigma)$   	for every admissible surface $\Sigma \subset \bar M(S)$ with $\Omega(\Sigma_{t_0})\subset \Omega(\Sigma)$.}
  	\end{align} 
  \end{prop}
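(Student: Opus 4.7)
The plan is to pass to the relaxed variational problem for the functional $\tilde J_t$ on the BV class $\mathcal{U}$, extract a minimizer confined by the barriers constructed in Propositions \ref{prop:sub} and \ref{prop:super}, and then appeal to regularity theory for capillary minimizers to recover a smooth minimal capillary surface $\Sigma_t$.

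For the existence of a BV minimizer, I start with a minimizing sequence $\{F_k\}_{k=1}^{\infty}\subset \mathcal{U}$ with $\tilde J_t(F_k)\to \Lambda_t$. The normalization $v<w$ on $\bar\Omega(\tilde\Sigma_{t_0})\setminus\tilde\Sigma_{t_0}$ implies $\Omega(W_t)\subset\Omega(V_t)$. I then apply Lemma \ref{replacement lemma} twice, first using \eqref{replace 1} to replace $F_k$ by $\Omega(V_t)\cap F_k$, then \eqref{replace 2} to enlarge by $\Omega(W_t)$, yielding a new minimizing sequence satisfying $\Omega(W_t)\subset F_k\subset \Omega(V_t)$. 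Since $v\to \infty$ at infinity, $V_t$ is compact and the $F_k$ lie in a common compact subset of $\mathbb{R}^3$. Using that $|\Phi(t)|<1$ by \eqref{capillary angle pi}--\eqref{Phi convex} together with the trace estimate of Lemma \ref{bv:trace1} applied to a suitable cutoff near $S$, the perimeter masses $||\partial F_k||(M(S))$ are uniformly bounded. BV compactness then yields $\mathbbm{1}_{F_k}\to \mathbbm{1}_{F_t}$ in $L^1(M(S))$ along a subsequence for some set $F_t$ of finite perimeter with $\Omega(W_t)\subset F_t\subset \Omega(V_t)$, so $F_t\in \mathcal{U}$, and Lemma \ref{freelowersemi} furnishes $\tilde J_t(F_t)\leq \Lambda_t$. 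Hence $F_t$ minimizes $\tilde J_t$ in $\mathcal{U}$.

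To upgrade $F_t$ to a smooth minimal capillary surface, I invoke regularity: interior regularity of $\partial^* F_t\cap M(S)$ follows from the classical De Giorgi theory for sets of least perimeter, while regularity of $\partial^* F_t$ up to $S$ at the prescribed constant contact angle $\arccos(-\Phi(t))\in(0,\pi)$ is provided by J.~Taylor's theorem \cite{Taylor} for capillary minimizers. This produces a smooth minimal capillary surface $\Sigma_t=\partial^* F_t\cap \bar M(S)$ meeting $S$ transversely. Minimality and positivity of the area density exclude closed components of $\Sigma_t$ in $M(S)$, so $\Sigma_t$ is admissible. For any admissible competitor $\Sigma\subset \bar M(S)$ with $\Omega(\Sigma_{t_0})\subset \Omega(\Sigma)$ one has $\Omega(\Sigma)\in \mathcal{U}$ and $\tilde J_t(\Omega(\Sigma))=J_t(\Sigma)$, so
$$
J_t(\Sigma_t)=\tilde J_t(F_t)\leq \tilde J_t(\Omega(\Sigma))=J_t(\Sigma),
$$
establishing \eqref{smooth inf}.

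The main obstacle is the free boundary regularity along $S$: unlike the interior situation, where De Giorgi's theorem is standard, identifying $\partial^* F_t$ near $S$ as a smooth surface meeting $S$ at a constant contact angle requires the full strength of the capillary regularity theory of J.~Taylor, together with a careful verification that the supersolution $w$ prevents $\partial^* F_t$ from being absorbed into $\Sigma_{t_0}$ so that the inclusion $\Omega(\Sigma_{t_0})\subset \Omega(\Sigma_t)$ is genuine and $\Sigma_t$ is admissible rather than having boundary components coinciding with $\partial\Sigma_{t_0}$.
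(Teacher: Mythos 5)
Your proof is correct and follows essentially the same route as the paper's: pass to the relaxed problem over $\mathcal{U}$, use Lemma \ref{replacement lemma} to squeeze a minimizing sequence between $\Omega(W_t)$ and $\Omega(V_t)$ (the paper does this in a single step with $E_k=(F_k\cap\Omega(V_t))\cup\Omega(W_t)$, you do it in two sequential applications, which is the same computation), extract a minimizer via BV compactness and Lemma \ref{freelowersemi}, and invoke Taylor's free-boundary regularity to produce the smooth minimal capillary surface. The additional details you supply — the uniform perimeter bound from $|\Phi(t)|<1$ and the verification that $\Omega(W_t)\subset\Omega(V_t)$ — are correct and only make explicit what the paper leaves implicit.
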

  \begin{proof}
  	Let $\{F_k\}_{k=1}^\infty$ be a sequence of sets $F_k\in\mathcal{U}$  such that 
  	$$
  	\lim_{k\to\infty} \tilde J_t(F_k)=\Lambda_t.
  	$$ Let $E_k=(F_k\cap \Omega(V_t))\cup \Omega(W_t)$. By Lemma \ref{replacement lemma}, we have 
  	$$
 \tilde  	J_t(E_k)\leq \tilde J_t(F_k)
  	$$
  	for every $k\geq 1$. By Lemma \ref{bv:compact}, passing to a subsequence, there is a set $E\in\mathcal{U}$  such that $\mathbbm{1}_{E_k}\to\mathbbm{1}_{E}$ in $L^1(M(S))$.  By Lemma \ref{freelowersemi}, we have $\tilde J_t(E)=\Lambda_t$ so that 
  	\begin{align} \label{cacciopoli inf} 
  		\tilde J_t(E)\leq \tilde J_t(F)
  	\end{align}
  	for every $F\in \mathcal{U}$.  The pioneering work \cite[\S6]{Taylor} of J.~Taylor shows that   $\partial^*E$   is  smooth and that its closure $\Sigma_t$ is a   minimal capillary surface with capillary angle  $\arccos(-\Phi(t))$. One may also use the recent results in  \cite[Corollary 1.4]{DePhilippisMaggi} or \cite[Theorem 1.1]{OtisEdelenLi} in this step. Then \eqref{smooth inf} follows from \eqref{cacciopoli inf}. The assertion follows.
  \end{proof}

\section{Stable minimal capillary surfaces minimizing the free energy}
 \label{stable minimal section}

In this section, we assume that $ S\subset \mathbb{R}^3$ is a properly embedded plane with integrable mean curvature. 
Moreover, we assume that the complement of a compact subset of $S$ is contained in the graph of a function $\psi \in C^\infty(\mathbb{R}^2)$ such that, for some $\tau>1/2$,
\begin{equation} 
	\begin{aligned} \label{asymptotically flat section 4}
		\sum_{i=1}^2|\partial_i\psi(y)|+\sum_{i,\,j=1}^2|y|\,|\partial^2_{ij}\psi(y)|=O(|y|^{-\tau}).
	\end{aligned}
\end{equation} 
Let $\nu(S)$ be the unit normal field of $S$ asymptotic to $-e_3$. The component of $\mathbb{R}^3\setminus S$ that $\nu(S)$ points out of is denoted by $M(S)$ and referred to as the region above $S$.  We denote the second fundamental form of $S$ with respect to $\nu(S)$ by $h(S)$. Our convention is such that the trace of $h(S)$, i.e., the mean curvature $H(S)$, is the tangential divergence of $\nu(S)$.

Recall from Appendix \ref{appendix:af surfaces} the definition of an admissible surface $\Sigma \subset \bar M(S)$ as well as that of its lateral surface $S(\Sigma)\subset S$ and its inside $\Omega(\Sigma)\subset M(S)$. Moreover, recall the definition of the normal $\nu(\Sigma)$ and  the co-normal $\mu(\Sigma)$ as well as the conventions for the second fundamental form $h(\Sigma)$, the mean curvature $H(\Sigma)$, and the geodesic curvature $k(\partial \Sigma)$.

Recall from Appendix \ref{appendix:free energy} the definition of a minimal capillary surface $\Sigma\subset \bar M(S)$ with capillary angle $\theta\in (0,\pi)$ and the notion of stability for such a surface. Moreover, given $t\in\mathbb{R}$,  recall  the definition \eqref{free energy smooth} of the free energy $J_t(\Sigma)$ with  angle $\arccos(-\tanh (t))$ of an admissible surface $\Sigma\subset \bar M(S)$.

Let $t_0\in\mathbb{R}$. Suppose that $\Sigma_{t_0}\subset \bar M(S)$ is a stable minimal capillary surface with capillary angle $\arccos(-\tanh (t_0))$. We assume that $H(S)\geq 0$ on $S\setminus S(\Sigma_{t_0})$.

Given $t\in [t_0,\infty)$, let $\mathcal{E}_t$ be the set of all minimal capillary surfaces  $\Sigma_t\subset \bar M(S)$ such that  $\Omega(\Sigma_{t_0})\subset \Omega(\Sigma_t)$ and 
\begin{align} \label{mathE def}
J_t(\Sigma_t)\leq J_t(\Sigma) \text{ for every admissible surface $\Sigma\subset \bar M(S)$ with $\Omega(\Sigma_{t_0})\subset \Omega(\Sigma)$}.
\end{align} 
By Proposition \ref{capillary existence}, $\mathcal{E}_t\neq \emptyset$ for all $t\in(t_0,\infty)$. 

In this section, we study basic properties of the sets $\mathcal{E}_t$, $t\in[t_0,\infty)$.
\begin{lem}\label{sigmat is stable}
	Let $t\in[t_0,\infty)$ and suppose that $\Sigma_t\in\mathcal{E}_t$. Then $\Sigma_t$ is stable.
\end{lem}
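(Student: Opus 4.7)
The plan is to reduce the stability of $\Sigma_t$ to the fact that it is an \emph{unconstrained} local minimizer of $J_t$, so that the nonnegativity of the second variation is immediate from \eqref{mathE def}. The only obstruction is the one-sided constraint $\Omega(\Sigma_{t_0})\subset \Omega(\Sigma)$, and the crux of the argument is therefore to verify that $\Sigma_t$ is disjoint from $\Sigma_{t_0}$, so that this constraint is locally inactive near $\Sigma_t$.

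First, I would show $\Sigma_t\cap \Sigma_{t_0}=\emptyset$. Suppose toward a contradiction that they share a point $p$. Since $\Omega(\Sigma_{t_0})\subset \Omega(\Sigma_t)$, the surface $\Sigma_{t_0}$ lies locally on one side of $\Sigma_t$ at $p$. If $p$ is interior to both $\Sigma_t$ and $\Sigma_{t_0}$, the strong maximum principle for minimal surfaces forces local agreement; unique continuation within the connected components through $p$ then propagates the equality to the respective boundary arcs on $S$, and in particular to the capillary angles $\arccos(-\tanh(t))$ and $\arccos(-\tanh(t_0))$, contradicting $t\neq t_0$. If $p\in \partial\Sigma_t\cap \partial\Sigma_{t_0}\subset S$, the Hopf-type maximum principle for minimal capillary surfaces yields the same local agreement and hence the same contradiction. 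The remaining mixed cases, in which $p$ is interior to one surface and a boundary point of the other, are excluded by the transverse intersection with $S$ built into the definition of an admissible surface.

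Once disjointness is in hand, given any $\phi\in C^\infty(\Sigma_t)$, I would construct a capillary variation $\Phi\colon \Sigma_t\times (-\varepsilon,\varepsilon)\to \bar M(S)$ of $\Sigma_t$ with initial normal velocity $\phi$ and $\Phi(\partial\Sigma_t,s)\subset S$ for all $s$; this is standard and only requires augmenting $\phi\,\nu(\Sigma_t)$ by a suitable tangential correction near $\partial\Sigma_t$. For $\varepsilon$ small, each $\Sigma_s=\Phi(\Sigma_t,s)$ is an admissible surface, and, because $\Sigma_t$ is disjoint from the fixed surface $\Sigma_{t_0}$, so is $\Sigma_s$, so that the constraint $\Omega(\Sigma_{t_0})\subset \Omega(\Sigma_s)$ is automatic. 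The defining property \eqref{mathE def} of $\mathcal{E}_t$ then gives $J_t(\Sigma_s)\geq J_t(\Sigma_t)$ for all small $s$, so that $\left.\tfrac{d^2}{ds^2}\right|_{s=0} J_t(\Sigma_s)\geq 0$ for every admissible test function $\phi$, which is the definition of stability of the minimal capillary surface $\Sigma_t$ with capillary angle $\arccos(-\tanh(t))$.

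I expect the main obstacle to be the boundary step in the disjointness argument: the interior case is covered by the classical strong maximum principle, but ruling out a touching at a common point of $\partial\Sigma_t\cap \partial\Sigma_{t_0}\subset S$ needs a Hopf-type boundary lemma for minimal capillary surfaces with prescribed contact angles. If such a statement is not readily available, my fallback would be an explicit local comparison: write both surfaces as graphs over the common tangent plane at $p$, and use the fact that the distinct prescribed slopes of $\partial\Sigma_t$ and $\partial\Sigma_{t_0}$ at $p$ relative to $S$ are incompatible with $\Sigma_{t_0}$ lying on the prescribed side of $\Sigma_t$ near $p$.
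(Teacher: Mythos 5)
The paper's proof is a one-liner citing \eqref{mathE def} and Lemma \ref{stability}, implicitly treating the one-sided constraint $\Omega(\Sigma_{t_0})\subset \Omega(\Sigma)$ as locally inactive; your proposal makes the required disjointness explicit, which is indeed the right way to justify that step. There is, however, one gap: your contradiction hinges on $t\neq t_0$, but the lemma allows $t=t_0$. In that case the capillary angles coincide, so the strong maximum principle at a touching point forces the touching components of $\Sigma_t$ and $\Sigma_{t_0}$ to \emph{coincide} rather than producing a contradiction. To patch this, observe that the stability inequality \eqref{stability ineq} is verified component-wise: on a component shared with $\Sigma_{t_0}$ it holds because $\Sigma_{t_0}$ is stable by the standing hypothesis of the section, while on the remaining components $\Sigma_t$ is disjoint from $\Sigma_{t_0}$ and your unconstrained variational argument applies. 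Modulo this repair, and the boundary slope comparison you sketch for contact at a common boundary point when the angles differ, your argument is correct and spells out a step the paper leaves implicit.
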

\begin{proof}
	This follows from \eqref{mathE def} and Lemma \ref{stability}.
\end{proof}

\begin{lem} \label{capillary topology}
	Let $t\in[t_0,\infty)$ and suppose that $\Sigma_t\in \mathcal{E}_t$. Then  $\Sigma_t$ is diffeomorphic to a union of disks.
\end{lem}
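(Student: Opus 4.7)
The plan is to prove that each connected component $\Sigma_t^i$ of $\Sigma_t$ satisfies $\chi(\Sigma_t^i) = 1$. Since each $\Sigma_t^i$ is an orientable compact surface with non-empty boundary (as $\Sigma_t$ meets $S$ transversely), this is equivalent to showing $\Sigma_t^i$ is diffeomorphic to a disk.

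First, I would observe that each component $\Sigma_t^i$ is itself stable. Indeed, any test function supported on $\Sigma_t^i$ extends by zero to an admissible test function in the stability inequality for $\Sigma_t$, and the stability quadratic form decouples across components, so stability of $\Sigma_t$ given by Lemma \ref{sigmat is stable} yields stability of each $\Sigma_t^i$.

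Next, I would apply the second variation formula to $\Sigma_t^i$ with the constant normal speed variation $f \equiv 1$. Since $\Sigma_t^i$ is minimal in $\mathbb{R}^3$, the Gauss equation gives $K = -\frac{1}{2}|h(\Sigma_t^i)|^2$. Combining stability, the Gauss-Bonnet theorem, and the capillary boundary condition (which expresses the geodesic curvature of $\partial \Sigma_t^i$ in $\Sigma_t^i$ in terms of $H(S)$ and the capillary angle $\arccos(-\tanh t)$), I would derive exactly the inequality underlying \eqref{rho estimate}, namely
\begin{equation*}
2\pi \chi(\Sigma_t^i) \geq \frac{1}{2}\int_{\Sigma_t^i} |h(\Sigma_t^i)|^2 + \sqrt{1 - \tanh^2 t}\,\int_{\partial \Sigma_t^i} H(S).
\end{equation*}
The inclusion $\Omega(\Sigma_{t_0}) \subset \Omega(\Sigma_t)$ forces $S(\Sigma_{t_0}) \subset S(\Sigma_t)$, so the boundary $\partial \Sigma_t^i$ lies in $S \setminus S(\Sigma_{t_0})$ where $H(S) \geq 0$ by assumption. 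The right-hand side is thus non-negative, giving $\chi(\Sigma_t^i) \geq 0$, so $\Sigma_t^i$ is either a disk or an annulus.

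The final step is to rule out the annulus case. If $\chi(\Sigma_t^i) = 0$, equality forces $h(\Sigma_t^i) \equiv 0$, so $\Sigma_t^i$ is a flat planar annulus, and $H(S) \equiv 0$ along $\partial \Sigma_t^i$. To obtain a contradiction, I would exploit the global minimizing property $\Sigma_t \in \mathcal{E}_t$ by constructing an admissible competitor with strictly smaller free energy $J_t$. A natural candidate is to remove the annular component and replace it with a suitable disk-type configuration spanning only the outer boundary circle in the supporting plane, or to break the annulus by a small perturbation that separates its two boundary components while preserving the inclusion $\Omega(\Sigma_{t_0}) \subset \Omega$. The main obstacle lies precisely in this last step: stability combined with Gauss-Bonnet alone yields only $\chi \geq 0$, and upgrading to $\chi \geq 1$ genuinely requires leveraging the stronger global minimizing hypothesis through an explicit admissibility-preserving competitor construction.
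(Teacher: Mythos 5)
Your reduction to showing $\chi(\Sigma_t^i)\geq 0$ for each component is exactly right, and the derivation via stability with the test function $f\equiv 1$, the Gauss equation $2K(\Sigma_t)=-|h(\Sigma_t)|^2$, and Gauss--Bonnet matches the paper's argument (this is Lemma~\ref{sigmat is stable} combined with Lemma~\ref{minimization in homology class3}). The gap is in the last step: you conjecture that ruling out the annulus case requires the \emph{global} minimizing property $\Sigma_t\in\mathcal{E}_t$ and propose searching for a competitor surface, but no such competitor is actually needed. Stability alone suffices, and the mechanism is more delicate than a Gauss--Bonnet count.

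The idea you are missing is the following. If $\chi(\Sigma_t^i)=0$, then equality holds throughout your chain, so $f\equiv 1$ realizes the infimum of the stability quadratic form on that component. By Lemma~\ref{first eigenvalue}, the minimizer is (up to scale) the unique first eigenfunction, and the Robin boundary condition satisfied by the first eigenfunction forces $-k(\partial\Sigma_t)\cdot 1 + \cosh(t)\,H(S)\cdot 1 = 0$ on $\partial\Sigma_t^i$. Since equality also forces $H(S)\equiv 0$ there, this gives $k(\partial\Sigma_t)\equiv 0$. But equality forces $h(\Sigma_t)\equiv 0$, so $\Sigma_t^i$ is a compact flat surface; its boundary consists of Jordan curves in a plane, for which
$$
\int_{\partial\Sigma_t^i}|k(\partial\Sigma_t)|\geq 4\pi,
$$
a contradiction. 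In short, the annulus is excluded not by constructing a cheaper competitor but by observing that the rigidity of the equality case is incompatible with the Fenchel-type lower bound on total geodesic curvature. Your proposed route of deleting or perturbing the annular component would also run into the difficulty that you must keep $\Omega(\Sigma_{t_0})\subset\Omega(\Sigma)$ and cannot a priori guarantee a strict decrease of $J_t$, so it is both harder and unnecessary.
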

\begin{proof}
	This follows from Lemma \ref{sigmat is stable} and Lemma \ref{minimization in homology class3}.
\end{proof}

By Proposition \ref{prop:sub} and Lemma \ref{smoothing the corner}, there is a subsolution $v\in C^\infty(\bar M(S))$ satisfying \eqref{subsolution properties}. 

By Proposition \ref{prop:super}, there is an admissible surface $\tilde\Sigma_{t_0}\subset \bar M(S)$ disjoint from $\Sigma_{t_0}$  with $ \Omega (\Sigma_{t_0})\subset \Omega(\tilde\Sigma_{t_0})$  and  a supersolution $w\in C^\infty(\bar\Omega(\tilde \Sigma_{t_0})\setminus (\tilde \Sigma_{t_0}\cup \bar \Omega(\Sigma_{t_0})))\cap C^0(\bar \Omega(\tilde \Sigma_{t_0})\setminus \tilde \Sigma_{t_0})$
satisfying \eqref{w properties} with $t=t_0$.   

Subtracting a constant from $v$, we may assume that  $v< w$ in $\bar \Omega(\tilde \Sigma_{t_0})\setminus \tilde \Sigma_{t_0}$. 

Given $t\in(t_0,\infty)$, let 
\begin{itemize} 
	\item[$\circ$] $V_t=\{x\in \bar M(S):v(x)= t\}$ and 
	\item[$\circ$] $W_t=\{x\in \bar M(S):w(x)= t\}.$
\end{itemize}

\begin{lem} \label{remain bounded}
	Let $t\in(t_0,\infty)$ and suppose that $\Sigma_t\in \mathcal{E}_t$. There holds $\Omega(W_t)\subset \Omega(\Sigma_t)\subset \Omega(V_t)$. 
\end{lem}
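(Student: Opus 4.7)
The plan is to argue by contradiction, exploiting the $\tilde J_t$-minimality of $F=\Omega(\Sigma_t)$ established in Proposition~\ref{capillary existence} together with the strict inequalities in the sub- and supersolution boundary conditions \eqref{subsolution properties} and \eqref{w properties}. We treat the outer inclusion $\Omega(\Sigma_t)\subset\Omega(V_t)$; the inner inclusion $\Omega(W_t)\subset\Omega(\Sigma_t)$ follows by an entirely analogous argument with $w$, $W_t$, and \eqref{w properties} in place of $v$, $V_t$, and \eqref{subsolution properties}.

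Suppose for contradiction that $\Omega(\Sigma_t)\not\subset\Omega(V_t)$. Setting $F=\Omega(\Sigma_t)$, $E=F\cap\Omega(V_t)\in\mathcal{U}$, and $R=F\setminus\bar E\neq\emptyset$, Lemma~\ref{replacement lemma} gives $\tilde J_t(E)\leq\tilde J_t(F)$, while $\tilde J_t$-minimality of $F$ gives the reverse, so $\tilde J_t(F)=\tilde J_t(E)$. Since $\Sigma_t=\partial F$ is already smooth, we may apply the divergence theorem directly to the calibration field $X=|Dv|^{-1}Dv$ on $R\subset\{v>t\geq t_0\}$, where $\operatorname{div}X=0$ by \eqref{subsolution properties}, following the chain of estimates in the proof of Lemma~\ref{replacement lemma}. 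Combined with the Cauchy--Schwarz inequality $\langle X,\nu(\Sigma_t)\rangle\leq 1$ on $\Sigma_t\cap R$, this yields
\begin{align*}
0\;=\;\tilde J_t(F)-\tilde J_t(E)\;\geq\;-\int_{S\cap R}\langle X,\nu(S)\rangle\;-\;\Phi(t)\,|S\cap R|,
\end{align*}
with equality if and only if $\nu(\Sigma_t)=X$ a.e.~on $\Sigma_t\cap R$. By the strict inequality in \eqref{subsolution properties} combined with $v>t$ on $S\cap R$, the right-hand side is strictly positive whenever $|S\cap R|>0$, contradicting the equality on the left. Hence $|S\cap R|=0$, and the inequality then forces the Cauchy--Schwarz saturation $\nu(\Sigma_t)=X$ throughout $\Sigma_t\cap R$.

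It follows that $\Sigma_t\cap R$ is everywhere tangent to the foliation $\{V_s:s\geq t_0\}$, so each connected component of $\Sigma_t\cap R$ lies within a single level set $V_s$ of $v$ with $s>t$. Real-analyticity of the minimal surfaces $\Sigma_t$ and $V_s=\Sigma_{\lambda(s)}$ (the latter from Lemma~\ref{subsolutions}) promotes this local tangency to global coincidence: a full connected component of $\Sigma_t$ must equal $\Sigma_{\lambda(s)}$. Along the shared boundary $\Gamma_{\lambda(s)}\subset S$, the capillary condition for $\Sigma_t$ reads $\langle\nu(\Sigma_t),\nu(S)\rangle=\tanh(t)$, while the strict subsolution inequality reads $\langle X,\nu(S)\rangle<-\tanh(s)$. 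Since $\nu(\Sigma_t)=X$ on this component and $\tanh$ is strictly increasing with $s>t$, these two boundary identities are incompatible, yielding the sought contradiction. The principal subtlety is the case $|S\cap R|=0$: the argument hinges on both the strictness of \eqref{subsolution properties} and the analyticity of minimal surfaces in $\mathbb{R}^3$ to rule out a shared component with mismatched capillary angles.
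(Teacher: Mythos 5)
Your argument reaches the right conclusion, but by a genuinely different and considerably longer route than the paper's. The paper treats this as a pure barrier statement: by \eqref{subsolution properties} the level sets $\{V_{t'} : t' > t\}$ form a smooth foliation of a neighborhood of infinity by minimal surfaces each meeting $S$ at an angle strictly larger than $\arccos(-\tanh(t'))\ge\arccos(-\tanh(t))$, so a single application of the strong maximum principle for capillary surfaces (sliding $V_{t'}$ inward from $t'=\infty$; the paper cites \cite[Lemma 1.13]{LiZhouZhu}) gives $\Omega(\Sigma_t)\subset\Omega(V_t)$, and the inner inclusion is symmetric. This uses only that $\Sigma_t$ is a minimal capillary surface with the stated angle, not its energy-minimality. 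You instead invoke the $\tilde J_t$-minimality of $\Omega(\Sigma_t)$ and push the calibration inequality of Lemma~\ref{replacement lemma} to its equality case, then analyze rigidity. That works, but it forces the case split $|S\cap\partial R|>0$ versus $=0$, and in the latter case a unique-continuation step whose boundary behavior deserves more care than you give it (one must rule out that the tangency region of $\Sigma_t$ with a level set $V_s$ meets $\partial\Sigma_t$ or $\partial V_s$, which again rests on the strict angle gap; only then does the component coincide with $V_s$ in full, leading to the angle contradiction). A cleaner way to close your second case, avoiding real-analyticity entirely, is to note that $v$ is continuous and constant $=s>t$ on each component $C$ of $\Sigma_t\cap\{v>t\}$ once Cauchy--Schwarz is saturated, so $C$ is closed as well as open in $\Sigma_t$ and hence is a full component whose boundary lies on $\Gamma_{\lambda(s)}$; comparing the two boundary angle conditions then gives $\tanh(s)<\tanh(t)$. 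Two small slips: you write $\Sigma_t\cap R$ where you mean $\Sigma_t\cap\{v>t\}$ (the set $R$ is open, so it is disjoint from $\partial R\supset\Sigma_t$), and the capillary condition is $\langle\nu(\Sigma_t),\nu(S)\rangle=-\tanh(t)$, not $\tanh(t)$. Neither affects the outcome, but the paper's maximum-principle route is shorter, avoids the equality-case analysis, and is the argument you should reach for first.
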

\begin{proof}
	By \eqref{subsolution properties}, for every $t'\in(t,\infty)$, $V_{t'}$ is an admissible surface that is minimal and such that  $\langle \nu(V_{t'}),\nu(S)\rangle < -\tanh (t')$ on $\partial V_{t'}$. Moreover $\{V_{t'}:t'\in(t,\infty)\}$ is a smooth foliation of the complement of a compact subset of $\bar M(S)$. By the strong maximum principle, we conclude that $\Omega(\Sigma_t)\subset \Omega(V_t)$; see, e.g., \cite[Lemma 1.13]{LiZhouZhu}. By the same reasoning, using \eqref{w properties} instead of \eqref{subsolution properties}, we conclude that $\Omega(W_t)\subset \Omega(\Sigma_t)$. The assertion follows. 
\end{proof}
\begin{lem} \label{compactness lemma}
	Let $\{t_k\}_{k=1}^\infty$ be a sequence of numbers $t_k\in[t_0,\infty)$ such that $t_k\to t$ for some $t\in[t_0,\infty)$. Suppose that $\Sigma_{t_k}\in \mathcal{E}_{t_k}$ for every $k$. There is $\Sigma_t\in \mathcal{E}_t$ such that, passing to a subsequence,  there holds $\Sigma_{t_k}\to \Sigma_t$  smoothly. 
\end{lem}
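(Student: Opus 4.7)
The plan is to establish uniform geometric bounds on the sequence $\{\Sigma_{t_k}\}_{k=1}^\infty$, extract a smoothly convergent subsequence, and then pass the minimizing property to the limit. Since $t_k\to t$ with $t$ finite, there is $T\in(t_0,\infty)$ with $t_k\leq T$ for all $k$. By Lemma \ref{remain bounded}, $\Omega(W_{t_0})\subset\Omega(W_{t_k})\subset\Omega(\Sigma_{t_k})\subset\Omega(V_{t_k})\subset\Omega(V_T)$, and since $v(x)\to\infty$ as $x\to\infty$ by \eqref{subsolution properties}, the set $\bar\Omega(V_T)\setminus\Omega(W_{t_0})$ is a fixed compact subset of $\bar M(S)$ that contains every $\Sigma_{t_k}$.

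To obtain a uniform area bound, I use $\tilde\Sigma_{t_0}$ (which satisfies $\Omega(\Sigma_{t_0})\subset\Omega(\tilde\Sigma_{t_0})$) as a competitor in \eqref{mathE def}:
$$
|\Sigma_{t_k}|-\tanh(t_k)\,|S(\Sigma_{t_k})|\leq|\tilde\Sigma_{t_0}|-\tanh(t_k)\,|S(\tilde\Sigma_{t_0})|.
$$
Since $S(\Sigma_{t_k})\subset S(V_T)$ has uniformly bounded area and $|\tanh(t_k)|\leq\tanh(T)<1$, rearranging yields $\sup_k|\Sigma_{t_k}|<\infty$. By Lemma \ref{sigmat is stable}, each $\Sigma_{t_k}$ is stable, and by Lemma \ref{capillary topology} each is a disjoint union of disks. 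The capillary angles $\arccos(-\tanh(t_k))$ lie in a compact subinterval of $(0,\pi)$ that avoids the two degenerate angles. The free boundary analogue of the Schoen--Simon curvature estimate for stable minimal surfaces with contact angle bounded away from $0$ and $\pi$ (applied on the fixed ambient region $\bar\Omega(V_T)\setminus\Omega(W_{t_0})$, where $S$ has uniform regularity) therefore gives $\sup_k\|h(\Sigma_{t_k})\|_{L^\infty}<\infty$. The monotonicity formula at interior and free boundary points provides a positive lower bound on the area of each connected component, so the uniform area bound also bounds the number of components.

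With uniform area, curvature, and topology bounds on surfaces confined to a fixed compact region, a standard argument based on writing $\Sigma_{t_k}$ locally as graphs over the limit and invoking elliptic theory (as in \cite[Theorem 15.9]{GilbargTrudinger}, now in the free boundary formulation; alternatively, Allard-type free boundary regularity, cf.\ \cite[Corollary 1.4]{DePhilippisMaggi}, \cite[Theorem 1.1]{OtisEdelenLi}) yields a smoothly convergent subsequence $\Sigma_{t_k}\to\Sigma_t$. The strict barriers from Propositions \ref{prop:sub} and \ref{prop:super} prevent the limit from collapsing onto $\tilde\Sigma_{t_0}$ or escaping past $V_T$; stability of each $\Sigma_{t_k}$, together with the positive lower area bound per component, rules out higher multiplicity in the limit (a multiplicity-two limit would produce a nontrivial positive Jacobi field, contradicting strict stability inherited in the limit from the strict stability forced by the barriers). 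Hence $\Sigma_t$ is a smooth minimal capillary surface with capillary angle $\arccos(-\tanh(t))$, and $\Omega(\Sigma_{t_0})\subset\Omega(\Sigma_t)$ passes to the limit.

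To verify $\Sigma_t\in\mathcal{E}_t$, let $\Sigma\subset\bar M(S)$ be any admissible surface with $\Omega(\Sigma_{t_0})\subset\Omega(\Sigma)$. From $J_{t_k}(\Sigma_{t_k})\leq J_{t_k}(\Sigma)$, smooth convergence $|\Sigma_{t_k}|\to|\Sigma_t|$, $|S(\Sigma_{t_k})|\to|S(\Sigma_t)|$, and continuity of $\tanh$, I obtain $J_t(\Sigma_t)\leq J_t(\Sigma)$. The main obstacle I expect is Step~3, namely securing a free-boundary curvature estimate for stable minimal capillary surfaces uniformly in the contact angle (on a compact interval of $(0,\pi)$) and simultaneously excluding multiplicity in the smooth convergence. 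Given that the paper's appendix collects the requisite second variation and stability theory for minimal capillary surfaces, and that the sub/supersolution structure developed in Section 2 provides the needed confinement and strict barriers, these analytic tools are precisely what is needed to close the argument.
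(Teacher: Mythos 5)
Your proposal takes essentially the same route as the paper: confinement via the barriers and Lemma~\ref{remain bounded}, a uniform area bound from the minimizing property \eqref{mathE def} tested against a fixed competitor, a curvature estimate for stable minimal capillary surfaces with contact angle bounded away from $\{0,\pi\}$, and then compactness with multiplicity-one convergence. Two steps are looser than the paper's treatment. For the curvature estimate you invoke an unattributed ``free-boundary Schoen--Simon analogue,'' whereas the paper cites the precise results \cite[Theorem 1.6]{HongSaturnino} and \cite[Theorem 1.12]{LiZhouZhu} that establish exactly this for stable capillary surfaces; these are genuinely nontrivial inputs and should be named rather than gestured at. Your multiplicity argument also asserts that the limit is strictly stable ``forced by the barriers,'' but the barriers give confinement, not strict stability, and the $\Sigma_{t_k}$ are merely stable (indeed $\kappa_t=0$ is permitted in Lemma~\ref{first eigenvalue}), so a multiplicity-two sheeting is not a priori excluded by a Jacobi-field argument alone; the robust route is to use the minimizing property \eqref{mathE def} directly, since a higher-multiplicity varifold limit would have strictly more area than a single-sheeted competitor with comparable lateral area, contradicting $J_{t_k}$-minimality of $\Sigma_{t_k}$ for large $k$. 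Finally, you omit the boundary length bound the paper derives from \eqref{geodesic} and \cite[Lemma B.3]{LiZhouZhu}, which is needed alongside the interior curvature estimate to control $\partial\Sigma_{t_k}$ in the compactness argument.
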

\begin{proof}
By		\eqref{asymptotically flat section 4}, the curvature of $S$ is bounded. Consequently,  	by \cite[Theorem 1.6]{HongSaturnino} or \cite[Theorem 1.12]{LiZhouZhu} and in view of \eqref{mathE def}, $|h(\Sigma_{t_k})|_{L^\infty(\Sigma_{t_k})}$ is bounded in terms of $|t_k|$. By \eqref{geodesic}, using again that the curvature of $S$ is bounded, $|k(\partial\Sigma_{t_k})|_{L^\infty(\partial \Sigma_{t_k})}$ is bounded in terms of $|t_k|$ and $|h(\Sigma_{t_k})|_{L^\infty(\partial \Sigma_{t_k})}$.  
By \cite[Lemma B.3]{LiZhouZhu}, $|\partial\Sigma_{t_k}|$ is bounded in terms of $|t_k|$ and $|\Sigma_{t_k}|$. By \eqref{mathE def} and Lemma \ref{remain bounded},
$$
\sup\{|\Sigma_{t_k}|:k\geq 1\}<\infty. 
$$
By a standard compactness argument for minimal surfaces using Lemma \ref{remain bounded}, we see that, passing to a subsequence,  $\Sigma_{t_k}\to \Sigma_t$  smoothly for some  minimal capillary surface $\Sigma_t$ with capillary angle $\arccos(-\tanh (t))$ such that $\Omega(\Sigma_{t_0})\subset \Omega(\Sigma_t)$; see \cite[p.~390-391]{choischoen} and \cite[\S6]{FraserLi}. It follows that $\Sigma_t$ satisfies \eqref{mathE def}. The assertion follows. 
\end{proof}
In particular, $\mathcal{E}_{t_0}\neq \emptyset$.  For convenient reference, we record the following elementary lemma.
\begin{lem} \label{sharp initial}
	Suppose that 
	$$
	\text{$J_{t_0}(\Sigma_{t_0})\leq J_{t_0}(\Sigma)$ for every admissible surface $\Sigma\subset \bar M(S)$ with $\Omega(\Sigma_{t_0})\subset \Omega(\Sigma)$}
	$$
	with equality if and only if $\Sigma=\Sigma_{t_0}$.  There holds  $\mathcal{E}_{t_0}=\{\Sigma_{t_0}\}$.
\end{lem}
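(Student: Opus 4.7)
The plan is to unwind the definition of $\mathcal{E}_{t_0}$ and invoke the hypothesis directly, without any further analysis. This is essentially a tautology, which is presumably why the authors call the lemma elementary.

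First I would verify the inclusion $\Sigma_{t_0}\in\mathcal{E}_{t_0}$. By assumption of this section, $\Sigma_{t_0}$ is a stable minimal capillary surface with capillary angle $\arccos(-\tanh(t_0))$. The inclusion $\Omega(\Sigma_{t_0})\subset \Omega(\Sigma_{t_0})$ is trivial, and the minimization property \eqref{mathE def} with $t=t_0$ is exactly the hypothesis of the lemma. Therefore $\Sigma_{t_0}\in\mathcal{E}_{t_0}$.

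Next I would show the reverse containment $\mathcal{E}_{t_0}\subset\{\Sigma_{t_0}\}$. Let $\Sigma\in\mathcal{E}_{t_0}$ be arbitrary. By the definition of $\mathcal{E}_{t_0}$, the surface $\Sigma$ is a minimal capillary surface with capillary angle $\arccos(-\tanh(t_0))$; in particular, because its capillary angle lies in $(0,\pi)$, the intersection of $\Sigma$ with $S$ is transverse, so $\Sigma$ is admissible in the sense of Appendix \ref{appendix:af surfaces}. Moreover, $\Omega(\Sigma_{t_0})\subset\Omega(\Sigma)$, and testing the minimization property \eqref{mathE def} for $\Sigma$ against the competitor $\Sigma_{t_0}$ yields $J_{t_0}(\Sigma)\leq J_{t_0}(\Sigma_{t_0})$. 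On the other hand, applying the hypothesis of the lemma to $\Sigma$, which is an admissible competitor with $\Omega(\Sigma_{t_0})\subset\Omega(\Sigma)$, gives $J_{t_0}(\Sigma_{t_0})\leq J_{t_0}(\Sigma)$. Hence $J_{t_0}(\Sigma)=J_{t_0}(\Sigma_{t_0})$, and the equality case of the hypothesis forces $\Sigma=\Sigma_{t_0}$.

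Combining both inclusions yields $\mathcal{E}_{t_0}=\{\Sigma_{t_0}\}$. The only substantive step is the observation that a minimal capillary surface with angle in $(0,\pi)$ qualifies as admissible, which is immediate from the definitions recalled at the start of the section; no obstacle is anticipated.
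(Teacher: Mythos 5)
Your proof is correct and is exactly the tautological unwinding the authors have in mind; the paper itself records this lemma without proof, labeling it "elementary." One small remark: you observe that $\Sigma\in\mathcal{E}_{t_0}$ is admissible "because its capillary angle lies in $(0,\pi)$, so the intersection with $S$ is transverse"—this is true, but it is also already built into the paper's definition of a minimal capillary surface as recalled in Appendix C (a minimal capillary surface is by definition an admissible surface satisfying the angle condition), so the step is even more immediate than you say.
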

Given $t\in[t_0,\infty)$, let
\begin{equation} \label{s def}
\begin{aligned}
	&\circ\qquad  s_t^-=\inf \{|S(\Sigma_t)|:\Sigma_t\in \mathcal{E}_t\}\text{  and}\hspace{6cm} \\
	&\circ \qquad s_t^+=\sup \{|S(\Sigma_t)|:\Sigma_t\in \mathcal{E}_t\}.
\end{aligned}
\end{equation} 

\begin{prop} \label{universal existence}
	Let $t\in [t_0,\infty)$. There are $\Sigma_t^-,\,\Sigma_t^+\in \mathcal{E}_t$ such that $|S(\Sigma_t^-)|=s_t^-$ and $|S(\Sigma_t^+)|=s_t^+$. Moreover, 
	\begin{align}\label{interpolate}
		|\Sigma^+_t|=|\Sigma^-_t|+\Phi(t)\,(s_t^+-s_t^-).
	\end{align}
\end{prop}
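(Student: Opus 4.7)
The plan is to derive both the existence statements and the interpolation identity from the compactness theorem Lemma \ref{compactness lemma} together with the observation that every element of $\mathcal{E}_t$ realizes the same common value of the free energy $J_t$.

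First, I would check that $s_t^\pm$ are finite and that the extremizing problems are nontrivial. For $t>t_0$, Proposition \ref{capillary existence} gives $\mathcal{E}_t\neq\emptyset$, and the statement preceding Lemma \ref{sharp initial} (obtained by applying Lemma \ref{compactness lemma} to a sequence $t_k\searrow t_0$) handles $t=t_0$. Lemma \ref{remain bounded} implies that every $\Sigma_t\in\mathcal{E}_t$ satisfies $\Omega(\Sigma_t)\subset\Omega(V_t)$, so its lateral surface $S(\Sigma_t)$ lies in the fixed bounded set $S\cap\bar\Omega(V_t)$. In particular $0\leq s_t^-\leq s_t^+\leq |S\cap\bar\Omega(V_t)|<\infty$.

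Next, to produce $\Sigma_t^-$, I would choose a minimizing sequence $\{\Sigma_t^k\}_{k=1}^\infty\subset\mathcal{E}_t$ with $|S(\Sigma_t^k)|\to s_t^-$ and apply Lemma \ref{compactness lemma} with the constant sequence $t_k=t$. After passing to a subsequence, $\Sigma_t^k$ converges smoothly to some $\Sigma_t^-\in\mathcal{E}_t$. Smooth convergence of $\Sigma_t^k$ yields smooth convergence of the boundary curves $\partial\Sigma_t^k\to\partial\Sigma_t^-$ as curves in $S$, so for $k$ large each $\partial\Sigma_t^k$ lies in a tubular neighborhood of $\partial\Sigma_t^-$ whose complement in $S$ has components in canonical bijection with those of $S\setminus\partial\Sigma_t^-$. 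This gives $|S(\Sigma_t^k)|\to|S(\Sigma_t^-)|$, hence $|S(\Sigma_t^-)|=s_t^-$. The construction of $\Sigma_t^+$ is identical, starting from a maximizing sequence instead.

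Finally, the identity \eqref{interpolate} is forced by the defining property \eqref{mathE def}: every element of $\mathcal{E}_t$ realizes the infimum of $J_t$ over admissible surfaces containing $\Omega(\Sigma_{t_0})$, so $J_t$ is constant on $\mathcal{E}_t$. Applied to $\Sigma_t^+$ and $\Sigma_t^-$ this gives $|\Sigma_t^+|-\Phi(t)\,|S(\Sigma_t^+)|=|\Sigma_t^-|-\Phi(t)\,|S(\Sigma_t^-)|$, which rearranges to \eqref{interpolate}. The only substantive point in the argument is the continuity of the lateral area functional $\Sigma\mapsto |S(\Sigma)|$ under smooth convergence of admissible surfaces; this is routine once one uses that the boundaries $\partial\Sigma_t^k$ eventually lie in a tubular neighborhood of $\partial\Sigma_t^-$ in $S$, so that I do not anticipate any genuine obstacle.
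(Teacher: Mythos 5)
Your argument follows exactly the paper's line of reasoning: use Lemma \ref{compactness lemma} (applied to a minimizing or maximizing sequence for the lateral area) to produce $\Sigma_t^-$ and $\Sigma_t^+$, then observe that $J_t$ is constant on $\mathcal{E}_t$ to deduce \eqref{interpolate}. The extra details you supply — finiteness of $s_t^\pm$ via Lemma \ref{remain bounded}, and the continuity of $\Sigma\mapsto|S(\Sigma)|$ under smooth convergence by tracking $\partial\Sigma$ in a tubular neighborhood — are correct and simply make explicit steps the paper leaves implicit in its appeal to Lemma \ref{compactness lemma}.
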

\begin{proof}
	By Proposition \ref{capillary existence}, $\mathcal{E}_t\neq \emptyset$  for every $t\in(t_0,\infty)$. Using Lemma \ref{compactness lemma}, we see that $\mathcal{E}_{t_0}\neq\emptyset$. Let $t\in[t_0,\infty)$. Using Lemma \ref{compactness lemma} again, we see that there are $\Sigma_t^-,\,\Sigma_t^+\in \mathcal{E}_t$ such that $|S(\Sigma_t^-)|=s_t^-$ and $|S(\Sigma_t^+)|=s_t^+$. By \eqref{mathE def}, we have $J_t(\Sigma^+_t)=J_t(\Sigma^-_t)$. Thus \eqref{interpolate} holds, as asserted.
\end{proof}

\begin{lem} \label{s property 1}
	Let $t_1,\,t_2\in [t_0,\infty)$ be such that $t_2>t_1$. There holds $s^-_{t_2}>s^+_{t_1}$.
\end{lem}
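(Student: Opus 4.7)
My strategy is a direct cross-comparison argument. Using Proposition \ref{universal existence}, I choose surfaces $\Sigma_{t_1}^+\in\mathcal{E}_{t_1}$ and $\Sigma_{t_2}^-\in\mathcal{E}_{t_2}$ realizing $|S(\Sigma_{t_1}^+)|=s_{t_1}^+$ and $|S(\Sigma_{t_2}^-)|=s_{t_2}^-$. Each of these satisfies $\Omega(\Sigma_{t_0})\subset\Omega(\cdot)$ and is therefore an admissible competitor in the variational problem defining the other. Applying \eqref{mathE def} in both directions yields
\begin{align*}
|\Sigma_{t_1}^+|-\Phi(t_1)\,s_{t_1}^+ &\leq |\Sigma_{t_2}^-|-\Phi(t_1)\,s_{t_2}^-,\\
|\Sigma_{t_2}^-|-\Phi(t_2)\,s_{t_2}^- &\leq |\Sigma_{t_1}^+|-\Phi(t_2)\,s_{t_1}^+.
\end{align*}
Adding these and using \eqref{Phi convex} to conclude $\Phi(t_2)-\Phi(t_1)>0$, we obtain the weak bound $s_{t_2}^-\geq s_{t_1}^+$.

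To upgrade this to strict inequality, I argue by contradiction. Assume $s_{t_2}^-=s_{t_1}^+$. Then both inequalities above hold with equality, so $\Sigma_{t_2}^-$ itself attains the infimum of $J_{t_1}$ among admissible surfaces with $\Omega(\Sigma_{t_0})\subset\Omega(\cdot)$. I next observe that $\Sigma_{t_2}^-$ is disjoint from $\bar\Omega(\Sigma_{t_0})$: indeed, \eqref{w properties} together with $w=t_0$ on $\bar\Omega(\Sigma_{t_0})$ and $t_2>t_0$ imply $\bar\Omega(\Sigma_{t_0})\subsetneq \Omega(W_{t_2})$, and $\Omega(W_{t_2})\subset\Omega(\Sigma_{t_2}^-)$ by Lemma \ref{remain bounded}. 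Hence the obstacle constraint is inactive along $\Sigma_{t_2}^-$, so arbitrary smooth compactly supported normal variations of $\Sigma_{t_2}^-$ produce admissible competitors. The first-variation formula for $J_{t_1}$ recorded in Appendix \ref{appendix:free energy} then forces the boundary condition $\langle \nu(\Sigma_{t_2}^-),\nu(S)\rangle = -\Phi(t_1)$ along $\partial\Sigma_{t_2}^-$. But since $\Sigma_{t_2}^-\in\mathcal{E}_{t_2}$ is a minimal capillary surface with capillary angle $\arccos(-\Phi(t_2))$, this same inner product equals $-\Phi(t_2)$. As $t_1\neq t_2$ and $\Phi$ is strictly monotone, this is the desired contradiction.

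The weak inequality is essentially immediate from the cross-comparison together with the strict monotonicity of $\Phi$. The step that I expect to require the most care is the strictness, which boils down to the angle-mismatch argument above; the only subtlety is checking that the obstacle $\Omega(\Sigma_{t_0})\subset\Omega(\cdot)$ does not pin $\Sigma_{t_2}^-$ against $\Sigma_{t_0}$, so that the two-sided capillary first-variation formula is available along all of $\partial\Sigma_{t_2}^-$. This is handled cleanly by invoking the supersolution $w$ via Lemma \ref{remain bounded}.
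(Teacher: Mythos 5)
Your proof is correct and follows essentially the same route as the paper's: a cross-comparison of $J_{t_1}$ and $J_{t_2}$ between minimizers at the two levels, with the strictness coming from the capillary-angle mismatch that would result if a surface in $\mathcal{E}_{t_2}$ also minimized $J_{t_1}$. The paper simply asserts the two strict inequalities $J_{t_1}(\Sigma_{t_1})<J_{t_1}(\Sigma_{t_2})$ and $J_{t_2}(\Sigma_{t_2})<J_{t_2}(\Sigma_{t_1})$ in one line, leaving implicit precisely the point you make explicit, namely that $\Sigma_{t_2}^-$ is strictly separated from the obstacle $\bar\Omega(\Sigma_{t_0})$ (via Lemma \ref{remain bounded} and $t_2>t_0$), so that two-sided first variations of $J_{t_1}$ are available along all of $\partial\Sigma_{t_2}^-$ and the Euler--Lagrange boundary condition forces the wrong contact angle. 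Spelling out this obstacle-inactivity check is a worthwhile clarification, though it is the same underlying argument.
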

\begin{proof} Let $\Sigma_{t_1}\in \mathcal{E}_{t_1}$ and $\Sigma_{t_2}\in \mathcal{E}_{t_2}$. Note that $\Sigma_{t_1}\notin \mathcal{E}_{t_2}$ and $\Sigma_{t_2}\notin \mathcal{E}_{t_1}$. By \eqref{mathE def},  $J_{t_1}(\Sigma_{t_1})<J_{t_1}(\Sigma_{t_2})$ and $J_{t_2}(\Sigma_{t_2})<J_{t_2}(\Sigma_{t_1})$. Thus,
	$$
	(\Phi(t_2)-\Phi(t_1))\,|S(\Sigma_{t_1})|<(\Phi(t_2)-\Phi(t_1))\,|S(\Sigma_{t_2})|.
	$$
	Using \eqref{Phi convex}, the assertion follows. 
\end{proof}
\begin{lem} \label{s property 2}
	Let $t\in [t_0,\infty)$ and $\{t_k\}_{k=1}^\infty$ be a sequence of numbers $t_k\in[t_0,\infty)$. Assume that $t_k\searrow t$. There holds
	\begin{itemize}
		\item[$\circ$] $s^-_{t_k}\searrow s^+_t$ and
		\item[$\circ$] $s^+_{t_k}\searrow s^+_t$.
	\end{itemize}
	Assume that $t_k\nearrow t$. There holds
	\begin{itemize}
	\item[$\circ$] $s^-_{t_k}\nearrow s^-_t$ and
	\item[$\circ$] $s^+_{t_k}\nearrow s^-_t$.
\end{itemize}
\end{lem}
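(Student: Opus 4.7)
The plan is to exploit two facts already established: the strict gap $s^-_{t_2} > s^+_{t_1}$ whenever $t_2 > t_1$ from Lemma \ref{s property 1}, and the smooth subsequential compactness of free energy minimizers from Lemma \ref{compactness lemma}. Together they pin down one-sided limits of $s^\pm$, with a jump at $t$: the limit from the right is $s^+_t$, while the limit from the left is $s^-_t$.

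For $t_k \searrow t$, the first step is to observe that applying Lemma \ref{s property 1} to the pairs $t_k > t_{k+1}$ and $t_k > t$ yields
$$
s^+_t < s^-_{t_k} \leq s^+_{t_k} \qquad \text{and} \qquad s^+_{t_{k+1}} < s^-_{t_k},
$$
so both $\{s^-_{t_k}\}$ and $\{s^+_{t_k}\}$ are strictly decreasing sequences bounded below by $s^+_t$; call their limits $L^- \leq L^+$. Next I would use Proposition \ref{universal existence} to pick $\Sigma^\pm_{t_k} \in \mathcal{E}_{t_k}$ with $|S(\Sigma^\pm_{t_k})| = s^\pm_{t_k}$ and invoke Lemma \ref{compactness lemma} to extract subsequences converging smoothly to some $\tilde\Sigma^\pm \in \mathcal{E}_t$. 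Smooth convergence of the boundaries inside $S$ gives $|S(\tilde\Sigma^\pm)| = L^\pm$. Since $\tilde\Sigma^+ \in \mathcal{E}_t$, the definition of $s^+_t$ forces $L^+ \leq s^+_t$, and I conclude $s^+_t \leq L^- \leq L^+ \leq s^+_t$, hence $L^- = L^+ = s^+_t$.

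The case $t_k \nearrow t$ is a mirror image. Lemma \ref{s property 1} now forces $s^+_{t_k} < s^-_t$ and strict monotonicity the other way, so the sequences $\{s^\pm_{t_k}\}$ increase to limits $L^- \leq L^+ \leq s^-_t$. Extracting smooth subsequential limits $\tilde\Sigma^\pm \in \mathcal{E}_t$ exactly as before and using that $\tilde\Sigma^- \in \mathcal{E}_t$ yields the reverse inequality $s^-_t \leq L^-$, so $L^- = L^+ = s^-_t$.

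The one mild technical point, and the step I would double-check most carefully, is the justification that smooth convergence $\Sigma^\pm_{t_k} \to \tilde\Sigma^\pm$ implies convergence $|S(\Sigma^\pm_{t_k})| \to |S(\tilde\Sigma^\pm)|$ of the lateral areas. This should be routine because Lemma \ref{remain bounded} confines all $\Sigma^\pm_{t_k}$ to a fixed compact region, and smooth convergence of admissible surfaces restricts to smooth convergence of the boundary curves $\partial\Sigma^\pm_{t_k}$ inside $S$; the bounded subsets of $S$ these curves enclose then have continuously varying area.
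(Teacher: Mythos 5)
Your proof is correct and takes essentially the same approach as the paper, which simply cites Lemma \ref{compactness lemma} and Lemma \ref{s property 1} without elaboration; you have fleshed out exactly the intended argument, including the routine check that smooth convergence of the minimizers passes to convergence of the lateral areas.
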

\begin{proof}
	This follows from Lemma \ref{compactness lemma} and Lemma \ref{s property 1}.
\end{proof}

\begin{lem} \label{inner divergence}
	Let $K\subset \bar M(S)$ be a compact set. For every sufficiently large $t\in[t_0,\infty)$, there holds $K\cap \Sigma_t=\emptyset$ for all $\Sigma_t\in \mathcal{E}_t$. 
\end{lem}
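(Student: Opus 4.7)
I argue by contradiction. Suppose there exist a compact set $K\subset \bar M(S)$, a sequence $t_k\to\infty$ in $[t_0,\infty)$, and surfaces $\Sigma_{t_k}\in\mathcal{E}_{t_k}$ with $K\cap\Sigma_{t_k}\neq\emptyset$ for every $k$. Passing to a subsequence and using Lemma \ref{capillary topology}, I fix a component $\Sigma_{t_k}^\star$ of $\Sigma_{t_k}$ that meets $K$; this is a stable minimal capillary disk by Lemma \ref{sigmat is stable}. Let $\Omega_k^\star\subset M(S)$ denote the bounded region with $\partial\Omega_k^\star\cap M(S)=\Sigma_{t_k}^\star$.

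The first step is a uniform area bound. If $\Omega_k^\star\not\supset\Omega(\Sigma_{t_0})$, then $\Sigma_{t_k}\setminus\Sigma_{t_k}^\star$ is an admissible competitor in \eqref{mathE def}, and the resulting free-energy comparison yields
\[
|\Sigma_{t_k}^\star|\leq\tanh(t_k)\,|S(\Omega_k^\star)|.
\]
Combining this with the monotonicity formula for minimal surfaces applied at a point $p_k\in\Sigma_{t_k}^\star\cap K$ and the convex hull property for minimal disks, I obtain that $\Sigma_{t_k}^\star$ and $S(\Omega_k^\star)$ both lie in a compact neighborhood of $K$ with $|\Sigma_{t_k}^\star|,|S(\Omega_k^\star)|\leq C$ for some $C$ depending only on $K$. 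If $\Omega_k^\star\supset\Omega(\Sigma_{t_0})$ instead, I choose $s>t_0$ sufficiently large that Proposition \ref{prop:super} applied to an element of $\mathcal{E}_s$ furnishes an admissible barrier whose inside contains $K$; this reduces the analysis to the previous case.

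Using interior curvature estimates for stable minimal surfaces, which are independent of the capillary angle, together with the area bound, I extract a subsequential varifold limit $V$ of $\Sigma_{t_k}^\star$ whose support lies in a bounded neighborhood of $K$. Because the capillary angle $\arccos(-\tanh(t_k))\to\pi$, the surfaces $\Sigma_{t_k}^\star$ become tangent to $S$ from inside $\bar M(S)$ along their boundary. Representing $\Sigma_{t_k}^\star$ locally as a graph over $S$ and using $H(\Sigma_{t_k}^\star)=0$ together with $H(S)\geq 0$, a Hopf-type comparison for the graph function forces it to collapse onto $S$ in the limit, so the support of $V$ lies in $S$ and has positive two-dimensional measure on an open set $U\subset S$.

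Stationarity of $V$ as a varifold in $\mathbb{R}^3$ then forces $H(S)=0$ on $U$. Combined with unique continuation for the minimal surface equation and the asymptotic decay \eqref{asymptotically flat section 4}, this would make $S$ a flat plane, contradicting the standing assumption inherited from Section 2 that $S$ is not a flat plane. The main obstacle is the boundary limit analysis in the previous step: as $\theta\to\pi$, the standard capillary boundary regularity underlying Lemma \ref{compactness lemma} deteriorates, and establishing the collapse of $\Sigma_{t_k}^\star$ onto $S$ with positive limiting mass requires a delicate Hopf-type comparison exploiting $H(S)\geq 0$.
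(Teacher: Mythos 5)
Your proposal has a real gap at the first step, and the intended mechanism downstream (collapse of $\Sigma_{t_k}^\star$ onto $S$) does not match what actually happens.

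The claimed uniform bound ``$\Sigma_{t_k}^\star$ and $S(\Omega_k^\star)$ both lie in a compact neighborhood of $K$ with $|\Sigma_{t_k}^\star|,\,|S(\Omega_k^\star)|\leq C$'' is not justified by the tools you invoke. The free-energy comparison does give $|\Sigma_{t_k}^\star|\leq \tanh(t_k)\,|S(\Omega_k^\star)|$, but this only couples the two areas; it does not bound either one. The monotonicity formula gives a \emph{lower} bound on area in balls, not an upper bound, and the convex hull property of minimal disks only constrains $\Sigma_{t_k}^\star$ once you already know that $\partial\Sigma_{t_k}^\star$ lies in a fixed compact set, which is precisely what is in question. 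In the model example of the half-catenoid, $|\Sigma_t|$ and $|S(\Sigma_t)|$ both diverge as $t\to\infty$, so no such uniform bound should be expected; the contradiction hypothesis $K\cap\Sigma_{t_k}\neq\emptyset$ alone does not rule out the surfaces extending arbitrarily far while still meeting $K$. Moreover, your subsequent picture --- that as $\arccos(-\tanh t_k)\to\pi$ the surfaces must ``collapse onto $S$'' with the varifold limit supported in $S$ --- is not correct in general: if $\partial\Sigma_{t_k}$ escapes to infinity while the surfaces continue to meet $K$, the local limit is a genuine unbounded area-minimizing boundary inside $M(S)$ with no boundary at all (not a piece of $S$). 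This is a scenario you never address, and it is exactly the one the paper disposes of via the Hoffman--Meeks half-space theorem.

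The paper's actual proof avoids any global area bound. After fixing a diffeomorphism $\Psi$ from a planar domain onto $(S\setminus S(\Sigma_{t_0}))\cup\partial\Sigma_{t_0}$, it introduces $r_k=\min\{|y|:y\in\Psi^{-1}(\partial\Sigma_{t_k})\}$ and dichotomizes. When $r_k=O(1)$, it builds Jordan curves $\Gamma_k$ touching $\Psi^{-1}(\partial\Sigma_{t_k})$ at $y_k$, solves a Plateau-type least-area problem in $\bar M(S)\setminus\bar\Omega(\Sigma_{t_0})$ with prescribed boundary $\Psi(\Gamma_k)$ using a calibration by level sets of the supersolution $w$, and shows via the strong maximum principle that these barrier surfaces $\tilde\Sigma_k$ are uniformly non-tangent to $S$. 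Because the capillary angle of $\Sigma_{t_k}$ tends to $\pi$, $\Sigma_{t_k}$ and $\tilde\Sigma_k$ intersect transversely at $\Psi(y_k)$, and the Lipschitz surface $\hat\Sigma_k$ with $\Omega(\hat\Sigma_k)=\Omega(\Sigma_{t_k})\cap\Omega(\tilde\Sigma_k)$ is a competitor with an edge and equal area, contradicting least area. When $r_k\to\infty$, a subsequential locally smooth limit is an unbounded area-minimizing boundary, and the proof of the half-space theorem gives the contradiction. Your approach also lacks the key maximum-principle insight that is applied not to $\Sigma_{t_k}$ itself (whose boundary is free and whose angle degenerates) but to an auxiliary minimizer $\tilde\Sigma_k$ with \emph{fixed} boundary, which keeps the boundary regularity and tangency analysis under control.
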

\begin{proof}
	Suppose, for a contradiction, that the assertion of the lemma is false. Then there exists a sequence $\{t_k\}_{k=1}^\infty$ of numbers $t_k\in [t_0,\infty)$ with  $t_k\to\infty$  and surfaces $\Sigma_{t_k}\in \mathcal{E}_{t_k}$  such that $K\cap\Sigma_{t_k}\neq \emptyset$ for every $k$. 
	
	By assumption, $S$ is diffeomorphic to the plane. By Lemma \ref{minimization in homology class3}, $\Sigma_{t_0}$ is diffeomorphic to a union of disks. It follows that there are $p_1,\ldots,p_\ell \in \mathbb{R}^2$ with $|p_i-p_j|>2$ for all $1\leq i,\,j\leq \ell$ with $i\neq j$ and a diffeomorphism
	$$
	\Psi:\mathbb{R}^2\setminus \bigcup_{i=1}^\ell \{y\in \mathbb{R}^2:|y-p_i|<1\}\to (S\setminus S(\Sigma_{t_0}))\cup \partial \Sigma_{t_0}. 
	$$
	We may assume that $p_1=0$. Since $\Omega(\Sigma_{t_0})\subset \Omega(\Sigma_{t_k})$, $\Psi^{-1}(\partial \Sigma_{t_k})$ encloses $\{y\in \mathbb{R}^2:|y|<1\}$ for every $k$. Let $r_k=\min\{|y|:y\in \Psi^{-1}(\partial \Sigma_{t_k})\}$ and note that, by Lemma \ref{remain bounded}, $\liminf_{k\to\infty} r_k>1$. Let $y_k\in \Psi^{-1}(\partial \Sigma_{t_k})$ be such that $|y_k|=r_k$. 
		
	Assume first that $r_k=O(1)$. It follows that there are Jordan curves $$\Gamma_k\subset \mathbb{R}^2\setminus \bigcup_{i=1}^\ell \{y\in \mathbb{R}^2:|y-p_i|\leq 1\}$$ with the following properties; see Figure \ref{curves}.
		\begin{figure}\centering
		\includegraphics[width=\linewidth]{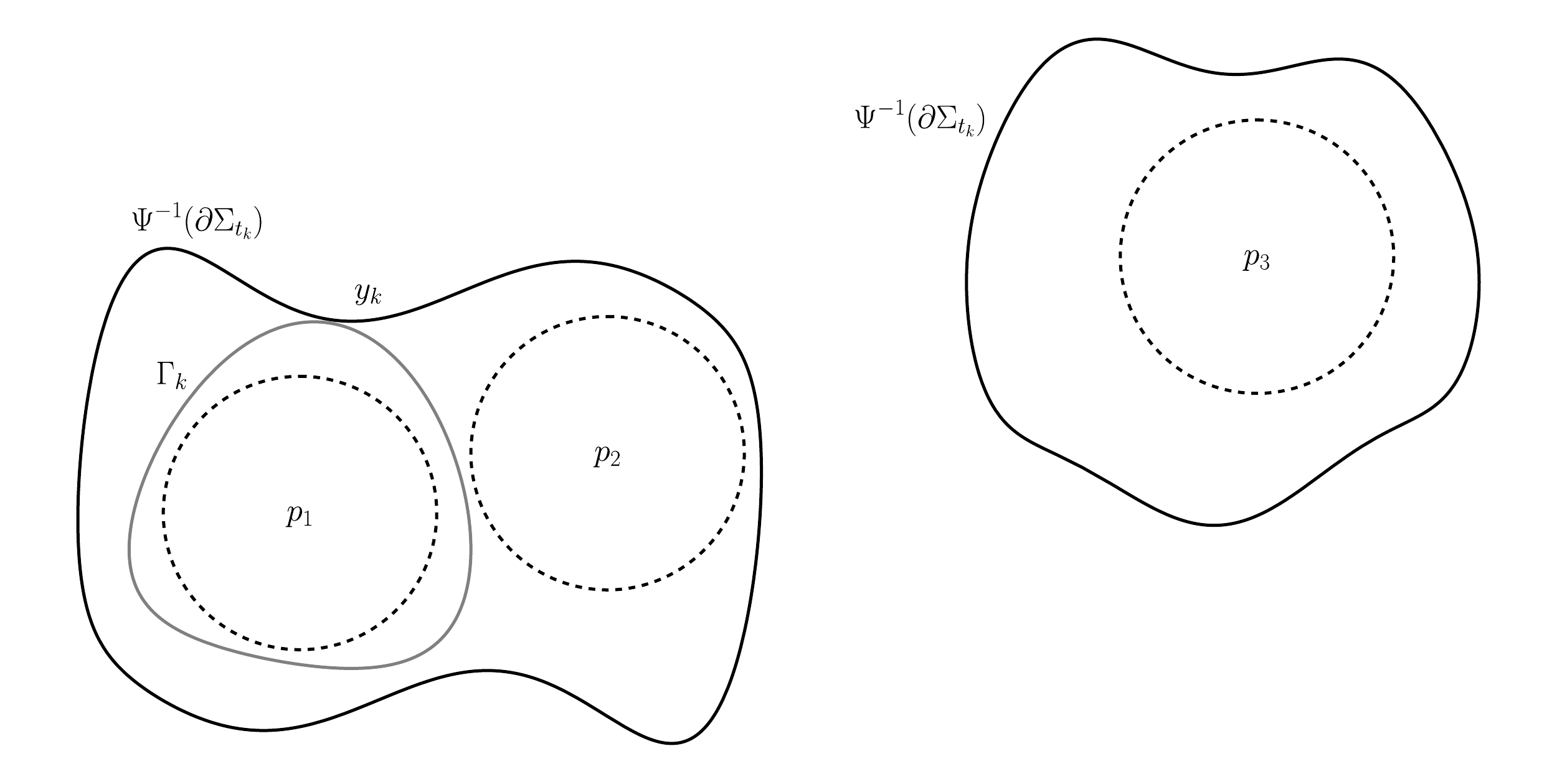}
		\caption{An illustration of the proof of Lemma \ref{inner divergence}. $\Psi^{-1}(\partial \Sigma_{t_0})$ has three components, which are illustrated by the dashed black lines. $\Psi^{-1}(\partial \Sigma_{t_k})$ has two components, which are illustrated by the solid black lines. $\Gamma_k$, which is illustrated by the solid gray line, touches one of these two components at $y_k$.  
		}
		\label{curves}
	\end{figure}
	\begin{itemize}
		\item[$\circ$] $\Gamma_k$ is homotopic to $\{y\in \mathbb{R}^2:|y|=1\}$ in $\mathbb{R}^2\setminus \bigcup_{i=1}^\ell \{y\in \mathbb{R}^2:|y-p_i|<1\}$. 
		\item[$\circ$] There holds $\Gamma_k\cap \Psi^{-1}(\partial \Sigma_{t_k})=\{y_k\}$. 
		\item[$\circ$] There holds $\sup\{|y|:y\in  \Gamma_k\}=r_k.$
		\item[$\circ$] There holds $	\liminf_{k\to\infty}\inf\{|y|:y\in  \Gamma_k\}>1.$
				\item[$\circ$] $\Gamma_k$ converges smoothly to a Jordan curve $\Gamma$ as $k \to \infty$.
	\end{itemize}
	Using that $H(S)\geq 0$ on $S\setminus S(\Sigma_{t_0})$ and a calibration argument based on the level sets of $w$ as in the proof of Lemma \ref{replacement lemma}, we see that there is a connected admissible surface $\tilde \Sigma_k\subset \bar M(S)\setminus \bar \Omega  (\Sigma_{t_0})$ with boundary $\partial \tilde \Sigma_k = \Psi (\Gamma_k)$ such that
	\begin {itemize}
	\item [$\circ$] 
	$|\tilde \Sigma_k|\leq |\Sigma|$ for every admissible surface $\Sigma \subset \bar M(S)\setminus \bar \Omega(\Sigma_{t_0})$ with $\partial \Sigma=\Psi(\Gamma_k)$ and 
	\item [$\circ$]  $\liminf_{k\to\infty}\operatorname{dist}(\tilde \Sigma_k,\Sigma_{t_0})>0$.
	\end {itemize}
	We claim that 
	\begin{align} \label{not tangent} 
		\liminf_{k\to\infty}\inf\{\langle \nu(\tilde \Sigma_k)(x),\nu(S)(x)\rangle:x\in \partial \tilde \Sigma_k\}>-1.
	\end{align} 
	If not, using  standard tools from minimal surface theory, we see that $\tilde \Sigma_k$ converges smoothly to an admissible minimal surface $\tilde \Sigma\subset \bar M(S)\setminus \bar \Omega(\Sigma_{t_0})$ with $\partial \tilde \Sigma = \Psi (\Gamma)$ that is tangent to $S$ somewhere on $\tilde \Sigma$. Then  $\tilde \Sigma\subset S$, by the strong maximum principle, which is impossible.
	
	 In view of \eqref{not tangent}, using that, by \eqref{capillary angle pi}, the capillary angle of $\Sigma_{t_k}$ converges to $\pi$ as $k\to\infty$,  we see that $\Sigma_{t_k}$ and $\Sigma_k$ intersect transversely at   $\Psi(y_k)$ provided that $k$ is sufficiently large. As $\partial \Sigma_{t_k}\cap \partial \tilde \Sigma_k=\{\Psi(y_k)\},$ it follows that  the admissible Lipschitz surface $\hat \Sigma_{k}\subset \bar M(S)$ with $\Omega(\hat \Sigma_{t_k})=\Omega(\Sigma_{t_k})\cap \Omega(\tilde \Sigma_k)$ satisfies $\partial\hat \Sigma_k=\Psi(\Gamma_k)$ and $\hat \Sigma_k\subset \bar M(S)\setminus \bar \Omega(\Sigma_{t_0})$. Moreover, using \eqref{mathE def}, we see that $|\hat \Sigma_k|=|\tilde \Sigma_k|$. Since $\hat \Sigma_{k}$ has an edge, we see that $\tilde \Sigma_k$ is not a least area surface in $\bar M(S)\setminus  \bar \Omega(\Sigma_{t_0})$, a contradiction.

	We now consider the case where $r_k\to\infty$. Using \eqref{mathE def}, standard tools from minimal surface theory, and that $K\cap \Sigma_{t_k}\neq \emptyset$ for every $k$, we see that, passing to a subsequence, $\Sigma_{t_k}$ converges locally smoothly to an unbounded area-minimizing boundary contained in $ \bar M(S)\setminus \bar \Omega(\Sigma_{t_0})$. As as in the proof of the half-space theorem  \cite[Theorem 1]{HoffmanMeeks} as we did in the proof of Lemma \ref{subsolutions}, we obtain a contradiction.
	
	The assertion follows. 
\end{proof}

\section{Monotonicity of the free energy mass}

In this section, we assume that $ S\subset \mathbb{R}^3$ is a properly embedded plane with integrable mean curvature. 
Moreover, we assume that the complement of a compact subset of $S$ is contained in the graph of a function $\psi \in C^\infty(\mathbb{R}^2)$ such that, for some $\tau>1/2$,
\begin{equation*} 
	\begin{aligned}
		\sum_{i=1}^2|\partial_i\psi(y)|+\sum_{i,\,j=1}^2|y|\,|\partial^2_{ij}\psi(y)|=O(|y|^{-\tau}).
	\end{aligned}
\end{equation*} 
Let $\nu(S)$ be the unit normal field of $S$ asymptotic to $-e_3$. The component of $\mathbb{R}^3\setminus S$ that $\nu(S)$ points out of is denoted by $M(S)$ and referred to as the region above $S$.   We denote the second fundamental form of $S$ with respect to $\nu(S)$ by $h(S)$. Our convention is such that the trace of $h(S)$, i.e., the mean curvature $H(S)$, is the tangential divergence of $\nu(S)$.

Recall from Appendix \ref{appendix:af surfaces} the definition of an admissible surface $\Sigma \subset \bar M(S)$ as well as that of its lateral surface $S(\Sigma)\subset S$ and  its inside $\Omega(\Sigma)\subset M(S)$. Moreover, recall the definition of the normal $\nu(\Sigma)$, the conventions for the second fundamental form $h(\Sigma)$ and mean curvature $H(\Sigma)$, as well as the definitions of the Gauss curvature $K(\Sigma)$ and the Euler characteristic $\chi(\Sigma)$.

	Recall from Appendix \ref{appendix:variation} the definition of an admissible variation $\{\Sigma(s):s\in(-\varepsilon,\varepsilon)\}$ of an admissible surface $\Sigma \subset \bar M(S)$. 

Recall from Appendix \ref{appendix:free energy} the definition of a minimal capillary surface $\Sigma\subset \bar M(S)$ with capillary angle $\theta\in (0,\pi)$ and the notion of stability for such a surface. Given $t\in\mathbb{R}$, recall the definition   \eqref{free energy smooth} of the free energy $J_t(\Sigma)$ with angle $\arccos(-\tanh (t))$ of an admissible surface $\Sigma \subset \bar M(S)$. Moreover, recall the definition \eqref{free energy mass} of the  free energy mass $m_f(\Sigma)$ of a minimal capillary surface $\Sigma\subset \bar M(S)$.

Let $t_0\geq 0$. Suppose that $\Sigma_{t_0}\subset \bar M(S)$ is a stable  minimal capillary surface with capillary angle $\arccos(-\tanh(t_0))$. We assume that $H(S)\geq 0$ on   $S\setminus S(\Sigma_{t_0})$.

Given $t\in[t_0,\infty)$, recall from Section \ref{stable minimal section} the definition of the sets $\mathcal{E}_{t}$ of all minimal capillary surfaces  $\Sigma_t\in \mathcal{E}_t$ satisfying \eqref{mathE def}. Recall from \eqref{s def} the definitions of $s^-_t,\,s^+_t$ and that, according to Proposition \ref{universal existence}, there are $\Sigma^-_t,\,\Sigma^+_t\in\mathcal{E}_t$ with $|S(\Sigma^-_t)|=s^-_t$ and $|S(\Sigma^+_t)|=s^+_t$.

The goal of this section is to prove that the function    $t\mapsto m_f(\Sigma^-_t)$ is nondecreasing.

Recall that $\Phi\in C^\infty(\mathbb{R})$ is given by $\Phi(t)=\tanh(t)$.

In view of Lemma \ref{s property 1} and Lemma \ref{s property 2}, we may define a continuous and nondecreasing  function $\sigma:(|S(\Sigma^+_{t_0})|,\infty)\to (t_0,\infty)$ that is determined by $\sigma(s)=t$ where $t\in(t_0,\infty)$ is such that $s\in[s^-_t,s^+_t]$. 
By Lemma \ref{inner divergence}, $\sigma$ is surjective. Let $\upsilon:(|S(\Sigma^{+}_{t_0})|,\infty)\to\mathbb{R}$ be given by $$  \upsilon(s)=
	|\Sigma^-_{\sigma(s)}|+\Phi(\sigma(s))\,(s-s^-_{\sigma(s)}).$$
	\begin{rema}
		Assume that $t\in(t_0,\infty)$ is such that $s^-_t=s^+_t=s$. Then $\sigma(s)=t$ and $\upsilon(s)=|\Sigma_t|$ for every $\Sigma_t\in\mathcal{E}_t$. 
	\end{rema}
Using Lemma \ref{compactness lemma}, Lemma \ref{s property 2}, \eqref{interpolate}, and that $\sigma$ is continuous, we see that $\upsilon$ is continuous.

According to	 Lemma \ref{rho k lower bound} below,  $\upsilon$ is a lower bound for the free energy profile \eqref{free energy profile intro}.
\begin{lem} \label{rho k lower bound}
	Let $\Sigma \subset \bar M(S)$ be an  admissible surface with $\Omega(\Sigma_{t_0})\subset \Omega(\Sigma)$ and $|S(\Sigma_{t_0}^+)|< |S(\Sigma)|$. There holds 
	$$
	\upsilon(|S(\Sigma)|)\leq |\Sigma|.
	$$
	\end{lem}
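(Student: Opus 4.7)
The plan is to reduce the inequality to a direct application of the definition of $\mathcal{E}_t$ for $t=\sigma(|S(\Sigma)|)$. Set $s=|S(\Sigma)|$. By hypothesis, $s>|S(\Sigma_{t_0}^+)|$, so $s$ lies in the domain of $\sigma$. Using Lemma \ref{s property 1} and Lemma \ref{s property 2}, the map $\sigma$ is well-defined and nondecreasing on this range, and by Lemma \ref{inner divergence} it is surjective onto $(t_0,\infty)$. Thus there is a unique $t=\sigma(s)\in(t_0,\infty)$ with $s\in[s^-_t,s^+_t]$.

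The key point is that $\Sigma^-_t\in\mathcal{E}_t$ is a minimizer of the free energy $J_t$ among admissible surfaces containing $\Omega(\Sigma_{t_0})$, as per \eqref{mathE def}, while $\Sigma$ is an admissible competitor with $\Omega(\Sigma_{t_0})\subset\Omega(\Sigma)$. Consequently,
\begin{align*}
|\Sigma^-_t|-\Phi(t)\,s^-_t = J_t(\Sigma^-_t)\leq J_t(\Sigma) = |\Sigma|-\Phi(t)\,s.
\end{align*}
Rearranging and using the definitions of $\sigma$ and $\upsilon$ yields
\begin{align*}
|\Sigma| \geq |\Sigma^-_t|+\Phi(t)\,(s-s^-_t) = |\Sigma^-_{\sigma(s)}|+\Phi(\sigma(s))\,(s-s^-_{\sigma(s)}) = \upsilon(s),
\end{align*}
which is the claimed inequality.

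There is no real obstacle here; the lemma is an immediate bookkeeping consequence of the minimizing property \eqref{mathE def} once one recognizes that the right-hand side of the definition of $\upsilon(s)$ is nothing other than $J_{\sigma(s)}(\Sigma^-_{\sigma(s)})+\Phi(\sigma(s))\,s$, so the inequality $\upsilon(s)\leq|\Sigma|$ is equivalent to $J_{\sigma(s)}(\Sigma^-_{\sigma(s)})\leq J_{\sigma(s)}(\Sigma)$. The only subtlety worth verifying is that $t=\sigma(s)$ lies strictly above $t_0$, which is guaranteed by the strict inequality $|S(\Sigma_{t_0}^+)|<|S(\Sigma)|$ together with Lemma \ref{s property 1}.
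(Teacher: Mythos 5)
Your proof is correct and is essentially identical to the paper's: both invoke \eqref{mathE def} with $t=\sigma(|S(\Sigma)|)$ to get $J_t(\Sigma^-_t)\leq J_t(\Sigma)$ and then rearrange, using the definition of $\upsilon$. The remarks about monotonicity and surjectivity of $\sigma$ are harmless but not needed here—once $\sigma$ is defined, only the minimizing property of $\Sigma^-_{\sigma(s)}$ enters the argument.
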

\begin{proof}
 By \eqref{mathE def}, we have
$
J_{\sigma(|S(\Sigma)|)}(\Sigma^-_{\sigma (|S(\Sigma)|)})\leq J_{\sigma(|S(\Sigma)|)}(\Sigma).
$
Using that $s^-_{\sigma(|S(\Sigma)|)}=|S(\Sigma^-_{\sigma(|S(\Sigma))})|$, we conclude that 
$$
\upsilon(|S(\Sigma)|)=|\Sigma^-_{\sigma(|S(\Sigma)|)}|+\Phi(\sigma(|S(\Sigma)|))\,(|S(\Sigma)|-s^-_{\sigma(|S(\Sigma)|)})\leq |\Sigma|,
$$
as asserted.
\end{proof}
The left-derivative and right-derivative   of a nondecreasing function $f$ are denoted by $\ubar f'$ and $\bar f'$, respectively. 
\begin{lem} \label{rho k increasing}
The function	$ \upsilon:(|S(\Sigma^{+}_{t_0})|,\infty)\to\mathbb{R}$
 is strictly increasing. Moreover, there holds 
	\begin{align} \label{ld and rd} 
		\bar {\upsilon}'(s)\leq \Phi(\sigma(s))\leq \ubar\upsilon'(s)
	\end{align} 
	for every $s\in (|S(\Sigma^{+}_{t_0})|,\infty)$.
\end{lem}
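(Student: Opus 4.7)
The plan is to derive both strict monotonicity and the one-sided derivative bounds directly from the minimization property \eqref{mathE def}, by repeatedly rearranging the inequality $J_{t'}(\Sigma^-_{t'})\leq J_{t'}(\Sigma)$ for suitable competitors $\Sigma$. The key algebraic observation is that on each plateau $[s^-_t,s^+_t]$ where $\sigma$ is constant equal to $t$, the formula for $\upsilon$ together with \eqref{interpolate} makes $\upsilon$ affine with slope $\Phi(t)$; consequently, everything reduces to controlling how $\upsilon$ bridges two plateaus corresponding to different values of $\sigma$.

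First I would establish strict monotonicity. Given $s_1<s_2$ in $(|S(\Sigma^+_{t_0})|,\infty)$, write $t_i=\sigma(s_i)$, so $t_0<t_1\leq t_2$. Applying \eqref{mathE def} at level $t_1$ with competitor $\Sigma^-_{t_2}$ produces
$$|\Sigma^-_{t_2}|\geq |\Sigma^-_{t_1}|+\Phi(t_1)\,(s^-_{t_2}-s^-_{t_1}),$$
and inserting this into the definition of $\upsilon$ gives
$$\upsilon(s_2)-\upsilon(s_1)\geq \Phi(t_1)\,(s^-_{t_2}-s_1)+\Phi(t_2)\,(s_2-s^-_{t_2}).$$
If $t_1=t_2$, the right-hand side collapses to $\Phi(t_1)\,(s_2-s_1)>0$ since $\sigma>t_0\geq 0$ and thus $\Phi(\sigma(\cdot))>0$. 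If $t_1<t_2$, Lemma \ref{s property 1} yields $s^-_{t_2}>s^+_{t_1}\geq s_1$, so the first summand is already strictly positive.

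Next I would address the one-sided derivative inequalities \eqref{ld and rd}. When $s$ is strictly interior to the plateau $[s^-_{\sigma(s)},s^+_{\sigma(s)}]$, the affinity observation above yields both inequalities with equality on a neighborhood, so the only substantive cases are $s=s^+_t$ for the right derivative and $s=s^-_t$ for the left derivative, with $t=\sigma(s)$. For $s=s^+_t$, take $s'>s^+_t$ close to $s$ and set $t'=\sigma(s')$, so $t'>t$. The inequality $J_{t'}(\Sigma^-_{t'})\leq J_{t'}(\Sigma^+_t)$ rearranges to $|\Sigma^-_{t'}|-|\Sigma^+_t|\leq \Phi(t')\,(s^-_{t'}-s^+_t)$; unfolding the definitions and using \eqref{interpolate} for $\upsilon(s)=|\Sigma^+_t|$ telescopes to
$$\upsilon(s')-\upsilon(s)\leq \Phi(t')\,(s'-s).$$
Dividing by $s'-s>0$ and letting $s'\searrow s$, using continuity of $\Phi$ together with $t'\searrow t$ from Lemma \ref{s property 2}, gives $\bar\upsilon'(s)\leq \Phi(\sigma(s))$. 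The symmetric bound $\ubar\upsilon'(s)\geq\Phi(\sigma(s))$ at $s=s^-_t$ follows by the same calculation with the competitor inequality $J_{t'}(\Sigma^-_{t'})\leq J_{t'}(\Sigma^-_t)$ for $s'<s^-_t$ and $t'=\sigma(s')<t$.

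The main delicate point is bookkeeping: one has to verify that when $s'$ approaches $s$ from the side on which $\sigma(s')\neq\sigma(s)$, the corresponding $t'$ does converge to $t$, because otherwise the limit of $\Phi(t')$ would not match $\Phi(\sigma(s))$. This is exactly the monotone convergence statement provided by Lemma \ref{s property 2}. Beyond that, the argument is an essentially mechanical exploitation of \eqref{mathE def}, and no curvature or second-variation input is needed at this stage.
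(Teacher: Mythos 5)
Your proof is correct and follows essentially the same strategy as the paper's: both derive strict monotonicity and the one-sided derivative bounds by comparing the minimizers $\Sigma^\pm_t$ against each other in the free energy functional at various levels, combined with Lemma \ref{s property 1}, Lemma \ref{s property 2}, and the affine structure of $\upsilon$ on each plateau $[s^-_t,s^+_t]$. The only cosmetic difference is in the strict-monotonicity step: you compare $\Sigma^-_{t_1}$ with $\Sigma^-_{t_2}$ at level $t_1$ and obtain a single unified inequality valid whether or not $\sigma(s_1)=\sigma(s_2)$, whereas the paper first disposes of the plateau case and then compares $\Sigma^+_{\sigma(s_1)}$ with $\Sigma^-_{\sigma(s_2)}$ at level $\sigma(s_1)$; both are the same idea.
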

\begin{proof}
	Let $s_1,\,s_2\in (|S(\Sigma^{+}_{t_0})|,\infty)$ be such that $s_2> s_1$. 
	
	To prove that $\upsilon$ is strictly increasing, using that, by \eqref{Phi convex} and \eqref{Phi zero}, $\Phi(t)> 0$ for every $t>0$, we may assume  
	that  $\sigma(s_2)>\sigma(s_1)$. Then, using that $\Phi(t)> 0$ for every $t> 0$  and \eqref{interpolate}, we have $\upsilon(s_1)\leq |\Sigma^{+}_{\sigma(s_1)}|$ and $\upsilon(s_2)\geq  |\Sigma^-_{\sigma(s_2)}|$. According to \eqref{mathE def}, $J_{\sigma(s_1)}(\Sigma^{+}_{\sigma(s_1)})\leq J_{\sigma(s_1) }(\Sigma^-_{\sigma(s_2)})$. Using that $\Phi(t)>0$ for every $t>0$ and that  $|S(\Sigma^{+}_{\sigma(s_1)})|<|S(\Sigma^-_{\sigma(s_2)})|$ by Lemma \ref{s property 1}, we obtain that $|\Sigma^{+}_{\sigma(s_1)}|<|\Sigma^-_{\sigma(s_2)}|$ and conclude that $\upsilon(s_2)> \upsilon(s_1)$, as asserted.  
	
Assume that $s_1< s^+_{\sigma(s_1)}$. Note that $\sigma(s_1)=\sigma(s_2)$ provided that $s_2-s_1>0$ is sufficiently small. It follows that 
	$$
	\upsilon(s_2)- \upsilon(s_1)=\Phi(\sigma(s_1))\,(s_2-s_1).
	$$ 
	Assume that $s_1=s^+_{\sigma(s_1)}$. By \eqref{interpolate}, $\upsilon(s_1)=|\Sigma^+_{\sigma(s_1}|$.  According to \eqref{mathE def},  $J_{\sigma(s_2)}(\Sigma^-_{\sigma(s_2)})\leq J_{\sigma(s_2)}(\Sigma^+_{\sigma(s_1)})$. 	Consequently,
	\begin{align*}
		\upsilon(s_2)-\upsilon(s_1)&\leq  \Phi(\sigma(s_2))\,(s_2-s_1).
	\end{align*}
	Using that $\sigma$ is continuous, we conclude that $\bar \upsilon'(s_1)\leq \Phi(\sigma(s_1))$. A similar argument shows that $\Phi(\sigma(s_1))\leq \ubar  \upsilon'(s_1).$ The assertion follows.
\end{proof}
The following lemma and its proof should be compared to the arguments in \cite[\S7]{BavardPansu} and  the proof of  \cite[Theorem 3]{bray}.

\begin{lem} \label{comparison function}
The following property holds for every $\delta>0$ sufficiently small. 
	For every $s\in (|S(\Sigma^{+}_{t_0})|+\delta,\infty)$, there exists $\tilde \delta\in(0,\delta)$ and $\tilde \upsilon \in C^\infty((s-\tilde \delta,s+\tilde \delta))$ such that 
	\begin{itemize}
		\item[$\circ$] $\tilde \upsilon(s)= \upsilon(s)$,
		\item[$\circ$] $\tilde\upsilon(s')\geq \upsilon(s')$ for every $s'\in(s-\tilde \delta,s+\tilde \delta)$,
		\item[$\circ$] $\tilde \upsilon'(s)=\tanh(\sigma(s))$, and 

		\item[$\circ$] $2\,\tilde \upsilon''(s)\,\tilde \upsilon(s)\leq 1-\tilde \upsilon'(s)^2$  .

	\end{itemize}
Moreover, $\tilde \upsilon \in C^\infty((s-\tilde \delta, s+\tilde \delta))$ can be chosen such that, in addition, 	\begin{align} \label{strict} 2\,\tilde \upsilon''(s)\,\tilde \upsilon(s)< 1-\tilde \upsilon'(s)^2\end{align} unless 
\begin{itemize}
	\item[$\circ$] $s=s^-_{\sigma(s)}$ or $s=s^+_{\sigma(s)}$,
	\item[$\circ$] every $\Sigma_{\sigma(s)}\in \mathcal{E}_{\sigma(s)}$ with $|\Sigma_{\sigma(s)}|= s$ is a union of flat disks, and
	\item[$\circ$]   $H(S)=0$ on $\partial \Sigma_{\sigma(s)}$ for every   $\Sigma_{\sigma(s)}\in \mathcal{E}_{\sigma(s)}$.
\end{itemize}
\end{lem}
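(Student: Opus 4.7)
I split the argument according to whether $s$ lies in the open interval $(s^-_{\sigma(s)},\,s^+_{\sigma(s)})$ or at one of its endpoints. If $s$ is strictly interior, then by \eqref{interpolate} and the definition of $\upsilon$, the function $\upsilon$ is affine on a neighborhood of $s$ with slope $\Phi(\sigma(s))=\tanh(\sigma(s))$; taking $\tilde\upsilon=\upsilon$ locally produces a smooth comparison function with $\tilde\upsilon''(s)=0$, so the strict inequality \eqref{strict} holds automatically because $1-\tanh^2(\sigma(s))>0$. The substance of the proof concerns the endpoint case $s=s^\pm_{\sigma(s)}$, treated below.

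Writing $t=\sigma(s)$, I use Proposition \ref{universal existence} to select $\Sigma\in\mathcal{E}_t$ with $|S(\Sigma)|=s$ and $|\Sigma|=\upsilon(s)$. Lemma \ref{sigmat is stable} makes $\Sigma$ stable, and Lemma \ref{capillary topology} presents $\Sigma$ as a disjoint union of topological disks $\Sigma_1\sqcup\cdots\sqcup\Sigma_\ell$. For each component let $B_i$ denote the first variation of $|S(\Sigma_i)|$ under a unit normal variation supported on $\Sigma_i$; the capillary condition $\cos\theta=-\tanh(t)$ yields $B_i=|\partial\Sigma_i|\cosh(t)$. I construct an admissible variation $\{\Sigma(u)\}$ of $\Sigma$ whose normal speed on $\Sigma_i$ is the constant $c_i=|\Sigma_i|/B_i>0$. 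Since $\sum_i c_i B_i=|\Sigma|>0$, the inverse function theorem provides a smooth local inverse $u(s')$ of $u\mapsto|S(\Sigma(u))|$ near $u=0$, and I set $\tilde\upsilon(s')=|\Sigma(u(s'))|$. Lemma \ref{rho k lower bound} then gives $\tilde\upsilon\ge\upsilon$, and the vanishing of the first variation of $J_t$ at $\Sigma$ (componentwise, since $J_t$ decouples over connected components) yields $\tilde\upsilon'(s)=\tanh(t)$.

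The central estimate is the bound on $\tilde\upsilon''(s)$. The chain rule gives
\[
\tilde\upsilon''(s) \;=\; \frac{\sum_{i=1}^{\ell} c_i^{\,2}\,\mathcal{Q}_i}{\bigl(\sum_{i=1}^{\ell} c_i B_i\bigr)^{2}},
\]
where $\mathcal{Q}_i$ denotes the second variation of $J_t$ at $\Sigma$ under a unit normal variation supported on $\Sigma_i$; the sum decomposes because the bilinear form has no cross-terms between disjoint components (their interiors and boundaries are disjoint). I then apply the second variation formula \eqref{area second}, Gauss-Bonnet on the disk $\Sigma_i$ (using $H(\Sigma_i)=0$, so that $K(\Sigma_i)=-\tfrac12|h(\Sigma_i)|^{2}$ and $\chi(\Sigma_i)=1$), the capillary-angle identity relating $k(\partial\Sigma_i)$ to $H(S)|_{\partial\Sigma_i}$, and the hypothesis $H(S)\ge 0$, exactly as in the derivation of \eqref{rho estimate} but executed component by component. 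Combined with the optimal isoperimetric inequality $|\partial\Sigma_i|^{2}\ge 4\pi|\Sigma_i|$ of Carleman--Almgren--Brendle, this yields $\mathcal{Q}_i\le |\partial\Sigma_i|^{2}/(2|\Sigma_i|)$. Since $B_i^{\,2}=|\partial\Sigma_i|^{2}\cosh^{2}(t)$, substituting $c_i=|\Sigma_i|/B_i$ gives $c_i^{\,2}\,\mathcal{Q}_i\le(1-\tanh^{2}(t))|\Sigma_i|/2$; summing and using $\sum_i c_iB_i=|\Sigma|$ yields $2\tilde\upsilon''(s)\,\tilde\upsilon(s)\le 1-\tilde\upsilon'(s)^{2}$.

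For the strict version \eqref{strict}, the component-wise bound on $\mathcal{Q}_i$ is strict unless simultaneously $\int_{\Sigma_i}|h|^{2}=0$, the isoperimetric inequality is saturated, and $\int_{\partial\Sigma_i} H(S)=0$. The first two force each $\Sigma_i$ to be a flat round disk by the rigidity in \cite{Brendle}; the third, together with $H(S)\ge 0$ on $S\setminus S(\Sigma_{t_0})$, forces $H(S)\equiv 0$ on $\partial\Sigma$. If any minimizer $\Sigma\in\mathcal{E}_{\sigma(s)}$ with $|S(\Sigma)|=s$ violates one of these conditions, selecting that particular minimizer in the construction above makes \eqref{strict} strict; otherwise we are precisely in the listed exceptional case. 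The main obstacle is identifying the choice $c_i=|\Sigma_i|/B_i$: this specific weighting, which reflects the quadratic scaling of both $|\Sigma|$ and $|S(\Sigma)|$ emphasized in the introduction, is what makes the component-wise stability estimates telescope without any Cauchy--Schwarz loss into the global inequality.
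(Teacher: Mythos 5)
Your argument is correct and follows essentially the same route as the paper's proof: split on whether $s$ is in the interior of $[s^-_{\sigma(s)},s^+_{\sigma(s)}]$ (where $\upsilon$ is affine with slope $\tanh\sigma(s)$, making \eqref{strict} automatic) or at an endpoint; at an endpoint, invoke Lemma~\ref{normal variation} with a component-constant normal speed, then apply Gauss--Bonnet on disks (Lemma~\ref{capillary topology}), $H(S)\ge 0$, and the sharp isoperimetric inequality.

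The one genuine difference is your choice of coefficients. The paper takes $\gamma_i\propto|\partial\Sigma_i|$, which normalizes $\sum\gamma_i|\partial\Sigma_i|=1$, saturates Cauchy--Schwarz in the ratio $\sum\gamma_i^2/(\sum\gamma_i|\partial\Sigma_i|)^2$, and only then invokes isoperimetric via $\sum|\partial\Sigma_i|^2\ge 4\pi\sum|\Sigma_i|$. You take $c_i\propto|\Sigma_i|/|\partial\Sigma_i|$, which normalizes the first variation of enclosed area to $|\Sigma|$, and you fold isoperimetric directly into the component-wise bound $\mathcal{Q}_i\le|\partial\Sigma_i|^2/(2|\Sigma_i|)$ so that each $c_i^2\mathcal{Q}_i\le|\Sigma_i|/(2\cosh^2 t)$ and the sum telescopes to $|\Sigma|/(2\cosh^2 t)$. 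Both normalizations exploit the degree-zero homogeneity of the expression $\sum\gamma_i^2\mathcal{Q}_i/(\sum\gamma_i|\partial\Sigma_i|)^2$ and, after combining with isoperimetric, yield the identical bound $2\tilde\upsilon''\tilde\upsilon\le 1-(\tilde\upsilon')^2$. The paper's weights make the intermediate (pre-isoperimetric) bound $2\pi(1-(\tilde\upsilon')^2)/\sum|\partial\Sigma_i|^2$ the Cauchy--Schwarz-optimal one among constant-per-component speeds; yours is slightly weaker pre-isoperimetric but the slack disappears once isoperimetric is applied, so nothing is lost. The rigidity analysis for \eqref{strict} is also identical in substance: strictness fails precisely when $\int|h|^2=0$, $\int_{\partial\Sigma_i}H(S)=0$, and the isoperimetric inequality is saturated for each component, which are the three conditions listed.

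One cosmetic remark: in the interior case the paper adds a quartic term $c(s)(s'-s)^4$ so as to keep $\tilde\upsilon\ge\upsilon$ on a fixed interval of width $\delta$; since the lemma only requires some $\tilde\delta\in(0,\delta)$ depending on $s$, your shortcut of taking $\tilde\upsilon=\upsilon$ on the (possibly smaller) interval where $\upsilon$ is affine is equally valid.
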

\begin{proof}
	Let $\delta>0$ and $s\in (|S(\Sigma^{+}_{t_0})|+\delta,\infty)$.
	
	In the case where  $s\in (s^-_{\sigma(s)},s^+_{\sigma(s)})$, using that $\upsilon$ is locally bounded   and linear near $s,$ we see that the function $\tilde \upsilon \in C^\infty((s-\delta/2,s+\delta/2))$ defined by $$\tilde \upsilon(s')=\upsilon(s)+\Phi(\sigma(s))\,(s'-s)+c(s)\,(s'-s)^4$$ satisfies $\tilde \upsilon(s)=\upsilon(s)$ and $\tilde \upsilon(s')\geq \upsilon(s')$ for every $s'\in(s-\delta/2,s+\delta/2)$ provided that $c(s)>1$ is sufficiently large. Note that, in this case, $\tilde \upsilon''(s)=0$ and $\tilde \upsilon'(s)=\tanh (\sigma(s))$, so that \eqref{strict} holds.  
	
	 In the case where  $s=s^-_{\sigma(s)}$ or $s=s^+_{\sigma(s)}$, let $\Sigma_{\sigma(s)}\in \mathcal{E}_{\sigma(s)}$ be such that $|\Sigma_{\sigma(s)}|=s$ and  $\Sigma^1_{\sigma(s)},\ldots ,\Sigma^\ell_{\sigma(s)}$ be the components of $\Sigma_{\sigma(s)}$. We apply Lemma \ref{normal variation} to $\Sigma_{\sigma(s)}$ with $\Sigma=\Sigma_{t_0}$ and
	 $$
	 \gamma_i=\frac{|\partial \Sigma^i_{\sigma(s)}|}{\sum_{j=1}^\ell|\partial \Sigma^j_{\sigma(s)}|^2},
	 $$
	   $1\leq i\leq \ell$. This gives $\tilde \delta\in(0,\delta)$,  a function $\tilde \upsilon \in C^\infty((s-\tilde \delta,s+\tilde \delta)),$ and an admissible variation $\{\Sigma(s'):s'\in(s-\tilde \delta,s+\tilde \delta)\}$ of $\Sigma_{\sigma(s)}$  with the following properties. 
	 \begin{align*}
	 	&\circ \qquad \tilde \upsilon'(s)=\tanh (\sigma(s)), \qquad \qquad \qquad\qquad\qquad\qquad\qquad\qquad\qquad\qquad
	 		\\&\circ \qquad \tilde \upsilon(s')=|\Sigma(s')|\text{ for every }s'\in(s-\tilde \delta,s+\tilde\delta),
	 			\\&\circ \qquad  |S(\Sigma(s'))|=s'\text{ for every }s'\in(s-\tilde \delta,s+\tilde \delta),\text{ and}\\
&\circ \qquad \Omega(\Sigma_{t_0})\subset \Omega(\Sigma(s'))\text{ for every } s'\in(s-\tilde \delta,s+\tilde \delta).
	 \end{align*}
	Moreover, there holds 
	\begin{align*} 
		\tilde \upsilon''(s)&=\bigg(\sum_{i=1}^\ell\gamma_i\,|\partial \Sigma^i_{\sigma(s)}|\bigg)^{-2}\,(1-\tilde \upsilon'(|S(\Sigma_{\sigma(s)})|)^2)
		\\
		&\qquad \times \sum_{i=1}^\ell \gamma_i^2\left(2\,\pi\,\chi(\Sigma^i_{\sigma(s)})-\frac12\,\int_{\Sigma^i_{\sigma(s)}} |h(\Sigma_{\sigma(s)})|^2-\cosh (\sigma(s))\,\int_{\partial \Sigma^i_{\sigma(s)} } H(S)\right).
	\end{align*}
	Clearly, $\tilde \upsilon(s)=\upsilon(s)$. 
	By Lemma \ref{rho k lower bound}, there holds $\tilde \upsilon(s')\geq \upsilon(s')$ for every $s'\in(s-\tilde \delta,s+\tilde \delta)$. According to Lemma \ref{capillary topology}, $\Sigma_{\sigma(s)}$ is diffeomorphic to a union of disks so that $\chi(\Sigma^i_{\sigma(s)})=1$ for every $1\leq i\leq \ell$. 	Using that $H(S)\geq 0$ in $S\setminus S(\Sigma_{t_0})$, we have
	$$ 
	\int_{\partial \Sigma^i_{\sigma(s)}} H(S)\geq 0
	$$
for every $1\leq i\leq \ell$. Clearly, 
	$$
	\int_{\Sigma^i_{\sigma(s)}} |h(\Sigma_{\sigma (s)})|^2\geq 0.
	$$
 Using that 
$$
\sum_{i=1}^\ell \gamma_i\,|\partial \Sigma^i_{\sigma(s)}|=1\qquad\text{and}\qquad \sum_{i=1}^\ell \gamma_i^2=\bigg(\sum_{i=1}^\ell \,|\partial \Sigma^i_{\sigma(s)}|^2\bigg)^{-1},
$$
we obtain that 
$$
\tilde \upsilon''(s)\leq 2\,\pi\,\bigg(\sum_{i=1}^\ell \,|\partial \Sigma^i_{\sigma(s)}|^2\bigg)^{-1}\,(1-\tilde \upsilon'(|S(\Sigma_{\sigma(s)})|)^2).
$$
Finally, by the optimal isoperimetric inequality for minimal disks, 
	$$
	|\partial \Sigma^i_{\sigma(s)}|^2\geq 4\,\pi\,|\Sigma^i_{\sigma(s)}|
	$$
 for every $1\leq i\leq \ell$	with equality if and only if $\Sigma^i_{\sigma(s)}$ is a flat disk; see \cite[p.~1]{Carleman}. 
The assertion follows from these estimates.
\end{proof}
\begin{prop} \label{montonicity prop}
	Let $t_1,\,t_2\in (t_0,\infty)$ be such that $t_2>t_1$. There holds
	$$
	m_f(\Sigma^-_{t_2})\geq m_f(\Sigma^-_{t_1}).
	$$
	If equality holds, then $s^-_t=s^+_t$ for every $t\in [t_1,t_2)$ and, for every $t\in [t_1,t_2]$, there is $\Sigma_t\in \mathcal{E}_t$ such that  $\Sigma_t$ is a  union of flat disks and  $H(S)=0$ on $\partial \Sigma_t$.
\end{prop}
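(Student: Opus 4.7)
The plan is to reduce the monotonicity of $t \mapsto m_f(\Sigma^-_t)$ to the monotonicity in $s$ of
$$ G(s) := \upsilon(s)\bigl(1 - \Phi(\sigma(s))^2\bigr), \qquad s \in (|S(\Sigma^+_{t_0})|, \infty), $$
and then to extract the latter from the pointwise barrier inequality of Lemma \ref{comparison function}. Using $|\Sigma^-_t| = \upsilon(s^-_t)$ and $\sigma(s^-_t) = t$,
$$ \pi\,m_f(\Sigma^-_t)^2 = \frac{|\Sigma^-_t|}{\cosh^2(t)} = \upsilon(s^-_t)\bigl(1 - \tanh^2(t)\bigr) = G(s^-_t); $$
since $s^-_{t_2} > s^+_{t_1} \ge s^-_{t_1}$ by Lemma \ref{s property 1}, it suffices to prove $G$ is nondecreasing.

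For the monotonicity of $G$, the key observation is that $u \mapsto u(1-(u')^2)$ is a first integral of the ODE $2uu'' + (u')^2 = 1$. At each fixed $s_0$, the $C^\infty$ upper barrier $\tilde\upsilon = \tilde\upsilon_{s_0}$ provided by Lemma \ref{comparison function} satisfies $\tilde\upsilon(s_0) = \upsilon(s_0)$, $\tilde\upsilon'(s_0) = \Phi(\sigma(s_0))$, $\tilde\upsilon \ge \upsilon$ near $s_0$, and $2\tilde\upsilon(s_0)\tilde\upsilon''(s_0) \le 1 - \tilde\upsilon'(s_0)^2$, so that
$$ \bigl[\tilde\upsilon(s)(1-\tilde\upsilon'(s)^2)\bigr]'\big|_{s=s_0} = \tilde\upsilon'(s_0)\bigl[1 - \tilde\upsilon'(s_0)^2 - 2\tilde\upsilon(s_0)\tilde\upsilon''(s_0)\bigr] \ge 0, $$
where we use $\tilde\upsilon'(s_0) = \Phi(\sigma(s_0)) \ge 0$ since $t_0 \ge 0$, together with the fact that $\tilde\upsilon(s_0)(1-\tilde\upsilon'(s_0)^2) = G(s_0)$. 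This is the infinitesimal form of the desired monotonicity. To pass from this infinitesimal statement to a global one, I would first upgrade $\upsilon$ to a $C^1$ function with $\upsilon'(s) = \Phi(\sigma(s))$: by Lemma \ref{rho k increasing}, $\bar\upsilon' \le \Phi\circ\sigma \le \ubar\upsilon'$ everywhere, so at any point of differentiability $\upsilon' = \Phi\circ\sigma$; combined with the continuity of $\sigma$ and the absolute continuity of $\upsilon$, this yields $\upsilon \in C^1$ and $\upsilon'=\Phi\circ\sigma$. Since $\Phi\circ\sigma$ is continuous and nondecreasing, $\upsilon'\in \mathrm{BV}$ and the pointwise barrier inequality upgrades to the distributional inequality $2\upsilon\,\upsilon'' \le (1-(\upsilon')^2)\,ds$, and integrating as in the heuristic \eqref{rho estimate}--\eqref{mono} of the introduction yields $G(s_2) - G(s_1) \ge 0$ for all $s_1 < s_2$.

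For the equality case, if $m_f(\Sigma^-_{t_2}) = m_f(\Sigma^-_{t_1})$, then $G$ is constant on $[s^-_{t_1}, s^-_{t_2}]$, forcing every inequality in the argument above to be saturated at every $s$ in the interior of that interval. By the alternative clause \eqref{strict} of Lemma \ref{comparison function}, this forces $s = s^-_{\sigma(s)} = s^+_{\sigma(s)}$ for every such $s$, every $\Sigma_{\sigma(s)} \in \mathcal{E}_{\sigma(s)}$ with $|\Sigma_{\sigma(s)}|=s$ to be a union of flat disks, and $H(S) = 0$ on $\partial\Sigma_{\sigma(s)}$; this is exactly the asserted characterization. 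The main obstacle is the rigorous conversion of the pointwise barrier inequality at each $s_0$ into a global monotonicity statement for $G$: this requires the preparatory $C^1$ regularity of $\upsilon$ together with careful handling of the measure-theoretic structure of $\upsilon''$ (in particular, that any singular part of $\upsilon''$ does not spoil the sign), which is where the bulk of the technical work lies.
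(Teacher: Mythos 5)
Your proposal follows essentially the same route as the paper: reduce to the monotonicity of $G(s)=\upsilon(s)(1-\upsilon'(s)^2)$, recognize the first-integral structure of $2uu''=1-(u')^2$, invoke the upper barrier of Lemma~\ref{comparison function}, and use the equality branch \eqref{strict} for rigidity. One genuine addition on your part: you observe that the two-sided sandwich $\bar\upsilon'\leq\Phi\circ\sigma\leq\ubar\upsilon'$ combined with continuity of $\Phi\circ\sigma$ and of $\upsilon$ forces $\upsilon\in C^1$ with $\upsilon'=\Phi\circ\sigma$; this is correct (the auxiliary function $\upsilon-\int\Phi\circ\sigma$ has nonpositive right derivative and nonnegative left derivative everywhere, hence is constant) and is cleaner than the paper's formulation, which works throughout with one-sided derivatives $\bar\upsilon'$ and never asserts $C^1$ regularity. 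Both you and the paper delegate the passage from the pointwise barrier inequality to global monotonicity: you to ``technical work,'' the paper to ``the same reasoning as in \cite[Lemma~3]{bray},'' so neither text is fully self-contained on this step.

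Two small imprecisions. First, you write that one must check ``any singular part of $\upsilon''$ does not spoil the sign''; in fact the singular part must \emph{vanish} --- the target distributional inequality $2\upsilon\,\upsilon''\leq(1-(\upsilon')^2)\,ds$ has an absolutely continuous right-hand side while $\upsilon>0$ and $\upsilon''\geq 0$, so a nontrivial singular piece of $\upsilon''$ would contribute a negative singular piece to $G'=\upsilon'\bigl[1-(\upsilon')^2-2\upsilon\,\upsilon''\bigr]$ (recall $\upsilon'>0$) and break monotonicity. The locally uniform bound on $\tilde\upsilon''$ coming from Lemma~\ref{comparison function} is precisely what rules this out. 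Second, in the equality case your argument delivers the rigidity conclusions (single minimizer, flat disks, $H(S)=0$ on the boundary) only for \emph{almost every} $t$ in the interval, whereas the statement requires them for \emph{every} $t\in[t_1,t_2]$; the paper closes this gap by applying the compactness Lemma~\ref{compactness lemma}, which you omit. Neither issue changes the structure of the proof, which matches the paper's.
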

\begin{proof}
	Let $f\in C^\infty(\mathbb{R})$ be such that  $0< f'(0)< 1$ and
$
2\,	f''(0)\,f(0)\leq 1-f'(0)^2.
	$
	Note that 
	\begin{align} \label{comparison ineq}
		\frac{d}{ds}\bigg|_{s=0}\,\left[f(s)\,(1-f'(s)^2)\right]\geq 0.
	\end{align}
	
	In view of Lemma \ref{comparison function} and \eqref{comparison ineq}, using that $t_0>0$, the same reasoning as in \cite[Lemma 3]{bray} shows that the function
	$$
s\mapsto	\upsilon(s)\,(1-\bar \upsilon'(s)^2)
	$$
	is nondecreasing on $(|S(\Sigma^{+}_{t_0})|,\infty)$  in the sense of distributions. In particular, for all $s_1,\,s_2\in (|S(\Sigma^{+}_{t_0})|,\infty)$ with $s_1<s_2$, we have
	$$
		\upsilon(s_1)\,(1-\bar \upsilon'(s_1)^2)\leq 	\upsilon(s_2)\,(1-\bar \upsilon' (s_2)^2).
	$$
	Recall from Lemma \ref{rho k increasing} that $\upsilon$ is strictly increasing. Thus, we may choose a strictly decreasing  sequence $\{s_1^k\}_{k=1}^\infty$ and a strictly increasing sequence $\{s_2^k\}_{k=1}^\infty$ of numbers $s^k_1,\,s^k_2\in (|S(\Sigma^{-}_{t_1})|,|S(\Sigma^-_{t_2})|)$ with $s^k_1\searrow |S(\Sigma^-_{t_1})|$ and $s^k_2\nearrow  |S(\Sigma^-_{t_2})|$ such that $\upsilon'(s^k_1)$ and $\upsilon'(s^k_2)$ exist for all $k$. In view of \eqref{ld and rd}, we have
	$$
\frac{1}{\cosh(\sigma(s^k_1))^2}\,\upsilon(s^k_1)\leq \frac{1}{\cosh(\sigma(s^k_2))^2}\,\upsilon(s^k_2)
	$$
	for all $k$ with $s^k_1<s^k_2$. 	 Using that  $\sigma$ and $\upsilon$ are continuous, we conclude that 
	\begin{align*} 
\frac{1}{\cosh (t_1)^2}	\,|\Sigma^-_{t_1}|\leq\frac{1}{\cosh (t_2)^2}\,|\Sigma^-_{t_2}|. 
	\end{align*} 
	If equality holds, then,  using Lemma \ref{comparison function} and repeating the argument in the proof of \cite[Lemma 3]{bray}, we see that $s^-_t=s^+_t$ for every $t\in[t_1,t_2)$ and that, for almost every $t\in(t_1,t_2)$, every $\Sigma_t\in \mathcal{E}_t$ is a union of  flat disks and that $H(S)=0$ on $\partial \Sigma_t$ for every $\Sigma_t\in \mathcal{E}_t$. Using Lemma \ref{compactness lemma}, we conclude that, for every $t\in[t_1,t_2]$, there is $\Sigma_t\in \mathcal{E}_t$ such that  $\Sigma_t$ is a  union of flat disks and  $H(S)=0$ on $\partial \Sigma_t$. The assertion follows.
\end{proof}
\begin{rema}
Note that, if $t_0<0$ and $t\in(t_0,0)$, then, in view of \eqref{interpolate}, 	$m_f(\Sigma^-_t)>m_f(\Sigma^+_t)$ unless $s^-_t=s^+_t$. For this reason, we should not expect $m_f(\Sigma_t)$ to be nondecreasing in the case where $t_0<0$.  \end{rema}
\begin{coro} \label{monotonicity corollary}
	Suppose that  
	\begin{align} \label{outward minimizing} 
\text{$J_{t_0}(\Sigma_{t_0})\leq J_{t_0}(\Sigma)$ for every admissible surface $\Sigma \subset \bar M(S)$ with $\Omega(\Sigma_{t_0})\subset \Omega(\Sigma)$}
	\end{align} 
	with equality if and only if $\Sigma=\Sigma_{t_0}$.
Then, for all $t_1,\,t_2\in[t_0,\infty)$ with $t_2>t_1$, there holds
	\begin{align} \label{monotnicity to be checked} 
	m_f(\Sigma^-_{t_1})\leq m_f(\Sigma^-_{t_2}). 
	\end{align} 
	If equality holds, then $s^-_t=s^+_t$ for every $t\in [t_1,t_2)$ and, for every $t\in [t_1,t_2]$, there is $\Sigma_t\in \mathcal{E}_t$ such that  $\Sigma_t$ is a  union of flat disks and  $H(S)=0$ on $\partial \Sigma_t$.
\end{coro}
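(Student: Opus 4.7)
The plan is to reduce the corollary to Proposition \ref{montonicity prop} via a limit argument at the endpoint $t_0$. The work of extending the monotonicity from $(t_0,\infty)$ to $[t_0,\infty)$ is routine given the tools already established in Section \ref{stable minimal section}, so no substantial new idea is needed; the only delicate point is keeping track of the rigidity statement at the endpoint.

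First, I would invoke Lemma \ref{sharp initial} together with the hypothesis \eqref{outward minimizing} to conclude that $\mathcal{E}_{t_0}=\{\Sigma_{t_0}\}$. In particular, $\Sigma_{t_0}^-=\Sigma_{t_0}^+=\Sigma_{t_0}$ and $s_{t_0}^-=s_{t_0}^+=|S(\Sigma_{t_0})|$. For $t_1,t_2\in(t_0,\infty)$ with $t_2>t_1$, the inequality \eqref{monotnicity to be checked} and its equality case are the content of Proposition \ref{montonicity prop}, so nothing remains in this range.

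Next, to handle $t_1=t_0$ with $t_2>t_0$, I would choose a strictly decreasing sequence $t_k\searrow t_0$ with $t_k<t_2$. By Lemma \ref{s property 2}, $s_{t_k}^-\searrow s_{t_0}^+=s_{t_0}^-$, and Lemma \ref{compactness lemma} applied to $\Sigma_{t_k}^-$ produces, after passing to a subsequence, a smooth limit in $\mathcal{E}_{t_0}=\{\Sigma_{t_0}\}$, which is therefore $\Sigma_{t_0}^-$. Smooth convergence gives $|\Sigma_{t_k}^-|\to|\Sigma_{t_0}^-|$, hence $m_f(\Sigma_{t_k}^-)\to m_f(\Sigma_{t_0}^-)$. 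Combining this with Proposition \ref{montonicity prop} applied to the pair $(t_k,t_2)$, the inequality $m_f(\Sigma_{t_0}^-)\leq m_f(\Sigma_{t_2}^-)$ follows by passing to the limit in $k$.

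Finally, I would address the rigidity statement in the boundary case $t_1=t_0$. Assuming equality in \eqref{monotnicity to be checked}, the chain $m_f(\Sigma_{t_0}^-)\leq m_f(\Sigma_t^-)\leq m_f(\Sigma_{t_2}^-)$ for $t\in(t_0,t_2]$ forces equality throughout, so the equality case of Proposition \ref{montonicity prop} applied on each pair $(t,t_2)$ with $t\in(t_0,t_2)$ yields $s_t^-=s_t^+$ for every $t\in[t,t_2)$ and produces, for each such $t$, a surface $\Sigma_t\in\mathcal{E}_t$ that is a union of flat disks with $H(S)=0$ on $\partial\Sigma_t$. Combined with $s_{t_0}^-=s_{t_0}^+$ from Lemma \ref{sharp initial}, this gives $s_t^-=s_t^+$ on all of $[t_0,t_2)$. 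The only subtle step is upgrading the rigid structure to $t=t_0$ itself: choosing $t_k\searrow t_0$ and $\Sigma_{t_k}\in\mathcal{E}_{t_k}$ that are unions of flat disks with $H(S)=0$ on their boundaries, Lemma \ref{compactness lemma} produces a smooth subsequential limit in $\mathcal{E}_{t_0}=\{\Sigma_{t_0}\}$, and smooth convergence preserves both the property of being a union of flat disks and the vanishing of $H(S)$ along the boundary. This completes the rigidity. I do not anticipate any essential obstacle; the only thing to watch is that one must invoke Lemma \ref{sharp initial} to rule out ambiguity at $t=t_0$, where $\mathcal{E}_{t_0}$ could a priori contain multiple elements.
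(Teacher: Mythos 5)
Your proof is correct and follows essentially the same route as the paper: invoke Lemma \ref{sharp initial} to pin down $\mathcal{E}_{t_0}=\{\Sigma_{t_0}\}$, reduce the case $t_1>t_0$ to Proposition \ref{montonicity prop}, and handle $t_1=t_0$ by a limiting argument along $t_k\searrow t_0$ using Lemma \ref{compactness lemma}. The paper's own proof is considerably terser, simply stating ``Using Lemma \ref{compactness lemma}, the assertion follows,'' whereas you have spelled out the remaining details of the equality case (the squeeze argument forcing equality throughout, the application of Proposition \ref{montonicity prop} on shrinking subintervals, and the propagation of the flat-disk and $H(S)=0$ structure to the endpoint $t_0$ via smooth convergence), all of which is a faithful elaboration of what the authors leave implicit.
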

\begin{proof}
	In view of Proposition \ref{montonicity prop}, it remains to check \eqref{monotnicity to be checked} in the case where $t_1=t_0$. By Lemma \ref{sharp initial} and \eqref{outward minimizing}, we have $\mathcal{E}_{t_0}=\{\Sigma_{t_0}\}$. By Proposition \ref{montonicity prop}, for every strictly decreasing sequence $\{\tilde t_k\}_{k=1}^\infty$ with $\tilde t_k\to t_0$, 
	$$
	m_f(\Sigma^-_{\tilde t_k})\leq m_f(\Sigma^-_{t_2}).
	$$
	Using Lemma \ref{compactness lemma}, the assertion follows. 
\end{proof}
\section{Asymptotic behavior of the free energy mass}
In this section, we assume that $ S\subset \mathbb{R}^3$ is  a properly embedded plane with integrable mean curvature. 
Moreover, we assume that the complement of a compact subset of $S$ is contained in the graph of a function $\psi \in C^\infty(\mathbb{R}^2)$ such that, for some $\tau>1/2$,
\begin{equation} 
	\begin{aligned} \label{asymptotically flat section 6}
		\sum_{i=1}^2|\partial_i\psi(y)|+\sum_{i,\,j=1}^2|y|\,|\partial^2_{ij}\psi(y)|=O(|y|^{-\tau}).
	\end{aligned}
\end{equation} 
Let $\nu(S)$ be the unit normal field of $S$ asymptotic to $-e_3$. The component of $\mathbb{R}^3\setminus S$ that $\nu(S)$ points out of is denoted by $M(S)$ and referred to as the region above $S$.  We denote the second fundamental form of $S$ with respect to $\nu(S)$ by $h(S)$. Our convention is such that the trace of $h(S)$, i.e., the mean curvature $H(S)$, is the tangential divergence of $\nu(S)$.

Recall from Appendix \ref{appendix:af surfaces} the definition \eqref{exterior mass2} of the exterior mass $m$ of $S$.

Recall from Appendix \ref{appendix:af surfaces} the definition of an admissible surface $\Sigma \subset \bar M(S)$ as well as that of its lateral surface $S(\Sigma)\subset S$ and its inside $\Omega(\Sigma)\subset M(S)$. Moreover, recall the definition of the normal $\nu(\Sigma)$, the co-normal $\mu(\Sigma)$, and the co-normal $\mu(S(\Sigma))$, as well as  the conventions for the second fundamental form $h(\Sigma)$ and mean curvature $H(\Sigma)$.

Recall from Appendix \ref{appendix:free energy} the definition of a minimal capillary surface $\Sigma\subset \bar M(S)$ with capillary angle $\theta\in(0,\pi)$ and the notion of stability for such surfaces. Given $t\in\mathbb{R}$, recall the definition   \eqref{free energy smooth} of the free energy $J_t(\Sigma)$ with angle $\arccos(-\tanh (t))$ of an admissible surface $\Sigma \subset \bar M(S)$. Moreover, recall the definition \eqref{free energy mass} of the  free energy mass $m_f(\Sigma)$ of a minimal capillary surface $\Sigma\subset \bar M(S)$.

Let $t_0\geq 0$. Suppose that $\Sigma_{t_0}\subset \bar M(S)$ is a stable  minimal capillary surface with capillary angle $\arccos(-\tanh (t_0))$. We assume that $H(S)\geq 0$ on  of $S\setminus S(\Sigma_{t_0})$.

Given $t\in[t_0,\infty)$, recall from Section \ref{stable minimal section} the definition of the sets $\mathcal{E}_{t}$ of all minimal capillary surfaces  $\Sigma_t\in \mathcal{E}_t$ satisfying \eqref{mathE def}. Moreover, recall the definition of $\Sigma^-_t\in \mathcal{E}_t$, whose existence is shown in Proposition \ref{universal existence}. 

The goal of this section is to show that 
$$
\lim_{t\to\infty} m_f(\Sigma^-_t)=m.
$$

\begin{lem} \label{coarse area estimate}
	For every $t\geq t_0$ and $r\geq 1$, there holds
	$$
	|\{x\in \Sigma^-_t:|x|\leq r\}|  \leq 4\,\pi\,r^2.
	$$
\end{lem}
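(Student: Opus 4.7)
The plan is to exploit the minimality of $\Sigma_t^-$ by using as a competitor the set obtained by adjoining the open ball $B_r = \{x \in \mathbb{R}^3 : |x| < r\}$ to the enclosed region. Setting $F = \Omega(\Sigma_t^-)$, the set $F' = F \cup B_r$ is bounded, contains $\Omega(\Sigma_{t_0})$, and has finite perimeter in $M(S)$ by Lemma \ref{intersection and union}, so $F' \in \mathcal{U}$. By the BV reformulation of Section 3, $F$ minimizes $\tilde J_t$ on $\mathcal{U}$, hence $\tilde J_t(F) \leq \tilde J_t(F')$.

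For almost every $r > 0$, the reduced boundary $\partial^* F$ meets the sphere $\partial B_r$ in an $\mathcal{H}^2$-negligible set, so the standard decomposition of the reduced boundary of a union yields
\begin{align*}
\tilde J_t(F') - \tilde J_t(F) = |\partial B_r \cap (M(S) \setminus F)| - ||\partial F||(B_r \cap M(S)) - \tanh(t) \int_{S \cap B_r} (1 - \mathbbm{1}_F|_S).
\end{align*}
Since $t \geq t_0 \geq 0$ gives $\tanh(t) \geq 0$ and the integral is nonnegative, the lateral contribution is nonpositive. Combined with the nonnegativity of the left-hand side and the trivial bound $|\partial B_r \cap (M(S) \setminus F)| \leq |\partial B_r| = 4\pi r^2$, one obtains
\begin{align*}
||\partial F||(B_r \cap M(S)) \leq 4\pi r^2.
\end{align*}

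Since $\Sigma_t^- = \partial^* F$ in $M(S)$ is smooth by the regularity invoked in Proposition \ref{capillary existence}, this perimeter equals $|\Sigma_t^- \cap B_r|$. The estimate for almost every $r$ extends to every $r > 0$ by monotonicity of $r \mapsto ||\partial F||(B_r \cap M(S))$, and $|\Sigma_t^- \cap B_r| = |\{x \in \Sigma_t^- : |x| \leq r\}|$ since the minimal surface $\Sigma_t^-$ cannot coincide with a sphere on a set of positive two-dimensional measure. This gives the asserted inequality. The main obstacle is the BV bookkeeping that justifies the decomposition of the reduced boundary and trace of $F \cup B_r$; this is standard once $r$ is chosen generically so that $\partial B_r$ is transverse to $\partial^* F$ and to $\partial S(\Sigma_t^-)$, and all other ingredients, namely the minimization property, the sign of $\tanh(t)$, and $|\partial B_r| = 4\pi r^2$, are elementary.
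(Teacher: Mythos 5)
Your proof is correct and follows essentially the same approach as the paper: both compare $\Sigma_t^-$ against the competitor obtained by adjoining the ball $B_r$ to $\Omega(\Sigma_t^-)$ and exploit the minimization property together with the sign of $\tanh(t)$ and $|\partial B_r|=4\pi r^2$. The paper phrases the competitor as an admissible Lipschitz surface $\tilde\Sigma_t$ and works directly with $J_t$ (after choosing $r$ so the sphere is transverse to $\Sigma_t^-$ and $S$, with the general $r$ obtained by approximation), whereas you phrase it in the BV formalism of Section 3 via $\tilde J_t$; the two are interchangeable. One small slip: you should take $F' = F\cup(B_r\cap M(S))$ rather than $F\cup B_r$, since $\mathcal{U}$ consists of subsets of $M(S)$ — this is what the paper does, and it is clearly what you intend.
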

\begin{proof}
	Given $t\geq t_0$, assume first that  $r\geq 1$ is such that the intersection of  $\Sigma^-_t$ with $\{x\in \bar M(S):|x|= r\}$ and the intersection of  $S$ with $\{x\in \bar M(S):|x|= r\}$ are both transverse. Let $\tilde \Sigma_t\subset \bar M(S)$ be the admissible Lipschitz surface with $\Omega(\tilde \Sigma_t)=\Omega(\Sigma^-_t)\cup \{x\in \bar M(S):|x|< r\}.$
	Note that $\Omega(\Sigma_{t_0})\subset \Omega(\tilde \Sigma_t)$ and $S(\Sigma^-_t)\subset S(\tilde \Sigma_t)$. Moreover, there holds $$\{x\in \Sigma^-_t:|x|> r\}=\{x\in \tilde \Sigma_t:|x|> r\}$$ 
	and 
	$$
	|\{x\in \tilde \Sigma_t:|x|\leq r\}|\leq 4\,\pi\,{r}^2. 
	$$
	By \eqref{mathE def}, $J_t(\Sigma^-_t)\leq J_t(\tilde \Sigma_t)$. Since $\Phi(t)>0$ for every $t>0$, we obtain $|\Sigma^-_t|\leq |\tilde \Sigma_t|$ so that 
	$$	|\{x\in \Sigma^-_t:|x|\leq r\}|  \leq 4\,\pi\,r^2,$$ as asserted. For general $r\geq 1$, the assertion follows by approximation. 
\end{proof}

Given $t\geq t_0$, let $$\alpha_t=\inf\{|x|:x\in \Sigma^-_t\}\qquad \text{and}\qquad \beta_t=\sqrt{\frac{|\Sigma^-_t|}{\pi}}.$$ 
\begin{lem} \label{inner radius to infty}
There holds
	$
\alpha_t\to \infty
$
and $\beta_t\to\infty$ as $t\to\infty$. 

\end{lem}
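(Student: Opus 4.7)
My plan is to treat the two conclusions separately, since each is essentially a one-step consequence of a result already established in the preceding sections. First I will handle $\alpha_t \to \infty$ using Lemma \ref{inner divergence}, and then I will handle $\beta_t \to \infty$ using the monotonicity of the free energy mass from Proposition \ref{montonicity prop}.

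For $\alpha_t \to \infty$, I would fix any $R > 0$ and consider the compact set $K = \{x \in \bar M(S) : |x| \leq R\}$. By Lemma \ref{inner divergence}, there exists $t(R) \geq t_0$ such that $K \cap \Sigma_t = \emptyset$ for all $t \geq t(R)$ and every $\Sigma_t \in \mathcal{E}_t$; in particular $K \cap \Sigma^-_t = \emptyset$, which forces $\alpha_t > R$. Since $R > 0$ is arbitrary, $\alpha_t \to \infty$.

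For $\beta_t \to \infty$, the starting point is the identity
$$
\beta_t = \cosh(t)\, m_f(\Sigma^-_t),
$$
which is immediate from the definition \eqref{free energy mass} of the free energy mass. I would then fix any $t_1 \in (t_0,\infty)$ and invoke Proposition \ref{montonicity prop} to obtain $m_f(\Sigma^-_t) \geq m_f(\Sigma^-_{t_1})$ for every $t > t_1$. The surface $\Sigma^-_{t_1}$ supplied by Proposition \ref{universal existence} is a genuine minimal capillary surface whose inside contains the nonempty region $\Omega(\Sigma_{t_0})$, so $|\Sigma^-_{t_1}| > 0$ and hence $m_f(\Sigma^-_{t_1}) > 0$. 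Combined with $\cosh(t) \to \infty$, this gives $\beta_t \to \infty$.

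I do not expect any serious obstacle here: both statements reduce to invoking previously proved results. The only point requiring a brief verification is that $|\Sigma^-_{t_1}| > 0$ for some (equivalently, every) $t_1 \in (t_0,\infty)$, which follows from the structural requirement $\Omega(\Sigma_{t_0}) \subset \Omega(\Sigma^-_{t_1})$ and the fact that $\Sigma^-_{t_1}$ is a smooth nonempty minimal capillary surface. The lemma therefore serves primarily as a convenient packaging of divergence information needed to analyze the asymptotic behavior of $m_f(\Sigma^-_t)$ in the following section.
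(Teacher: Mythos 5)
Your proof of $\alpha_t\to\infty$ is exactly the paper's. For $\beta_t\to\infty$, however, your route is genuinely different. The paper extracts a second consequence from Lemma~\ref{inner divergence}, namely $|S(\Sigma^-_t)|\to\infty$, and then applies the raw minimization property \eqref{mathE def} at the \emph{fixed} angle parameter $t_0+1$: from $J_{t_0+1}(\Sigma^-_{t_0+1})\leq J_{t_0+1}(\Sigma^-_t)$ one gets
$$|\Sigma^-_t|\geq |\Sigma^-_{t_0+1}|+\Phi(t_0+1)\,\bigl(|S(\Sigma^-_t)|-|S(\Sigma^-_{t_0+1})|\bigr),$$
which blows up since $\Phi(t_0+1)>0$ and $|S(\Sigma^-_t)|\to\infty$. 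You instead use the identity $\beta_t=\cosh(t)\,m_f(\Sigma^-_t)$ together with the monotonicity Proposition~\ref{montonicity prop}, reducing the claim to the positivity of a single free energy mass. Both are correct and there is no circularity (the monotonicity proposition is proved before and independently of this lemma), but the paper's argument is the more elementary one: it needs only the definition of $\mathcal{E}_t$, whereas yours invokes the central monotonicity result, which in turn rests on the second variation/Gauss--Bonnet machinery. One small point worth being explicit about in your version: the positivity $|\Sigma^-_{t_1}|>0$ hinges on $\Omega(\Sigma_{t_0})\neq\emptyset$, which holds in the applications because $\Sigma_{t_0}$ is a nonempty stable minimal capillary surface, but is slightly more than the bare hypotheses of the section make explicit; the paper's route sidesteps this entirely.
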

\begin{proof} 
By Corollary \ref{inner divergence}, we have $\alpha_t\to\infty$  and $|S(\Sigma^-_t)|\to\infty$. By \eqref{mathE def}, we have 
$$
J_{t_0+1}(\Sigma^-_{t_0+1})\leq J_{t_0+1}(\Sigma^-_t)
$$
for every $t\in(t_0,\infty)$.
Using that $\Phi(t_0+1)>0$, we conclude that  $\beta_t\to\infty,$ as asserted. 
\end{proof} 

\begin{lem} \label{layer cake integral estimate}
	Let $\varepsilon>0$ and $X\in C^\infty(\mathbb{R}^3,\mathbb{R}^3)$ be such that
	$DX(x)=O(|x|^{-2-\varepsilon})$  as $x\to\infty$. As $t\to\infty$,  there holds
	$$
	\int_{\partial \Sigma^-_t}\langle X,\mu(\Sigma^-_t)\rangle =o(1).
	$$
\end{lem}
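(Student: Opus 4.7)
The plan is to turn the boundary integral on $\partial \Sigma^-_t$ into an interior integral over $\Sigma^-_t$ via the tangential divergence theorem, using that $\Sigma^-_t$ is minimal to eliminate the mean-curvature term, and then to control the resulting interior integral using the decay hypothesis on $DX$, the coarse area estimate of Lemma \ref{coarse area estimate}, and the fact from Lemma \ref{inner radius to infty} that $\alpha_t \to \infty$.

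Concretely, since each $\Sigma^-_t \in \mathcal{E}_t$ is a minimal capillary surface, we have $H(\Sigma^-_t) = 0$, and the tangential divergence theorem on the compact surface $\Sigma^-_t$ gives
\begin{align*}
\int_{\partial \Sigma^-_t}\langle X, \mu(\Sigma^-_t)\rangle = \int_{\Sigma^-_t}\operatorname{div}_{\Sigma^-_t} X.
\end{align*}
Choosing an orthonormal tangent frame on $\Sigma^-_t$ yields the pointwise bound $|\operatorname{div}_{\Sigma^-_t} X(x)| \leq 2\,|DX(x)|$, and the hypothesis on $DX$ combined with the smoothness of $X$ produces a constant $C>0$ such that $|DX(x)| \leq C\,(1+|x|)^{-2-\varepsilon}$ on all of $\mathbb{R}^3$. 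It therefore suffices to prove that $\int_{\Sigma^-_t}(1+|x|)^{-2-\varepsilon}\,dH^2 \to 0$ as $t \to \infty$.

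I would estimate this weighted area via a layer-cake identity in the radial variable,
\begin{align*}
\int_{\Sigma^-_t}(1+|x|)^{-2-\varepsilon}\,dH^2 = (2+\varepsilon)\int_0^\infty (1+r)^{-3-\varepsilon}\,|\{x\in\Sigma^-_t: |x|\leq r\}|\,dr.
\end{align*}
By Lemma \ref{inner radius to infty}, $\alpha_t \to \infty$, so for $t$ sufficiently large and $r < \alpha_t$ the sub-level set on the right is empty. For $r \geq \alpha_t$, Lemma \ref{coarse area estimate} bounds the sub-level set by $4\,\pi\,r^2$. The resulting elementary integral is of order $\alpha_t^{-\varepsilon}$, which vanishes as $t \to \infty$.

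I do not expect a substantive obstacle. Admissible surfaces are compact by definition, so the divergence theorem applies directly to $\Sigma^-_t$ without any need to truncate at a large sphere, and one does not lose a spurious boundary term. The two features that make the proof go through are minimality, which kills the bulk mean-curvature term after integration by parts, and Lemma \ref{inner radius to infty}, which forces the supports of the surfaces $\Sigma^-_t$ to drift to infinity, where the decay of $DX$ takes over.
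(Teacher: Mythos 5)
Your proof is correct and follows essentially the same path as the paper: first variation (tangential divergence theorem on the minimal surface $\Sigma^-_t$, using $H(\Sigma^-_t)=0$), a layer-cake reformulation of the resulting weighted area, and the coarse area estimate of Lemma \ref{coarse area estimate} combined with $\alpha_t\to\infty$ from Lemma \ref{inner radius to infty}. The only difference is cosmetic: you apply the coarse area estimate on all of $[\alpha_t,\infty)$, whereas the paper splits the radial integral at $\beta_t$ and uses the total area bound $|\Sigma^-_t|=\pi\,\beta_t^2$ on $[\beta_t,\infty)$; both yield a bound of order $\alpha_t^{-\varepsilon}$, and your version is slightly more streamlined since it does not invoke $\beta_t$ at all.
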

\begin{proof}

By the first variation formula, using that $H(\Sigma^-_t)=0$, 
	$$
	\int_{\partial\Sigma^-_t}\langle X,\mu(\Sigma^-_t)\rangle=\int_{\Sigma^-_t}\operatorname{div }X-\int_{\Sigma^-_t}\langle D_{\nu(\Sigma^-_t)}X,\nu(\Sigma^-_t)\rangle= O(1)\,\int_{\Sigma^-_t}|x|^{-2-\varepsilon}.
	$$
In the case where $\beta_t\leq \alpha_t$, we have  
$$
\int_{\Sigma^-_t}|x|^{-2-\varepsilon}\leq \beta_t^{-2-\varepsilon}\,|\Sigma^-_t|= \pi\,\beta_t^{-\varepsilon}. 
$$
Assume that $\beta_t\geq \alpha_t$. Note that 
\begin{align*} 
&\int_{\Sigma^-_t}|x|^{-2-\varepsilon}
 =(2+\varepsilon)\,\int_{\alpha_t}^{\infty}s^{-3-\varepsilon}\,|\{x\in \Sigma^-_t:|x|\leq s\}|.
\end{align*} 
	Using Lemma \ref{coarse area estimate}, we have
	$$
	\int_{\alpha_t}^{\beta_t}s^{-3-\varepsilon}\,|\{x\in \Sigma^-_t:|x|\leq s\}|\leq 4\,\pi\, \varepsilon^{-1}\,(\alpha_t^{-\varepsilon}-\beta_t^{-\varepsilon}).
	$$
Moreover, 
	$$
	\int_{\beta_t}^{\infty}s^{-3-\varepsilon}\,|\{x\in \Sigma^-_t: |x|\leq s\}|\leq \pi\, (2+\varepsilon)^{-1}\,\beta_t^{-\varepsilon}.
	$$
The assertion follows from these identities and Lemma \ref{inner radius to infty}.
\end{proof}
Given $t\geq t_0$ sufficiently large, let $$P_t=\{(y,0):y\in\mathbb{R}^2\text{ and }(y,\psi(y))\in (S\setminus S(\Sigma^-_t))\cup \partial \Sigma^-_t\}.$$  By \eqref{asymptotically flat section 6}  and Lemma \ref{inner radius to infty}, the horizontal projection $$p_t:(S\setminus S(\Sigma^-_t))\cup \partial \Sigma^-_t \to P_t$$  given by $p_t(x)=x-\langle x,e_3\rangle\, e_3$ is a diffeomorphism. We tacitly identify functions and maps defined on $(S\setminus S(\Sigma^-_t))\cup \partial \Sigma^-_t$ with functions and maps defined on $P_t$ by precomposition with $p_t$. Let $\mu(P_t)$ be the inward co-normal of $\partial P_t\subset P_t$.

\begin{prop} \label{asymptotics prop}
	There holds
	$$
	\lim_{t\to\infty} m_f(\Sigma^-_t)=m.
	$$
\end{prop}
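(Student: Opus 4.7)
The plan is to establish the two-sided asymptotic $|\Sigma^-_t|/\cosh^2(t) \to \pi\,m^2$, which is equivalent to the claim. The approach applies the first variation formula on $\Sigma^-_t$ twice, combined with the capillary condition
$$\mu(\Sigma^-_t) = \frac{1}{\cosh(t)}\,\nu(S) + \tanh(t)\,\mu(S(\Sigma^-_t))$$
along $\partial\Sigma^-_t$ and the asymptotic flatness of $S$ encoded via the projection $p_t$ onto $P_t$.

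First, I apply the first variation with the constant vector field $X=e_3$; minimality yields $\int_{\partial\Sigma^-_t}\langle e_3,\mu(\Sigma^-_t)\rangle\,d\sigma = 0$. Using the capillary decomposition together with the explicit expressions $\langle e_3,\nu(S)\rangle = -(1+|\nabla\psi|^2)^{-1/2}$ and $\langle e_3,\mu(S(\Sigma^-_t))\rangle = \psi_{\mu(P_t)}(1+\psi_{\mu(P_t)}^2)^{-1/2}$, and pulling back to $\partial P_t$ via $p_t$, one obtains, modulo errors controlled via $|\nabla\psi|(y) = O(|y|^{-\tau})$ and Lemma \ref{inner radius to infty},
$$\frac{|\partial\Sigma^-_t|}{\cosh(t)} = \tanh(t)\int_{\partial P_t}\psi_{\mu(P_t)}\,dl + o(1).$$
The planar divergence theorem applied to $P_t\cap\{|y|\leq r\}$ and sending $r\to\infty$ gives $\int_{\partial P_t}\psi_{\mu(P_t)}\,dl = 2\,\pi\,m - \int_{P_t}\Delta\psi$. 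Since $\Delta\psi$ differs from $H(S)$ only by quantities of lower order in $|\nabla\psi|$ and $H(S)$ is integrable on $S$ while $P_t$ retreats to infinity, $\int_{P_t}\Delta\psi\to 0$. Thus $|\partial\Sigma^-_t|/\cosh(t)\to 2\,\pi\,m$. The optimal isoperimetric inequality for minimal surfaces $|\partial\Sigma^-_t|^2 \geq 4\,\pi\,|\Sigma^-_t|$ then immediately yields $\limsup_{t\to\infty}m_f(\Sigma^-_t)\leq m$.

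For the matching lower bound, I would apply the first variation with the Euler field $X=x$, for which $\operatorname{div}_{\Sigma^-_t}X=2$ gives the identity $2\,|\Sigma^-_t| = \int_{\partial\Sigma^-_t}\langle x,\mu(\Sigma^-_t)\rangle\,d\sigma$. Splitting $x$ into its horizontal and vertical components and using the $e_3$-identity from the previous paragraph, the contribution of $x_3\,e_3$ reduces to lower order; the horizontal contribution, after the capillary decomposition and projection to $\partial P_t$, is dominated by $2\,\tanh(t)\,|P_t^c|$ via $\int_{\partial P_t} y\cdot \mu(P_t)\,dl = 2\,|P_t^c|$. Combining with $|\partial P_t|/\cosh(t)\to 2\,\pi\,m$ from the first step, together with an argument showing that $P_t^c$ is asymptotically isoperimetric (so that $|P_t^c|/\cosh^2(t)\to \pi\,m^2$), one deduces $|\Sigma^-_t|/\cosh^2(t)\to\pi\,m^2$.

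The main obstacle is the careful control of error terms in the second identity, particularly those involving $\psi$ itself on $\partial P_t$ (which may grow like $|y|^{1-\tau}$, hence without bound as $t\to\infty$) and the cross-term $|\nabla\psi|^2\,x_3$. Bounding such terms by $o(\cosh^2(t))$ requires combining $|\nabla\psi| = O(|y|^{-\tau})$ on $\partial P_t$, Lemma \ref{coarse area estimate} (to bound how much of $\Sigma^-_t$ lives within a given radius from the origin), Lemma \ref{inner radius to infty} (ensuring $\alpha_t\to\infty$), and Lemma \ref{layer cake integral estimate} (producing $o(1)$ asymptotics for fluxes of rapidly decaying vector fields). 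These conspire to isolate the leading asymptotics and yield the desired $m_f(\Sigma^-_t)\to m$.
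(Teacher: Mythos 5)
Your first paragraph reproduces the paper's proof in essentially the same form: the isoperimetric inequality $|\partial\Sigma^-_t|^2\geq 4\pi|\Sigma^-_t|$ gives $m_f(\Sigma^-_t)\leq\frac{|\partial\Sigma^-_t|}{2\pi\cosh(t)}$, and the asymptotic flux computation — via the capillary decomposition $\mu(\Sigma^-_t)=\frac{1}{\cosh(t)}\nu(S)+\tanh(t)\mu(S(\Sigma^-_t))$, projection to $\partial P_t$, and the planar divergence theorem with integrability of $H(S)$ — yields $\frac{|\partial\Sigma^-_t|}{\cosh(t)}\to 2\pi m$. The only cosmetic difference is that you deduce the $e_3$-flux estimate from the exact first-variation identity $\int_{\partial\Sigma^-_t}\langle e_3,\mu(\Sigma^-_t)\rangle=0$ on the minimal surface, whereas the paper introduces a cutoff $\eta$ and appeals to Lemma \ref{layer cake integral estimate}; both routes reach the same estimate, and the roles of Lemma \ref{coarse area estimate} and Lemma \ref{inner radius to infty} in taming the error terms are identical.

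The remainder of your argument attempts more than the paper actually proves: as written, the paper establishes only the one-sided bound $\limsup_{t\to\infty}m_f(\Sigma^-_t)\leq m$, since the minimal-surface isoperimetric inequality is used in one direction only, and this upper bound (together with the monotonicity $m_f(\Sigma^-_t)\geq m_f(D)$) is all that is invoked in the proof of Theorem \ref{extrinsic:rpi}. Your Euler-field idea for the lower bound has a genuine gap. The identity $2|\Sigma^-_t|=\int_{\partial\Sigma^-_t}\langle x,\mu(\Sigma^-_t)\rangle$ together with the capillary decomposition does lead, after the error control you sketch, to $|\Sigma^-_t|=\tanh(t)\,|P_t^c|+o(\cosh^2(t))$, where $P_t^c$ denotes the bounded planar region enclosed by $\partial P_t$. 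But the step "together with an argument showing that $P_t^c$ is asymptotically isoperimetric" is precisely the missing lower bound in disguise: the planar isoperimetric inequality runs the wrong way, giving only $|P_t^c|\leq\frac{|\partial P_t|^2}{4\pi}$, which, combined with $\frac{|\partial P_t|}{\cosh(t)}\to 2\pi m$, merely re-derives the upper bound $|\Sigma^-_t|\leq(1+o(1))\,\pi m^2\cosh^2(t)$. To conclude $|P_t^c|/\cosh^2(t)\to\pi m^2$ one would need to show that $\partial P_t$ becomes asymptotically circular in a quantitative stability sense, and neither you nor the paper supplies such an argument.
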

\begin{proof}
	For notational convenience,  we extend $\psi$ to a smooth function on $\mathbb{R}^3$ that is independent of the third variable.
	
	By the optimal isoperimetric inequality for minimal disks \cite[p.~1]{Carleman}, using Lemma \ref{capillary topology},
	$$
	m_f(\Sigma^-_t)=\frac{1}{\cosh (t)}\,\sqrt{\frac{|\Sigma^-_t|}{\pi}}\leq\frac{1}{2\,\pi\,\cosh (t)}\,|\partial \Sigma^-_t|. 
	$$ 
	By Lemma \ref{angles}, we have
	\begin{align} 
\frac{1}{\cosh (t)}=\langle\nu(S),\mu( \Sigma^-_t)\rangle.
	\end{align}  
 There holds
	$$
	\nu(S)=\frac{1}{\sqrt{1+|D \psi|^2}}\,\left(D\psi-e_3\right )
	$$
	 on $S\setminus S(\Sigma^-_t)\cup \partial \Sigma^-_t $ 
	provided that $t>t_0$ is sufficiently large. Using \eqref{asymptotically flat section 6}, we see that there is $\eta\in C^\infty(\mathbb{R}^3)$ with $D\eta=O(|x|^{-1-2\,\tau})$ such that $$\eta(y,0)=\frac{1}{\sqrt{1+|D\psi(y,0)|^2}}$$ for every $y\in\mathbb{R}^2$. 
	Using  Lemma \ref{layer cake integral estimate}, we obtain that 
	$$
	\int_{\partial \Sigma^-_t}\frac{1}{\sqrt{1+|D \psi|^2}}\,\langle e_3,\mu( \Sigma^-_t)\rangle=\int_{\partial \Sigma^-_t}\eta\,\langle e_3,\mu( \Sigma^-_t)\rangle =o(1).
	$$
	By Lemma \ref{angles}, we have that 
	$$
	\mu( \Sigma^-_t)=\tanh (t)\,\mu( S(\Sigma^-_t))+\frac{1}{\cosh (t)}\,\nu(S)
	$$
 on $\partial \Sigma^-_t$.	Note that, by \eqref{asymptotically flat section 6} and Lemma \ref{inner radius to infty},
	$$
	\int_{\partial \Sigma^-_t}\frac{1}{\sqrt{1+|D \psi|^2}}\,\langle D\psi ,\nu( S)\rangle
	=o(1)\,|\partial \Sigma^-_t|.
	$$
We conclude that
$$
\frac{1}{\cosh (t)}\,|\partial \Sigma^-_t|=(1+o(1))\, \int_{\partial \Sigma^-_t}\frac{1}{\sqrt{1+|D \psi|^2}}\,\langle D\psi,\mu( S(\Sigma^-_t))\rangle +o(1)
$$
 as $t\to\infty$.
Note that 
$$
\int_{\partial \Sigma^-_t}\frac{1}{\sqrt{1+|D \psi|^2}}\,\langle D \psi,\mu( S(\Sigma^-_t))\rangle=\int_{\partial P_t}\frac{1}{1+| D \psi|^2}\,\langle D \psi,\mu(P_t)\rangle. 
$$
Recall that, on $S\setminus S(\Sigma^-_t) $, 
$$
H(S)=\sum_{i=1}^2\partial_i \left(\frac{\partial_i \psi}{\sqrt{1+|D\psi|^2}}\right).
$$
Using that $H(S)$ is integrable as well as  \eqref{asymptotically flat section 6} and Lemma \ref{inner radius to infty}, we conclude that 
$$
\int_{ P_t}(\partial_1\partial_1 \psi+\partial_2\partial_2  \psi)=o(1).
$$
In conjunction with the  first variational formula, using  \eqref{asymptotically flat section 6} and Lemma \ref{inner radius to infty} again, we see that
\begin{align*} 
\int_{\partial P_t}\frac{1}{1+| D \psi|^2}\,\langle D \psi,\mu(P_t)\rangle
=\lim_{r\to\infty} \frac{1}{r}\,\int_{\left\{y\in\mathbb{R}^2:|y|	 =r\right\}}\sum_{i=1}^2 y_i\,\partial_i\psi +o(1).
\end{align*} 
In view of \eqref{exterior mass2}, the assertion follows.
\end{proof}

\section{Proofs of Theorem \ref{extrinsic:rpi} and Theorem \ref{extrinsic:pmt}}

Recall from Appendix \ref{appendix:af surfaces} the definition of an asymptotically flat support surface $S\subset \mathbb{R}^3$ with unit normal $\nu(S)$, the definition of the region $M(S)$ above $S$,   the conventions for the second fundamental form $h(S)$ and the mean curvature $H(S)$, and the definition of the  Gauss curvature $K(S)$. Moreover, recall the definitions of the exterior mass $m$, the outermost free boundary minimal surface $D\subset \bar M(S)$ supported on $S$, the exterior surface $S'\subset S$ of $S$, and the exterior region $M'(S)\subset M(S)$ of $S$. 

 Recall from Appendix \ref{appendix:af surfaces} the definition of an admissible surface $\Sigma \subset \bar M(S)$, of its lateral  surface $S(\Sigma)\subset S$, and of its inside $\Omega(\Sigma)\subset M(S)$. 
 
 Recall from Appendix \ref{appendix:free energy} the definition of a minimal capillary surface $\Sigma\subset \bar M(S)$ with capillary angle $\theta\in (0,\pi)$ and the notion of stability for such a surface. Moreover, recall the definition of the free energy mass $m_f(\Sigma)$ of a minimal capillary surface $\Sigma\subset \bar M(S)$.

\begin{proof}[Proof of Theorem \ref{extrinsic:rpi}] 
Let $S\subset \mathbb{R}^3$ be an asymptotically flat support surface with  outermost free boundary minimal surface $D\subset \bar M(S)$, exterior surface $S'\subset S$, and exterior region $M'(S)\subset M(S)$. Suppose that $H(S)\geq 0$ on $S'$. By Lemma \ref{topology}, we may assume that $S$ is diffeomorphic to the plane.    
Given $t\in[t_0,\infty)$, recall from Section \ref{stable minimal section} the definition of the sets $\mathcal{E}_{t}$ of all minimal capillary surfaces  $\Sigma_t\in \mathcal{E}_t$ satisfying \eqref{mathE def}. Recall from \eqref{s def} the definitions of $s^-_t,\,s^+_t$ and that, according to Proposition \ref{universal existence}, there are $\Sigma^-_t,\,\Sigma^+_t\in\mathcal{E}_t$ with $|S(\Sigma^-_t)|=s^-_t$ and $|S(\Sigma^+_t)|=s^+_t$. Moreover, recall from Appendix \ref{appendix:af surfaces} the definition of the unit tangent field $e(\partial \Sigma_t)$ of $\Sigma_t$. 
By Corollary \ref{monotonicity corollary} and Lemma \ref{minimization in homology class}, we have, for every $t>0$,
$
m_f(\Sigma^-_t)\geq m_f(D).
$
In conjunction with  Proposition \ref{asymptotics prop}, we conclude that, as asserted,
$$
m\geq m_f(D)=\sqrt{\frac{|D|}{\pi}}.
$$
Assume now that equality holds. By Corollary, \ref{monotonicity corollary} $s^-_t=s^+_t$ for every $t\geq 0$. Moreover, for every $t\geq 0$ there is $\Sigma_t\in\mathcal{E}_t$ such that $\Sigma_t$ is a union ot flat disks and $H(S)=0$ on $\partial \Sigma_t$. Suppose, for a contradiction, that there are $t_1,\,t_2\geq 0$ with $t_2>t_1$ such that $\Sigma_{t_1}\cap \Sigma_{t_2}\neq \emptyset$. Since $\Sigma_{t_1}$ and $\Sigma_{t_1}$ are both unions of flat disks, it follows that $\partial \Sigma_{t_1}\cap \partial \Sigma_{t_2}\neq \emptyset$ and that the intersection of $\Sigma_{t_1}$ and $\Sigma_{t_2}$ is transverse. Moreover, since $\Sigma_{t_1}$ and $\Sigma_{t_2}$ are both unions of flat disks that intersect $S$ at a constant angle, it follows that both $e(\partial \Sigma_{t_1})$ and $e(\partial \Sigma_{t_2})$ are principal directions of $S$ with positive principal curvatures, respectively. Consequently, $h(S)$ is positive definite on  $\partial \Sigma^1_{t_1}\cap \partial \Sigma^1_{t_2}$.  This is a contradiction, since $H(S)=0$ on $\partial \Sigma_{t_1}$. It follows that  $\Sigma_{t_1}\cap \Sigma_{t_2}= \emptyset$ for all $t_1,\,t_2\geq 0$ with $t_2>t_1$. In conjunction with  Lemma \ref{compactness lemma}, we conclude that,  for every $t\geq 0$, $\Sigma_{t'}\to \Sigma_t$ smoothly as $t'\to t$.  It follows that the number of  components of $\Sigma_t$ is constant. Since $S$ is diffeomorphic to the plane and $\Sigma_t$ is diffeomorphic to a union of disks, the number of  components of $\bar \Omega(\Sigma_t)$ equals the number of components of $\Sigma_t$. Let $K\subset \bar M(S)$ be a connected compact set with $\bar \Omega(D)\subset K$. By the intermediate value theorem, for every $t\geq 0$, every component of $\bar \Omega(\Sigma_t)$ intersects $K$. Using Lemma \ref{inner radius to infty}, we see that $K\subset \bar \Omega (\Sigma_t)$ provided that $t>0$ is sufficiently large. It follows that both $\bar \Omega(\Sigma_t)$ and $\Sigma_t$ are connected for every $t\geq 0$. By the intermediate value theorem and Lemma \ref{inner divergence}, using that $\Sigma_0=D$, we have
$$
S'=\bigcup_{t\in[0,\infty)}\partial \Sigma_t.
$$
Consequently, $S'$ is a minimal surface with boundary $\partial D$ that is diffeomorphic to $\{y\in\mathbb{R}^2:1\leq |y|<2\}$. %
 Moreover, since $D$ is a flat disk that intersects $S$ orthogonally, the geodesic curvature of $\partial D\subset S$ vanishes. By the Gauss-Bonnet theorem, using also \eqref{asymptotically flat section 2},
$$
\int_{S'}K(S)=-2\,\pi.
$$   
Reflecting $S'$ across the plane that contains $D$, we obtain a connected complete  minimal surface $\hat S$ immersed in $ \mathbb{R}^3$ that is diffeomorphic to $\{y\in\mathbb{R}^2:1< |y|<2\}$ and satisfies 
$$
\int_{\hat S}K(\hat S)=-4\,\pi.
$$
By a result of R.~Osserman \cite[p.~341]{Osserman},  $\hat S$ is a catenoid. It follows that $S'$ is a half-catenoid, as asserted. 
\end{proof}

\label{section:proof of main results 2}

\begin{proof}[Proof of Theorem \ref{extrinsic:pmt}] Let $S\subset \mathbb{R}^3$ be an asymptotically flat support surface.  Suppose that $H(S)\geq 0$ on $S$. If there is a compact minimal surface $\Sigma\subset \bar M(S)$ that intersects $S$ orthogonally along $\partial \Sigma$, then, in view of Lemma \ref{existence of outermost free boundary} and Theorem \ref{extrinsic:rpi}, we have $m>0$, as asserted.
	Using Lemma \ref{existence of outermost free boundary}, we may therefore assume that $S$ is diffeomorphic to the plane. Moreover, we may assume that $S$ is not a flat plane. By the Gauss-Bonnet theorem, using \eqref{asymptotically flat section 2}, we see that
	$$
	\int_{S}K(S)=0. 
	$$
Since $S$ is not a flat plane, it follows that there is $x\in S$  with $K(S)(x)>0$. Let $\varepsilon>0$ be such that the plane $$P_{\varepsilon}=\{x'\in \mathbb{R}^3:\langle x'-x+\varepsilon\,\nu(S)(x),\nu(S)(x)\rangle=0\}$$ and $S$ intersect transversely and let $\Sigma_{\varepsilon}$ be the  component of $\bar M(S)\cap P_{\varepsilon}$ with $x\in S(\Sigma_{\varepsilon})$.  Using that $K(S)(x)>0$, we see that
\begin{align} \label{negative free energy} 
J_{t_0}(\Sigma_{\varepsilon})<0
\end{align} 
provided that $t_0>0$ is sufficiently large and that $\varepsilon>0$ is sufficiently small. Arguing as in the proof of Proposition \ref{capillary existence} and using \eqref{negative free energy}, we conclude that there is a stable minimal capillary surface $\Sigma_{t_0}\subset \bar M(S)$ with capillary angle $\arccos(-\tanh(t_0))$ such that, for every admissible surface $\Sigma\subset \bar M(S)$,
	$
	J_{t_0}(\Sigma_{t_0})\leq J(\Sigma). 
	$ 
	 Clearly
	$
	m_f(\Sigma_{t_0})>0.
	$   
Arguing as in the proof of Theorem \ref{extrinsic:rpi}, we see that
$
m\geq m_f(\Sigma_{t_0})>0,
$
as asserted.
\end{proof}

\section{Embedded minimal surfaces  with finite total curvature} 
In this section, we assume that $\tilde S$ is a connected complete minimal surface embedded in $\mathbb{R}^3$ with designated unit normal $\nu(\tilde S)$. Let $K(\tilde S)$ be the Gauss curvature of $\tilde S$. We assume that 
$$
\int_{\tilde S} |K(\tilde S)|<\infty. 
$$   
The goal of this section is to prove Theorem \ref{catenoid characterization}.

Recall from \cite[Proposition 1]{Schoen} that, rotating $\tilde S$ if necessary,  there is $\lambda>1$ and an integer $\ell\geq 1$ such that $\{x\in \mathbb{R}^3:|x|=\lambda\}$ and $\tilde S$ intersect transversely in Jordan curves $\Gamma_1,\ldots, \Gamma_\ell$ and that the components $\tilde S_1,\ldots,\tilde S_\ell$ of   $\{x\in \tilde S:|x|>\lambda\}$ bounded by $\Gamma_1,\ldots \Gamma_\ell,$ respectively, are contained in the graphs of functions $\psi_1,\ldots,\psi_\ell \in C^\infty(\mathbb{R}^2)$ with $\psi_1(y)<\ldots< \psi_\ell(y)$ for every $y\in \mathbb{R}^2$. Moreover, for each $1\leq k\leq \ell$, 
\begin{equation}  
	\begin{aligned} \label{asymptotically flat section 8}
		|\psi_k(y)-a_k\,\log|y|-b_k|+	\sum_{i=1}^2|\partial_i\psi_k(y)|+\sum_{i,\,j=1}^2|y|\,|\partial^2_{ij}\psi_k(y)|=O(|y|^{-1})
	\end{aligned}
\end{equation} 
for some $a_k,\,b_k\in \mathbb{R}.$ 
Note that $\mathbb{R}^3\setminus \tilde S$ has exactly two components. We denote the component whose outward normal points in direction of $-e_3$ on $\tilde S_\ell$ by $M^+$ and the component whose outward normal points in direction of $e_3$ on $\tilde S_1$ by $M^-$. Note that $M^+$ and $M^-$  coincide if and only if  $\ell$ is even. 

Given $1\leq k\leq \ell$, let $\mu(\tilde S_k)$ be the inward co-normal of $\tilde S_k$. Following \cite[\S17]{Fang}, we define the flux $\operatorname{Flux}(\tilde S_k)\in\mathbb{R}^3$ of $\tilde S_k$ by 
\begin{align} \label{Flux} 
	\operatorname{Flux}(\tilde S_k)=\int_{\Gamma_k}\mu(\tilde S_k).
\end{align} 
Recall that, by the first variation formula, for every Jordan curve $\Gamma\subset \tilde S_k$ that is homologous to $\Gamma_k$ in $\tilde S_k$,
\begin{align} \label{homotopy invariant} 
	\operatorname{Flux}(\tilde S_k)=\int_{\Gamma} \mu(\tilde S_k).
\end{align}

Let $\Sigma^+$ be the  component of $\{x\in \bar M^+:|x|=\lambda\}$ that intersects $\tilde S_\ell$ transversely  and $\Sigma^-$ be the  component of $\{x\in \bar M^-:|x|=\lambda\}$ that intersects $\tilde S_1$ transversely. Note that both $M^+\setminus \Sigma^+$ and $M^-\setminus \Sigma^-$ have exactly two components. Let $\gamma^+(\tilde S)$ be the infimum of $\sqrt{4\,\pi\,|\Sigma|}$ among all compact surfaces $\Sigma\subset \bar M^+$ that intersect $\tilde S$ transversely and are homologous to $\Sigma^+$ relative to $\tilde S$. Likewise, let $\gamma^-(\tilde S)$ be the infimum of $\sqrt{4\,\pi\,|\Sigma|}$ among all compact surfaces $\Sigma\subset \bar M^-(S)$ that intersect $\tilde S$ transversely and are homologous to $\Sigma^-$ relative to $\tilde S$. We define the neck size $\gamma(\tilde S)$ of $\tilde S$ as 
\begin{align}  \label{neck size}
	\gamma(\tilde S)=\max\{\gamma^+(\tilde S),\gamma^-(\tilde S)\}.
\end{align} 

For the proof of Theorem \ref{catenoid characterization}, recall from Appendix \ref{appendix:af surfaces} the definition of an asymptotically flat support surface $S\subset \mathbb{R}^3$ with unit normal $\nu(S)$ and the definition of the region $M(S)$ above $S$. Moreover, recall the definitions of the exterior mass $m$, the outermost free boundary minimal surface $D\subset \bar M(S)$ supported on $S$, the exterior surface $S'\subset S$ of $S$, and  the exterior region $M'(S)\subset M(S)$ of $S$. 

\begin{proof}[Proof of Theorem \ref{catenoid characterization}] We may assume that $\gamma^+(\tilde S)\geq \gamma^-(\tilde S)$. In view of \eqref{asymptotically flat section 8}, outside a compact set, $\bar M^+$ is foliated by compact surfaces with positive mean curvature that meet $S$ at an acute angle. Using standard tools from geometric measure theory, it follows that either $\Sigma^+$ is null homologous relative to $\tilde S$ or that there is a nonempty compact free boundary minimal surface $\tilde D\subset \bar M^+$ with $\gamma^+(\tilde S)=\sqrt{4\,\pi\,|\tilde D|}$. If $ \Sigma^+$ is null homologous relative to $\tilde S$, then $\ell=1$ and, by the strong maximum principle, using \eqref{asymptotically flat section 8}, $\tilde S$ is a flat plane so that $\operatorname{Flux}(\tilde S)=(0,\,0,\,0)$ and $\gamma(\tilde S)=0$.

	If $\Sigma^+$ is not null homologous relative to $\tilde S$, let $\tilde S'$ be the  component of $\tilde S\setminus \partial \tilde D$ with $\tilde S'\cap \tilde S_\ell\neq \emptyset$. We extend $\tilde S'$ to an  asymptotically flat support surface $S\subset \mathbb{R}^3$ such that $\tilde D\setminus \partial\tilde  D\subset M(S)$. Since $\tilde D$ minimizes area in its homology class relative to $\tilde S$, we see that the outermost free boundary minimal surface $D\subset \bar M(S)$ of $S$ exists and satisfies $|D|\geq |\tilde D|$. Moreover, the exterior surface $S'$ of $S$ satisfies $S'\subset \tilde S'$. Moreover,  using \eqref{asymptotically flat section 8}, \eqref{homotopy invariant}, and \eqref{exterior mass2}, we see that 
	$\operatorname{Flux}(F_\ell)=(0,0,2\,\pi\, m)$ where $m$ is the exterior mass of $S$. By Theorem \ref{extrinsic:rpi}, we have
	\begin{align} \label{catenoid char eq} 
		m\geq\sqrt{\frac{|D|}{\pi}}
	\end{align} 
	so that $|\operatorname{Flux}(\tilde S_\ell)|\geq\gamma(\tilde S)$ as asserted.  If $|\operatorname{Flux}(\tilde S_\ell)|=\gamma(\tilde S)$, then  equality holds in \eqref{catenoid char eq}. By Theorem \ref{extrinsic:rpi}, $S'\subset \tilde S$ is a half-catenoid. By unique continuation, $\tilde S$ is a catenoid; see \cite[p.~235]{aronszajn}. The assertion follows. 
\end{proof}

\begin{appendices} 
	\section{Asymptotically flat support surfaces}
	\label{appendix:af surfaces}
	In this section, we recall background on asymptotically flat support surfaces.
	
	Let $ S\subset \mathbb{R}^3$ be a connected  unbounded properly embedded surface such that $\mathbb{R}^3\setminus S$ has exactly two components.  In particular, $S$ has no boundary. We say that $S$ is an asymptotically flat support surface if its mean curvature is integrable and the complement of a compact subset of $S$ is contained in the graph of a function $\psi \in C^\infty(\mathbb{R}^2)$ such that, 	for some $\tau>1/2$,
	\begin{equation} \label{asymptotically flat section 2}
		\begin{aligned} 
			\sum_{i=1}^2|\partial_i\psi(y)|+\sum_{i,\,j=1}^2|y|\,|\partial^2_{ij}\psi(y)|=O(|y|^{-\tau}).
		\end{aligned}
	\end{equation} 
  Let $\nu(S)$ be the unit normal field asymptotic to $-e_3$. The component of $\mathbb{R}^3\setminus S$ that $\nu(S)$ points out of is denoted by $M(S)$ and referred to as the region above $S$.  We denote the second fundamental form of $S$ with respect to $\nu(S)$ by $h(S)$. Our convention is such that the trace of $h(S)$, i.e., the mean curvature $H(S)$, is the tangential divergence of $\nu(S)$.

	  The exterior mass of $S$ is
	\begin{align} \label{exterior mass2}
		m=\lim_{r\to\infty}\frac{1}{2\,\pi\,r}\,\int_{\{y\in\mathbb{R}^2:|y|=r\}}\sum_{i=1}^2\,y^i\,\partial_i\psi(y).
	\end{align}

	A nonempty compact minimal surface  $D\subset \bar M(S)$  that intersects $S$ orthogonally along $\partial D$ such that $\partial D\subset S$ bounds a bounded open subset of $S$ is called a free boundary minimal surface. Given a free boundary minimal surface $D\subset \bar M(S)$, let $S'\subset S$ be the closure of the unbounded component of $S\setminus \partial D$ and $M'(S)$ be the unbounded component of   $M(S)\setminus D$. We call $D$ outermost if every compact minimal surface  $\Sigma\subset \bar M'(S)$  that intersects $S$ orthogonally along $\partial \Sigma$ is contained in $D$. If an outermost free boundary minimal surface $D\subset \bar M(S)$ exists, we say that $S$ is an asymptotically flat support surface with  outermost free boundary surface $D$, exterior surface $S'$, and exterior region $M'(S)$.

	\begin{lem} \label{existence of outermost free boundary}
		Let $S$ be an asymptotically flat support surface. Suppose that $H(S)\geq 0$. Then either there is a unique outermost free boundary minimal surface $D\subset \bar M(S)$ supported on $S$ or there is no compact minimal surface $\Sigma\subset \bar M(S)$ that intersects $S$ orthogonally along $\partial \Sigma$ and $S$ is diffeomorphic to the plane.
	\end{lem}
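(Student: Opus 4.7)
The plan is to split the proof into two cases, according to whether $\bar M(S)$ contains any compact minimal surface meeting $S$ orthogonally.

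In the first case, I would invoke Lemma \ref{subsolutions} to obtain, for $\lambda_0$ sufficiently large, the smooth foliation $\{\Sigma_\lambda : \lambda \geq \lambda_0\}$ of $\bar M(S)\setminus \bar \Omega(\Sigma_{\lambda_0})$ by area-minimizing minimal graphs meeting $S$ at angles strictly greater than $\pi/2$. A calibration argument in the spirit of Lemma \ref{replacement lemma}, using $H(S)\geq 0$ together with these barriers, confines every compact minimal surface in $\bar M(S)$ with free boundary on $S$ to the bounded region $\bar\Omega(\Sigma_{\lambda_0})$. Letting $\mathcal{F}$ denote the family of all such surfaces, I would form the bounded open set
$$
U = \bigcup_{\Sigma\in \mathcal{F}} \Omega(\Sigma)\subset \Omega(\Sigma_{\lambda_0})
$$
and minimize the perimeter functional $F\mapsto \|\partial F\|(M(S))$ among Caccioppoli sets $F$ with $U\subset F\subset \Omega(\Sigma_{\lambda_0})$. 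Standard BV compactness together with lower semicontinuity (cf.~Lemma \ref{freelowersemi}) produces a minimizer $F^*$, and the regularity theorem of J.~Taylor \cite{Taylor}, invoked exactly as in the proof of Proposition \ref{capillary existence}, shows that $D := \overline{\partial^* F^*\cap M(S)}$ is a smooth free boundary minimal surface. By construction, every $\Sigma\in\mathcal{F}$ satisfies $\Omega(\Sigma)\subset \Omega(D)$; in particular, if $\Sigma\subset \bar M'(S)$ also, then $\Sigma$ must lie on $\partial \Omega(D)\cap M(S)=D$, so $D$ is outermost. Uniqueness follows by applying the same minimization to two putative outermost surfaces and noting that their enclosed regions must coincide.

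In the second case, when no compact minimal surface in $\bar M(S)$ meets $S$ orthogonally, I would argue by contradiction that $S$ is diffeomorphic to $\mathbb{R}^2$. Since $S$ is a properly embedded, connected, one-ended, two-sided surface in $\mathbb{R}^3$, the only alternative is that $S$ is diffeomorphic to $\Sigma_g\setminus \{\mathrm{pt}\}$ for some $g\geq 1$. For $\lambda$ sufficiently large, $S\cap \bar\Omega(\Sigma_\lambda)$ is then a compact genus-$g$ surface with connected boundary $\Gamma_\lambda$, so the relative homology group $H_2(\bar M(S)\cap \bar\Omega(\Sigma_\lambda),\,S\cap \bar\Omega(\Sigma_\lambda);\mathbb{Z}_2)$ contains nontrivial classes (equivalently, by Dehn's lemma/the loop theorem, some essential simple closed curve on $S\cap\bar\Omega(\Sigma_\lambda)$ bounds a disk in $\bar M(S)$). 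Minimizing perimeter in a nontrivial class, once again using the barriers from Lemma \ref{subsolutions} to prevent mass loss to infinity and Taylor's theorem at the free boundary, yields a nonempty compact minimal surface in $\bar M(S)$ meeting $S$ orthogonally, a contradiction.

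The main technical obstacle is the free boundary regularity at points where $\partial^* F^*$ meets $S$, which is supplied by Taylor's theorem exactly as in Proposition \ref{capillary existence}. A secondary delicate point appears in the second case: one must verify that the chosen relative homology class admits a nontrivial minimizer, i.e.~that a minimizing sequence does not degenerate by collapsing onto $S$ or sliding off to infinity. This is precisely where the mean-convexity assumption $H(S)\geq 0$ and the barriers from Lemma \ref{subsolutions} are essential.
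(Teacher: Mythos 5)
The paper itself disposes of this lemma in one line, citing \cite[Lemma~2.1]{Koerber} and \cite[Lemma~2.3]{Koerber}; those lemmas package the construction of the outermost free boundary minimal surface via a doubling/reflection of $\bar M(S)$ across $S$ together with existing existence and uniqueness theory for outermost closed minimal surfaces in the doubled manifold. Your proposal instead builds the outermost surface directly by an obstacle-constrained perimeter minimization in $\bar M(S)$, which is a legitimate and genuinely different route, so a fair comparison is worth recording: the doubling approach buys one free access to the well-developed closed-surface theory at the cost of some smoothing near the corner, while the direct approach stays in the original geometry but requires one to redo the obstacle and maximum-principle arguments at the free boundary.

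That said, your sketch has gaps that are not merely cosmetic. First, the obstacle problem over $U=\bigcup_{\Sigma\in\mathcal F}\Omega(\Sigma)$ requires more than BV compactness and Taylor regularity: you must argue that the minimizer $F^*$ is not stuck against the obstacle in a way that destroys minimality, and that $\partial^*F^*$ does not coincide with the outer barrier $\Sigma_{\lambda_0}$ (which meets $S$ at a non-orthogonal angle and hence is not a free boundary minimal surface). The strong maximum principle handles contact with $\partial U$ only because $\partial U$ is locally a union of minimal surfaces, but since $\mathcal F$ may be infinite, $\partial U$ need not be smooth and this needs to be said explicitly. You also need to rule out $F^*$ collapsing onto $U$ entirely (so that $D$ could fail to be an admissible admissible surface) or onto $S$. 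Second, the uniqueness assertion ``their enclosed regions must coincide'' is not an argument; the standard route is either to invoke the strong maximum principle to show that two outermost surfaces meeting on either side must agree, or to observe that the maximal trapped region is canonically defined and both candidates bound it. Third, in the topological case your invocation of Dehn's lemma/loop theorem and of a nontrivial class in $H_2(\bar M(S)\cap\bar\Omega(\Sigma_\lambda),\,S\cap\bar\Omega(\Sigma_\lambda);\mathbb Z_2)$ when $S$ has genus $g\geq 1$ is plausible but needs justification: you must exhibit a compressing curve on $S$ that bounds in $\bar M(S)$ rather than on the other side, and then verify (using the barriers and $H(S)\geq 0$) that the relative homology class has a nontrivial, nondegenerate minimizer, i.e.\ that mass neither escapes to infinity nor collapses onto $S$. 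These are all fixable with the tools you cite, but as written the proposal is a correct outline rather than a proof.
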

	\begin{proof}
		This follows from \cite[Lemma 2.1]{Koerber} and \cite[Lemma 2.3]{Koerber}.
	\end{proof}
	\begin{lem} \label{topology}  Let 	$S\subset\mathbb{R}^3$ be an asymptotically flat support surface with outermost free boundary minimal surface $D\subset \bar M(S)$ and exterior surface $S'\subset S$. Suppose that $H(S)\geq 0$ on $S'$. Then $S'$ is diffeomorphic to the plane minus a finite number of  disks and  $D$ is diffeomorphic to a union of disks.
	\end{lem}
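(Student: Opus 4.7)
\emph{Proof plan.} Let $D_1,\dots,D_k$ denote the connected components of $D$.

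First, I would show that each $D_i$ is a topological disk. A standard first-variation argument based on the outermost property (cf.~\cite[Lemma 2.3]{Koerber}) implies that each $D_i$ is stable as a free boundary minimal surface in $\bar M(S)$. Testing stability with $f\equiv 1$ yields
\begin{align*}
\int_{D_i}|h(D_i)|^2 + \int_{\partial D_i} h(S)(\nu(D_i),\nu(D_i))\leq 0.
\end{align*}
The orthogonality $D_i\perp S$ along $\partial D_i$ yields the identity $k(\partial D_i)=h(S)(e,e)$ for the geodesic curvature of $\partial D_i$ in $D_i$, where $e$ is the unit tangent to $\partial D_i$. Combining Gauss--Bonnet on $D_i$ with the identity $K(D_i)=-\tfrac12|h(D_i)|^2$ (valid since $D_i$ is minimal in $\mathbb{R}^3$), the decomposition $H(S)=h(S)(e,e)+h(S)(\nu(D_i),\nu(D_i))$ on $\partial D_i$, and the stability inequality, I would obtain
\begin{align*}
2\,\pi\,\chi(D_i)\geq \tfrac12\,\int_{D_i}|h(D_i)|^2+\int_{\partial D_i}H(S)\geq 0,
\end{align*}
using $H(S)\geq 0$ on $\partial D_i\subset S'$. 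Hence $\chi(D_i)\in\{0,1\}$; in the case $\chi(D_i)=0$, equality throughout forces $D_i$ to be flat and $H(S)$ to vanish on $\partial D_i$, and a planar extension of such an annular $D_i$ across one of its two boundary circles would produce a free boundary minimal disk in $\bar M'(S)$ not contained in $D$, contradicting outermostness. Therefore each $D_i$ is a disk.

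Next, I would show that $S'$ is diffeomorphic to $\mathbb{R}^2$ minus finitely many disjoint open disks. Since each $D_i$ is a disk, $\partial S'=\partial D$ is a disjoint union of $k$ smoothly embedded circles. The decay \eqref{asymptotically flat section 2} gives $|h(S)|^2\in L^1(S)$, and therefore $|K(S)|\in L^1(S')$. Combined with the fact that $S'$ has a single end asymptotic to the graph of $\psi$, a classical topology argument for surfaces of finite total Gauss curvature (Huber-type) implies that $S'$ has finite topology: it is diffeomorphic to the interior of a compact surface of some genus $g\geq 0$ with $k$ boundary circles and a single puncture.

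To conclude, I would rule out positive genus. If $g\geq 1$, there is a smoothly embedded loop $\gamma\subset S'$ that is homologically non-trivial in $S'$ but null-homologous in $\bar M(S)$. Minimizing area among admissible surfaces $\Sigma\subset\bar M'(S)$ with $\partial \Sigma\subset S'$ in the homology class of $\gamma$, using barriers from the sub-/supersolutions of Section~2 together with $H(S)\geq 0$ on $S'$ to confine the minimizing sequence and prevent degeneration, and invoking the regularity theory of J.~Taylor as in Proposition~\ref{capillary existence}, would yield a compact free boundary minimal surface $\Sigma\subset\bar M'(S)$ with $\Sigma\not\subset D$, contradicting outermostness. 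Hence $g=0$. The main obstacle will be this last step, where one must carefully construct barriers to keep the minimization in a compact region of $\bar M(S)$ and rule out singular limits at $S$.
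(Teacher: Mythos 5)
The paper proves this lemma by citing \cite[Lemmas~2.1 and~2.3]{Koerber}, so there is no internal argument to compare against; the closest internal analogue is the proof of Lemma~\ref{minimization in homology class3}. Your stability computation leading to $2\,\pi\,\chi(D_i)\geq\frac12\int_{D_i}|h(D_i)|^2+\int_{\partial D_i}H(S)\geq 0$ is correct and matches the paper's approach (and that of \cite{Ros}). However, your mechanism for ruling out the annulus case does not work. If $D_i$ is a flat annulus contained in a plane $P$ meeting $S$ orthogonally along its inner boundary circle $\gamma_1$, then near $\gamma_1$ the plane $P$ is transversal to $S$ and $D_i$ lies on the $\bar M(S)$-side of $S$; the planar disk in $P$ bounded by $\gamma_1$ therefore leaves $\bar M(S)$ near $\gamma_1$ and cannot serve as a competitor for outermostness. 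The way to rule out the annulus (as in Lemma~\ref{minimization in homology class3}) is to observe that equality in the stability inequality for $f\equiv 1$ forces, via Lemma~\ref{first eigenvalue} and the Robin boundary condition in \eqref{first eigenfunction}, $k(\partial D_i)\equiv 0$; since $D_i$ is planar, $\partial D_i$ would then consist of compact straight lines, which is impossible (equivalently, $\int_{\partial D_i}|k(\partial D_i)|\geq 4\,\pi$ for a planar annulus, contradicting $k\equiv 0$).

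Two further observations. The Huber-type argument is unnecessary: by definition, the complement of a compact subset of $S$ is contained in a graph and is thus an annulus, so $S$ (and hence $S'$) already has finite topology. And the genus-$0$ step, which you flag as the main obstacle, is indeed where the real work lies and is not supplied; note in particular that the claim that loops in $S'$ are null-homologous in $\bar M(S)$ presupposes information about the topology of $\bar M(S)$ that is precisely what this lemma is meant to establish (the paper only assumes $S$ is diffeomorphic to the plane \emph{after} invoking this lemma).
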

	\begin{proof}
		This follows from \cite[Lemma 2.1]{Koerber} and \cite[Lemma 2.3]{Koerber}.
	\end{proof}
	
	In view of Lemma \ref{existence of outermost free boundary} and Lemma \ref{topology}, we may and will assume that $S$ is diffeomorphic to the plane.
	
	We say that a compact surface $\Sigma\subset \bar M(S)$ is admissible if $\Sigma$ intersects $S$ transversely along $\partial \Sigma$ and if $\Sigma$ has no closed components contained in $M(S)$. Given an admissible surface $\Sigma\subset \bar M(S)$, let $S(\Sigma)$ denote the closure of the union of the bounded components of $S\setminus \partial \Sigma$ and  $\Omega(\Sigma)\subset M(S)$ be the union of the bounded components of $M(S)\setminus \Sigma$. We call $S(\Sigma)$ the lateral surface of $\Sigma$ and $\Omega(\Sigma)$ the inside of $\Sigma$. Moreover, we call $|S(\Sigma)|$ the enclosed area of $\Sigma$. Note that every component of an admissible surface is an admissible surface. 
	
	Let $\nu(\Sigma)$ be the normal of $\Sigma$ pointing out of $\Omega(\Sigma),$  $\mu(\Sigma)$ be  the outward co-normal of $\partial \Sigma\subset \Sigma,$ $\mu(S(\Sigma))$ be  the outward co-normal of $\partial S(\Sigma)\subset S(\Sigma)$, and $e(\partial \Sigma)$ be the unit tangent field of $\partial \Sigma$ given by $e(\partial \Sigma)=\nu(\Sigma)\times \mu(\Sigma)$. Let $k(\partial \Sigma)$ be the geodesic curvature of $\partial \Sigma\subset \Sigma$ computed as the tangential divergence of  $\mu(\Sigma)$. We denote the second fundamental form and its trace, the mean curvature, of $\Sigma$ with respect to $\nu(\Sigma)$ by $h(\Sigma)$ and $H(\Sigma)$, respectively. Our convention is such that $H(\Sigma)$ is the tangential divergence of $\nu(\Sigma)$. Let $K(\Sigma)$ be the Gauss curvature of $\Sigma$. Let $\chi(\Sigma)$ be the Euler characteristic of $\Sigma$. Given a function $f\in C^\infty(\Sigma)$, we denote the gradient of $f$ by $\nabla^\Sigma f$ and the nonnegative Laplace-Beltrami operator of $f$ by $\Delta^\Sigma f$. 
	
		\begin{lem} \label{minimization in homology class}
	 Let 	$S\subset\mathbb{R}^3$ be an asymptotically flat support surface with outermost minimal free boundary  surface $D\subset \bar M(S)$, exterior surface $S'\subset S$, and exterior region $M'(S)$. Suppose that $H(S)\geq 0$ on $S'$.  If  $\Sigma \subset \bar M'(S)$ is an admissible surface with $\Omega(D)\subset \Omega(\Sigma)$, then  $|D|\leq | \Sigma|$ with equality if and only if $D=\Sigma$. 
	\end{lem}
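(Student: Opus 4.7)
The plan is to prove the lemma by a perimeter-minimization argument. Let $\mathcal{F}$ denote the collection of bounded Caccioppoli sets $F \subset M(S)$ with $\Omega(D) \subset F$. I shall show that $\Omega(D)$ is the unique minimizer of $P(F) := \|\partial F\|(M(S))$ on $\mathcal{F}$, with minimum value $|D|$. The asserted inequality $|D| \leq |\Sigma|$ then follows by taking $F = \Omega(\Sigma)$, which lies in $\mathcal{F}$ because $\Sigma$ meets $S$ transversely, so that $P(\Omega(\Sigma)) = |\Sigma|$.

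First, I need a uniform bound for minimizing sequences. Lemma \ref{subsolutions} provides, for all sufficiently large $\lambda$, an area-minimizing disk $\Sigma_\lambda \subset \bar M(S)$ with boundary $\Gamma_\lambda \subset S$, such that $\{\Sigma_\lambda : \lambda \geq \lambda_0\}$ foliates the complement of a compact set in $\bar M(S)$ and $\Omega(D) \subset \Omega(\Sigma_\lambda)$ for $\lambda$ large. For any $F \in \mathcal{F}$, a replacement argument analogous to Lemma \ref{replacement lemma} — replacing $F$ by $F \cap \Omega(\Sigma_\lambda)$ and exploiting the area-minimizing property of $\Sigma_\lambda$ — shows that $P(F \cap \Omega(\Sigma_\lambda)) \leq P(F)$. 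Combined with the BV-compactness of Lemma \ref{bv:compact} and lower semicontinuity as in Lemma \ref{freelowersemi}, this yields a minimizer $E \in \mathcal{F}$ contained in $\Omega(\Sigma_\lambda)$ for some $\lambda$ large.

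Next, I invoke the regularity theorem of Taylor (\cite[\S6]{Taylor}, as in the proof of Proposition \ref{capillary existence}) with capillary angle $\pi/2$ to conclude that $T := \overline{\partial^{\ast} E \cap M(S)}$ is a smooth compact free boundary minimal surface in $\bar M(S)$. Since $\Omega(D)$ is open and contained in $E$, we have $T \cap \Omega(D) = \emptyset$, so $T \subset \bar M'(S)$ is a free boundary minimal surface in the exterior region. The outermost property of $D$ then forces $T \subset D$, and since $T$ is a smooth compact subsurface with boundary on $S$ and the components of $D$ are also of this form, $T$ is a union of connected components of $D$. In particular, $\partial^{\ast} E \cap M'(S) = \emptyset$, so $E \setminus \bar{\Omega}(D)$ is simultaneously open and closed in the connected set $M'(S)$. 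Because $M'(S)$ is unbounded while $E$ is bounded, this set must be empty, so $E = \Omega(D)$ and the minimum value is $|D|$.

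For the equality case, if an admissible $\Sigma \subset \bar M'(S)$ with $\Omega(D) \subset \Omega(\Sigma)$ satisfies $|\Sigma| = |D|$, then $\Omega(\Sigma)$ is itself a minimizer in $\mathcal{F}$; by the uniqueness established above, $\Omega(\Sigma) = \Omega(D)$ as sets of finite perimeter, and hence $\Sigma = \partial \Omega(\Sigma) \cap M(S) = D$. The main obstacle in this plan is the uniqueness step in paragraph three, where the outermost property of $D$ must be combined with the connectedness and unboundedness of $M'(S)$ to exclude a minimizer whose boundary is a proper subunion of the components of $D$; the barrier from Lemma \ref{subsolutions} is essential here to rule out unbounded competitors.
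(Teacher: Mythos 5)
The paper's own ``proof'' of this lemma is a one-line citation to \cite[Lemma 2.1]{Koerber} and \cite[Lemma 2.3]{Koerber}, so there is no in-paper argument to compare against directly. Your perimeter-minimization argument is a plausible reconstruction of the route taken there: exhibit $\Omega(D)$ as the unique minimizer of perimeter in $M(S)$ among bounded Caccioppoli sets containing $\Omega(D)$, with the barriers from Lemma \ref{subsolutions} (i.e., the calibration of Lemma \ref{replacement lemma} at $t=0$) providing compactness and the outermost hypothesis delivering uniqueness via the open-closed argument in $M'(S)$. The overall structure is sound.

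There is, however, a genuine gap in the regularity step. You invoke Taylor's theorem to conclude that $T=\overline{\partial^*E\cap M(S)}$ is a smooth free boundary minimal surface, but Taylor's result applies to \emph{unconstrained} minimizers of the free energy. Your $E$ minimizes perimeter subject to the hard obstacle $\Omega(D)\subset E$, and the very conclusion you are heading for ($E=\Omega(D)$) means this constraint is maximally active: $\partial^*E$ is expected to lie \emph{on} the obstacle $D$. The paper is careful about exactly this point: in Proposition \ref{capillary existence} the competitors are replaced by $(F_k\cap\Omega(V_t))\cup\Omega(W_t)$, and the strict supersolution $W_t$ forces the minimizer's boundary strictly away from $\Sigma_{t_0}$, so the constraint is inactive where Taylor's theorem is applied. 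In your setting no such decoupling is available, and you must argue differently near $D$: away from $\bar\Omega(D)$ the minimizer admits two-sided variations and Taylor applies; where $\partial^*E$ touches $D$, one-sided variations give a sign on the (distributional) mean curvature, and the strong maximum principle for minimal surfaces (together with unique continuation) then forces the touching component of $\partial^*E$ to coincide with the corresponding component of $D$. Without this (or an explicit appeal to obstacle-problem regularity), the step ``$T$ is a smooth free boundary minimal surface, hence $T\subset D$ by the outermost property'' does not go through.

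A secondary point: Lemma \ref{subsolutions} and Proposition \ref{prop:sub} are stated under $H(S)\ge 0$ on all of $S$, whereas the present lemma only assumes $H(S)\ge 0$ on the exterior surface $S'$. Since the foliation $\{\Sigma_\lambda\}$ and the calibration function $v$ are only used far out (where $S$ coincides with $S'$), this can be repaired — for example by first invoking Lemma \ref{smoothing the corner}, or by choosing $\lambda_0$ large enough that the maximum principle in the proof of Lemma \ref{subsolutions} only sees the region where $H(S)\ge 0$ — but it should be addressed rather than cited as if the hypotheses matched.
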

	\begin{proof}
		This follows from \cite[Lemma 2.1]{Koerber} and \cite[Lemma 2.3]{Koerber}.
	\end{proof}
	
	\section{First and second variation of area and enclosed area}
	\label{appendix:variation}
In this section, we recall the formulas for the first and second variation of area and of  enclosed area of an admissible surface.
	
			Recall from Appendix \ref{appendix:af surfaces} the definition of an asymptotically flat support surface $S\subset \mathbb{R}^3$ with unit normal $\nu(S)$, the definition of the region $M(S)$ above $S$, and  the conventions for the second fundamental form $h(S)$ and the mean curvature $H(S)$.
	
	Let $S\subset \mathbb{R}^3$ be an asymptotically flat support surface diffeomorphic to the plane. Recall from Appendix \ref{appendix:af surfaces} the definition of an admissible surface $\Sigma \subset \bar M(S)$, of its lateral  surface $S(\Sigma)\subset S$, and of its inside $\Omega(\Sigma)\subset M(S)$. Moreover, recall the definitions of the normal $\nu(\Sigma)$, the co-normal $\mu( \Sigma),$ the co-normal $\mu(S(\Sigma))$, and the tangent field $e(\partial \Sigma)$, the conventions for the geodesic curvature $k(\partial \Sigma)$, second fundamental form $h(\Sigma),$ and the mean curvature $H(\Sigma)$, as well as the definition of the Gauss curvature $K(\Sigma)$. Given $f\in C^\infty(\Sigma)$, recall the definitions of $\nabla^\Sigma f$ and $\Delta^\Sigma f$.

	Let $\Sigma\subset \bar M(S)$ be an admissible surface and   $\theta \in C^\infty(\partial \Sigma,(0,\pi))$ be such that 
	$
	\cos(\theta)=\langle \nu(S),\nu(\Sigma)\rangle.
	$
	
	We first recall the following elementary results.
	\begin{lem}  \label{angles}
		There holds 
		\begin{equation*}
			\begin{aligned} 
				\nu(S)&=\cos(\theta)\,  \nu(\Sigma)+\sin(\theta) \,\mu(\Sigma),\qquad\qquad\qquad\qquad\qquad\qquad \\
				\mu(S(\Sigma))&=\sin(\theta) \,\nu(\Sigma)-\cos(\theta) \,\mu(\Sigma),\\
				\nu(\Sigma)&=\cos(\theta)\,  \nu(S)+\sin(\theta) \,\mu(S(\Sigma)),\text{ and}\\
				\mu(\Sigma)&=\sin(\theta) \,\nu(S)-\cos(\theta) \,\mu(S(\Sigma)).
			\end{aligned}
		\end{equation*}
		Moreover, we have
		\begin{align}  \label{geodesic} 
			k(\partial \Sigma)=\frac{1}{\sin(\theta)}\,h(S)(e(\partial \Sigma),e(\partial \Sigma))-\frac{\cos(\theta)}{\sin(\theta) }\,h(\Sigma)(e(\partial \Sigma),e(\partial \Sigma)).
		\end{align} 
	\end{lem}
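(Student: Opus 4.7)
The proof is a pointwise computation on $\partial \Sigma$, combining linear algebra in a $2$-plane with a short differentiation for the geodesic curvature identity.

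First I would observe that at each point $p \in \partial\Sigma$, the tangent vector $e(\partial\Sigma)$ lies in $T_p\Sigma \cap T_p S$, so both $\nu(\Sigma)$ and $\nu(S)$ are orthogonal to $e(\partial\Sigma)$; the co-normals $\mu(\Sigma)$ and $\mu(S(\Sigma))$ are orthogonal to $e(\partial\Sigma)$ by definition. Hence all four vectors lie in the $2$-dimensional plane $V := e(\partial\Sigma)^\perp$, and both $\{\nu(\Sigma), \mu(\Sigma)\}$ and $\{\nu(S), \mu(S(\Sigma))\}$ are orthonormal bases of $V$.

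The defining relation $\cos\theta = \langle \nu(S), \nu(\Sigma)\rangle$ therefore forces
\[
\nu(S) = \cos\theta\,\nu(\Sigma) + \varepsilon\sin\theta\,\mu(\Sigma), \qquad \varepsilon\in\{-1, +1\}.
\]
To pin down $\varepsilon$, note that $\mu(\Sigma)$ is the outward co-normal of $\partial\Sigma$ in $\Sigma$; because the interior of $\Sigma$ lies in $M(S)$ and the intersection with $S$ is transverse, $\mu(\Sigma)$ has positive component in the direction of $\nu(S)$ (which points out of $M(S)$). Pairing the displayed equation with $\mu(\Sigma)$ gives $\varepsilon\sin\theta > 0$, and since $\theta \in (0,\pi)$ we conclude $\varepsilon = +1$, establishing the first identity. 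A symmetric orientation argument for $\mu(S(\Sigma))$---the outward co-normal of $\partial S(\Sigma)$ in $S(\Sigma)$ points away from $\Omega(\Sigma)$, hence has positive $\nu(\Sigma)$-component---together with orthonormality in $V$ yields the second identity. The third and fourth identities then follow by inverting the $2\times 2$ orthogonal transformation (equivalently, by solving the first two as a linear system).

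For the geodesic curvature formula, I would differentiate the first identity along $e := e(\partial\Sigma)$ and pair with $e$:
\[
\langle D_e \nu(S), e\rangle = \cos\theta\,\langle D_e \nu(\Sigma), e\rangle + \sin\theta\,\langle D_e \mu(\Sigma), e\rangle,
\]
where the terms produced by differentiating $\cos\theta$ and $\sin\theta$ contribute vectors in $V$ and hence drop out after pairing with $e$. Using the paper's sign conventions $h(S)(e,e) = \langle D_e\nu(S), e\rangle$ and $h(\Sigma)(e,e) = \langle D_e\nu(\Sigma), e\rangle$, and the definition of $k(\partial\Sigma)$ as the tangential divergence of $\mu(\Sigma)$ along the $1$-dimensional curve $\partial\Sigma$, which equals $\langle D_e \mu(\Sigma), e\rangle$, the displayed identity rearranges to $h(S)(e,e) = \cos\theta\cdot h(\Sigma)(e,e) + \sin\theta\cdot k(\partial\Sigma)$, from which \eqref{geodesic} follows by dividing by $\sin\theta$.

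The main (and essentially only) delicate point is the sign verification $\varepsilon = +1$: one must carefully unpack the conventions for ``outward'' co-normal in $\Sigma$ versus in $S(\Sigma)$ and the conventions for $\nu(S)$, $\nu(\Sigma)$ relative to their respective regions $M(S)$ and $\Omega(\Sigma)$. Once this sign is fixed, the remaining identities reduce to linear algebra on the $2$-plane $V$, and the curvature identity is a one-line differentiation.
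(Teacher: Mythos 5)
Your proof is correct, and the paper states Lemma \ref{angles} without proof, presenting it as an elementary recall. The decomposition in the $2$-plane $e(\partial\Sigma)^\perp$ using the two orthonormal frames $\{\nu(\Sigma),\mu(\Sigma)\}$ and $\{\nu(S),\mu(S(\Sigma))\}$, the sign determination from the orientation conventions ($\mu(\Sigma)$, $\mu(S(\Sigma))$ outward co-normals; $\nu(\Sigma)$, $\nu(S)$ pointing out of $\Omega(\Sigma)$, $M(S)$), and the differentiation of the first identity along $e(\partial\Sigma)$ paired with $e(\partial\Sigma)$ to get \eqref{geodesic} constitute the expected argument and agree with the paper's stated conventions for $h$, $k$, and the tangential divergence.
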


	Let $\varepsilon>0$ and $F\in C^\infty(\Sigma\times(-\varepsilon,\varepsilon))\to \bar M(S)$ be such that $F(\,\cdot\,,0)=\operatorname{Id}_\Sigma$ and $F(x,s)\in S$ for every $x\in \partial \Sigma$ and $s\in(-\varepsilon,\varepsilon)$.  	Decreasing $\varepsilon>0$, if necessary, we obtain a smooth family $\{\Sigma(s):s\in(-\varepsilon,\varepsilon)\}$ of admissible surfaces $\Sigma(s)=\{F(x,s):x\in \Sigma\}$, which we call an admissible variation of $\Sigma$. We tacitly identify functions and maps defined on $\Sigma(s)$ with functions and maps defined on $\Sigma$ by precomposition with $F(\,\cdot\,,s)$. 
	 
	Let $f\in C^\infty(\Sigma)$ and $X\in C^\infty(\Sigma,\mathbb{R}^3)$ be such that $\langle X,\nu(\Sigma)\rangle =0$ and 
	$$
	\dot{F}=\frac{dF(\,\cdot\,,s)}{ds}\bigg|_{s=0}=f\,\nu(\Sigma)+X.
	$$ 
Reparametrizing $F$, we may assume that  $X|_{\partial \Sigma}$ is parallel to $\mu(\Sigma)$ so that 
  \begin{align*} 
  	 X=-\frac{\cos(\theta)}{\sin(\theta)}\,f\,\mu( \Sigma).
  	 \end{align*} 
  \begin{lem} \label{normal change}
  	Let $Z\in\mathbb{R}^3$. On $\partial \Sigma$, there holds
  	\begin{align*} 
  	\frac{d}{ds}\bigg|_{s=0}\langle \nu(\Sigma(s)), Z\rangle &=-\frac{\cos(\theta)}{\sin(\theta)}\,h(\Sigma)(\mu(\Sigma),\operatorname{proj}_{T\Sigma}Z)\,f-\langle \nabla^\Sigma f,\operatorname{proj}_{T\Sigma} Z\rangle \text{ and}\\
  	\frac{d}{ds}\bigg|_{s=0}\langle \nu(S), Z\rangle &=\frac{1}{\sin(\theta)}\,h(S)(\mu(S(\Sigma)),\operatorname{proj}_{TS}Z)\,f.
  	\end{align*} 
  \end{lem}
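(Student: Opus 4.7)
The plan is to establish both identities by direct differentiation, using only the definition of the admissible variation and the constraint that $F(\partial \Sigma \times (-\varepsilon,\varepsilon))\subset S$.

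For the first identity, I would start from the characterization of $\nu(\Sigma(s))$ at $F(x,s)$ as being of unit length and orthogonal to $dF_{(x,s)}(T_x\Sigma)$. Differentiating $\langle \nu(\Sigma(s)), dF_{(x,s)}(e_i)\rangle = 0$ at $s = 0$ for a local orthonormal frame $\{e_1,e_2\}$ of $T\Sigma$ yields
$$\langle \dot \nu, e_i\rangle = -\langle \nu(\Sigma), D_{e_i}\dot F\rangle.$$
Expanding $\dot F = f\,\nu(\Sigma) + X$, using $\partial_i f = \langle \nabla^\Sigma f, e_i\rangle$, the identity $\langle \nu(\Sigma), D_{e_i} X\rangle = -h(\Sigma)(e_i, X)$, which follows from $X$ being tangent to $\Sigma$ together with the sign convention $h(\Sigma)(e_i, e_j) = \langle D_{e_i}\nu(\Sigma), e_j\rangle$, and noting that $\langle \dot \nu, \nu(\Sigma)\rangle = 0$, I obtain the general formula
$$\dot \nu = -\nabla^\Sigma f + h(\Sigma)(X,\,\cdot\,)^\sharp.$$
Pairing with $Z$, using that $\nabla^\Sigma f \in T\Sigma$ and the shape operator maps into $T\Sigma$ so that only $\operatorname{proj}_{T\Sigma} Z$ is seen, and substituting the specific expression $X = -\frac{\cos(\theta)}{\sin(\theta)} f\,\mu(\Sigma)$ on $\partial \Sigma$, gives the first identity.

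For the second identity, the key observation is that $\nu(S)$ is a vector field on $S$ independent of $s$, but it is being evaluated at the $s$-dependent point $F(x,s)\in S$ for $x\in \partial\Sigma$. Hence the derivative at $s=0$ reduces to the directional derivative $D_V\nu(S)$ along $V = \dot F = f\,\nu(\Sigma) + X$ at $x$. Because $F(x,s)\in S$ for all $s$, the vector $V$ is tangent to $S$ on $\partial \Sigma$; I would verify this by using Lemma \ref{angles} to decompose $\nu(\Sigma)$ and $\mu(\Sigma)$ in the basis $\{\nu(S),\mu(S(\Sigma))\}$ and checking that the $\nu(S)$-components of the two contributions to $V$ cancel exactly. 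The same computation simplifies $V$ to $\tfrac{f}{\sin(\theta)}\mu(S(\Sigma))$. Then the standard identity $\langle D_V\nu(S), Z\rangle = h(S)(V,\operatorname{proj}_{TS} Z)$ yields the second formula.

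No step is genuinely hard; the only delicate point is consistent bookkeeping of sign conventions and the identification of functions on $\Sigma(s)$ with functions on $\Sigma$ via pull-back by $F(\,\cdot\,,s)$. The cancellation of the $\nu(S)$-component of $V$ along $\partial\Sigma$ and the clean reduction of its tangential part to a multiple of $\mu(S(\Sigma))$ are what make the second formula take such a simple form, and both rely precisely on the reparametrization choice $X = -\tfrac{\cos(\theta)}{\sin(\theta)} f\,\mu(\Sigma)$ made just prior to the lemma.
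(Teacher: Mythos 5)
Your proof is correct and follows essentially the same route as the paper: both start from the general first-variation-of-normal formula $\dot\nu(\Sigma) = -\nabla^\Sigma f + h(\Sigma)(X,\cdot)^\sharp$ and the directional derivative $D_{\dot F}\nu(S)$, and both then invoke Lemma \ref{angles} to evaluate $\operatorname{proj}_{T\Sigma}\dot F = -\tfrac{\cos\theta}{\sin\theta}f\,\mu(\Sigma)$ and $\dot F = \tfrac{1}{\sin\theta}f\,\mu(S(\Sigma))$ on $\partial\Sigma$. The paper simply quotes these two variation formulas and the two consequences of Lemma \ref{angles} without derivation; you have filled in the derivation, including the explicit check that $\dot F$ is tangent to $S$ along $\partial\Sigma$, which the paper leaves implicit.
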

  \begin{proof}
  	Note that, on $\partial \Sigma$,
  	$$
  	\frac{d}{ds}\bigg|_{s=0}\langle \nu(\Sigma(s)), Z\rangle =h(\Sigma)(\operatorname{proj}_{T\Sigma}\dot F,\operatorname{proj}_{T\Sigma}Z)-\langle \nabla^\Sigma f,\operatorname{proj}_{T\Sigma} Z\rangle 
  	$$
  	and 
  	$$
  	\frac{d}{ds}\bigg|_{s=0}\langle \nu(S), Z\rangle =h(S)(\operatorname{proj}_{T\Sigma}\dot F,\operatorname{proj}_{TS} Z\rangle .
  	$$
  	Using Lemma \ref{angles}, we have, on $\partial \Sigma$,
  	\begin{align*} 
 \operatorname{proj}_{T\Sigma} \dot F=-\frac{\cos(\theta)}{\sin(\theta)}\,f\,\mu(\Sigma)\qquad\qquad \text{ and } \qquad\qquad   \dot F=\frac{1}{\sin(\theta)}\,f\,\mu(S(\Sigma)).
  	\end{align*} 
  	The assertion follows from these identities.
  \end{proof}

\begin{lem} \label{capillary change}
On $\partial \Sigma$, 	there holds, 
	\begin{align*}
		&\frac{d}{ds}\bigg|_{s=0} \langle \nu(\Sigma(s)),\nu(S)\rangle  \\&\quad =h(S)(\mu(S(\Sigma)),\mu(S(\Sigma)))\,f-\cos(\theta)\,h(\Sigma)(\mu(\Sigma),\mu(\Sigma))\,f-\sin(\theta)\,\langle \nabla^\Sigma f,\mu(\Sigma)\rangle.
	\end{align*}
\end{lem}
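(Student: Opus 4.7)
The plan is to exploit the product rule, treating the inner product $\langle \nu(\Sigma(s))(F(x,s)), \nu(S)(F(x,s))\rangle$ for $x\in \partial \Sigma$ as a function of $s$ with two $s$-dependent factors: the normal of the moving hypersurface, and the ambient support normal evaluated at the moving boundary point $F(x,s)\in S$. This splits the derivative at $s=0$ into
\begin{equation*}
\frac{d}{ds}\bigg|_{s=0}\langle \nu(\Sigma(s)),\nu(S)\rangle \;+\; \frac{d}{ds}\bigg|_{s=0}\langle \nu(S),\nu(\Sigma)\rangle,
\end{equation*}
where the two summands match exactly the two identities furnished by Lemma \ref{normal change}, applied with the fixed ambient vector $Z=\nu(S)$ and $Z=\nu(\Sigma)$, respectively.

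For the first summand, I will substitute $Z=\nu(S)$ into the first identity of Lemma \ref{normal change}. By the first decomposition of Lemma \ref{angles}, there holds $\operatorname{proj}_{T\Sigma}\nu(S)=\sin(\theta)\,\mu(\Sigma)$; substituting this, the explicit $\sin(\theta)$ cancels the $\cos(\theta)/\sin(\theta)$ prefactor in front of the $h(\Sigma)$-term and combines cleanly with the gradient term, producing $-\cos(\theta)\,h(\Sigma)(\mu(\Sigma),\mu(\Sigma))\,f - \sin(\theta)\,\langle \nabla^\Sigma f, \mu(\Sigma)\rangle$.

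For the second summand, I will substitute $Z=\nu(\Sigma)$ into the second identity of Lemma \ref{normal change}. By the third decomposition of Lemma \ref{angles}, there holds $\operatorname{proj}_{TS}\nu(\Sigma)=\sin(\theta)\,\mu(S(\Sigma))$; the $\sin(\theta)$ again cancels the $1/\sin(\theta)$ prefactor, leaving $h(S)(\mu(S(\Sigma)),\mu(S(\Sigma)))\,f$. Adding the two contributions yields the asserted identity. No genuine obstacle arises: the lemma is a direct algebraic consequence of Lemma \ref{normal change} and Lemma \ref{angles}, and the only point meriting care is the bookkeeping of the orthogonal projections and the matching of sign conventions when applying the product rule.
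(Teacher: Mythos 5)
Your proof is correct and takes essentially the same route as the paper: split $\frac{d}{ds}\big|_{s=0}\langle\nu(\Sigma(s)),\nu(S)\rangle$ by the product rule, apply the two identities of Lemma~\ref{normal change} with $Z=\nu(S)$ and $Z=\nu(\Sigma)$ respectively, and substitute the projections $\operatorname{proj}_{T\Sigma}\nu(S)=\sin(\theta)\,\mu(\Sigma)$ and $\operatorname{proj}_{TS}\nu(\Sigma)=\sin(\theta)\,\mu(S(\Sigma))$ read off from Lemma~\ref{angles}. This is exactly the paper's argument, just spelled out in more detail.
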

\begin{proof}
	Using Lemma \ref{angles}, we have, on $\partial \Sigma$,
	\begin{align*} 
		  	\operatorname{proj}_{T\Sigma} \nu(S)=\sin(\theta)\,\mu(\Sigma)\qquad\qquad \text{ and }\qquad\qquad   \operatorname{proj}_{TS} \nu(\Sigma )=\sin(\theta)\,\mu(S(\Sigma)).
	\end{align*} 
	The assertion follows from this and Lemma \ref{normal change}.
\end{proof}

Recall that 
\begin{align} \label{jacobi equation}
	\frac{d}{ds}\bigg|_{s=0}H(\Sigma(s))=-\Delta^\Sigma f-|h(\Sigma)|^2\,f+\langle \nabla ^\Sigma H(\Sigma),X\rangle.
\end{align}
	\begin{lem} \label{first and second variation} There holds
		\begin{align} \label{area first}  
		\frac{d|\Sigma(s)|}{ds}\bigg|_{s=0}=\int_{\Sigma}H(\Sigma)\,f-\int_{\partial\Sigma} \frac{\cos(\theta)}{\sin(\theta)}\,f
		\end{align} 
		and
		\begin{align} \label{enclosed area first}
		\frac{d|S(\Sigma(s))|}{ds}\bigg|_{s=0}=\int_{\partial \Sigma} \frac{1}{\sin(\theta)}\,f.
		\end{align} 
Moreover, if $H(\Sigma)=0$ and  $\theta$ is constant on $\partial \Sigma$, then
		\begin{equation} \label{area second}
		\begin{aligned} 
&\frac{d^2|\Sigma(s)|}{ds^2}\bigg|_{s=0}+\cos(\theta)\,\frac{d^2|S(\Sigma(s))|}{ds^2}\bigg|_{s=0}\\&\qquad =\int_{\Sigma} |\nabla^\Sigma f|^2-\int_\Sigma|h(\Sigma)|^2\,f^2-\frac{1}{\sin(\theta)}\int_{\partial \Sigma}H(S)\,f^2+\int_{\partial \Sigma}k(\partial \Sigma)\,f^2.
		\end{aligned} 
		\end{equation} 
	\end{lem}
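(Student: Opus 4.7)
The first formula \eqref{area first} is a direct application of the standard first variation $\frac{d}{ds}\big|_{s=0}|\Sigma(s)|=\int_\Sigma H(\Sigma)\,\langle \dot F,\nu(\Sigma)\rangle+\int_{\partial\Sigma}\langle \dot F,\mu(\Sigma)\rangle$: the normal component is $f$, while the prescribed boundary form $X|_{\partial\Sigma}=-(\cos\theta/\sin\theta)\,f\,\mu(\Sigma)$ provides the co-normal component $-(\cos\theta/\sin\theta)\,f$. For \eqref{enclosed area first}, observe that $S(\Sigma(s))$ lies in the stationary surface $S$ and varies only through the motion of its boundary $\partial S(\Sigma(s))=\partial\Sigma(s)$ along $S$, so $\frac{d}{ds}\big|_{s=0}|S(\Sigma(s))|=\int_{\partial\Sigma}\langle \dot F,\mu(S(\Sigma))\rangle$. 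Since $F(\cdot,s)\in S$ on $\partial\Sigma$, the vector $\dot F$ is tangent to $S$ there; decomposing $\dot F=f\,\nu(\Sigma)-(\cos\theta/\sin\theta)f\,\mu(\Sigma)$ in the orthonormal basis $\{\nu(S),\mu(S(\Sigma))\}$ of $T_xS$ via Lemma \ref{angles}, the $\nu(S)$-components cancel and one is left with $\dot F=(f/\sin\theta)\,\mu(S(\Sigma))$ on $\partial\Sigma$, proving \eqref{enclosed area first}.

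For the second variation the plan is to analyze the combined capillary functional $G(s):=|\Sigma(s)|+\cos\theta\cdot|S(\Sigma(s))|$, where $\cos\theta$ is the constant value of $\langle \nu(\Sigma),\nu(S)\rangle$ at $s=0$. Summing \eqref{area first} and \eqref{enclosed area first} at each $s$ gives
\[
G'(s)=\int_{\Sigma(s)}H(\Sigma(s))\,\langle \dot F,\nu(\Sigma(s))\rangle+\int_{\partial\Sigma(s)}\langle \dot F,\mu(\Sigma(s))+\cos\theta\,\mu(S(\Sigma(s)))\rangle.
\]
At $s=0$, Lemma \ref{angles} yields $\mu(\Sigma)+\cos\theta\,\mu(S(\Sigma))=\sin\theta\,\nu(S)$; together with $H(\Sigma)=0$ and $\dot F\perp\nu(S)$ on $\partial\Sigma$, this shows that $G'(0)=0$, so $G$ is stationary. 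To compute $G''(0)$, the interior contribution is evaluated via the Jacobi equation \eqref{jacobi equation}, which with $H(\Sigma)=0$ reduces to $\frac{d}{ds}\big|_{s=0}H(\Sigma(s))=-\Delta^\Sigma f-|h(\Sigma)|^2 f$; integration by parts then produces $\int_\Sigma |\nabla^\Sigma f|^2-\int_\Sigma|h(\Sigma)|^2 f^2-\int_{\partial\Sigma}f\,\langle \nabla^\Sigma f,\mu(\Sigma)\rangle$.

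The boundary contribution to $G''(0)$ is handled by differentiating $\mu(\Sigma(s))$ and $\mu(S(\Sigma(s)))$ at $s=0$ using Lemma \ref{normal change} (which supplies the $s$-derivatives of $\nu(\Sigma(s))$ and $\nu(S)$) and Lemma \ref{capillary change} (which supplies the $s$-derivative of $\theta(s)$), together with the tangential motion of $\partial\Sigma(s)$. Careful bookkeeping --- using $H(\Sigma)=0$ so that $h(\Sigma)(\mu,\mu)=-h(\Sigma)(e,e)$ with $e=e(\partial\Sigma)$, decomposing $H(S)=h(S)(\mu(S(\Sigma)),\mu(S(\Sigma)))+h(S)(e,e)$, and invoking \eqref{geodesic} to eliminate $h(S)(e,e)$ --- reduces the net boundary density to $-(1/\sin\theta)\,H(S)+k(\partial\Sigma)$ multiplied by $f^2$. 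The stray term $\int_{\partial\Sigma}f\,\langle \nabla^\Sigma f,\mu(\Sigma)\rangle$ from the interior integration by parts cancels against the contribution $\sin\theta\,\langle \nabla^\Sigma f,\mu\rangle\,f$ produced by Lemma \ref{capillary change} when differentiating $\langle \dot F,\mu(\Sigma)+\cos\theta\,\mu(S(\Sigma))\rangle$ at $s=0$. The main technical obstacle is precisely this matching of boundary contributions and the algebraic reduction of the boundary density via \eqref{geodesic}; a conceptually cleaner route is to recognize $G$ as the standard capillary free energy at a critical point with constant contact angle $\theta$, invoke the known second variation in the form $\int_\Sigma(|\nabla^\Sigma f|^2-|h|^2 f^2)-\int_{\partial\Sigma}[\csc\theta\,h(S)(\mu_S,\mu_S)-\cot\theta\,h(\Sigma)(\mu,\mu)]f^2$, and apply only the final \eqref{geodesic}-substitution to arrive at \eqref{area second}.
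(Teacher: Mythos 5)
Your treatment of the two first variations is exactly the paper's: for \eqref{area first} you identify the normal and co-normal components of $\dot F$, and for \eqref{enclosed area first} you use Lemma \ref{angles} to reduce $\dot F$ on $\partial\Sigma$ to $(f/\sin\theta)\,\mu(S(\Sigma))$. Your plan for \eqref{area second} — differentiate the stationary combination $|\Sigma(s)|+\cos\theta\,|S(\Sigma(s))|$, use the Jacobi equation \eqref{jacobi equation} for the interior, and reduce the boundary density with Lemma \ref{capillary change} and \eqref{geodesic} — is also the route the paper takes, and all of the ingredients you name are exactly the ones used. The one place where the paper's bookkeeping is noticeably lighter than yours is the boundary term: rather than differentiating the co-normal combination $\mu(\Sigma(s))+\cos\theta\,\mu(S(\Sigma(s)))$ (which, as you say, invites a tedious reconciliation of terms), the paper substitutes \eqref{area first} and \eqref{enclosed area first} into the sum to get a boundary integrand that is the scalar $\big(\cos\theta_0-\cos\theta(s)\big)/\sin\theta(s)$ times $f$. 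Since this vanishes identically at $s=0$, its $s$-derivative reduces to $-\tfrac{1}{\sin\theta}\,f\,\tfrac{d}{ds}\big|_{s=0}\langle\nu(\Sigma(s)),\nu(S)\rangle$, and Lemma \ref{capillary change} together with the $H(\Sigma)=0$ reductions and \eqref{geodesic} yields \eqref{capillary change 2} directly. After integrating the interior by parts, the $\int_{\partial\Sigma}f\,\langle\nabla^\Sigma f,\mu(\Sigma)\rangle$ terms combine rather than cancel outright, which is a slightly more accurate description of what happens than the one you give, but your overall plan is correct and would reach \eqref{area second} with the extra bookkeeping you already flagged.
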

\begin{proof}
\eqref{area first} follows from the first variation of area formula.
\eqref{enclosed area first} follows from the first variation of volume formula, using that $\dot{F}$ is tangent to $S$ on $\partial \Sigma$ and $$\langle \dot{F},\mu(S(\Sigma))\rangle =\frac{1}{\sin(\theta)}\,f.$$

Assume now that $H(\Sigma)=0$ and that $\theta$ is constant on $\partial \Sigma$. Then, using \eqref{jacobi equation}, 
\begin{align*} 
&\frac{d^2|\Sigma(s)|}{ds^2}\bigg|_{s=0}+\cos(\theta)\,\frac{d^2|S(\Sigma(s))|}{ds^2}\bigg|_{s=0}\\&\qquad =-\int_{\Sigma}f\,\Delta^\Sigma  f-\int_{\Sigma}|h(\Sigma)|^2\,f^2\,-\frac{1}{\sin(\theta)}\int_{\partial \Sigma} f\,
\frac{d}{ds}\bigg|_{s=0} \langle \nu(\Sigma_t),\nu(S)\rangle.
\end{align*} 
On $\partial \Sigma$, using that $H(\Sigma)=0$, there holds
\begin{align*} 
h(S)(e(\partial \Sigma),e(\partial \Sigma))&=H(S)-h(S)(\mu(S(\Sigma)),\mu(S(\Sigma)))\text{ and}\\
h(\Sigma)(e(\partial \Sigma),e(\partial \Sigma))&=-h(\Sigma)(\mu(\Sigma),\mu(\Sigma)).
\end{align*} 
In conjunction with \eqref{geodesic} and Lemma \ref{capillary change}, we obtain that 
\begin{align} \label{capillary change 2} 
\frac{d}{ds}\bigg|_{s=0} \langle \nu(\Sigma(s)),\nu(S)\rangle=-\sin(\theta)\,\langle \nabla^\Sigma f, \mu(\Sigma)\rangle -\sin(\theta)\,k(\partial \Sigma)\,f+H(S)\,f.
\end{align} 
The assertion now follows from these identities, using that 
$$
-\int_{\Sigma}f\,\Delta^\Sigma f+\int_{\partial \Sigma}f\,\langle \nabla^\Sigma f,\mu(\Sigma)\rangle =\int_{\Sigma}|\nabla^\Sigma f|^2.
$$
\end{proof}	

	\section{The free energy and minimal capillary surfaces}
	\label{appendix:free energy}
	In this section, we recall background on the free energy and on minimal capillary surfaces.
	
		Recall from Appendix \ref{appendix:af surfaces} the definition of an asymptotically flat support surface $S\subset \mathbb{R}^3$ with unit normal $\nu(S)$, the definition of the region $M(S)$ above $S$, and  the conventions for the second fundamental form $h(S)$ and the mean curvature $H(S)$. 
	
	Let $S\subset \mathbb{R}^3$ be an asymptotically flat support surface diffeomorphic to the plane. Recall from Appendix \ref{appendix:af surfaces} the definition of an admissible surface $\Sigma \subset \bar M(S)$, of its lateral  surface $S(\Sigma)\subset S$, and of its inside $\Omega(\Sigma)\subset M(S)$. Moreover, recall the definitions of the normal $\nu(\Sigma)$, the co-normal $\mu( \Sigma),$ and the co-normal $\mu(S(\Sigma))$, the conventions for the geodesic curvature $k(\partial \Sigma)$, the second fundamental form $h(\Sigma),$ and  the mean curvature $H(\Sigma)$, as well as the definitions of the Gauss curvature $K(\Sigma)$ and the Euler characteristic $\chi(\Sigma)$. Given $f\in C^\infty(\Sigma)$, recall the definitions of $\nabla^\Sigma f$ and $\Delta^\Sigma f$. 
	
	Recall from Appendix \ref{appendix:variation} the definition of an admissible variation $\{\Sigma(s):s\in(-\varepsilon,\varepsilon)\}$ of an admissible surface $\Sigma \subset \bar M(S)$. 
	
	Given $t\in\mathbb{R}$, we define the free energy $J_t(\Sigma)$ with angle $\arccos(-\tanh (t))$ of an admissible surface $\Sigma\subset \bar M(S)$ by
	\begin{align} \label{free energy smooth}
		J_t(\Sigma)=|\Sigma|-\tanh (t)\,|S(\Sigma)|.
	\end{align} 
	We say that an admissible surface $\Sigma\subset \bar M(S)$ is a  minimal capillary surface with capillary angle $\theta \in (0,\pi)$ if $\Sigma$ is a minimal surface and, on  $\partial \Sigma$, $$\langle \nu(\Sigma),\nu(S)\rangle =\cos(\theta).$$ 
	Note that every component of a minimal capillary surface with capillary angle $\theta \in(0,\pi)$ is a minimal capillary surface with capillary angle $\theta\in(0,\pi)$.

	The free energy mass $m_f(\Sigma)$ of a minimal capillary surface $\Sigma\subset \bar M(S)$ with capillary angle $\theta\in(0,\pi)$ is
	\begin{align}\label{free energy mass}
		m_{f}(\Sigma)=\sin(\theta)\,\sqrt{\frac{|\Sigma|}{\pi}}.
	\end{align} 
	
	\begin{lem}
		Let $t\in\mathbb{R}$. An admissible surface $\Sigma\subset \bar M(S)$  is a  minimal capillary surface with capillary angle $\arccos(-\tanh (t))$ if and only if
		$$
		\frac{d J_t(\Sigma(s))}{ds}\bigg|_{s=0}=0
		$$
		 for every admissible variation $\{\Sigma(s):s\in(-\varepsilon,\varepsilon)\}$ of $\Sigma$. 
	\end{lem}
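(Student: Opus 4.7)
The plan is to compute $\frac{d}{ds}\big|_{s=0} J_t(\Sigma(s))$ directly from the first variation formulas already established in Lemma \ref{first and second variation} and then invoke the fundamental lemma of the calculus of variations. Writing $f=\langle \dot F,\nu(\Sigma)\rangle$ and letting $\theta\in C^\infty(\partial \Sigma,(0,\pi))$ denote the function with $\cos(\theta)=\langle \nu(S),\nu(\Sigma)\rangle$, the formulas \eqref{area first} and \eqref{enclosed area first} give
\begin{align*}
\frac{dJ_t(\Sigma(s))}{ds}\bigg|_{s=0}=\int_{\Sigma} H(\Sigma)\,f-\int_{\partial \Sigma} \frac{\cos(\theta)+\tanh(t)}{\sin(\theta)}\,f.
\end{align*}

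For the \emph{if} direction, I would simply observe that if $H(\Sigma)=0$ on $\Sigma$ and $\cos(\theta)=-\tanh(t)$ on $\partial \Sigma$, then both the interior and boundary terms vanish identically for any admissible variation.

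For the \emph{only if} direction, the strategy is to prescribe $f$ to isolate each condition. First, for any $\varphi \in C^\infty_c(\Sigma\setminus \partial \Sigma)$, I would construct an admissible variation whose normal speed equals $\varphi$; since $\varphi$ vanishes near $\partial \Sigma$, the reparametrization requirement $X=-\tfrac{\cos(\theta)}{\sin(\theta)}\,f\,\mu(\Sigma)$ on $\partial \Sigma$ from Appendix \ref{appendix:variation} is automatic. The vanishing of $J_t'$ for all such $\varphi$ forces $H(\Sigma)\equiv 0$. Second, with $H(\Sigma)=0$ now established, I would choose variations whose normal speed $f$ ranges over an arbitrary smooth function on $\partial \Sigma$, extended into the interior by any fixed cutoff; this is standard to construct and does not require unique solvability in the interior. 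The vanishing of the boundary integral then gives $\cos(\theta)+\tanh(t)=0$ on $\partial \Sigma$, i.e., the capillary angle equals $\arccos(-\tanh(t))$.

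The only mildly technical point is verifying that admissible variations with prescribed normal speed on $\partial \Sigma$ exist, but this is straightforward: given any smooth $f$ on $\Sigma$, define $F(x,s)$ by following the flow of a smooth vector field $Y$ on $\bar M(S)$ with $Y|_{\Sigma}=f\,\nu(\Sigma)-\tfrac{\cos(\theta)}{\sin(\theta)}\,f\,\mu(\Sigma)$ on $\partial \Sigma$ and $Y|_{\Sigma}=f\,\nu(\Sigma)$ away from a collar of $\partial \Sigma$, choosing $Y$ tangent to $S$ along $S$ so that $F(\partial \Sigma,s)\subset S$. Thus no step here is a serious obstacle; the lemma is essentially a direct consequence of the first variation formulas in Appendix \ref{appendix:variation} combined with the fundamental lemma applied separately in the interior and on the boundary.
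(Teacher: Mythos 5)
Your proof is correct and takes the same route as the paper: the paper's own proof is simply the remark that the lemma follows from the first variation formulas \eqref{area first} and \eqref{enclosed area first} of Lemma \ref{first and second variation}, which is exactly the computation you carry out, followed by the fundamental lemma of the calculus of variations applied in the interior and on the boundary. You have merely spelled out the details that the paper leaves implicit, and those details are sound.
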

	\begin{proof}
		This follows from Lemma \ref{first and second variation}.
	\end{proof}
	
   For the statement and proof of Lemma \ref{normal variation}, note that 
$
\sin(\arccos(-\tanh (t)))=\cosh (t)^{-1}
$ 
for all $t\in\mathbb{R}$.

\begin{lem} \label{normal variation}
	Let $t\in\mathbb{R}$. Suppose that $\Sigma_t\subset \bar M(S)$ is a  minimal capillary surface with capillary angle $\arccos(-\tanh (t))$ and  that $\Sigma\subset \bar M(S)$ is an admissible surface disjoint from $\Sigma_t$ such that $\Omega(\Sigma)\subset \Omega(\Sigma_t)$. Let $\Sigma^1_t,\ldots, \Sigma^\ell_t$ be the components of $\Sigma_t$ and $\gamma_1,\ldots,\gamma_\ell\in \mathbb{R}$ be such that \begin{align}\label{coefficients sum} \gamma=\sum_{i=1}^\ell \gamma_i\,|\partial \Sigma^i_t|> 0 .\end{align}  	There exists $\delta>0$, a function $\varphi\in C^\infty((|S(\Sigma_t)|-\delta,|S(\Sigma_t)|+\delta)),$ and an admissible variation $$\{\Sigma(s):s\in(|S(\Sigma_t)|-\delta,|S(\Sigma_t)|+\delta)\}$$ of $\Sigma_t$  such that the following properties hold. 
	\begin{itemize}
		\item[$\circ$] $\varphi'(|S(\Sigma_t)|)=\tanh (t)$
		\item[$\circ$] $\varphi(s)=|\Sigma(s)|$ for every $s\in(|S(\Sigma_t)|-\delta,|S(\Sigma_t)|+\delta)$
		\item[$\circ$] $|S(\Sigma(s))|=s$ for every $s\in(|S(\Sigma_t)|-\delta,|S(\Sigma_t)|+\delta)$
		\item[$\circ$] $\Omega(\Sigma)\subset \Omega(\Sigma(s))$ for every $s\in(|S(\Sigma_t)|-\delta,|S(\Sigma_t)|+\delta)$
	\end{itemize}
	Moreover, we have 
	\begin{align*}
		\varphi''(|S(\Sigma_t)|)&=\gamma^{-2}\,(1-\varphi'(|S(\Sigma_t)|)^2)
		 \sum_{i=1}^\ell \gamma_i^2\left(2\,\pi\,\chi(\Sigma^i_t)-\frac12\,\int_{\Sigma^i_t} |h(\Sigma_t)|^2-\cosh (t)\,\int_{\partial \Sigma^i_t } H(S)\right).
	\end{align*}
	
\end{lem}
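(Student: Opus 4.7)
The plan is to construct an admissible variation of $\Sigma_t$ whose initial normal speed equals the piecewise constant function $f$ given by $f = \gamma_i$ on $\Sigma^i_t$, use the first variation formulas to reparameterize by the enclosed area, and then apply the second variation formula together with the Gauss-Bonnet theorem to identify $\varphi''$. Throughout, let $\theta = \arccos(-\tanh(t))$ so that $\cos\theta = -\tanh(t)$ and $\sin\theta = \cosh(t)^{-1}$.

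First, I construct a smooth map $F\colon \Sigma_t \times (-\varepsilon,\varepsilon) \to \bar M(S)$ with $F(\cdot,0) = \operatorname{Id}_{\Sigma_t}$ whose initial velocity has the form $\dot F = f\,\nu(\Sigma_t) + X$, where $X \in C^\infty(\Sigma_t,\mathbb{R}^3)$ is tangent to $\Sigma_t$ and, on a collar neighborhood of $\partial \Sigma_t$, satisfies $X = -(\cos\theta/\sin\theta)\,f\,\mu(\Sigma_t)$. This prescription near $\partial\Sigma_t$ forces $F(\partial \Sigma_t, \tau) \subset S$ for small $\tau$, yielding an admissible variation $\{\tilde\Sigma(\tau) : \tau \in (-\varepsilon,\varepsilon)\}$ of $\Sigma_t$. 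Since $\Sigma \cap \Sigma_t = \emptyset$ and $\Omega(\Sigma) \subset \Omega(\Sigma_t)$, after possibly shrinking $\varepsilon$, the inclusion $\Omega(\Sigma) \subset \Omega(\tilde\Sigma(\tau))$ persists.

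Because $H(\Sigma_t) = 0$ and $f$ is constant on each component, the first variation formulas \eqref{area first} and \eqref{enclosed area first} give
\begin{align*}
\frac{d|\tilde\Sigma(\tau)|}{d\tau}\bigg|_{\tau=0} = \sinh(t)\,\gamma
\qquad \text{and} \qquad
\frac{d|S(\tilde\Sigma(\tau))|}{d\tau}\bigg|_{\tau=0} = \cosh(t)\,\gamma.
\end{align*}
As $\gamma > 0$, the map $\tau \mapsto |S(\tilde\Sigma(\tau))|$ is a local diffeomorphism near $0$. Reparameterizing by enclosed area gives $\Sigma(s) = \tilde\Sigma(\tau(s))$ with $|S(\Sigma(s))| = s$ and $\varphi(s) := |\Sigma(s)|$, and the chain rule yields $\varphi'(|S(\Sigma_t)|) = \sinh(t)/\cosh(t) = \tanh(t)$ together with
\[
\varphi''(|S(\Sigma_t)|) = \frac{1}{\cosh^2(t)\,\gamma^2}\left[\frac{d^2|\tilde\Sigma(\tau)|}{d\tau^2}\bigg|_{\tau=0} - \tanh(t)\,\frac{d^2|S(\tilde\Sigma(\tau))|}{d\tau^2}\bigg|_{\tau=0}\right].
\]

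For the bracketed quantity, I apply the second variation formula \eqref{area second} with $\theta$ constant along $\partial\Sigma_t$; the left-hand side of \eqref{area second} is precisely this bracket since $\cos\theta = -\tanh(t)$. Because $f$ is constant on each component, $\nabla^{\Sigma^i_t}\!f = 0$, and the right-hand side reduces to a sum over $i$ of $\gamma_i^2\,\bigl[\int_{\partial\Sigma^i_t} k(\partial\Sigma^i_t) - \int_{\Sigma^i_t} |h(\Sigma_t)|^2 - \cosh(t)\,\int_{\partial\Sigma^i_t} H(S)\bigr]$. The Gauss-Bonnet theorem on $\Sigma^i_t$ yields $\int_{\partial\Sigma^i_t} k(\partial\Sigma^i_t) = 2\pi\chi(\Sigma^i_t) - \int_{\Sigma^i_t} K(\Sigma^i_t)$, and the minimality of $\Sigma^i_t$ in $\mathbb{R}^3$ gives $K(\Sigma^i_t) = -\tfrac12|h(\Sigma^i_t)|^2$. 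Combining the two $|h|^2$ contributions and using $1 - \tanh^2(t) = \cosh^{-2}(t)$ recovers the stated formula. The argument is essentially a careful bookkeeping exercise; the only delicate point is arranging the tangential component $X$ of the initial velocity near $\partial\Sigma_t$ so that the family $\{\tilde\Sigma(\tau)\}$ remains admissible while the normal speed stays piecewise constant, which is why \eqref{area second} applies cleanly.
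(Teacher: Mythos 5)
Your proof is correct and follows essentially the same route as the paper's: construct an admissible variation with piecewise-constant normal speed $\gamma_i$ on $\Sigma^i_t$, compute the first variations to reparametrize by enclosed area, and then combine the second variation formula \eqref{area second} with the Gauss--Bonnet theorem and $2K(\Sigma_t)=-|h(\Sigma_t)|^2$ to identify $\varphi''$. The only cosmetic difference is that you spell out the tangential adjustment $X=-(\cos\theta/\sin\theta)\,f\,\mu(\Sigma_t)$ along $\partial\Sigma_t$ explicitly, whereas the paper invokes the admissible variation framework of Appendix B and then refers directly to Lemma \ref{first and second variation}.
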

\begin{proof}
	Given $\tilde \delta>0$ sufficiently small, let $\{\tilde \Sigma(\tilde s):\tilde s\in(-\tilde \delta,\tilde \delta)\}$ be an admissible variation of $\Sigma_t$ whose normal speed $\dot f^\perp$  at $s=0$ satisfies $\dot f^\perp=\gamma_i$ on $\Sigma^i_t$. Note that  $\Omega(\Sigma)\subset \Omega(\tilde \Sigma(\tilde s))$ for every $s\in(-\tilde \delta,\tilde \delta)$ provided that $\tilde \delta>0$ is sufficiently small.
	
	Let $\tilde \varphi,\,\tilde \eta\in C^\infty((-\tilde \delta,\tilde \delta))$ be given by $\tilde \varphi(\tilde s)=\tilde \Sigma(\tilde s)$ and $\tilde \eta(\tilde s)=|S(\tilde \Sigma(\tilde s))|.$  By Lemma \ref{first and second variation}, 
	\begin{align} \label{first derivative}  
		\tilde \varphi'(0)=\tanh (t)\,\cosh (t)\,\gamma\qquad \text{and}\qquad \tilde \eta'(0)=\cosh (t)\,\gamma.
	\end{align} 
	Moreover, we have
	$$
	\tilde \varphi''(0)-\tanh (t)\,\tilde \eta''(0)=\sum_{i=1}^\ell\,\gamma^2_i \left(-\int_{\Sigma^i_t} |h(\Sigma_t)|^2-\cosh (t)\,\int_{\partial \Sigma^i_t} H(S)+\int_{\partial \Sigma^i_t}k(\partial \Sigma_t)\right).
	$$
	By the Gauss-Bonnet theorem, using that $2\,K(\Sigma_t)=-|h(\Sigma_t)|^2$,
	$$
	-\frac12\,\int_{\Sigma^i_t} |h(\Sigma_t)|^2+\int_{\partial \Sigma^i_t}k(\partial \Sigma_t)= 2\,\pi\,\chi(\Sigma^i_t)
	$$
	for every $1\leq i\leq \ell$.	Shrinking $\tilde \delta>0$, if necessary, and using \eqref{coefficients sum} and \eqref{first derivative}, we see that $\tilde \eta$ is strictly increasing on $(-\tilde \delta,\tilde \delta)$. Let
	\begin{itemize}
		\item[$\circ$] $\delta=\min\{|S(\tilde \Sigma(\tilde \delta))|-|S(\Sigma_t)|,|S(\Sigma_t)|-|S(\tilde\Sigma(-\tilde \delta))|\}$,
		\item[$\circ$] $\varphi\in C^\infty((|S(\Sigma_t)|-\delta,|S(\Sigma_t)|+\delta))$ be given by $\varphi(s)=\tilde \varphi(\tilde \eta^{-1}(s))$, and
		\item[$\circ$]  $\Sigma(s)=\tilde \Sigma(\tilde \eta^{-1}(s))$ for $s\in  (|S(\Sigma_t)|-\delta,|S(\Sigma_t)|+\delta)$.
	\end{itemize}   From this,
	we see that  $\delta$, $\varphi$, and $\{\Sigma(s):s\in (|S(\Sigma_t)|-\delta,|S(\Sigma_t)|+\delta)\}$ have all of the asserted properties. 
\end{proof}

	A minimal capillary surface $\Sigma_t\subset \bar M(S)$ with capillary angle $\arccos(-\tanh (t))$ is called stable if
	\begin{align} \label{stability ineq} 
	\int_{\Sigma_t} |\nabla^{\Sigma_t} f|^2-\int_{\Sigma_t}|h(\Sigma_t)|^2\,f^2-\cosh (t)\int_{\partial \Sigma_t}H(S)\,f^2+\int_{\partial \Sigma_t}k(\partial \Sigma_t)\,f^2\geq 0
	\end{align} 	
	for every $f\in C^\infty(\Sigma_t)$.
	Note that $\Sigma_t$ is stable if and only if every component of $\Sigma_t$ is stable.
	\begin{lem} \label{stability}
		Let  $t\in\mathbb{R}$. A minimal capillary surface $\Sigma_t\subset \bar M(S)$  with capillary angle $\arccos(-\tanh (t))$  is stable if and only if
		$$
		\frac{d^2J_t(\Sigma(s))}{ds^2}\bigg|_{s=0}\geq 0
		$$
		 for every admissible variation $\{\Sigma(s):s\in(-\varepsilon,\varepsilon)\}$ of $\Sigma_t$.		
	\end{lem}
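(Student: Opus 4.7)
My plan is to identify $\frac{d^2}{ds^2}\big|_{s=0}J_t(\Sigma(s))$ with the stability quadratic form from \eqref{stability ineq}, so that both implications of the equivalence reduce to the ability to realize arbitrary $f\in C^\infty(\Sigma_t)$ as the normal speed of an admissible variation.

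Since $\Sigma_t$ is a minimal capillary surface, $H(\Sigma_t)=0$ and its capillary angle $\theta=\arccos(-\tanh(t))$ is constant along $\partial\Sigma_t$; hence the hypotheses of the second variation formula \eqref{area second} from Lemma \ref{first and second variation} are satisfied. Observing that $\cos(\theta)=-\tanh(t)$ and $\sin(\theta)^{-1}=\cosh(t)$, definition \eqref{free energy smooth} gives
\[
\frac{d^2 J_t(\Sigma(s))}{ds^2}\bigg|_{s=0}=\frac{d^2|\Sigma(s)|}{ds^2}\bigg|_{s=0}+\cos(\theta)\,\frac{d^2|S(\Sigma(s))|}{ds^2}\bigg|_{s=0},
\]
and by \eqref{area second} this equals
\[
\int_{\Sigma_t}|\nabla^{\Sigma_t}f|^2-\int_{\Sigma_t}|h(\Sigma_t)|^2\,f^2-\cosh(t)\int_{\partial\Sigma_t}H(S)\,f^2+\int_{\partial\Sigma_t}k(\partial\Sigma_t)\,f^2,
\]
where $f=\langle\dot F,\nu(\Sigma_t)\rangle\in C^\infty(\Sigma_t)$ is the normal speed of the variation. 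This is precisely the quadratic form in \eqref{stability ineq} evaluated at $f$. Consequently, if $\Sigma_t$ is stable, the second variation of $J_t$ is nonnegative along every admissible variation, which gives the forward implication at once.

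For the converse, the single remaining step — and the main (though standard) technical point — is to show that every $f\in C^\infty(\Sigma_t)$ can be realized as the normal speed at $s=0$ of some admissible variation of $\Sigma_t$. My plan is to construct a smooth vector field $Y$ on a neighborhood of $\Sigma_t$ in $\bar M(S)$ with $Y\in TS$ along $S$ and $\langle Y,\nu(\Sigma_t)\rangle=f$ on $\Sigma_t$. Starting from $\tilde f\,\nu(\Sigma_t)$, where $\tilde f$ is any smooth extension of $f$ and $\nu(\Sigma_t)$ is extended via the nearest-point projection to $\Sigma_t$, one adds a smooth correction tangent to $\Sigma_t$ supported near $\partial\Sigma_t$. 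Because $\theta\in(0,\pi)$ implies $\nu(S)$ and $\nu(\Sigma_t)$ are linearly independent on $\partial\Sigma_t$, such a correction can be chosen to enforce $Y\in TS$ along $S$ without changing the component $\langle Y,\nu(\Sigma_t)\rangle$ on $\Sigma_t$; in boundary Fermi coordinates this amounts to solving a pointwise linear system that is uniformly nondegenerate. The local flow of $Y$ then produces the desired admissible variation, and applying the identity above together with the hypothesis $\frac{d^2 J_t(\Sigma(s))}{ds^2}\big|_{s=0}\geq 0$ delivers \eqref{stability ineq}, as required.
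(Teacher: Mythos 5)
Your proposal is correct and takes essentially the same approach as the paper: the paper's one-line proof "This follows from Lemma \ref{first and second variation}" is exactly the identification $\frac{d^2}{ds^2}\big|_{s=0}J_t(\Sigma(s))=\frac{d^2|\Sigma(s)|}{ds^2}\big|_{s=0}+\cos(\theta)\,\frac{d^2|S(\Sigma(s))|}{ds^2}\big|_{s=0}$ together with \eqref{area second}, which you make explicit. Your additional care in verifying that every $f\in C^\infty(\Sigma_t)$ arises as the normal speed of an admissible variation (exploiting $\theta\in(0,\pi)$ so that $\nu(S)$ and $\nu(\Sigma_t)$ are linearly independent along $\partial\Sigma_t$) fills in a standard point the paper leaves implicit.
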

\begin{proof}
	This follows from Lemma \ref{first and second variation}.
\end{proof}
\begin{lem} \label{first eigenvalue}
	Let $\Sigma_t\subset \bar M(S)$ be a connected stable minimal capillary surface with capillary angle $\arccos(-\tanh (t))$.
	Let $\kappa_t\in \mathbb{R}$ be the infimum of the functional
	\begin{align} \label{functional} 
		f\mapsto 	\int_{\Sigma_t} |\nabla^{\Sigma_t} f|^2-\int_{\Sigma_t}|h(\Sigma_t)|^2\,f^2-\cosh (t)\,\int_{\partial \Sigma_t}H(S)\,f^2+\int_{\partial \Sigma_t}k(\partial \Sigma_t)\,f^2
	\end{align} 
	among all $f\in W^{1,2}(\Sigma_t)$ with  
	$$
	\int_{\Sigma_t} f^2=1.	$$
There holds $\kappa_t\geq 0$ and there is a positive function  $f_t\in C^\infty(\Sigma_t)$    such that
	\begin{align} \label{first eigenfunction} 
	\begin{dcases}
		-\Delta^{\Sigma_t} f_t-|h(\Sigma_t)|^2\,f_t=\kappa_t\,f_t\qquad&\text{ in }\Sigma_t\text{ and}
		\\-\langle \nabla ^{\Sigma_t} f_t,\mu(\Sigma_t)\rangle -k(\partial \Sigma_t)\,f_t+\cosh (t)\,H(S)f_t=0 &\text{ on }\partial \Sigma_t
	\end{dcases}
	\end{align} 
	Moreover, if $\tilde f\in C^\infty(\Sigma_t)$ satisfies 
	\begin{align*} 
		\begin{dcases}
		-\Delta^{\Sigma_t} \tilde f-|h(\Sigma_t)|^2\,\tilde f=\kappa_t\,\tilde f\qquad&\text{ in }\Sigma_t\text{ and}
		\\-\langle \nabla ^{\Sigma_t} \tilde f,\mu(\Sigma_t)\rangle -k(\partial \Sigma_t)\,\tilde f+\cosh (t)\,H(S)\tilde f=0 &\text{ on }\partial \Sigma_t,
	\end{dcases}
	\end{align*} 
	 then $\tilde f=\beta\,f_t$ for some $\beta\in\mathbb{R}$. 
\end{lem}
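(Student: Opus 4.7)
The plan is to apply the direct method of the calculus of variations to the Rayleigh quotient associated with the quadratic form in \eqref{functional}, and then establish positivity and simplicity of the first eigenfunction by the maximum principle and a standard division argument. The bound $\kappa_t\geq 0$ is immediate from the stability inequality \eqref{stability ineq}, applied component-wise using that every component of $\Sigma_t$ is stable.

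First, I would verify that $\kappa_t$ is attained. Let $\{f_k\}_{k=1}^\infty$ be a minimizing sequence in $W^{1,2}(\Sigma_t)$ with $\int_{\Sigma_t}f_k^2=1$. Since $|h(\Sigma_t)|^2,\, k(\partial \Sigma_t)$, and $\cosh(t)\,H(S)|_{\partial \Sigma_t}$ are bounded (the surface $\Sigma_t$ is compact and smooth), the quadratic form \eqref{functional} differs from $\int_{\Sigma_t}|\nabla^{\Sigma_t}f|^2$ by lower-order terms controlled via the compact embedding $W^{1,2}(\Sigma_t)\hookrightarrow L^2(\Sigma_t)$ and the compact trace $W^{1,2}(\Sigma_t)\hookrightarrow L^2(\partial\Sigma_t)$. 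Combined with $\kappa_t\geq 0$, this yields a uniform $W^{1,2}$ bound on $\{f_k\}_{k=1}^\infty$. Passing to a subsequence that converges weakly in $W^{1,2}(\Sigma_t)$, strongly in $L^2(\Sigma_t)$, and strongly in $L^2(\partial \Sigma_t)$, lower semicontinuity of $\int |\nabla^{\Sigma_t}f|^2$ produces a minimizer $f_t\in W^{1,2}(\Sigma_t)$ with $\int_{\Sigma_t} f_t^2=1$. The associated Euler--Lagrange equations are precisely \eqref{first eigenfunction}: interior criticality yields $-\Delta^{\Sigma_t} f_t-|h(\Sigma_t)|^2 f_t=\kappa_t\,f_t$, and testing against variations with non-vanishing boundary trace produces the Robin condition $-\langle \nabla^{\Sigma_t}f_t,\mu(\Sigma_t)\rangle-k(\partial \Sigma_t)\,f_t+\cosh(t)\,H(S)\,f_t=0$. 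Standard elliptic regularity for Robin problems on smooth surfaces then gives $f_t\in C^\infty(\Sigma_t)$.

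To see that $f_t>0$, I observe that $|f_t|\in W^{1,2}(\Sigma_t)$ has $|\nabla^{\Sigma_t}|f_t||^2=|\nabla^{\Sigma_t} f_t|^2$ almost everywhere, so $|f_t|$ also realizes the infimum and hence is likewise a smooth nonnegative solution of \eqref{first eigenfunction}. Replacing $f_t$ by $|f_t|$, the strong maximum principle yields $f_t>0$ in the interior of $\Sigma_t$ (the zeroth-order term $|h(\Sigma_t)|^2+\kappa_t$ has a sign but the classical Hopf-type argument applies in the form $-\Delta^{\Sigma_t}f_t-(|h(\Sigma_t)|^2+\kappa_t)f_t=0$ with $f_t\geq 0$; alternatively, if $f_t$ vanished on a set of positive measure its nodal set would bound an open region in which the operator has first Dirichlet eigenvalue $\leq \kappa_t$, which contradicts its being the minimum). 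If $f_t$ vanished at an interior point, the strong maximum principle would force $f_t\equiv 0$, which contradicts $\int f_t^2=1$. If $f_t$ vanished at a boundary point $x_0\in\partial \Sigma_t$, the Hopf boundary-point lemma gives $\langle \nabla^{\Sigma_t}f_t(x_0),\mu(\Sigma_t)\rangle>0$, contradicting the Robin condition, which at $x_0$ reads $\langle \nabla^{\Sigma_t}f_t(x_0),\mu(\Sigma_t)\rangle=0$. Hence $f_t>0$ on all of $\Sigma_t$.

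Finally, for the simplicity claim, I would use the standard Picone-type quotient argument. Let $\tilde f\in C^\infty(\Sigma_t)$ solve the same eigenvalue problem, and set $g=\tilde f/f_t$, which is smooth since $f_t>0$. A direct computation using the two eigenvalue equations gives $\operatorname{div}(f_t^2\,\nabla^{\Sigma_t} g)=0$ on $\Sigma_t$, and subtracting the two Robin conditions yields $f_t\,\langle\nabla^{\Sigma_t}g,\mu(\Sigma_t)\rangle=0$ on $\partial\Sigma_t$, hence $\langle \nabla^{\Sigma_t}g,\mu(\Sigma_t)\rangle=0$. Multiplying the interior equation by $g$ and integrating by parts on $\Sigma_t$, the boundary term vanishes and one obtains
\begin{align*}
\int_{\Sigma_t}f_t^2\,|\nabla^{\Sigma_t}g|^2=0,
\end{align*}
so $g$ is constant and $\tilde f=\beta\,f_t$ for some $\beta\in\mathbb{R}$. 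The main obstacle is handling the boundary term correctly so that the Picone identity closes; once the Robin condition is seen to propagate to $\nabla^{\Sigma_t}g\perp\mu(\Sigma_t)$ on $\partial \Sigma_t$, everything else is routine.
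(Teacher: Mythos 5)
Your proof is correct and unpacks the paper's one-line citation of a standard principal-eigenvalue theorem (Evans, Theorem~2 in \S6.5) into the direct method plus a Picone simplicity argument; this is essentially the same approach. One minor slip that does not affect the outcome: with $\mu(\Sigma_t)$ the outward co-normal and $f_t\geq 0$ a superharmonic eigenfunction, the Hopf boundary-point lemma at a boundary zero $x_0$ gives $\langle\nabla^{\Sigma_t}f_t(x_0),\mu(\Sigma_t)\rangle<0$ rather than $>0$, but either sign contradicts the Robin condition, which forces $\langle\nabla^{\Sigma_t}f_t(x_0),\mu(\Sigma_t)\rangle=0$.
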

\begin{proof}
	This follows from minimizing \eqref{functional} 
	among all $f\in W^{1,2}(\Sigma_t)$ with  
	$$
	\int_{\Sigma_t} f^2=1,	$$
	using the Rellich–Kondrachov theorem, the trace theorem for Sobolev functions, elliptic regularity, and \eqref{stability ineq}; see, e.g., \cite[Theorem 2 in \S6.5]{Evans}.
\end{proof}
	\begin{lem} \label{minimization in homology class3}
	Let $t\in \mathbb R$. 	Suppose that   $\Sigma_t\subset \bar M(S)$ is a stable minimal capillary surface with capillary angle $\arccos(-\tanh (t))$ such that $H(S)\geq 0$ on $\partial \Sigma_t$. Then  $\Sigma_t$ is  diffeomorphic to a union of disks.
	\end{lem}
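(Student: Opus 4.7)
The plan is to prove the claim component by component. Stability passes to each connected component of $\Sigma_t$ by restricting the stability inequality \eqref{stability ineq} to test functions supported on that component (as noted just after the definition of stability in Appendix \ref{appendix:free energy}). So I assume $\Sigma_t$ is connected. Since $\Sigma_t$ is an embedded orientable compact surface in $\mathbb{R}^3$ with nonempty boundary (admissibility rules out closed components), one has $\chi(\Sigma_t) \leq 1$, with equality if and only if $\Sigma_t$ is a disk. It therefore suffices to show $\chi(\Sigma_t) \geq 1$.

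The key step is an integral identity extracted from the positive first eigenfunction of the Jacobi operator. By Lemma \ref{first eigenvalue}, there exists $u = f_t \in C^\infty(\Sigma_t)$ with $u > 0$ and $\kappa_t \geq 0$ satisfying $-\Delta^{\Sigma_t} u - |h(\Sigma_t)|^2\, u = \kappa_t\, u$ in $\Sigma_t$ and the Robin-type condition $\langle \nabla^{\Sigma_t} u, \mu(\Sigma_t) \rangle = -k(\partial \Sigma_t)\, u + \cosh(t)\, H(S)\, u$ on $\partial \Sigma_t$. Setting $\phi = \log u$, a direct calculation gives $\Delta^{\Sigma_t} \phi = -|h(\Sigma_t)|^2 - \kappa_t - |\nabla^{\Sigma_t} \phi|^2$. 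Integrating over $\Sigma_t$, applying the divergence theorem together with the boundary condition (which yields $\langle \nabla^{\Sigma_t} \phi, \mu(\Sigma_t) \rangle = -k(\partial \Sigma_t) + \cosh(t)\, H(S)$), and substituting Gauss-Bonnet combined with the minimality identity $2\, K(\Sigma_t) = -|h(\Sigma_t)|^2$ gives
\begin{equation*}
2\pi\, \chi(\Sigma_t) = \cosh(t) \int_{\partial \Sigma_t} H(S) + \tfrac{1}{2} \int_{\Sigma_t} |h(\Sigma_t)|^2 + \kappa_t\, |\Sigma_t| + \int_{\Sigma_t} |\nabla^{\Sigma_t} \phi|^2.
\end{equation*}
Since $H(S) \geq 0$ on $\partial \Sigma_t$, every term on the right-hand side is nonnegative, so $\chi(\Sigma_t) \geq 0$, and hence $\chi(\Sigma_t) \in \{0, 1\}$.

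The main obstacle is to exclude the case $\chi(\Sigma_t) = 0$, which would correspond to $\Sigma_t$ being an annulus. I expect this to be resolved as follows. If $\chi(\Sigma_t) = 0$, each of the four nonnegative terms in the identity must vanish: $h(\Sigma_t) \equiv 0$ so $\Sigma_t$ is a flat planar region, $\phi$ is constant so $u$ is constant, $\kappa_t = 0$, and $H(S) = 0$ on $\partial \Sigma_t$. Substituting $u$ constant and $H(S) = 0$ back into the boundary condition forces $k(\partial \Sigma_t) \equiv 0$. But because $\Sigma_t$ is a flat planar region, the intrinsic geodesic curvature of $\partial \Sigma_t$ in $\Sigma_t$ coincides with the Euclidean curvature of $\partial \Sigma_t$ viewed as a smooth closed curve in the plane containing $\Sigma_t$, and no smooth closed planar curve can have zero curvature everywhere. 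This contradiction gives $\chi(\Sigma_t) = 1$, i.e., $\Sigma_t$ is a disk. The statement for non-connected $\Sigma_t$ then follows by applying the argument to each component.
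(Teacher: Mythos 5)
Your proof is correct, but it follows a genuinely different route from the paper's. The paper tests the stability inequality \eqref{stability ineq} directly with the constant function $f=\mathbbm{1}_{\Sigma^1_t}$; together with $2\,K(\Sigma_t)=-|h(\Sigma_t)|^2$ and Gauss--Bonnet this gives $\frac12\int|h|^2+\cosh(t)\int_{\partial}H(S)\leq 2\pi\chi$, hence $\chi\geq 0$, and for $\chi=0$ equality is upgraded via the uniqueness statement of Lemma \ref{first eigenvalue} to the fact that $f\equiv 1$ satisfies the eigenvalue equations, forcing $k(\partial\Sigma_t)\equiv 0$, contradicted by the Fenchel-type bound $\int_{\partial\Sigma^1_t}|k(\partial\Sigma_t)|\geq 4\pi$ for a flat annulus. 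You instead take the positive first eigenfunction $u=f_t$ and run the classical logarithmic substitution $\phi=\log u$ (in the spirit of Fischer-Colbrie--Schoen/Schoen--Yau), arriving after integration and Gauss--Bonnet at the exact identity
$$
2\pi\,\chi(\Sigma_t)=\cosh(t)\int_{\partial\Sigma_t}H(S)+\tfrac12\int_{\Sigma_t}|h(\Sigma_t)|^2+\kappa_t\,|\Sigma_t|+\int_{\Sigma_t}|\nabla^{\Sigma_t}\phi|^2.
$$
This packages both the inequality $\chi\geq 0$ and the equality-case rigidity into a single equation: when $\chi=0$ every term on the right vanishes, giving $h\equiv 0$, $u$ constant, $\kappa_t=0$, and $H(S)=0$ on $\partial\Sigma_t$, whence the boundary condition forces $k\equiv 0$. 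Your way of ruling this out is also somewhat more elementary than the paper's: a flat region with totally geodesic boundary cannot exist because no smooth closed plane curve has identically vanishing curvature, whereas the paper invokes the quantitative estimate $\int|k|\geq 4\pi$. The trade-off is that your argument requires the existence and positivity of the first eigenfunction from Lemma \ref{first eigenvalue} at the outset, while the paper only uses that lemma to handle the borderline case.
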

	\begin{proof}
		The argument is essentially the same  as that in the proof of \cite[Proposition 2]{Ros}. 
		Given a component  $\Sigma^1_t\subset \Sigma_t$, we use  $f=\mathbbm{1}_{\Sigma^1_t}$ in \eqref{stability ineq}. Using that $2\,K(\Sigma_t)=-|h(\Sigma_t)|^2$ and the Gauss-Bonnet theorem
		$$
		\int_{\Sigma^1_t} K(\Sigma_t)+\int_{\partial \Sigma^1_t} k(\partial \Sigma_t)=2\,\pi\,\chi(\Sigma^1_t),
		$$
		  we obtain
		$$
		\frac12\,\int_{\Sigma^1_t} |h(\Sigma_t)|^2+\cosh (t)\,\int_{\partial \Sigma^1_t}H(S)\leq 2\,\pi\,\chi(\Sigma^1_t).
		$$
		In particular, $\chi(\Sigma^1_t)\geq 0$. It follows that $\Sigma^1_t$ is  diffeomorphic to either a disk or an annulus. If $\Sigma^1_t$ is diffeomorphic to an annulus, i.e., $\chi(\Sigma^1_t)=0$, it follows that $H(S)=0$ on $\partial \Sigma^1_t$ and  $h(\Sigma_t)=0$  on $\Sigma^1_t$. Consequently, equality holds in \eqref{stability ineq} for $f=\mathbbm{1}_{\Sigma^1_t}$. Applying Lemma \ref{first eigenvalue}, we see that  
		$$
		\begin{dcases}
			-\Delta^{\Sigma_t} f=0\qquad&\text{in }\Sigma^1_t\text{ and}\\
			-\langle \nabla^{\Sigma_t} f,\mu(\Sigma_t)\rangle -k(\partial \Sigma_t)\,f=0&\text{on }\partial \Sigma^1_t.
		\end{dcases}
		$$
		Thus, $k(\partial \Sigma_t)=0$ on $\partial \Sigma^1_t$. Since 
		$$
		\int_{\partial \Sigma^1_t}|k(\partial \Sigma_t)|\geq 4\,\pi,
		$$  
		we obtain a contradiction.
	\end{proof}
	\begin{lem} \label{smoothing the corner}
		Let $t\in\mathbb{R}$. Suppose that $\Sigma_t\subset \bar M(S)$ is a stable minimal capillary surface with capillary angle $\arccos(-\tanh (t))$  such that 
		$H(S)\geq 0$  on $S\setminus S(\Sigma_t)$. There exists an asymptotically flat support surface $S_t\subset\mathbb{R}^3$ diffeomorphic to the plane with $H(S_t)\geq 0$ such that $(S\setminus S_t)\cup (S_t \setminus S)$ is compact, $M( S_t)\subset M(S)\setminus \bar \Omega(\Sigma_t)$, and $S_t$ is not a flat plane.
	\end{lem}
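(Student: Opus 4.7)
The plan is to construct $S_t$ as a smooth convex-corner regularization of the Lipschitz surface $\hat S := \partial(M(S)\setminus\bar\Omega(\Sigma_t))$, which bounds the open region $\hat M := M(S)\setminus\bar\Omega(\Sigma_t)$. By construction $\hat S$ is smooth except along a disjoint union of circles $\Gamma \subset \partial\Sigma_t$ where $\hat M$ meets $\bar\Omega(\Sigma_t)\cap S$, and by Lemma \ref{angles} together with the identity $\cos\theta = \langle\nu(S),\nu(\Sigma_t)\rangle = -\tanh t$, the two smooth faces of $\hat S$ meet along $\Gamma$ at dihedral angle $\theta = \arccos(-\tanh t)\in(0,\pi)$ measured through $\hat M$; in particular, $\hat S$ has a \emph{convex} edge along $\Gamma$.

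To confirm that $\hat S$ is homeomorphic to $\mathbb{R}^2$, I would use Lemma \ref{minimization in homology class3} (which applies since $H(S)\geq 0$ on $\partial\Sigma_t$ by continuity from $S\setminus S(\Sigma_t)$) to deduce that $\Sigma_t$ is a disjoint union of disks. A direct Euler characteristic computation on the connected components of $\partial\bar\Omega(\Sigma_t) = \Sigma_t \cup S(\Sigma_t)$ then shows that each such component is a topological sphere; by the three-dimensional Schoenflies theorem each component of $\bar\Omega(\Sigma_t)$ is a topological $3$-ball attached to $S$ along a planar region, and excising these from the half-space $M(S)$ leaves a region homeomorphic to a half-space.

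The smoothing is carried out in a tubular neighborhood $U_\varepsilon$ of $\Gamma$ of small radius $\varepsilon > 0$, using Fermi coordinates adapted to $\Gamma$. In each slice transverse to $\Gamma$ I would replace the convex corner of $\hat S$ by a smooth $C^\infty$ convex curve lying in $\hat M$ of characteristic radius $\rho \ll \varepsilon$, tangent to both faces of the corner; outside $U_\varepsilon$ set $S_t = \hat S$. The resulting $S_t$ is smooth, coincides with $S$ outside a compact set (so is asymptotically flat), satisfies $M(S_t)\subset \hat M = M(S)\setminus\bar\Omega(\Sigma_t)$, and is diffeomorphic to $\mathbb{R}^2$ by the topological argument above.

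For the mean curvature, on $S_t\cap(S\setminus S(\Sigma_t))$ we have $H(S_t) = H(S) \geq 0$ by hypothesis; on $S_t\cap\Sigma_t$ the outward unit normal of $M(S_t)$ is $-\nu(\Sigma_t)$, and so $H(S_t) = -H(\Sigma_t) = 0$ by minimality of $\Sigma_t$; and in the smoothing region, the in-slice curvature of the smoothing arc contributes a term of order $1/\rho$ to $H(S_t)$, while all other contributions (from the curvature of $\Gamma$ in $\mathbb{R}^3$ and the variation of the Fermi frame along $\Gamma$) remain bounded uniformly in $\rho$, so taking $\rho$ small enough forces $H(S_t) > 0$ there. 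Finally $S_t$ is not a flat plane since $H(S_t) > 0$ somewhere. The main obstacle is the careful accounting of curvature contributions in the smoothing region, which is a standard but delicate convex-smoothing estimate.
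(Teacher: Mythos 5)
Your proposal is correct and follows essentially the same route as the paper: identify $\Sigma_t$ as a union of disks via Lemma \ref{minimization in homology class3}, take $(S\setminus S(\Sigma_t))\cup\Sigma_t$ as the Lipschitz boundary of $M(S)\setminus\bar\Omega(\Sigma_t)$, note that the dihedral angle along the edge $\partial\Sigma_t$ equals the capillary angle $\arccos(-\tanh t)\in(0,\pi)$ so the edge is convex, and round the corner into the unbounded region with a profile of small radius $\rho$ so that the $O(1/\rho)$ in-slice curvature dominates the bounded terms; the sign accounting (curvature vector into $M(S_t)$, hence $H>0$, while the flat faces contribute $H(S)\geq 0$ and $H(\Sigma_t)=0$) is correct. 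You supply slightly more detail than the paper's one-sentence proof, and your choice to smooth $\partial\bigl(M(S)\setminus\bar\Omega(\Sigma_t)\bigr)$ rather than literally $(S\setminus S(\Sigma_t))\cup\Sigma_t$ is in fact the more robust one in the event that some boundary circles of $\Sigma_t$ are nested (then a disk component of $\Sigma_t$ may lie in the interior of $\bar\Omega(\Sigma_t)$ and should be discarded); just note that in that degenerate case your displayed identity $\partial\bar\Omega(\Sigma_t)=\Sigma_t\cup S(\Sigma_t)$ would likewise fail, so the claim that each component of $\partial\bar\Omega(\Sigma_t)$ is a sphere should be argued from the unbounded-component description rather than from that identity.
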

	\begin{proof}
		According to Lemma \ref{minimization in homology class3}, $\Sigma_t$ is diffeomorphic to a union of disks. Consequently, $(S\setminus S(\Sigma_t))\cup \Sigma_t$ is homeomorphic to the plane. The assertion now follows from smoothing  $(S\setminus S(\Sigma_t))\cup \Sigma_t$ near the edge $\partial \Sigma_t$, using that $H(S)\geq 0$ on $S\setminus S(\Sigma_t)$ and that $H(\Sigma_t)=0$. 
	\end{proof}

	The proof of the following lemma is inspired by that of \cite[Theorem 3]{MeeksYau}; see also \cite[Theorem 3.1]{Galloway}.

	\begin{lem} \label{local foliation}
		Let $t\in\mathbb{R}$ and suppose that $\Sigma_t \subset \bar M(S)$ is a connected stable minimal capillary surface with capillary angle $\arccos(-\tanh (t))$. There exist $\delta>0$, $\omega\in C^\infty((-\delta,\delta))$, and  a smooth foliation $\{\Sigma(s):s\in(-\delta,\delta)\}$ by minimal capillary surfaces $\Sigma(s)\subset \bar M(S)$ with capillary angle $\arccos(-\tanh(\omega(s)))$ such that the following properties hold. There holds $\Sigma(0)=\Sigma_t$, $\omega'(0)\geq 0$,  and the normal speed  $\dot f^\perp \in C^\infty(\Sigma_t)$ of the foliation at $s=0$ is positive and satisfies   
		\begin{align} \label{speed} 
			\begin{dcases}
				-\Delta^{\Sigma_t} \dot f^\perp -|h(\Sigma_t)|^2\,\dot f^\perp =0\qquad&\text{ in }\Sigma_t\text{ and}
				\\-\langle \nabla ^{\Sigma_t} \dot f^\perp ,\mu(\Sigma_t)\rangle -k(\partial \Sigma_t)\,\dot f^\perp +\cosh (t)\,H(S)\,\dot f^\perp =-\cosh (t)^{-1}\,\omega'(0) &\text{ on }\partial \Sigma_t.
			\end{dcases}
		\end{align} 
	\end{lem}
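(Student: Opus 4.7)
The plan is to apply the implicit function theorem to the nonlinear system characterizing minimal capillary surfaces near $\Sigma_t$. Nearby admissible surfaces will be written as normal graphs $\Sigma_f$ over $\Sigma_t$ (with the standard modification near $\partial\Sigma_t$ so that $\partial\Sigma_f\subset S$). The property that $\Sigma_f$ is a minimal capillary surface with angle $\arccos(-\tanh\omega)$ amounts to $H(\Sigma_f)=0$ on $\Sigma_f$ and $\langle\nu(\Sigma_f),\nu(S)\rangle+\tanh\omega=0$ on $\partial\Sigma_f$. By \eqref{jacobi equation} and \eqref{capillary change 2}, the linearization of this system at $(f,\omega)=(0,t)$ in direction $(g,\dot\omega)$ is $(Lg,\,Bg+\cosh(t)^{-1}\dot\omega)$, where $Lg=-\Delta^{\Sigma_t}g-|h(\Sigma_t)|^{2}\,g$ and $Bg=-\langle\nabla^{\Sigma_t}g,\mu(\Sigma_t)\rangle-k(\partial\Sigma_t)\,g+\cosh(t)H(S)\,g$. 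The requirement \eqref{speed} on $\dot f^\perp$ is precisely $L\dot f^\perp=0$ in $\Sigma_t$ and $B\dot f^\perp=-\cosh(t)^{-1}\omega'(0)$ on $\partial\Sigma_t$.

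I will distinguish two cases according to the first eigenvalue $\kappa_t\geq 0$ from Lemma \ref{first eigenvalue}. If $\kappa_t>0$, the map $g\mapsto(Lg,Bg)$ is a Fredholm isomorphism on the relevant Hölder spaces, and the implicit function theorem produces a smooth curve $\omega\mapsto f(\omega)$ of solutions near $\omega=t$ with $f(t)=0$; reparametrizing by $s=\omega-t$ yields $\omega'(0)=1>0$. If $\kappa_t=0$, the kernel of $(L,B)$ is the one-dimensional span of the positive first eigenfunction $f_t$, and by self-adjointness so is the cokernel. Pairing any solution of \eqref{speed} against $f_t$ via Green's identity yields the compatibility relation $\cosh(t)^{-1}\omega'(0)\int_{\partial\Sigma_t}f_t=0$, which forces $\omega'(0)=0$ since $f_t>0$. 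A Lyapunov--Schmidt reduction modelled on \cite[Theorem 3]{MeeksYau} and \cite[Theorem 3.1]{Galloway}, writing $f=sf_t+g$ with $g$ in the $L^{2}$-orthogonal complement of $f_t$ and solving for $g(s)$ via the implicit function theorem (with $\omega(s)\equiv t$), will then produce a genuine one-parameter family $\{\Sigma(s)\}$ with $\dot f^\perp=f_t$ at $s=0$; the variational structure inherited from $J_t$ ensures that the single reduced equation in the cokernel direction is automatically satisfied along the branch.

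To establish $\dot f^\perp>0$: in the case $\kappa_t=0$ this is just the positivity of $f_t$ from Lemma \ref{first eigenvalue}. In the case $\kappa_t>0$, $\dot f^\perp$ is characterized as the unique minimizer in $W^{1,2}(\Sigma_t)$ of
\begin{align*}
\mathcal F(u)=\int_{\Sigma_t}\bigl(|\nabla^{\Sigma_t}u|^{2}-|h(\Sigma_t)|^{2}u^{2}\bigr)+\int_{\partial\Sigma_t}\bigl(k(\partial\Sigma_t)-\cosh(t)\,H(S)\bigr)u^{2}-2\cosh(t)^{-1}\omega'(0)\int_{\partial\Sigma_t}u,
\end{align*}
which is strictly convex because $\kappa_t>0$. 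Splitting $u=u^{+}-u^{-}$, the quadratic part of $\mathcal F$ decomposes as $Q(u^{+})+Q(u^{-})$ while $-\int_{\partial\Sigma_t}u^{+}\leq-\int_{\partial\Sigma_t}u$ (using $\omega'(0)>0$); hence $\mathcal F(u^{+})\leq\mathcal F(u)$ and uniqueness of the minimizer forces $u^{-}\equiv 0$, so $\dot f^\perp\geq 0$. The identity $-\Delta^{\Sigma_t}\dot f^\perp=|h(\Sigma_t)|^{2}\dot f^\perp\geq 0$ makes $\dot f^\perp$ superharmonic, and the strong maximum principle together with Hopf's lemma at boundary points (where $\dot f^\perp=0$ is incompatible with the Robin condition having non-zero right-hand side $-\cosh(t)^{-1}\omega'(0)<0$) upgrades this to $\dot f^\perp>0$ on $\Sigma_t$.

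Positivity of the normal speed will then imply that the graphs representing $\Sigma(s)$ over $\Sigma_t$ are strictly monotone in $s$ for $|s|$ small, so the map $(x,s)\mapsto x+f(s)(x)\nu(\Sigma_t)(x)$ is a local diffeomorphism and $\{\Sigma(s):s\in(-\delta,\delta)\}$ sweeps out a neighborhood of $\Sigma_t$ in $\bar M(S)$ as a smooth foliation. The hard part will be the degenerate case $\kappa_t=0$, where the direct implicit function theorem fails due to the nontrivial kernel: the Lyapunov--Schmidt reduction requires verifying that the single obstruction in the cokernel direction vanishes identically along the bifurcation branch, which I will deduce from the variational and self-adjoint structure of the capillary problem.
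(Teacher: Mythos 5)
Your setup and the non-degenerate case $\kappa_t>0$ are essentially correct and parallel the paper's argument; in fact your positivity proof for $\dot f^\perp$ via the strictly convex functional $\mathcal F$, decomposition $u=u^+-u^-$, and Hopf's lemma is a nice alternative to the paper's substitution $q=f_t^{-1}\dot f^\perp$ followed by the maximum principle, and both routes work.

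The degenerate case $\kappa_t=0$, however, has a genuine gap and the proposed route will not close. You set $\omega(s)\equiv t$ and perform a Lyapunov--Schmidt reduction $f=s\,f_t+g(s)$, asserting that ``the variational structure inherited from $J_t$ ensures that the single reduced equation in the cokernel direction is automatically satisfied along the branch.'' That is not the case. Because the problem is variational, the reduced obstruction is $\phi(s)=\frac{d}{ds}\hat J(s)$ where $\hat J(s)=J_t\bigl(\Sigma(s\,f_t+g(s))\bigr)$; one has $\hat J'(0)=0$ (criticality) and $\hat J''(0)=0$ (because $\kappa_t=0$), but the higher-order terms of $\hat J$ are not controlled, so $\phi(s)=c\,s^2+O(s^3)$ with $c$ generically nonzero. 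In other words, a family of minimal capillary surfaces with the \emph{fixed} contact angle $\arccos(-\tanh t)$ through $\Sigma_t$ does not in general exist when the linearization is degenerate, and no amount of self-adjointness will produce it. The paper's resolution is precisely to let the contact angle vary along the family: the correct map is
$$\tilde{\mathcal F}(f,\omega)=\Bigl(H(\Sigma(f)),\ \langle\nu(\Sigma(f)),\nu(S)\rangle+\tanh\omega,\ \int_{\Sigma_t}f_t\,f^\perp+\int_{\partial\Sigma_t}f_t\,f^\perp\Bigr),$$
whose differential at $(0,t)$ is an isomorphism by the Fredholm alternative (the extra scalar variable $\dot\omega$ kills the one-dimensional cokernel, and the normalization equation kills the one-dimensional kernel). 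The implicit function theorem then yields a family $(f(s),\omega(s))$, and pairing the linearized equations against $f_t$ forces $\omega'(0)=0$ and $\dot f^\perp=\beta f_t$. You should also notice that \cite[Theorem 3.1]{Galloway}, which you cite, uses exactly this device of letting the (analogue of the) constant vary rather than holding it fixed; your proposal actually deviates from that reference at the crucial step.
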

	\begin{proof}
		Since $\langle \nu(\Sigma_t),\nu(S)\rangle =\tanh (t)$, there are $\varepsilon>0$ and  $F\in C^\infty(\Sigma_t\times(-\varepsilon,\varepsilon),\mathbb{R}^3)$ such that
		\begin{equation*}
			\begin{aligned} 
				&\circ\qquad\text{$F$ is a diffeomorphism with its image,}\\
				&\circ\qquad\text{$\{x\in \bar M(S):\operatorname{dist}(x,\Sigma_t)<\cosh( t)^{-1}\,\varepsilon\}\subset \{F(y,s):y\in \Sigma_t$ and $s\in(-\varepsilon,\varepsilon)\}$, and}
				\\&\circ\qquad  \langle \dot F,\nu(\Sigma_t ) \rangle\geq \frac{1}{\cosh (t)} \text{ where } \dot F=\frac{dF(\,\cdot\,,s)}{ds}\bigg|_{s=0}.
			\end{aligned} 
		\end{equation*}
		Given $\alpha\in(0,1)$ and $\delta>0$, let $$\mathcal{U}_\delta=\{f\in C^{2,\alpha}(\Sigma_t):|f|_{C^{2,\alpha}(\Sigma_t)}<\delta\}.$$ Provided that $\delta>0$ is sufficiently small,  $\Sigma(f)=\{F(y,f(y)):y\in \Sigma_t\}$ is an admissible surface for every $f\in \mathcal{U}_\delta$. We tacitly identify functions and maps defined on $\Sigma(f)$ with functions and maps defined on $\Sigma_t$ by precomposition with the map $\Sigma_t\to \Sigma(f)$ given by $y\mapsto F(y,f(y))$. Given $f\in C^{2,\alpha}(\Sigma_t)$, let $f^\perp=\langle \dot F,\nu(\Sigma_t)\rangle\, f$. Note that $f=0$ if and only if $f^\perp=0$. 
		
		According to Lemma \ref{first eigenvalue}, there is a nonnegative number $\kappa_t$ and a positive function $f_t\in C^\infty(\Sigma_t)$ that satisfy \eqref{first eigenfunction}.
		
		We first assume that $\kappa_t>0$. Let  $\mathcal{F}_\delta:\mathcal{U}_\delta\to C^{0,\alpha}(\Sigma_t)\times C^{1,\alpha}(\partial \Sigma_t)$ be given by  
		$$
		\mathcal F_\delta(f)=(H(\Sigma(f)),\langle \nu(\Sigma(f)),\nu(S)\rangle).
		$$	
		In view of \eqref{jacobi equation} and \eqref{capillary change 2}, the differential $\mathcal{D}\mathcal{F_\delta}(0) : C^{2,\alpha}(\Sigma_t)\to C^{0,\alpha}(\Sigma_t)\times C^{1,\alpha}(\partial \Sigma_t)$ is given by
		$$
		\mathcal{D}_f\mathcal{F_\delta}(0)=\left (	-\Delta^{\Sigma_t} f^\perp-|h(\Sigma_t)|^2\,f^\perp,-\frac{1}{\cosh (t)}\,\langle \nabla^{\Sigma_t} f^\perp, \mu(\Sigma_t)\rangle -\frac{1}{\cosh (t)}\,k(\partial \Sigma_t)\,f^\perp+H(S)\,f^\perp \right ).
		$$
		By the Fredholm alternative and Lemma \ref{first eigenvalue}, using that $\kappa_t>0$, we see that $\mathcal{D}\mathcal{F}_\delta(0):C^{2,\alpha}(\Sigma_t)\to C^{0,\alpha}(\Sigma_t)\times C^{1,\alpha}(\partial \Sigma_t)$ is an isomorphism. Consequently, there is a smooth family $\{f(s):s\in(-\delta,\delta)\}$ of functions $f(s)\in C^{2,\alpha}(\Sigma_t)$ such that $\Sigma(f(s))$ is a capillary minimal surface with capillary angle $\arccos(-\tanh(t+s))$ provided that $\delta>0$ is sufficiently small. Using \eqref{jacobi equation} and \eqref{capillary change 2}, we see that the normal speed $\dot f^\perp$ of this family at $s=0$ satisfies \eqref{speed} with $\omega(s)=t+s$. Moreover, the function $q= f_t^{-1}\,\dot f^\perp$ satisfies 
		\begin{align*} 
			\begin{dcases}
				-\Delta^{\Sigma_t} q -f_t^{-1}\,\langle \nabla^{\Sigma_t}q,\nabla^{\Sigma_t} f_t\rangle  =-\kappa_t\,q -f_t^{-2}\,|\nabla^{\Sigma_t}f_t|^2\,q &\text{ in }\Sigma_t\text{ and}
				\\-\langle \nabla ^{\Sigma_t} q ,\mu(\Sigma_t)\rangle  =-\cosh (t)^{-1}\,f_t &\text{ on }\partial \Sigma_t.
			\end{dcases}
		\end{align*} 
		By the strong maximum principle, $q>0$ and, consequently, $\dot f^\perp >0$. It follows that  $\{\Sigma(f(s)):s\in(-\varepsilon,\varepsilon)\}$ is a smooth foliation. This completes the proof in the case where $\kappa_t>0$.
		
		Assume now that $\kappa_t=0$. Let $\tilde{ \mathcal{F}}_\delta:\mathcal{U}_\delta\times \mathbb{R}\to C^{0,\alpha}(\Sigma_t)\times C^{1,\alpha}(\partial \Sigma_t)\times\mathbb{R}$ be given by  
		$$
		\tilde {\mathcal F}_\delta(f,\omega)=\left(H(\Sigma(f)),\langle \nu(\Sigma(f)),\nu(S)\rangle+\tanh(\omega),\int_{\Sigma_t}f_t\, f^\perp+\int_{\partial \Sigma_t} f_t\, f^\perp \right).
		$$
		By the Fredholm alternative, given $a \in C^{0,\alpha}(\Sigma_t)$ and $b \in C^{1,\alpha}(\partial \Sigma_t)$, there is a unique $f\in C^{2,\alpha}(\Sigma_t)$ with 
		\begin{align*} 
			\begin{dcases}
				-\Delta^{\Sigma_t} f-|h(\Sigma_t)|^2\,f=a\qquad&\text{ in }\Sigma_t\text{ and}
				\\-\langle \nabla ^{\Sigma_t} f,\mu(\Sigma_t)\rangle -k(\partial \Sigma_t)\,f+\cosh (t)\,H(S)f=b &\text{ on }\partial \Sigma_t
			\end{dcases}
		\end{align*} 
		and 
		$$
		\int_{\Sigma_t} f_t\,f+\int_{\partial \Sigma_t} f_t\,f=0
		$$
		if and only if 
		\begin{align} \label{necessary condition}
			\int_{\Sigma_t} f_t\,a+\int_{\partial \Sigma_t} f_t\,b=0.
		\end{align}
		It follows that the differential $\mathcal{D}\tilde {\mathcal{F_\delta}}(0) : C^{2,\alpha}(\Sigma_t)\times \mathbb{R}\to C^{0,\alpha}(\Sigma_t)\times C^{1,\alpha}(\partial \Sigma_t)\times \mathbb{R}$  is an isomorphism. Consequently, there is a smooth family $\{f(s):s\in(-\delta,\delta)\}$ of functions $f(s)\in C^{2,\alpha}(\Sigma_t)$ and $\omega\in C^\infty((-\delta,\delta))$ such that $\Sigma(f(s))$ is a minimal capillary surface with capillary angle $\arccos(-\tanh(\omega(s)))$ and 
		$$
		\int_{\Sigma_t} f_t\,f(s)^\perp +\int_{\partial \Sigma_t} f_t\,f(s)^\perp=s
		$$
for all $s\in(-\delta,\delta)$. 		As before, we see that the normal speed $\dot f^\perp$ of this family at $s=0$ satisfies \eqref{speed} and that
		\begin{align} \label{equation}  
			\int_{\Sigma_t} f_t\,\dot f^\perp +\int_{\partial \Sigma_t} f_t\,\dot f^\perp=1.
		\end{align} 
		In view of \eqref{necessary condition}, we have $\omega'(0)=0$. Using Lemma \ref{first eigenvalue} and \eqref{equation}, we conclude that $\dot f^\perp=\beta\,f_t$ for some $\beta>0$. This completes the proof in the case where $\kappa_t=0$.	
	\end{proof}

\section{Sets of finite perimeter}
\label{BV}
Let $\Omega\subset \mathbb{R}^3$ be an open set  with Lipschitz boundary.

Following \cite[\S5]{EvansGariepy}, we say that a measurable set $E\subset \Omega$ has finite perimeter in $\Omega$ if $\mathbbm{1}_E\in L^1(\Omega)$ and 
$$
||\partial E||(\Omega)=\sup\left\{\int_E \operatorname{div}\varphi:\varphi\in C^\infty_c(\Omega,\mathbb{R}^n)\text{ with }|\varphi|\leq 1\right\}<\infty.
$$
 Recall from \cite[\S5.1]{EvansGariepy} that given a set  $E\subset \Omega$ of finite perimeter in $\Omega$ there is a  Radon measure $||\partial E||$ on $\Omega$ called the perimeter measure of $E$ in $\Omega$ such that, for every open set $\Omega'\subset \Omega$ that is compactly contained in $\Omega,$ we have 
$$
||\partial E||(\Omega')=\sup\left\{\int_E \operatorname{div}\varphi: \varphi\in C^\infty_c(\Omega',\mathbb{R}^n)\text{ and }|\varphi|\leq 1\right\}.
$$
 Recall from \cite[\S5.7]{EvansGariepy} that if $E\subset \Omega$ is a set of finite perimeter in $\Omega$, there exists a  2-rectifiable set $\partial^* E\subset  \Omega$ called the reduced boundary of $E$ in $\Omega$ such that 
$$
||\partial E||(\Omega\setminus \partial ^*E)=0\qquad\text{and}\qquad ||\partial E||(\Omega)=|\partial^* E|.
$$
If $F \subset \mathbb{R}^3$ is a smooth bounded set such that $\partial F$ and $\partial \Omega$ intersect transversely, then the reduced boundary of $\Omega\cap F$ in $\Omega$ is equal to $  \Omega\cap \partial F$.
\begin{lem}[{\cite[Theorem 4 in \S5.2]{EvansGariepy}}] \label{bv:lower}
	Let $\{E_k\}_{k=1}^\infty$ be a sequence of sets $E_k\subset \Omega$ of finite perimeter in $\Omega$  and suppose that $\mathbbm{1}_{E_k}\to \mathbbm{1}_E$ in $L_{loc}^1(\Omega)$ for some set $E\subset \Omega$ of finite perimeter in $\Omega$. Then 
	$$
	||\partial E||(\Omega)\leq \liminf_{k\to\infty} ||\partial E_k||(\Omega).
	$$
\end{lem}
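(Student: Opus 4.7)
The plan is to prove this as a direct application of the variational definition of perimeter, using duality against smooth compactly supported test fields. The lower semicontinuity of perimeter under $L^1_{\text{loc}}$-convergence is essentially a consequence of the fact that the perimeter is defined as a supremum of continuous linear functionals on $L^1(\Omega)$ restricted to characteristic functions, and any supremum of continuous functionals is lower semicontinuous.

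Concretely, I would argue as follows. Fix an arbitrary test field $\varphi \in C_c^\infty(\Omega, \mathbb{R}^n)$ with $|\varphi| \leq 1$. Since $\operatorname{div}\varphi \in C_c^\infty(\Omega)$ is bounded and compactly supported, and $\mathbbm{1}_{E_k} \to \mathbbm{1}_E$ in $L^1_{\text{loc}}(\Omega)$, the dominated convergence theorem gives
$$
\int_{E} \operatorname{div}\varphi = \lim_{k\to\infty}\int_{E_k}\operatorname{div}\varphi.
$$
For each $k$, the definition of the perimeter measure yields $\int_{E_k}\operatorname{div}\varphi \leq \|\partial E_k\|(\Omega)$, since $\varphi$ is admissible in the supremum defining $\|\partial E_k\|(\Omega)$. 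Combining these observations,
$$
\int_E \operatorname{div}\varphi \leq \liminf_{k\to\infty}\|\partial E_k\|(\Omega).
$$
Finally, taking the supremum of the left-hand side over all admissible $\varphi \in C_c^\infty(\Omega,\mathbb{R}^n)$ with $|\varphi|\leq 1$ yields $\|\partial E\|(\Omega) \leq \liminf_{k\to\infty}\|\partial E_k\|(\Omega)$, as desired.

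There is essentially no obstacle here: the only point requiring mild care is ensuring that the convergence $\int_{E_k}\operatorname{div}\varphi \to \int_E\operatorname{div}\varphi$ is valid, which follows from $L^1_{\text{loc}}$-convergence together with the fact that $\operatorname{div}\varphi$ is supported in a fixed compact subset of $\Omega$. The argument does not require any compactness result such as Lemma \ref{bv:compact}, nor does it need a priori bounds on $\|\partial E_k\|(\Omega)$; if the $\liminf$ is $+\infty$ the inequality is vacuous. This robustness reflects the general principle that any quantity defined as a supremum of functionals that are continuous in a given topology is automatically lower semicontinuous in that topology.
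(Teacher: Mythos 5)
Your proof is correct and is precisely the standard argument from the reference the paper cites for this lemma (Evans and Gariepy, Theorem 4 in \S 5.2), so the paper and your proposal take essentially the same route: duality against $C^\infty_c$ test fields, passing to the limit on a fixed compact support, and then taking the supremum. One cosmetic remark: invoking the dominated convergence theorem is slightly imprecise here, since $L^1_{\mathrm{loc}}$-convergence does not furnish almost-everywhere convergence without first passing to a subsequence; the cleaner step is the direct estimate $\bigl|\int_{E_k}\operatorname{div}\varphi - \int_E\operatorname{div}\varphi\bigr| \leq \|\operatorname{div}\varphi\|_{L^\infty}\int_{\operatorname{spt}\varphi}|\mathbbm{1}_{E_k} - \mathbbm{1}_E| \to 0$, which is in fact exactly the mechanism you correctly identify in your closing paragraph.
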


\begin{lem}[{\cite[Theorem 4 in \S5.2]{EvansGariepy}}] \label{bv:compact} Let $K\subset \mathbb{R}^3$ be  a compact set.
	Suppose that $\{E_k\}_{k=1}^\infty$ is a sequence of sets $E_k\subset K\cap \Omega$ of finite perimeter in $\Omega$  with
$$
\sup_{k\geq 1} ||\partial E_k||(\Omega)<\infty.
$$
Then there is a set $E\subset K\cap \Omega$ of finite perimeter in $\Omega$  such that, passing to a subsequence, $\mathbbm{1}_{E_k}\to \mathbbm{1}_E$ in $L^1(\Omega)$. 
\end{lem}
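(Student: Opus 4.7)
The plan is to reduce the assertion to the classical Rellich-type compactness theorem for $BV$ functions on a bounded Lipschitz domain. The key observation is that the hypothesis $E_k\subset K$ with $K$ compact provides the uniform $L^1$ bound $\|\mathbbm{1}_{E_k}\|_{L^1(\Omega)}=|E_k|\leq |K|$, which combined with the assumed uniform bound on $\|\partial E_k\|(\Omega)$ gives $\sup_k \|\mathbbm{1}_{E_k}\|_{BV(\Omega)}<\infty$.

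First, I would perform a localization to reduce to a bounded domain. Since $\partial \Omega$ is Lipschitz and $K$ is compact, there is a bounded open set $U\subset \Omega$ with Lipschitz boundary such that $K\cap \Omega\subset U$. Then every $E_k$ is contained in $U$ and the characteristic functions $\mathbbm{1}_{E_k}$ are uniformly bounded in $BV(U)$. Next, I would invoke the classical compact embedding $BV(U)\hookrightarrow L^1(U)$ for the bounded Lipschitz domain $U$ (this is the Evans--Gariepy statement cited) to extract a subsequence $\{\mathbbm{1}_{E_{k_j}}\}_{j=1}^\infty$ converging in $L^1(U)$ to some $f\in L^1(U)$. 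Passing to a further subsequence that converges almost everywhere, $f$ takes values in $\{0,1\}$ a.e., so $f=\mathbbm{1}_E$ for some measurable set $E\subset U$. Extending by zero outside $U$, we obtain $\mathbbm{1}_{E_{k_j}}\to \mathbbm{1}_E$ in $L^1(\Omega)$, and $E\subset K\cap \Omega$ after modification on a null set.

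Finally, the fact that $E$ has finite perimeter in $\Omega$ follows immediately from Lemma \ref{bv:lower}, which yields $\|\partial E\|(\Omega)\leq \liminf_j \|\partial E_{k_j}\|(\Omega)<\infty$. There is no genuine obstacle in this argument; the only mildly delicate point is the need to localize to a bounded subdomain since $\Omega$ itself may be unbounded (as in the application to $\Omega=M(S)$), and this is handled by the compactness of $K$ together with the Lipschitz regularity of $\partial \Omega$.
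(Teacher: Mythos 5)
The paper does not actually supply a proof of Lemma~\ref{bv:compact}; it is stated as a citation to \cite[Theorem 4 in \S5.2]{EvansGariepy}. What you have written is therefore a reconstruction of the standard argument that bridges the cited Rellich-type compactness for $BV$ on a bounded Lipschitz domain to the setting here, where $\Omega=M(S)$ is unbounded but the sets are uniformly confined to $K$. Your reduction is correct and is essentially the argument one would supply: the uniform $L^1$ bound $|E_k|\leq |K|$ together with the uniform perimeter bound yields a uniformly bounded sequence in $BV(U)$ on a suitable bounded localization $U$, the cited compact embedding produces an $L^1$-convergent subsequence, almost-everywhere convergence along a further subsequence forces the limit to be a $\{0,1\}$-valued function $\mathbbm{1}_E$ with $E\subset K\cap\Omega$ up to a null set, and Lemma~\ref{bv:lower} gives finite perimeter of $E$. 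Since the $E_k$ and $E$ all lie in $U$, $L^1(U)$-convergence upgrades for free to $L^1(\Omega)$-convergence.

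The one step you pass over quickly, and which deserves a sentence of justification, is the existence of the bounded open set $U\subset\Omega$ with Lipschitz boundary and $K\cap\Omega\subset U$. Note that such a $U$ cannot have closure contained in $\Omega$ when $K$ reaches $\partial\Omega$, so $\partial U$ must contain a piece of $\partial\Omega$ together with a ``cap'' inside $\Omega$; the Lipschitz regularity of $\partial U$ along the seam is then not automatic. The usual fix is to take $U=\Omega\cap B_R$ with $K\subset B_R$ and argue that for a generic choice of $R$ the sphere meets $\partial\Omega$ transversely (in the paper's application $\partial\Omega=S$ is smooth, so this is immediate). Alternatively, one can avoid the issue entirely: by Lemma~\ref{bv:trace1} applied with a cutoff $\varphi$ that equals a fixed unit vector on a neighborhood of $K$, the traces $\mathbbm{1}_{E_k}|_{\partial\Omega}$ are uniformly bounded in $L^1(\partial\Omega)$, so the zero extensions of $\mathbbm{1}_{E_k}$ to $\mathbb{R}^3$ are uniformly bounded in $BV(\mathbb{R}^3)$ and supported in $K$, and $BV$ compactness on $\mathbb{R}^3$ applies without any boundary regularity hypothesis. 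Either route closes the small gap, and with that caveat your proof is sound.
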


Each  set $E\subset \Omega$ of finite perimeter in $\Omega$ has a trace $\mathbbm{1}_E|_{\partial \Omega}\in L_{loc}^1(\partial \Omega)$ such that, for almost every $x\in \partial \Omega$, 
 \begin{align*} 
 	\mathbbm{1}_E|_{\partial \Omega}(x)=\lim_{r\searrow 0}\frac{|\{x'\in E:|x'-x|<r\}|}{|\{x'\in \Omega:|x'-x|<r\}|}; \end{align*}
 see \cite[Theorem 1 in \S5.3]{EvansGariepy} and \cite[Theorem 2 in \S5.3]{EvansGariepy}. Consequently,
if $F \subset \mathbb{R}^3$ is a smooth bounded set such that $\partial F$ and $\partial \Omega$ intersect transversely, then $\mathbbm{1}_{\Omega\cap F}|_{\partial \Omega}=\mathbbm{1}_{\partial \Omega\cap F}$ almost everywhere on $\partial \Omega$.  
  
\begin{lem}[{\cite[Theorem 1 in \S5.3]{EvansGariepy}}]  \label{bv:trace1} Let $E\subset \Omega$ be a bounded set of finite perimeter in $\Omega$. There holds $\mathbbm{1}_E|_{\partial \Omega}\in L^1(\partial \Omega)$. Moreover,  there holds   
	$$
	\int_{\partial \Omega} |\mathbbm{1}_E|_{\partial \Omega}|\,|\varphi|\leq \int_{\Omega} |\varphi|\,\mathrm{d}\,||\partial E||+\int_{E} |D\varphi|
	$$
	for every $\varphi \in C^\infty(\mathbb{R}^n,\mathbb{R}^n)$.
\end{lem}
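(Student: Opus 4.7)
The plan is to derive the inequality first for smooth sets via the classical divergence theorem and then extend to general sets of finite perimeter by approximation.

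\textbf{Smooth case.} Suppose $F\subset \mathbb{R}^n$ is a bounded smooth set with $\partial F$ meeting $\partial \Omega$ transversely. To avoid regularizing the outward normal on the merely Lipschitz surface $\partial \Omega$, I would localize via a partition of unity so that, after a bi-Lipschitz straightening, the boundary may be identified with $\{x_n=0\}$ and $\Omega$ with $\{x_n>0\}$. For a.e.\ $x'\in \mathbb{R}^{n-1}$, the slice $t\mapsto \mathbbm{1}_F(x',t)$ is a $BV$ function on $(0,\infty)$ with trace $\mathbbm{1}_F(x',0^+)$; one-dimensional integration by parts applied to the Lipschitz function $t\mapsto |\varphi(x',t)|$ then yields
\[
\mathbbm{1}_F(x',0^+)\,|\varphi(x',0)|\leq \int_0^\infty |\varphi(x',t)|\,\mathrm d|D_t\mathbbm{1}_F(x',\cdot)|(t)+\int_0^\infty \mathbbm{1}_F(x',t)\,|D\varphi(x',t)|\,\mathrm dt,
\]
using $|\partial_t|\varphi||\leq |D\varphi|$ a.e. Integrating over $x'\in \mathbb{R}^{n-1}$ and applying Fubini to the perimeter measure produces the desired inequality for smooth $F$.

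\textbf{Approximation.} For general $E\subset \Omega$ of finite perimeter, I would use the standard approximation theorem for $BV$ functions to construct smooth sets $F_k$ whose boundaries meet $\partial \Omega$ transversely such that $\mathbbm{1}_{F_k\cap \Omega}\to \mathbbm{1}_E$ strictly in $BV(\Omega)$, i.e.\ $L^1$ convergence together with convergence of total variations. Under strict $BV$-convergence, both $||\partial (F_k\cap \Omega)||\to ||\partial E||$ weakly$^*$ on $\Omega$ and the boundary traces $\mathbbm{1}_{\partial \Omega\cap F_k}\to \mathbbm{1}_E|_{\partial \Omega}$ in $L^1(\partial \Omega)$. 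Applying the smooth-case inequality to each $F_k$ and passing to the limit, the LHS converges to $\int_{\partial \Omega}\mathbbm{1}_E|_{\partial \Omega}|\varphi|$, the perimeter-type term to $\int_\Omega |\varphi|\,\mathrm d||\partial E||$, and the interior term $\int_{F_k\cap \Omega}|D\varphi|$ to $\int_E|D\varphi|$ by dominated convergence.

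The main obstacle is arranging the approximation so that $L^1(\partial \Omega)$-convergence of the boundary traces actually goes through in the Lipschitz setting of $\partial \Omega$, which requires combining interior mollification with localization to Lipschitz graph charts near the boundary. Once this is set up, the one-dimensional slicing reduction in the smooth case and standard $BV$-compactness complete the argument; this is essentially the proof strategy of \cite{EvansGariepy}.
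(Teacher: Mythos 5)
Your slicing strategy is the right one in spirit (it is essentially how Evans--Gariepy prove the existence of the trace operator), and the one-dimensional integration by parts step is correct. The 1D identity you quote follows from the fundamental theorem for $BV$ functions on a half-line with $g\ge 0$, $\psi\ge 0$, $g$ compactly supported, and $|\partial_t|\varphi||\le |D\varphi|$ a.e.\ is fine. The approximation step at the end also matches the paper's own tools (Lemma~\ref{bv:perimeterapprox} and Lemma~\ref{bv:trace2}).

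The genuine gap is in the ``straighten and slice'' reduction: the bi-Lipschitz flattening changes \emph{both} measures in the estimate, and your sketch does not track those Jacobians. Concretely, if $\Omega\cap U=\{x_n>\gamma(x')\}$ with $\gamma$ Lipschitz, then vertical slicing gives on the left
$$
\int \mathbbm{1}_E(x',\gamma(x')^+)\,|\varphi(x',\gamma(x'))|\,\mathrm{d}x' \;=\; \int_{\partial\Omega\cap U}\mathbbm{1}_E|_{\partial\Omega}\,|\varphi|\,\frac{1}{\sqrt{1+|\nabla\gamma|^2}}\,\mathrm{d}\mathcal{H}^{n-1},
$$
so the boundary integral you produce is weighted by $(1+|\nabla\gamma|^2)^{-1/2}$, while on the right the slicing only sees the $e_n$-component $|D_n\mathbbm{1}_E|\le \|\partial E\|$. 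Moving the weight over, the estimate you obtain is
$\int_{\partial\Omega}\mathbbm{1}_E|_{\partial\Omega}|\varphi|\le \sqrt{1+\operatorname{Lip}(\gamma)^2}\,\big(\int_\Omega|\varphi|\,\mathrm{d}\|\partial E\|+\int_E|D\varphi|\big)$, which has a constant strictly larger than~$1$. This is not a removable loss: the inequality with constant~$1$ is \emph{false} for merely Lipschitz $\partial\Omega$. For example, in $n=2$ take $\Omega$ locally the wedge $\{y>|x|\}$ and $E=\{(x,y)\in\Omega: y<1\}$; then $\|\partial E\|(\Omega)=2$ while $\mathcal{H}^1(\partial^*E\cap\partial\Omega)=2\sqrt{2}$, so a constant vector field $\varphi$ already violates the claimed bound. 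The lemma as used in the paper only needs to hold when $\partial\Omega=S$ is smooth, and there the constant~$1$ is correct, but to obtain it your argument has to be refined: slice along directions that are (locally, via a sufficiently fine partition of unity) close to the outward normal $\nu_\Omega$ so that the surface Jacobian factor $|\nu_\Omega\cdot\xi|$ is within $\varepsilon$ of~$1$, or alternatively use a calibration by a smooth extension of $\nu_\Omega$ supported in a collar around $\partial\Omega$ and push the collar width to~$0$. Without one of these refinements, your proof establishes the $BV$ trace inequality with a Lipschitz-dependent constant rather than the sharp constant~$1$ that Lemma~\ref{freelowersemi} relies on when it integrates $(1-\eta_\delta)$ against the perimeter measures.
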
 
\begin{lem}[{\cite[Theorem 1 in \S5.3]{EvansGariepy}}] \label{bv:trace2} Let $K\subset \mathbb{R}^3$ be a compact set. Suppose that $\{E_k\}_{k=1}^\infty$ is a sequence of sets $E_k\subset K\cap \Omega$ of finite perimeter in $\Omega$  such that 
	$$
\lim_{k\to\infty} \int_{\Omega}|\mathbbm{1}_{E_k}-\mathbbm{1}_E|=0\qquad\text{and}\qquad 	\lim_{k\to\infty}||\partial E_k||(\Omega)=||\partial E||(\Omega) 
	$$
	for some set $E\subset K\cap  \Omega$ of finite perimeter in $\Omega$. Then $\mathbbm{1}_{E_k} |_{\partial \Omega}\to \mathbbm{1}_{E}|_{\partial \Omega}$ in $L^1(\partial \Omega)$. 
\end{lem}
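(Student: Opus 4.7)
The plan is to derive Lemma \ref{bv:trace2} by applying Lemma \ref{bv:trace1} to the symmetric differences $F_k = E_k \triangle E$ and then upgrading the natural weak-$*$ convergence of the perimeter measures to convergence without mass loss at the boundary via the strict-convergence hypothesis. A key preliminary observation is the pointwise almost-everywhere bound
$$\bigl|\mathbbm{1}_{E_k}|_{\partial \Omega}(x) - \mathbbm{1}_E|_{\partial \Omega}(x)\bigr| \leq \mathbbm{1}_{F_k}|_{\partial \Omega}(x)\qquad\text{for a.e.~}x\in \partial \Omega,$$
which follows immediately from the Lebesgue-point definition of the trace once one notes that $\bigl||E_k\cap B_r(x)| - |E\cap B_r(x)|\bigr|\leq |F_k\cap B_r(x)|$ and lets $r\searrow 0$.

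I would then choose cut-off functions $\eta_\delta \in C^\infty(\mathbb{R}^3)$ with $0 \leq \eta_\delta \leq 1$, $\eta_\delta = 1$ in a neighborhood of $\partial \Omega$, $\operatorname{spt}\eta_\delta \subset \{x : \operatorname{dist}(x,\partial \Omega) \leq \delta\}$, and $|D\eta_\delta| \leq 2\,\delta^{-1}$, and apply Lemma \ref{bv:trace1} to $F_k$, which has finite perimeter with $||\partial F_k||(\Omega) \leq ||\partial E_k||(\Omega) + ||\partial E||(\Omega)$ by Lemma \ref{intersection and union}. This yields
$$\int_{\partial \Omega} \bigl|\mathbbm{1}_{E_k}|_{\partial \Omega} - \mathbbm{1}_E|_{\partial \Omega}\bigr| \leq \int_\Omega \eta_\delta \, d||\partial F_k|| + \frac{2}{\delta} \int_\Omega |\mathbbm{1}_{E_k} - \mathbbm{1}_E|.$$
For fixed $\delta$, the second term vanishes as $k\to\infty$ by hypothesis. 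The first term I would handle by showing that the strict-convergence assumption upgrades the $L^1$-convergence of the indicators to narrow convergence of the total variation measures $||\partial E_k||$ on $\bar \Omega$: weak-$*$ convergence on $C_c(\Omega)$ follows via the variational definition of the perimeter from the $L^1$-convergence of indicators, and the convergence of total masses combined with Lemma \ref{bv:lower} applied to suitable open subsets then precludes concentration of mass at $\partial \Omega$ in the limit. Hence $\int_\Omega \eta_\delta \, d||\partial F_k|| \leq \int_\Omega \eta_\delta \, d||\partial E_k|| + \int_\Omega \eta_\delta \, d||\partial E||$ converges to $2\int_\Omega \eta_\delta \, d||\partial E||$ as $k\to\infty$.

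Letting $\delta \searrow 0$, the dominated convergence theorem applied to the finite Radon measure $||\partial E||$ on $\Omega$ gives $\int_\Omega \eta_\delta \, d||\partial E|| \to 0$, since $\eta_\delta \to 0$ pointwise on the open set $\Omega$. The main technical obstacle I anticipate is the upgrade from weak-$*$ to narrow convergence of the perimeter measures: preventing concentration of mass at $\partial \Omega$ in the limit requires the strict-convergence hypothesis in an essential way, and is essentially a reflection and lower-semicontinuity argument analogous to the one carried out in the proof of Lemma \ref{freelowersemi}.
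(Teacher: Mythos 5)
The proposal is correct and supplies a self-contained proof of a statement the paper simply cites to Evans--Gariepy without reproducing an argument. Your strategy is closely modeled on the paper's own proof of Lemma \ref{freelowersemi}: the pointwise trace bound for the symmetric difference $F_k = E_k\triangle E$, the boundary cut-off $\eta_\delta$, the application of Lemma \ref{bv:trace1}, and the final $\delta\searrow 0$ pass via dominated convergence are all the right ingredients. The genuinely new input relative to Lemma \ref{freelowersemi} is the use of the strict-convergence hypothesis to obtain a $k$-uniform bound on $\int_\Omega\eta_\delta\,\mathrm{d}\,||\partial E_k||$, and you correctly identify this as the crux.

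Two points should be tightened. First, Lemma \ref{intersection and union} as stated only asserts that $E\cap F$ and $E\cup F$ have finite perimeter; the \emph{measure} inequality $||\partial F_k||\leq||\partial E_k||+||\partial E||$, which you later need in order to pull $\eta_\delta$ inside the perimeter integral, has to be quoted separately (it follows from the inclusion $\partial^*(E_k\triangle E)\subset\partial^*E_k\cup\partial^*E$ up to an $\mathcal{H}^2$-null set). Second, the appeal to narrow convergence of $||\partial E_k||$ and to "precluding concentration of mass at $\partial\Omega$" is more machinery than is needed and, as phrased, is not quite a complete argument: nothing in the proposal actually establishes weak-$*$ convergence of the scalar measures $||\partial E_k||$, only of the vector measures $D\mathbbm{1}_{E_k}$. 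A cleaner and elementary route is to write $\int_\Omega\eta_\delta\,\mathrm{d}\,||\partial E_k|| = ||\partial E_k||(\Omega) - \int_\Omega(1-\eta_\delta)\,\mathrm{d}\,||\partial E_k||$, use the hypothesis $||\partial E_k||(\Omega)\to||\partial E||(\Omega)$ on the first term, and on the second apply a layer-cake/Fatou argument together with Lemma \ref{bv:lower} on the open sets $\{1-\eta_\delta > s\}$, $s\in(0,1)$, to obtain $\liminf_{k}\int_\Omega(1-\eta_\delta)\,\mathrm{d}\,||\partial E_k||\geq\int_\Omega(1-\eta_\delta)\,\mathrm{d}\,||\partial E||$. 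Combining these gives $\limsup_{k}\int_\Omega\eta_\delta\,\mathrm{d}\,||\partial E_k||\leq\int_\Omega\eta_\delta\,\mathrm{d}\,||\partial E||$, and this one-sided bound is all the remainder of your proof requires.
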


\begin{lem}[{\cite[Lemma 12.22]{Maggi}}] \label{intersection and union}
Let $E,\,F\subset \Omega$ be sets of finite perimeter in $\Omega$. Then $E\cup F$ and $E\cap F$ are sets of finite perimeter in $\Omega$. 
\end{lem}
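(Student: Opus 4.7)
The plan is to reduce to the case of smooth sets via approximation and then pass to the limit using lower semicontinuity of the perimeter. The paper already refers elsewhere to an approximation result (Lemma \ref{bv:perimeterapprox}); the strategy will be to invoke that tool separately for $E$ and $F$ to produce sequences $\{E_k\}_{k=1}^\infty$ and $\{F_k\}_{k=1}^\infty$ of smooth bounded subsets of $\mathbb{R}^3$ such that
$$\mathbbm{1}_{\Omega\cap E_k}\to \mathbbm{1}_E,\qquad \mathbbm{1}_{\Omega\cap F_k}\to \mathbbm{1}_F\quad\text{in }L^1(\Omega),$$
and such that $||\partial(\Omega\cap E_k)||(\Omega)\to ||\partial E||(\Omega)$ and $||\partial(\Omega\cap F_k)||(\Omega)\to ||\partial F||(\Omega)$.

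After a small perturbation (by Sard's theorem applied to the signed distance functions of $\partial E_k$, $\partial F_k$ and $\partial \Omega$), I may arrange that the hypersurfaces $\partial E_k$, $\partial F_k$, and $\partial \Omega$ intersect pairwise transversely. Then $E_k\cup F_k$ and $E_k\cap F_k$ are bounded Lipschitz sets. Decomposing
$$A_k=\Omega\cap(\partial E_k\setminus \bar F_k),\quad B_k=\Omega\cap \partial E_k\cap F_k,\quad C_k=\Omega\cap \partial F_k\cap E_k,\quad D_k=\Omega\cap(\partial F_k\setminus \bar E_k),$$
a direct verification from transversality yields, up to $\mathcal{H}^2$-null sets,
$$\partial(\Omega\cap(E_k\cup F_k))\cap \Omega=A_k\cup D_k\quad\text{and}\quad \partial(\Omega\cap(E_k\cap F_k))\cap \Omega=B_k\cup C_k,$$
so that
$$||\partial(\Omega\cap(E_k\cup F_k))||(\Omega)+||\partial(\Omega\cap(E_k\cap F_k))||(\Omega)=||\partial(\Omega\cap E_k)||(\Omega)+||\partial(\Omega\cap F_k)||(\Omega).$$

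Finally, note that $\mathbbm{1}_{\Omega\cap(E_k\cup F_k)}\to \mathbbm{1}_{E\cup F}$ and $\mathbbm{1}_{\Omega\cap(E_k\cap F_k)}\to \mathbbm{1}_{E\cap F}$ in $L^1(\Omega)$. Applying Lemma \ref{bv:lower} to each sequence and adding yields
$$||\partial(E\cup F)||(\Omega)+||\partial(E\cap F)||(\Omega)\leq ||\partial E||(\Omega)+||\partial F||(\Omega)<\infty,$$
which gives the desired conclusion.

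The main obstacle is the bookkeeping in the transversality step: ensuring that small perturbations of $E_k$ and $F_k$ can be arranged so that all three boundaries meet pairwise transversely while preserving the $L^1$ and perimeter convergence. This is standard but requires invoking Sard's theorem and a diagonal argument to select the perturbations consistently. Everything else is routine lower semicontinuity.
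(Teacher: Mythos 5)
The paper does not prove this lemma; it cites \cite[Lemma 12.22]{Maggi}. Your approach is a legitimate one in spirit, but there are two gaps in the proposal as written. First, Lemma \ref{bv:perimeterapprox} applies only to \emph{bounded} sets, whereas the statement allows arbitrary $E,\,F\subset\Omega$ of finite perimeter, which need not be bounded when $\Omega$ is unbounded; as written you only cover the bounded case, and reducing to it by intersecting with large balls already uses a mild form of the result being proved. Second, the transversality perturbation is glossed over: transversality of $\partial E_k$ and $\partial F_k$ with $\partial\Omega$ is not even well defined when $\partial\Omega$ is only Lipschitz (though it is also unnecessary, since your decomposition uses only transversality of $\partial E_k$ with $\partial F_k$); more seriously, a small perturbation of $E_k$, say by a translation $t$, changes $||\partial(\Omega\cap E_k)||(\Omega)=|\Omega\cap\partial E_k|$ to $|(\Omega-t)\cap\partial E_k|$, and these can differ by roughly $|\partial\Omega\cap\partial E_k|$, a quantity that Lemma \ref{bv:perimeterapprox} does not control. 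So a blind Sard-type perturbation can destroy the perimeter convergence you need; fixing this requires either re-entering the proof of the approximation theorem to choose $E_k$ as a generic sub-level set of a mollification, or a separate argument that almost every small perturbation preserves the convergence.

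The route behind \cite[Lemma 12.22]{Maggi} sidesteps both issues by mollifying the indicator functions directly. With $u_\varepsilon=\mathbbm{1}_E*\rho_\varepsilon$ and $v_\varepsilon=\mathbbm{1}_F*\rho_\varepsilon$ for a standard mollifier $\rho_\varepsilon$, the pointwise identity $|\nabla\max(u,v)|+|\nabla\min(u,v)|=|\nabla u|+|\nabla v|$ almost everywhere for locally Lipschitz $u,\,v$ gives
\begin{align*}
\int_\Omega|\nabla\max(u_\varepsilon,v_\varepsilon)|+\int_\Omega|\nabla\min(u_\varepsilon,v_\varepsilon)|=\int_\Omega|\nabla u_\varepsilon|+\int_\Omega|\nabla v_\varepsilon|,
\end{align*}
and since $\max(u_\varepsilon,v_\varepsilon)\to\mathbbm{1}_{E\cup F}$ and $\min(u_\varepsilon,v_\varepsilon)\to\mathbbm{1}_{E\cap F}$ in $L^1_{loc}(\Omega)$, lower semicontinuity yields the conclusion with no boundedness and no transversality assumptions. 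One further minor point: Lemma \ref{bv:lower} as stated already presupposes that the limit set has finite perimeter, so what you really need is the ordinary lower semicontinuity of the perimeter supremum under $L^1_{loc}$ convergence, which holds without that hypothesis and follows in one line from the definition of $||\partial E||(\Omega)$.
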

\begin{lem}[{\cite[Theorem 13.8 and Remark 13.9]{Maggi}}]\label{bv:perimeterapprox}
		Let $E\subset \Omega$ be a bounded set of finite perimeter in $\Omega$. There exists a compact set $K\subset \mathbb{R}^3$ and a  sequence $\{E_k\}_{k=1}^\infty$ of smooth open sets $E_k\subset K$ such that $\mathbbm{1}_{E_k\cap \Omega}\to  \mathbbm{1}_E$ in $L^1(\Omega)$ and 
	$$
	||\partial(\Omega\cap E_k)||(\Omega)\to||\partial E||(\Omega).
	$$ 
\end{lem}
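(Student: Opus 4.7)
The plan is to approximate $\mathbbm{1}_E$ by mollification and extract smooth super-level sets as the desired sequence. Since $E \subset \Omega$ is bounded, I would first extend $\mathbbm{1}_E$ by zero to obtain a compactly supported function $v$ on $\mathbb{R}^3$. Let $\rho_\varepsilon$ be a standard nonnegative radial mollifier supported in the ball of radius $\varepsilon$ and set $u_\varepsilon = v * \rho_\varepsilon \in C^\infty_c(\mathbb{R}^3)$, with $0 \leq u_\varepsilon \leq 1$ and all $u_\varepsilon$ supported inside a single compact set $K \subset \mathbb{R}^3$ depending only on $E$.

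Next I would use the standard smooth BV approximation inside $\Omega$: one can show $u_\varepsilon \to \mathbbm{1}_E$ in $L^1(\mathbb{R}^3)$ and $\int_\Omega |\nabla u_\varepsilon|\,dx \to ||\partial E||(\Omega)$ as $\varepsilon \searrow 0$, by combining the classical relaxation characterization of $||\partial E||(\Omega)$ with a partition-of-unity argument that exploits the Lipschitz structure of $\partial\Omega$ to flatten the boundary locally before mollifying. Then by the coarea formula,
\[
\int_\Omega |\nabla u_\varepsilon|\,dx = \int_0^1 \mathcal{H}^2(\Omega \cap \{u_\varepsilon = t\})\,dt.
\]
For almost every $t \in (0,1)$ the hypersurface $\{u_\varepsilon = t\}$ is smooth by Sard's theorem, and $\partial\Omega$ intersects it transversely for almost every $t$. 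By the mean value theorem I can select $t_\varepsilon \in (1/3,2/3)$ with these regularity properties and satisfying $\mathcal{H}^2(\Omega \cap \{u_\varepsilon = t_\varepsilon\}) \leq 3\int_\Omega |\nabla u_\varepsilon|\,dx$.

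Setting $E_k = \{u_{\varepsilon_k} > t_{\varepsilon_k}\}$ for a suitable sequence $\varepsilon_k \searrow 0$, the sets $E_k \subset K$ are smooth and open. Chebyshev's inequality gives $|E_k \triangle E| \leq 3\int_{\mathbb{R}^3}|u_{\varepsilon_k} - \mathbbm{1}_E|$, so $\mathbbm{1}_{\Omega \cap E_k} \to \mathbbm{1}_E$ in $L^1(\Omega)$. Combining the perimeter bound from the coarea selection with lower semicontinuity of perimeter (Lemma \ref{bv:lower}) then yields $||\partial(\Omega \cap E_k)||(\Omega) \to ||\partial E||(\Omega)$, as required.

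The main obstacle is the behavior near $\partial\Omega$: naive mollification of the zero extension $v$ smears the characteristic function across $\partial\Omega$ and can create spurious gradient mass inside $\Omega$, so a direct bound would only give $\limsup_k ||\partial(\Omega \cap E_k)||(\Omega) \leq ||Dv||(\mathbb{R}^3)$ rather than the sharp intrinsic bound $||\partial E||(\Omega)$. Overcoming this requires the relative-mollification refinement alluded to above, using coordinates that flatten $\partial\Omega$ and a shift in the mollification direction so that the smoothing does not pull mass from outside $\Omega$ into $\Omega$; this is precisely the content of Theorem 13.8 in Maggi's book, whose statement we quote here.
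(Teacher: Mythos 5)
The paper gives no proof of this lemma; it is quoted wholesale from Maggi's book, and your final paragraph correctly identifies the genuinely delicate point --- arranging the mollification relative to the Lipschitz boundary so that $\int_\Omega|\nabla u_\varepsilon|\to \|\partial E\|(\Omega)$ rather than only $\limsup_\varepsilon\int_\Omega|\nabla u_\varepsilon|\le |Dv|(\mathbb{R}^3)$ --- as being the content of Theorem~13.8 in Maggi. To that extent your outline tracks the source. However, even granting that hard step, your level-set selection contains a concrete gap that prevents the conclusion.

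You pick $t_\varepsilon\in(1/3,2/3)$ with $\mathcal{H}^2(\Omega\cap\{u_\varepsilon=t_\varepsilon\})\le 3\int_\Omega|\nabla u_\varepsilon|$ and then assert that this, combined with lower semicontinuity, "yields $\|\partial(\Omega\cap E_k)\|(\Omega)\to\|\partial E\|(\Omega)$." What you actually obtain is
\[
\|\partial E\|(\Omega)\le \liminf_k\|\partial(\Omega\cap E_k)\|(\Omega)\le \limsup_k\|\partial(\Omega\cap E_k)\|(\Omega)\le 3\,\|\partial E\|(\Omega),
\]
and the two bounds do not pinch. The factor of three is an artifact of averaging over an interval of length one third, and you cannot simply shrink the interval, because $t_\varepsilon$ must stay away from $0$ and $1$ for the Chebyshev bound $|E_k\triangle E|\le 3\int|u_{\varepsilon_k}-\mathbbm{1}_E|$ to hold.

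The correct selection uses a Fatou argument rather than the crude mean value bound. For every fixed $t\in(0,1)$ one has $\{u_\varepsilon>t\}\to E$ in $L^1(\mathbb{R}^3)$, so lower semicontinuity gives $g(t):=\liminf_\varepsilon\mathcal{H}^2(\Omega\cap\{u_\varepsilon=t\})\ge\|\partial E\|(\Omega)$. Applying Fatou to the coarea identity $\int_0^1\mathcal{H}^2(\Omega\cap\{u_\varepsilon=t\})\,dt=\int_\Omega|\nabla u_\varepsilon|\to\|\partial E\|(\Omega)$ yields $\int_0^1 g(t)\,dt\le\|\partial E\|(\Omega)$, which combined with $g\ge\|\partial E\|(\Omega)$ forces $g(t)=\|\partial E\|(\Omega)$ for almost every $t\in(0,1)$. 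Fix one such regular value $t$ (regular for $u_\varepsilon$ by Sard, for all $\varepsilon$ along a sequence) and pass to a subsequence $\varepsilon_k$ attaining the $\liminf$; then $E_k=\{u_{\varepsilon_k}>t\}$ achieves the required perimeter convergence. This is what Maggi's argument does, and your mean-value shortcut, as written, does not recover it.
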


\end{appendices} 
\begin{bibdiv}
	\begin{biblist}
		
		\bib{AlmarazBarbosaDeLima}{article}{
			author={Almaraz, S\'ergio},
			author={Barbosa, Ezequiel},
			author={de~Lima, Levi},
			title={A positive mass theorem for asymptotically flat manifolds with a
				non-compact boundary},
			date={2016},
			ISSN={1019-8385,1944-9992},
			journal={Comm. Anal. Geom.},
			volume={24},
			number={4},
			pages={673\ndash 715},
			url={https://doi.org/10.4310/CAG.2016.v24.n4.a1},
			review={\MR{3570413}},
		}
		
		\bib{Almgren}{article}{
			author={Almgren, Frederick},
			title={Optimal isoperimetric inequalities},
			date={1986},
			ISSN={0022-2518,1943-5258},
			journal={Indiana Univ. Math. J.},
			volume={35},
			number={3},
			pages={451\ndash 547},
			url={https://doi.org/10.1512/iumj.1986.35.35028},
			review={\MR{855173}},
		}
		
		\bib{ArnowittDeserMisner}{article}{
			author={Arnowitt, Richard},
			author={Deser, Stanley},
			author={Misner, Charles},
			title={Coordinate invariance and energy expressions in general
				relativity},
			date={1961},
			ISSN={0031-899X},
			journal={Phys. Rev. (2)},
			volume={122},
			pages={997\ndash 1006},
			review={\MR{127946}},
		}
		
		\bib{aronszajn}{article}{
			author={Aronszajn, Nachman},
			title={A unique continuation theorem for solutions of elliptic partial
				differential equations or inequalities of second order},
			date={1957},
			ISSN={0021-7824},
			journal={J. Math. Pures Appl. (9)},
			volume={36},
			pages={235\ndash 249},
			review={\MR{92067}},
		}
		
		\bib{BavardPansu}{article}{
			author={Bavard, Christophe},
			author={Pansu, Pierre},
			title={Sur le volume minimal de {${\mathbf{R}}^2$}},
			date={1986},
			ISSN={0012-9593},
			journal={Ann. Sci. \'Ecole Norm. Sup. (4)},
			volume={19},
			number={4},
			pages={479\ndash 490},
			url={http://www.numdam.org/item?id=ASENS_1986_4_19_4_479_0},
			review={\MR{875084}},
		}
		
		\bib{Bonnet}{article}{
			author={Bonnet, Ossian},
			title={M{\'e}moire sur l'emploi d'un nouveau syst{\`e}me de variables
				dans l'{\'e}tude des propri{\'e}t{\'e}s des surfaces courbes},
			date={1860},
			journal={J. Math. Pures Appl.},
			volume={5},
			number={2},
			pages={153\ndash 266},
		}
		
		\bib{bray}{book}{
			author={Bray, Hubert},
			title={The {P}enrose inequality in general relativity and volume
				comparison theorems involving scalar curvature},
			publisher={ProQuest LLC, Ann Arbor, MI},
			date={1997},
			ISBN={978-0591-60594-5},
			url={http://gateway.proquest.com/openurl?url_ver=Z39.88-2004&rft_val_fmt=info:ofi/fmt:kev:mtx:dissertation&res_dat=xri:pqdiss&rft_dat=xri:pqdiss:9810085},
			note={Thesis (Ph.D.)--Stanford University},
			review={\MR{2696584}},
		}
		
		\bib{Bray2}{article}{
			author={Bray, Hubert},
			title={Proof of the {R}iemannian {P}enrose inequality using the positive
				mass theorem},
			date={2001},
			ISSN={0022-040X,1945-743X},
			journal={J. Differential Geom.},
			volume={59},
			number={2},
			pages={177\ndash 267},
			url={http://projecteuclid.org/euclid.jdg/1090349428},
			review={\MR{1908823}},
		}
		
		\bib{BrayNeves}{article}{
			author={Bray, Hubert},
			author={Neves, Andr\'e},
			title={Classification of prime 3-manifolds with {Y}amabe invariant
				greater than {$\mathbb{RP}^3$}},
			date={2004},
			ISSN={0003-486X,1939-8980},
			journal={Ann. of Math. (2)},
			volume={159},
			number={1},
			pages={407\ndash 424},
			url={https://doi.org/10.4007/annals.2004.159.407},
			review={\MR{2052359}},
		}
		
		\bib{Brendle}{article}{
			author={Brendle, Simon},
			title={The isoperimetric inequality for a minimal submanifold in
				{E}uclidean space},
			date={2021},
			ISSN={0894-0347,1088-6834},
			journal={J. Amer. Math. Soc.},
			volume={34},
			number={2},
			pages={595\ndash 603},
			url={https://doi.org/10.1090/jams/969},
			review={\MR{4280868}},
		}
		
		\bib{BrendleHungWang}{article}{
			author={Brendle, Simon},
			author={Hung, Pei-Ken},
			author={Wang, Mu-Tao},
			title={A {M}inkowski inequality for hypersurfaces in the anti--de
				{S}itter--{S}chwarzschild manifold},
			date={2016},
			ISSN={0010-3640,1097-0312},
			journal={Comm. Pure Appl. Math.},
			volume={69},
			number={1},
			pages={124\ndash 144},
			url={https://doi.org/10.1002/cpa.21556},
			review={\MR{3433631}},
		}
		
		\bib{Carleman}{article}{
			author={Carleman, Torsten},
			title={Zur {T}heorie der {M}inimalfl\"achen},
			date={1921},
			ISSN={0025-5874,1432-1823},
			journal={Math. Z.},
			volume={9},
			number={1-2},
			pages={154\ndash 160},
			url={https://doi.org/10.1007/BF01378342},
			review={\MR{1544458}},
		}
		
		\bib{Carlotto}{article}{
			author={Carlotto, Alessandro},
			title={Rigidity of stable minimal hypersurfaces in asymptotically flat
				spaces},
			date={2016},
			ISSN={0944-2669,1432-0835},
			journal={Calc. Var. Partial Differential Equations},
			volume={55},
			number={3},
			pages={Art. 54, 20},
			url={https://doi.org/10.1007/s00526-016-0989-4},
			review={\MR{3500292}},
		}
		
		\bib{OtisEdelenLi}{article}{
			author={Chodosh, Otis},
			author={Edelen, Nick},
			author={Li, Chao},
			title={Improved regularity for minimizing capillary hypersurfaces},
			date={2024},
			journal={preprint, arXiv:2401.08028},
		}
		
		\bib{choischoen}{article}{
			author={Choi, Hyeong},
			author={Schoen, Richard},
			title={The space of minimal embeddings of a surface into a
				three-dimensional manifold of positive {R}icci curvature},
			date={1985},
			ISSN={0020-9910,1432-1297},
			journal={Invent. Math.},
			volume={81},
			number={3},
			pages={387\ndash 394},
			url={https://doi.org/10.1007/BF01388577},
			review={\MR{807063}},
		}
		
		\bib{DePhilippisMaggi}{article}{
			author={De~Philippis, Guido},
			author={Maggi, Francesco},
			title={Regularity of free boundaries in anisotropic capillarity problems
				and the validity of {Y}oung's law},
			date={2015},
			ISSN={0003-9527,1432-0673},
			journal={Arch. Ration. Mech. Anal.},
			volume={216},
			number={2},
			pages={473\ndash 568},
			url={https://doi.org/10.1007/s00205-014-0813-2},
			review={\MR{3317808}},
		}
		
		\bib{EichmairKoerber2}{article}{
			author={Eichmair, Michael},
			author={Koerber, Thomas},
			title={Doubling of asymptotically flat half-spaces and the {R}iemannian
				{P}enrose inequality},
			date={2023},
			ISSN={0010-3616,1432-0916},
			journal={Comm. Math. Phys.},
			volume={400},
			number={3},
			pages={1823\ndash 1860},
			url={https://doi.org/10.1007/s00220-023-04635-7},
			review={\MR{4595610}},
		}
		
		\bib{Evans}{book}{
			author={Evans, Lawrence},
			title={Partial differential equations},
			edition={Second},
			series={Graduate Studies in Mathematics},
			publisher={American Mathematical Society, Providence, RI},
			date={2010},
			volume={19},
			ISBN={978-0-8218-4974-3},
			url={https://doi.org/10.1090/gsm/019},
			review={\MR{2597943}},
		}
		
		\bib{EvansGariepy}{book}{
			author={Evans, Lawrence},
			author={Gariepy, Ronald},
			title={Measure theory and fine properties of functions},
			series={Studies in Advanced Mathematics},
			publisher={CRC Press, Boca Raton, FL},
			date={1992},
			ISBN={0-8493-7157-0},
			review={\MR{1158660}},
		}
		
		\bib{Fang}{book}{
			author={Fang, Yi},
			title={Lectures on minimal surfaces in {${\bf R}^3$}},
			series={Proceedings of the Centre for Mathematics and its Applications,
				Australian National University},
			publisher={Australian National University, Centre for Mathematics and its
				Applications, Canberra},
			date={1996},
			volume={35},
			ISBN={0-7315-2443-8},
			review={\MR{1401344}},
		}
		
		\bib{FraserLi}{article}{
			author={Fraser, Ailana},
			author={Li, Martin},
			title={Compactness of the space of embedded minimal surfaces with free
				boundary in three-manifolds with nonnegative {R}icci curvature and convex
				boundary},
			date={2014},
			ISSN={0022-040X,1945-743X},
			journal={J. Differential Geom.},
			volume={96},
			number={2},
			pages={183\ndash 200},
			url={http://projecteuclid.org/euclid.jdg/1393424916},
			review={\MR{3178438}},
		}
		
		\bib{Galloway}{article}{
			author={Galloway, Gregory},
			title={Rigidity of marginally trapped surfaces and the topology of black
				holes},
			date={2008},
			ISSN={1019-8385,1944-9992},
			journal={Comm. Anal. Geom.},
			volume={16},
			number={1},
			pages={217\ndash 229},
			url={http://projecteuclid.org/euclid.cag/1213020543},
			review={\MR{2411473}},
		}
		
		\bib{Geroch}{article}{
			author={Geroch, Robert},
			title={Energy extraction},
			date={1973},
			journal={Annals of the New York Academy of Sciences},
			volume={224},
			number={1},
			pages={108\ndash 117},
			url={https://nyaspubs.onlinelibrary.wiley.com/doi/abs/10.1111/j.1749-6632.1973.tb41445.x},
		}
		
		\bib{GilbargTrudinger}{book}{
			author={Gilbarg, David},
			author={Trudinger, Neil},
			title={Elliptic partial differential equations of second order},
			series={Classics in Mathematics},
			publisher={Springer-Verlag, Berlin},
			date={2001},
			ISBN={3-540-41160-7},
			review={\MR{1814364}},
		}
		
		\bib{GuanLi}{article}{
			author={Guan, Pengfei},
			author={Li, Junfang},
			title={The quermassintegral inequalities for {$k$}-convex starshaped
				domains},
			date={2009},
			ISSN={0001-8708,1090-2082},
			journal={Adv. Math.},
			volume={221},
			number={5},
			pages={1725\ndash 1732},
			url={https://doi.org/10.1016/j.aim.2009.03.005},
			review={\MR{2522433}},
		}
		
		\bib{HoffmanKarcher}{incollection}{
			author={Hoffman, David},
			author={Karcher, Hermann},
			title={Complete embedded minimal surfaces of finite total curvature},
			date={1997},
			booktitle={Geometry, {V}},
			series={Encyclopaedia Math. Sci.},
			volume={90},
			publisher={Springer, Berlin},
			pages={5\ndash 93},
			url={https://doi.org/10.1007/978-3-662-03484-2_2},
			review={\MR{1490038}},
		}
		
		\bib{HoffmanMeeks}{article}{
			author={Hoffman, David},
			author={Meeks, William},
			title={The strong halfspace theorem for minimal surfaces},
			date={1990},
			ISSN={0020-9910,1432-1297},
			journal={Invent. Math.},
			volume={101},
			number={2},
			pages={373\ndash 377},
			url={https://doi.org/10.1007/BF01231506},
			review={\MR{1062966}},
		}
		
		\bib{HongSaturnino}{article}{
			author={Hong, Han},
			author={Saturnino, Artur},
			title={Capillary surfaces: stability, index and curvature estimates},
			date={2023},
			ISSN={0075-4102,1435-5345},
			journal={J. Reine Angew. Math.},
			volume={803},
			pages={233\ndash 265},
			url={https://doi.org/10.1515/crelle-2023-0050},
			review={\MR{4649183}},
		}
		
		\bib{HuiskenIlmanen}{article}{
			author={Huisken, Gerhard},
			author={Ilmanen, Tom},
			title={The inverse mean curvature flow and the {R}iemannian {P}enrose
				inequality},
			date={2001},
			ISSN={0022-040X,1945-743X},
			journal={J. Differential Geom.},
			volume={59},
			number={3},
			pages={353\ndash 437},
			url={http://projecteuclid.org/euclid.jdg/1090349447},
			review={\MR{1916951}},
		}
		
		\bib{HuiskenKoerber}{article}{
			author={Huisken, Gerhard},
			author={Koerber, Thomas},
			title={Inverse mean curvature flow and {R}icci-pinched three-manifolds},
			date={2024},
			ISSN={0075-4102,1435-5345},
			journal={J. Reine Angew. Math.},
			volume={814},
			pages={1\ndash 8},
			url={https://doi.org/10.1515/crelle-2024-0040},
			review={\MR{4793336}},
		}
		
		\bib{Kapouleas}{article}{
			author={Kapouleas, Nikolaos},
			title={Complete embedded minimal surfaces of finite total curvature},
			date={1997},
			ISSN={0022-040X,1945-743X},
			journal={J. Differential Geom.},
			volume={47},
			number={1},
			pages={95\ndash 169},
			url={http://projecteuclid.org/euclid.jdg/1214460038},
			review={\MR{1601434}},
		}
		
		\bib{Koerber}{article}{
			author={Koerber, Thomas},
			title={The {R}iemannian {P}enrose inequality for asymptotically flat
				manifolds with non-compact boundary},
			date={2023},
			ISSN={0022-040X,1945-743X},
			journal={J. Differential Geom.},
			volume={124},
			number={2},
			pages={317\ndash 379},
			url={https://doi.org/10.4310/jdg/1686931603},
			review={\MR{4602727}},
		}
		
		\bib{KorevaarKusnerSolomon}{article}{
			author={Korevaar, Nicholas},
			author={Kusner, Rob},
			author={Solomon, Bruce},
			title={The structure of complete embedded surfaces with constant mean
				curvature},
			date={1989},
			ISSN={0022-040X,1945-743X},
			journal={J. Differential Geom.},
			volume={30},
			number={2},
			pages={465\ndash 503},
			url={http://projecteuclid.org/euclid.jdg/1214443598},
			review={\MR{1010168}},
		}
		
		\bib{KornArthur}{book}{
			author={Korn, Arthur},
			title={{\"U}ber {M}inimalfl{\"a}chen, deren {R}andkurven wenig von
				ebenen {K}urven abweichen},
			series={Physikalisch-Mathematische Classe},
			publisher={Verlag der K{\"o}nigl. Akad. der Wiss.},
			date={1909},
		}
		
		\bib{Li}{article}{
			author={Li, Chao},
			title={A polyhedron comparison theorem for 3-manifolds with positive
				scalar curvature},
			date={2020},
			ISSN={0020-9910,1432-1297},
			journal={Invent. Math.},
			volume={219},
			number={1},
			pages={1\ndash 37},
			url={https://doi.org/10.1007/s00222-019-00895-0},
			review={\MR{4050100}},
		}
		
		\bib{LiZhouZhu}{article}{
			author={Li, Chao},
			author={Zhou, Xin},
			author={Zhu, Jonathan},
			title={Min-max theory for capillary surfaces},
			date={2021},
			journal={preprint, arXiv:2111.09924},
		}
		
		\bib{LopezRos}{article}{
			author={L\'opez, Francisco},
			author={Ros, Antonio},
			title={Complete minimal surfaces with index one and stable constant mean
				curvature surfaces},
			date={1989},
			ISSN={0010-2571,1420-8946},
			journal={Comment. Math. Helv.},
			volume={64},
			number={1},
			pages={34\ndash 43},
			url={https://doi.org/10.1007/BF02564662},
			review={\MR{982560}},
		}
		
		\bib{LopezRos1}{article}{
			author={L\'opez, Francisco},
			author={Ros, Antonio},
			title={On embedded complete minimal surfaces of genus zero},
			date={1991},
			ISSN={0022-040X,1945-743X},
			journal={J. Differential Geom.},
			volume={33},
			number={1},
			pages={293\ndash 300},
			url={http://projecteuclid.org/euclid.jdg/1214446040},
			review={\MR{1085145}},
		}
		
		\bib{Maggi}{book}{
			author={Maggi, Francesco},
			title={Sets of finite perimeter and geometric variational problems},
			series={Cambridge Studies in Advanced Mathematics},
			publisher={Cambridge University Press, Cambridge},
			date={2012},
			volume={135},
			ISBN={978-1-107-02103-7},
			url={https://doi.org/10.1017/CBO9781139108133},
			note={An introduction to geometric measure theory},
			review={\MR{2976521}},
		}
		
		\bib{Marquardt}{article}{
			author={Marquardt, Thomas},
			title={Weak solutions of inverse mean curvature flow for hypersurfaces
				with boundary},
			date={2017},
			ISSN={0075-4102,1435-5345},
			journal={J. Reine Angew. Math.},
			volume={728},
			pages={237\ndash 261},
			url={https://doi.org/10.1515/crelle-2014-0116},
			review={\MR{3668996}},
		}
		
		\bib{MeeksYau}{article}{
			author={Meeks, William},
			author={Yau, Shing-Tung},
			title={The existence of embedded minimal surfaces and the problem of
				uniqueness},
			date={1982},
			ISSN={0025-5874,1432-1823},
			journal={Math. Z.},
			volume={179},
			number={2},
			pages={151\ndash 168},
			url={https://doi.org/10.1007/BF01214308},
			review={\MR{645492}},
		}
		
		\bib{Osserman}{article}{
			author={Osserman, Robert},
			title={Global properties of minimal surfaces in {$E\sp{3}$} and
				{$E\sp{n}$}},
			date={1964},
			ISSN={0003-486X},
			journal={Ann. of Math. (2)},
			volume={80},
			pages={340\ndash 364},
			url={https://doi.org/10.2307/1970396},
			review={\MR{179701}},
		}
		
		\bib{Rado}{article}{
			author={Rado, Tibor},
			title={Some remarks on the problem of {P}lateau},
			date={1930},
			journal={Proceedings of the National Academy of Sciences},
			volume={16},
			number={3},
			pages={242\ndash 248},
		}
		
		\bib{Ros}{article}{
			author={Ros, Antonio},
			title={Stability of minimal and constant mean curvature surfaces with
				free boundary},
			date={2008},
			ISSN={0103-9059,2317-6636},
			journal={Mat. Contemp.},
			volume={35},
			pages={221\ndash 240},
			review={\MR{2584186}},
		}
		
		\bib{Schoen}{article}{
			author={Schoen, Richard},
			title={Uniqueness, symmetry, and embeddedness of minimal surfaces},
			date={1983},
			ISSN={0022-040X,1945-743X},
			journal={J. Differential Geom.},
			volume={18},
			number={4},
			pages={791\ndash 809},
			url={http://projecteuclid.org/euclid.jdg/1214438183},
			review={\MR{730928}},
		}
		
		\bib{SchoenYau}{article}{
			author={Schoen, Richard},
			author={Yau, Shing-Tung},
			title={On the proof of the positive mass conjecture in general
				relativity},
			date={1979},
			ISSN={0010-3616,1432-0916},
			journal={Comm. Math. Phys.},
			volume={65},
			number={1},
			pages={45\ndash 76},
			url={http://projecteuclid.org/euclid.cmp/1103904790},
			review={\MR{526976}},
		}
		
		\bib{SchoenYau3}{article}{
			author={Schoen, Richard},
			author={Yau, Shing~Tung},
			title={The energy and the linear momentum of space-times in general
				relativity},
			date={1981},
			ISSN={0010-3616,1432-0916},
			journal={Comm. Math. Phys.},
			volume={79},
			number={1},
			pages={47\ndash 51},
			url={http://projecteuclid.org/euclid.cmp/1103908887},
			review={\MR{609227}},
		}
		
		\bib{Taylor}{article}{
			author={Taylor, Jean},
			title={Boundary regularity for solutions to various capillarity and free
				boundary problems},
			date={1977},
			ISSN={0360-5302,1532-4133},
			journal={Comm. Partial Differential Equations},
			volume={2},
			number={4},
			pages={323\ndash 357},
			url={https://doi.org/10.1080/03605307708820033},
			review={\MR{487721}},
		}
		
		\bib{Traizet}{article}{
			author={Traizet, Martin},
			title={An embedded minimal surface with no symmetries},
			date={2002},
			ISSN={0022-040X,1945-743X},
			journal={J. Differential Geom.},
			volume={60},
			number={1},
			pages={103\ndash 153},
			url={http://projecteuclid.org/euclid.jdg/1090351085},
			review={\MR{1924593}},
		}
		
		\bib{Volkmann}{thesis}{
			author={Volkmann, Alexander},
			title={Free boundary problems governed by mean curvature},
			type={Ph.D. Thesis},
			date={2015},
			note={available at
				\url{https://refubium.fu-berlin.de/bitstream/handle/fub188/11688/Thesis_Volkmann.pdf}},
		}
		
		\bib{Witten}{article}{
			author={Witten, Edward},
			title={A new proof of the positive energy theorem},
			date={1981},
			ISSN={0010-3616},
			journal={Comm. Math. Phys.},
			volume={80},
			number={3},
			pages={381\ndash 402},
			review={\MR{626707}},
		}
		
		\bib{Xu}{article}{
			author={Xu, Kai},
			title={A topological gap theorem for the {$\pi_2$}-systole of psc
				3-manifolds},
			date={2023},
			journal={preprint, arXiv:2307.01922},
		}
		
	\end{biblist}
\end{bibdiv}

\end{document}